\documentclass{article}

\usepackage[margin=1in]{geometry}
\usepackage{amsmath,amsthm,amssymb,amsfonts}
\usepackage{todonotes}
\usepackage{subcaption}
\usepackage[export]{adjustbox}
\usepackage{comment}
\usepackage[percent]{overpic}
\usepackage{tikz}
\usepackage{url}
\usepackage{hyperref}

\theoremstyle{definition}
\newtheorem{theorem}{Theorem}
\newtheorem{lemma}[theorem]{Lemma}
\newtheorem{claim}[theorem]{Claim}

\newtheorem{corollary}[theorem]{Corollary}

\newtheorem{definition}[theorem]{Definition}
\newtheorem{remark}[theorem]{Remark}

\newcommand{\N}{\mathbb{N}}
\newcommand{\Z}{\mathbb{Z}}

\newcommand{\R}{\mathbb{R}}

\newcommand{\pr}{\mathbb{P}}
\newcommand{\E}{\mathbb{E}}

\newcommand{\eps}{\varepsilon}
\newcommand{\cost}[1]{\mathcal{C}(#1)}

\newcommand{\calC}{\mathcal{C}}

\newcommand{\calP}{\mathcal{P}}

\newcommand{\calV}{\mathcal{V}}

\author{Zylan Benjert \and J\'{u}lia Komj\'{a}thy \and Johannes Lengler \and John Lapinskas \and Ulysse Schaller\thanks{U.S.\ gratefully acknowledges support by the Swiss National Science Foundation [grant number
200021 192079].}}
\title{Degree-dependent and distance-dependent contact rates interpolate between explosive, exponential and polynomial epidemic growth}

\begin{document}

\maketitle

\begin{abstract}
It is a fundamental question in epidemiology to estimate, model and predict the growth rate of a pandemic. Analogously, analysing the diffusion of innovation, (fake) news, memes, and rumours is of key importance in the social sciences. The resulting epidemic growth curves can be classified according to their growth rates. These have been found to range from exponential to both faster super-exponential curves and slower subexponential or polynomial curves. 
Previous research has lacked a unified explanatory framework capable of accommodating super-exponential, (stretched) exponential, and polynomial growth patterns within the same contact network. In this paper we propose a simple agent-based network model that can capture all these phases. We provide such a framework by modelling how transmission rates depend on spatial distance and on individuals' numbers of contacts. By comparing the growth rate of spreading processes with or without degree-dependent and/or distance-dependent contact rates through data-driven and synthetic simulations on real and modelled networks with underlying geometry, we find evidence that even a `sublinear presence' of these causes may cause a significant slow down of the growth rate on the same underlying network.  
We find that the growth rate is governed by a combination of three factors: geometry, the prevalence of weak ties, and superspreaders. We confirm our results with rigorous proofs in a theoretical model, using a spatial multiscale-argument in long-range heterogeneous first passage percolation. Our results give a plausible explanation of why the consecutive waves of a single pandemic can differ in their growth even if their spreading mechanisms are similar.

\end{abstract}

Biological~\cite{pastor2015epidemic}, environmental~\cite{stone2007seasonal} and behavioural~\cite{funk2009spread} factors together determine the early growth of an epidemic. Within the framework of information propagation - encompassing phenomena such as misinformation, online memes, and viruses - information carriers serve a role equivalent to infectious agents in epidemic modelling \cite{rogers2014diffusion}. The early-stage growth dynamics of such spreading or diffusion processes are governed by individual willingness and transmission capacity, which vary significantly across populations and contexts. 
This variability in transmission characteristics provides a natural explanation for the wide spectrum of growth curves observed in diffusion phenomena.

\emph{A variety of epidemic growth curves.} It is more remarkable that different waves of the \emph{same} diffusion process may show different growth dynamics even on essentially the same underlying contact network.
To illustrate this, we take a look at the recent Covid-19 pandemic. Pavithran \emph{et.\ al.\ }\cite{pavithran2022extreme} perform statistically significant tests to show that the 3rd Covid wave in the USA, the 2nd waves in India and Italy, and the 5th wave in Japan all show super-exponential (i.e.\ faster than exponential) growth, while the 1st waves in the UK, Brazil, India and Mexico are all polynomial growth in nature. Concurrently, the 1st waves in the USA and South Africa, the 3rd wave in the UK and the 4th in Germany show pure exponential growth. It is notable here that consecutive waves in the same country show different growth classes in the USA, the UK and India. Similar observations were made by Kohanczyk \emph{et.\ al.\ }\cite{kochanczyk2020super}, with an initially shrinking doubling time of the Covid-19 pandemic in China, Italy, Spain, France, UK, Germany,
Switzerland, and New York State, indicating faster than exponential growth, and by Baruah \cite{baruah2020hyper}, where super-exponential growth of the 2nd Covid-19 wave in Russia, but exponential growth for the 1st wave, was reported. Alongside super-exponential growth curves, \emph{sub}-exponential growth curves were also observed during the Covid-19 pandemic \cite{liu2022temporal}, where the authors fitted \emph{stretched exponential} functions $\exp(c t^\gamma)$ to the epidemic curves.

Variation in the growth class of epidemic curves is not restricted to Covid-19. Viboud \emph{et.\ al.\ }\cite{viboud2016generalized} carry out a detailed statistical analysis of early epidemic growth for a range of diseases including influenza, Ebola, foot-and-mouth disease, HIV/AIDS, plague, measles, and smallpox, and find polynomial growth curves of the form $\sim (\tfrac{r}{\eta}t + A)^\eta$ for growth exponent $\eta>0$ and scaling parameters $A, r$. A major plague epidemic in Bombay in 1905, the 1918 influenza pandemic in San Francisco, and a smallpox outbreak in Khulna, Bangladesh in 1972 are shown to grow polynomially with estimated exponent $\eta\sim 0.85$, while the early growth dynamics of a Zika epidemic in Colombia exhibits much slower polynomial growth with  $\eta \in [0.4,0.7]$. Similar to Covid, the growth curves of the Ebola virus also show high variation: in districts of Margibi in Liberia, and in Bombali and Bo in Sierra Leone, the pandemic displayed near exponential growth, while the districts of Bomi in Liberia and Kenema in Sierra Leone displayed particularly slow polynomial growth with estimated exponent $\eta\sim 0.1$. An intermediate pattern of growth has been observed for the district of Western Area Urban in Sierra Leone. 
Chowell \emph{et.\ al.\ }\cite{chowell2015western, polyepidemicsurvey} draw similar conclusions,
with both slow local polynomial growth followed by an acceleration to global exponential curves for the Ebola pandemic in Western Africa. 

Intermediate growth of epidemic is often tied with fractal-like spreading patterns, as observed for the Covid-19 pandemic in mainland China  \cite{long2023multifractal}  and in \cite{PACURAR2020110073, natalia2023fractal} the authors argue that the multi-fractal nature of daily infection curves can be explained by geographic fractal patterns of alternating local and global infection dynamics. Our model below naturally recovers these fractal-like dynamics in its polynomial phase.  
To summarise, we can classify the early growth of epidemic curves of spreading processes into four universality classes: 
\begin{enumerate}
\setlength\itemsep{0em}
\item[(i)]\label{item:hyperexp} faster than exponential, i.e.\ super-exponential;
\item[(ii)]\label{item:exp} exponential: $\sim e^{C t}$
\item[(iii)]\label{item:subexp} quasi-exponential $\sim e^{Ct^\psi}$ for some $\psi<1$; 
\item[(iv)]\label{item:poly} polynomial; $\sim C t^{2\varphi}$ for some $\varphi>0$;
\end{enumerate}
where $C$ is some suitably chosen positive constant, and the factor $2$ in (iv) reflects the local $2$-dimensional nature of the Earth's surface.

\emph{Techniques for modelling spreading processes.} Modelling epidemic/diffusion processes via \emph{compartmental models} based on differential equations dates back to Bernoulli's work on small pox in 1760 \cite{bernoulli1760reflexions}, with classical references \cite{kermack1927contribution, kendall1956deterministic, ross1916application}; see also the textbook \cite{brauer2008compartmental}. While compartmental models serve as versatile tools for fitting empirical data, they fail to explain the emergence of polynomial growth patterns as opposed to exponential trajectories: in general, incorporation of node-degree heterogeneity can only be done numerically and at the cost of an enormous increase in the system's complexity leading to analytically less tractable simulation approaches (e.g.\ Individual Based Mean-Field Theory, Quenched Mean Field Approximation, NIMFA), see the review \cite{pastor2015epidemic}.

The use of random infection trees -- called branching processes in mathematical contexts, see \cite{athreya2004branching, athreya2012classical, jagers1995branching} for classical textbooks -- allows us to model even the earliest phases of diffusion processes, and provides powerful modelling tools in modern epidemiology not just to estimate the growth \cite{levesque2021model} but also to recover mutation-clusters of the virus-RNA \cite{tran2024estimating} or design effective  contact tracing methods \cite{fyles2021using, muller2023contact}. Nevertheless, random trees generally either grow super-exponentially (in case $R_0$, the average number of secondary cases, is infinite) or exponentially (if $R_0$ is finite and larger than $1$), or not at all, i.e.\ the spreading stops or the disease disappears from the population (in case $R_0\le 1$) \cite{athreya2004branching}. Modelling polynomial growth with tree-based models is difficult and only possible with population-dependent reproduction dynamics. 

Agent-based models, with agents either situated on a network or moving in continuum space, are effective simulation tools that allow for behavioural heterogeneity. When properly tuned they may produce both polynomial and exponential growth curves, see \cite{namatame2016agent}. 
 Often, these models are analytically less or entirely not tractable, and their mean-field approximations often fail to capture emergent phenomena \cite{chatterjee2009contact}.

In the network-based approach --- a special type of agent-based modelling -- individuals correspond to nodes in a network while their contacts correspond to links. The power of this approach is the  \emph{universality} phenomenon: often, the exact microscopic rules of how the network is formed or how the spreading occurs are only relevant via a few key parameters, and can lead to only a few possible macroscopic behaviours of the diffusion process \cite{bhamidi2017universality, adriaans2018weighted}. Universality thus allows us to test whether a phenomenon is robustly present. This approach was successfully applied in \cite{odor2021switchover}, where it was illustrated that geometry causes the ratio of the final size of a pandemic started from two types of initial seed sets (dispersed vs core-seeding) to undergo a `switchover', a robust phenomenon generally present in networks with a core-structure. 

\emph{Structural and behavioural factors in diffusion processes.}
There are systematic studies on the role of the underlying social network for diffusion processes \cite{keeling2005implications, newman2002spread, stegehuis2016epidemic}, yet there is no unanimous 
agreement on the effect of topological substructures on the growth of the diffusion process.  With the Covid-19 pandemic behind us, the role of individuals (nodes) with a high number of contacts (degree) in the network -- called superspreaders in epidemiological contexts and  influencers in social media contexts -- in speeding up the diffusion is now widely acknowledged \cite{illingworth2021superspreaders, rambo2021impact, sneppen2020impact, kochanczyk2020super}. The role of communities is less clear, as they may both facilitate or slow down the spread \cite{lieberthal2023epidemic, stegehuis2016epidemic}. 
Beyond such network effects, the dynamics of a diffusion process can be also immensely influenced by behavioural factors \cite{tiwari2021dynamics}, i.e., individuals adjusting their own transmission rates, either consciously or unconsciously. 

There is now significant evidence that in social networks actual contacts and transmissions do not scale linearly with the degree~\cite{feldman2017high,kroy2023superspreading,wang2022effects, ke2021vivo}, which leads to a reduction of the effective degree of superspreaders. Analogously, while the effect of weak ties (less-often used, often long-distance contacts) is well established in our social relationships, mental well-being and even in cognitive function \cite{easley2010strong,granovetter1973strength, pan2020power, sandstrom2014social}, there is no scientific agreement on exactly how the presence of these weak ties influence the growth rate of a pandemic \cite{fraser2021dual, larson2017weakness, shi2007networks}. This knowledge-gap can be explained by the variability of `how often' these weak ties are effectively being used to transmit. 
   For instance, in online social media contexts both weak and strong links of influential nodes may be used or omitted simultaneously, affecting the popularity growth of viral spreading such as memes, videos, and re-tweets: 
polynomial, exponential, and even super-exponential growth curves all have been observed \cite{wang2019model, bauckhage2015viral}.

Returning to the role of the underlying contact network and its heterogeneity, we may conclude that while the above modelling approaches are excellent for data fitting, they do not provide a plausible explanation of why a given growth curve occurs, or give guidance on which model to choose, often leading to data-driven, ad-hoc solutions. These observations illustrate the need for a simple theoretical model with a limited parameter set that can coherently account for the diverse growth regimes observed within a single network structure, encompassing polynomial, exponential, and super-exponential phases, and which can also be tuned to fit parametrised growth curves, for instance to adjust $\varphi$ in the polynomial regime $(B+At)^\varphi$ or $\psi$ in the quasi-exponential regime $\exp(C t^\psi)$.  

\section{Network-dependent transmission dynamics.} 
We propose a stochastic model of only a few parameters following the idea that the effective usage of links is decaying with spatial distance and with node degree. 
We are guided by the universality principle and we use the simplest scenario where the phenomenon appears by using an \textbf{SI} diffusion model; adaptation to more complicated network-based compartmental models should be straightforward. In the SI model, initially a single seed  node is aware of an information or called infected (has state $I$), and the rest of the nodes are susceptible to receiving the information (all have state $S$). Once a node is aware of the information, it stays aware forever. Upon receiving the information the first time at some time $t_u$, the node $u$ starts to transmit the information to each of its neighbours, using the node-to-node-dependent transmission rate $r(u,v)$ in \eqref{eq:transition-rate}. Unless a neighbouring node $v$ receives the information via another route earlier, it becomes aware of the information at time $t_u+ E_{uv}/r(u,v)$, where $E_{uv}$ encodes the randomness; an exponential random variable of mean $1$. 

For the rate $r(u,v)$ we consider the underlying network embedded in a geometric space: this could be the earth's surface in case of social networks with people's approximate location as node location; or a more hidden form of geometry as, for instance, hyperbolic geometry was observed for the internet-network~\cite{krioukov2010hyperbolic}. Each node $v$ has a corresponding spatial coordinate $x_v$, and we denote the total number of contacts (degree) of this node by $\deg(v)$. We set the transmission rate from a given node $u$ to any neighbouring node $v$ to
\begin{equation}\label{eq:transition-rate}
r(u,v) = \beta \deg(u)^{-\mu}  \deg(v)^{-\nu} \|x_u -x_v \|^{-\zeta}, 
\end{equation}
where $\|x_u-x_v\|$ denotes the spatial distance between the two nodes $u,v$. This choice
results in a mean passage time $\beta^{-1} \deg(u)^\mu \deg(v)^\nu \|x_u-x_v\|^{\zeta}$ on the link $(u,v)$ when the information is sent from $u$ to $v$.
Heuristically, $\mu>0$ causes a slow-down in the mean transmission time out of high degree vertices,  while for $\mu\in (0,1)$ the typical number of infection within unit time still grows with the degree. It scales as $\propto \deg(u)^{1-\mu}$, assuming a scale-free network model where most neighbours of $u$ possess low degree and are situated nearby. Analogously, $\nu>0$ corresponds to a slow-down in the mean infection time towards high degree nodes on any particular link, while the effective number of incoming infection within unit time again scales as $\propto \deg(u)^{1-\nu}$ with the scale-free network assumption. Whenever $\zeta>0$, long links experience a slow down in the transmission rate, intuitively corresponding to the observed effective less frequent usage of weak links \cite{kramer2014let, maness2017theory, urena2020estimating}. These choices are supported by empirical observations: very high degree nodes are often unable to effectively realise their degree when there is a `cost' associated to communication or where  time-constraints play a role~\cite{feldman2017high}, and similar penalisation has been used to model the sublinear impact of superspreaders as a function of contacts~\cite{giuraniuc2006criticality, karsai2006nonequilibrium, miritello2013time, pu2015epidemic,yang2008optimal, baxter2021degree}, also in other contexts~\cite{bonaventura2014characteristic, ding2018centrality, lee2009centrality, zlatic2010topologically, hooyberghs2010biased}. Consistent with our model, all these applications assume a polynomial dependence with exponent(s) in the range $\mu, \nu \in (0,1)$, where a high-degree vertex may cause more new infections per time than a low-degree vertex, but this effect is sublinear in the degree. Spatial penalisation  is analysed in long-range population dispersion models~\cite{chatterjee2016multiple}, modelling 
aerial and marine pathogen dispersal, plant dispersal, and human-helped dispersion
\cite{brown2002aerial, cuenca2003long, filipe2004effects, mccallum2003rates, ruiz2000global, suarez2001patterns}.
We call the SI model with rates in \eqref{eq:transition-rate} \emph{contact-dependent spreading process}. Note that the dependence is local, as it only considers the ego-networks of the two individual's involved in the transmission.
Subsampling ego-networks and their effect on diffusion processes was explored also in \cite{smith2020using}, where the focus lies on the inference of the true network from ego networks rather than how ego-networks affect the transmission dynamics. 
Since the transmission rate function in \eqref{eq:transition-rate} is decreasing with degree and distance, switching to dependent transmission times generally slows down the early growth of the pandemic. Our focus of interest is to answer the question:
\begin{align*}
\textit{When do degree- and distance-dependent transmission rates}\\ 
\textit{change the universality class of the early epidemic growth?}
\end{align*}
Or, whether and when such dependencies do not cause a qualitative change, but only result in a different scaling within the same growth class. 
In what follows we give an essentially full theoretical answer to this question. To reduce the parameter space, we consider only the case $\mu=\nu$ in \eqref{eq:transition-rate}. While this seems like a restrictive assumption, the same analysis as described below can be carried out to study the case $\mu\neq \nu$, and we do not expect to see new universality classes. 
To illustrate the validity of our theoretical results, we also carry out data driven and synthetic simulations. We start with these first.

\subsection{Gowalla geographic network dataset} Our first scenario is the publicly available Gowalla network dataset \cite{gowalla}.
The Gowalla dataset is a location based social networking website where users can opt-in to share their geographic locations upon login. The obtained friendship network is simple, undirected, consists of $n=196,591$ nodes and $950,327$ links, each node corresponding to a user and each link to a friendship between two users. There are $n = 107,092$ users with location data and $456,830$ links between them. The average degree (number of neighbours per node) $\bar d = 8.53$, the minimal degree $\delta=0$, and the maximal degree $\Delta = 9967$. 
The degree distribution is highly heterogeneous and the proportion of degree $k$ nodes can be approximated by a power function $n_k/n\propto k^{-\tau_{\mathrm{Gow}}}$, see Figure \ref{fig:gowalla-hills}. We estimate  $\tau_\mathrm{Gow} \approx 2.78$ using the Hill's estimator, see Methods and SI \cite{voitalov2019scale}. 

The spatial location of nodes spans across the globe, with most nodes located either in Europe ($\sim42.5k$) or the US ($\sim52.5k$), and some coverage of Japan and Australia with resp. $\sim 1.3k$ and $2.2k$ users.  
The link-length distribution follows a truncated power law with mean $885$ and estimated decay exponent $\tau_{\mathrm{Gow,link}}\approx 0.387$. See Figure \ref{fig:gowalla-illustrations} for visualisations.

To decide which effects are relevant in the network and which are not, we also use a random reference network model -- we shuffle the links of the dataset while keep the nodes in place to obtain a configuration model version of the network. In this reference model, link probabilities are independent of the spatial distance between nodes.

\begin{figure*}[h]
\centering 

\begin{minipage}[b]{0.60\linewidth} 
    \centering
    \begin{subfigure}[b]{0.49\linewidth}
        \includegraphics[height=3.1cm]{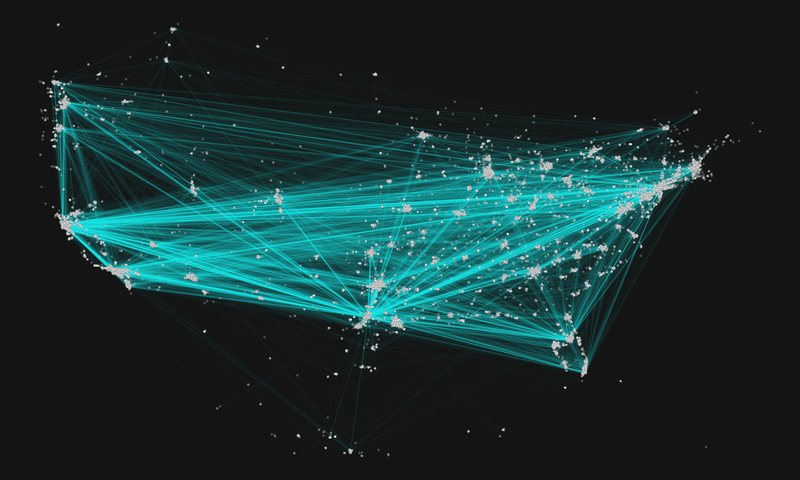}
        \caption{}
        \label{fig:gowalla-US-graph}
    \end{subfigure}
    \hfill 
    \begin{subfigure}[b]{0.49\linewidth} 
        \centering
        \includegraphics[height=3.1cm, right]{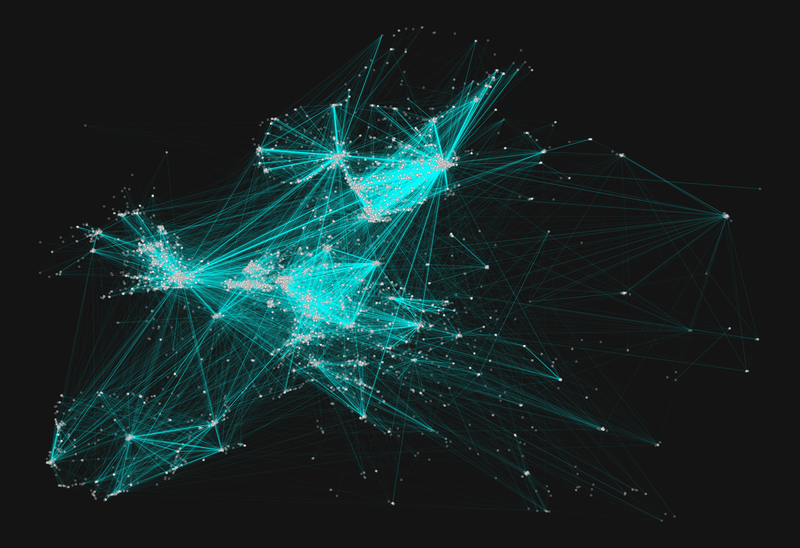}
        \caption{}
        \label{fig:gowalla-europe-graph}
    \end{subfigure}
    
    \vspace{0.3cm} 
    
    \begin{subfigure}[b]{\linewidth}
        \centering
        \includegraphics[width=\linewidth]{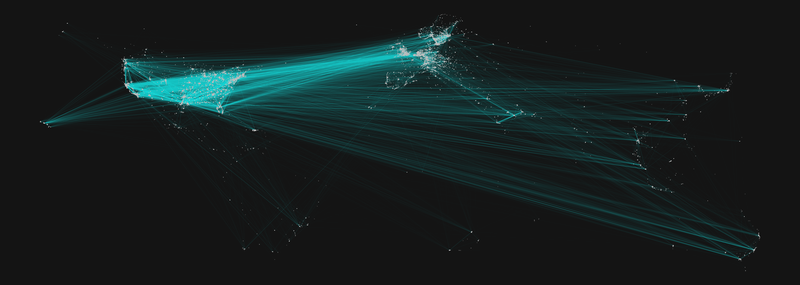}
        \caption{}
        \label{fig:gowalla-world-graph}
    \end{subfigure}
\end{minipage}
\begin{minipage}[b]{0.38\linewidth} 
    \centering
    \begin{subfigure}[b]{\linewidth} 
        \centering
        \includegraphics[width=\linewidth, trim={1cm 2.5cm 2cm 0cm}, clip]{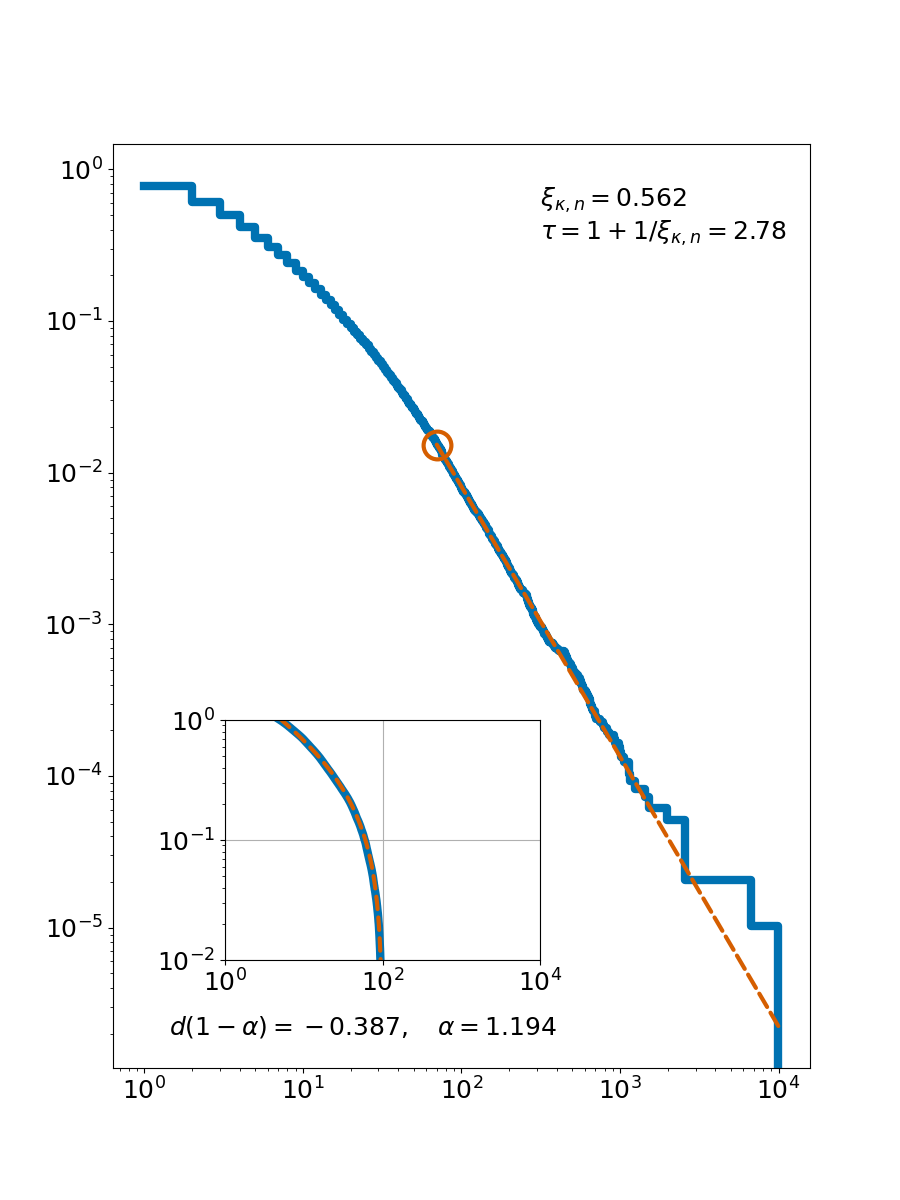}
        \caption{}
        \label{fig:gowalla-hills}
    \end{subfigure}
\end{minipage}

\caption{The Gowalla network. Visualizations of the network restricted to (a) the US and (b) Europe, as well as (c) the entire network. Figure (d) shows its cumulative degree distribution function, revealing a power-law tail. We estimated the power-law exponent using three different estimators (dashed), with consistent estimates around $1.78$, yielding $\tau_{\mathrm{Gow}} = 2.78$. The inset shows the link-length cumulative distribution, which follows a truncated power-law. We estimate the exponent using non-linear regression (that recovers the true parameter for synthetic GIRGs) on the well-behaved region of the plot from $10$km to $100$km, giving $\tau_{\mathrm{Gow,link}}=1.4 \pm 0.02$, see SI.}
\label{fig:gowalla-illustrations}

\end{figure*}

\begin{figure*}
\centering
\begin{subfigure}{0.32\textwidth}
    \includegraphics[width=\textwidth, trim = 90 90 90 90, clip]{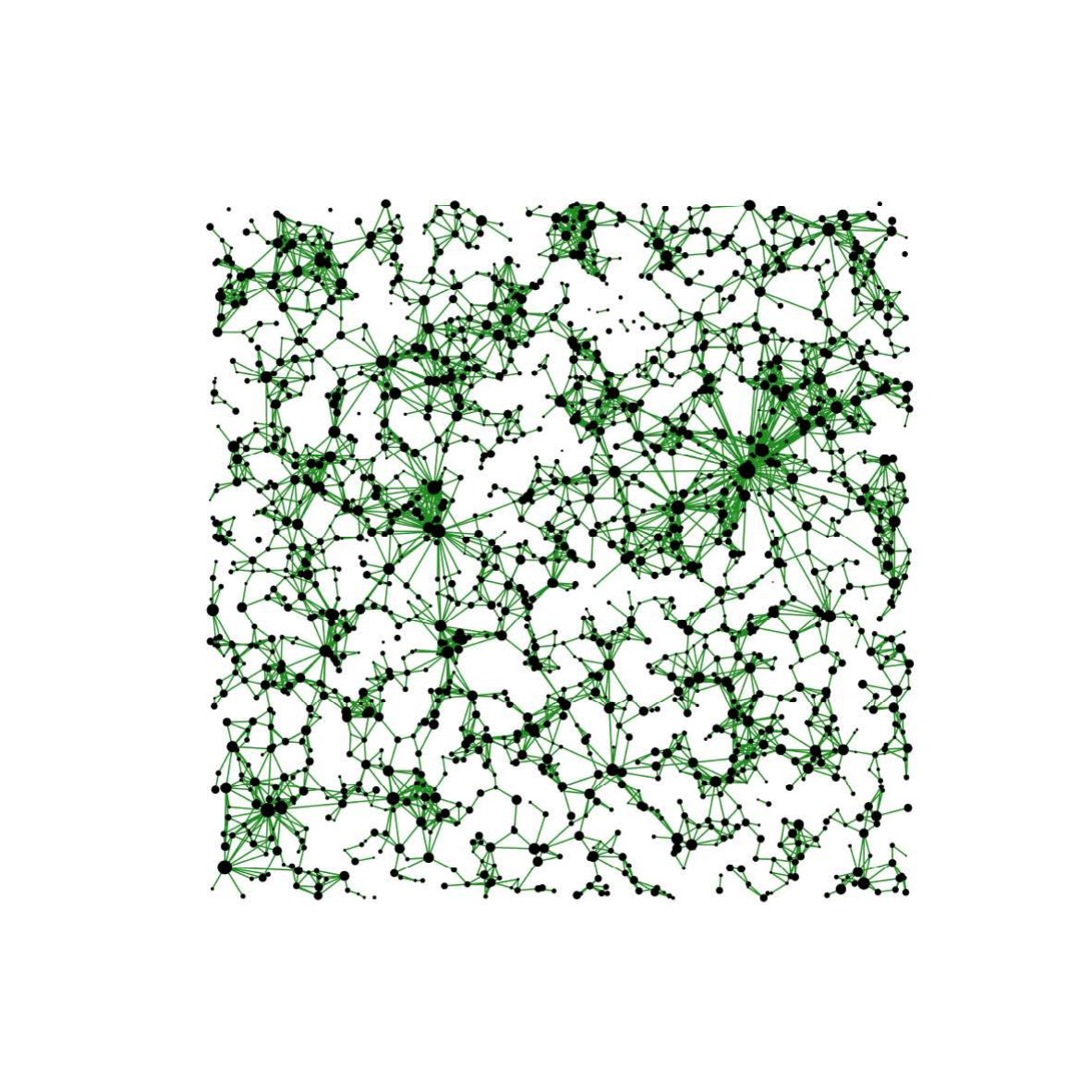}
    \caption{$\tau=3.7$, $\alpha=6$, $c_1 = 0.7$.}
    \label{fig:girg-high-tau-high-alpha}
\end{subfigure}
\begin{subfigure}{0.32\textwidth}
    \includegraphics[width=\textwidth, trim = 90 90 90 90, clip]{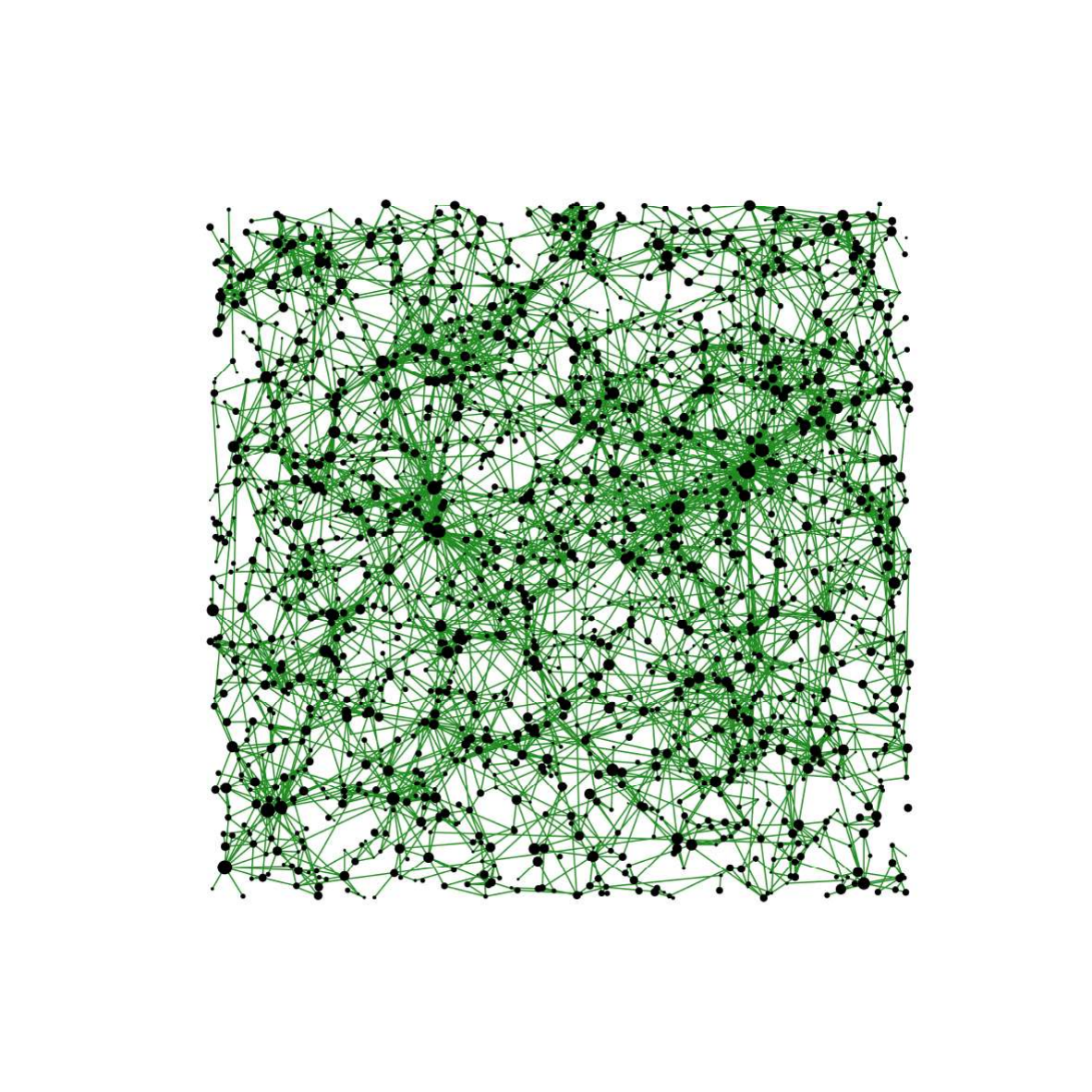}
    \caption{$\tau=3.7$, $\alpha=1.7$, $c_1 = 0.43$.}
    \label{fig:girg-high-tau-low-alpha}
\end{subfigure}
\begin{subfigure}{0.32\textwidth}
    \includegraphics[width=\textwidth, trim = 90 90 90 90, clip]{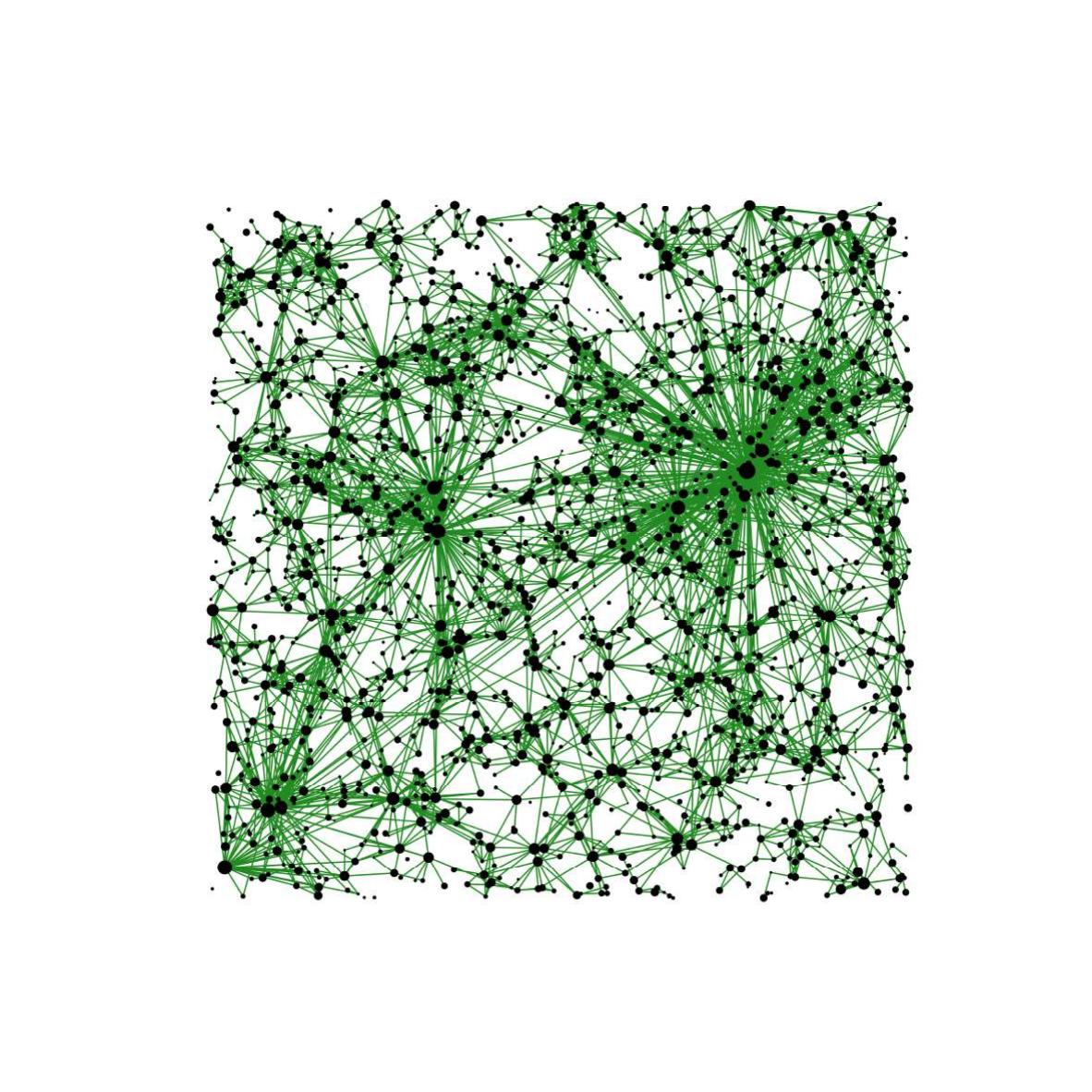}
    \caption{$\tau=2.8$, $\alpha=6$, $c_1 = 0.4$.}
    \label{fig:girg-low-tau-high-alpha}
\end{subfigure}

\caption{Illustration of the Geometric Inhomogeneous Random Graph model for three different settings of the parameters $\tau$ and $\alpha$ that each fall into a different universality class with respect to graph distance (or hop-count). On Fig.\ \ref{fig:girg-high-tau-high-alpha}, one needs to use $\propto \|u-v\|$ many links to go from node $u$ to node $v$. On Fig.\ \ref{fig:girg-high-tau-low-alpha} one needs $\propto (\log \|u-v\|)^\Delta$ many links for some $\Delta>1$: this graph has the small-world property. On Fig.\ \ref{fig:girg-low-tau-high-alpha} one needs only $\propto \log \log \|u-v\|$ links by making use of the hubs: the network is an ultra-small world. The graphs were generated by assigning a position (chosen uniformly at random in the unit square $[0,1]^2$) and a random weight $w$ (following a power-law with exponent $\tau$) to $2000$ vertices. Each pair of vertices $\{u,v\}$ draws an i.i.d.\ uniform variable $E_{uv}\in[0,1]$, and is connected by a link if $E_{uv} \le c_1 \cdot (1 \wedge c_2\cdot(w_u w_v / (\mathbb{E}[W]\|u-v\|^2))^{\alpha})$. The constant $c_2$ was set to $c_2 = 0.7$ here, while the constant $c_1$ was varied so that the three graphs all have $\sim5800$ links. The position, weights, and link-existence random variables are coupled on the three graph instances. The size of a node is proportional to its weight.}
\label{fig:girg-illustrations}
\end{figure*}

\subsection{Geometric inhomogeneous random graphs (GIRG)}
Complex networks and their dynamics can be studied using synthetic models. To study the influence of superspreaders and long-distance connections, we employ a model that couples a scale-free degree distribution with an underlying geometric space. Geometric Inhomogeneous Random Graphs (GIRGs) \cite{Sampling_GIRG} provide an analytically tractable framework for this purpose, see Methods.
GIRGs reproduce key features of many real systems: (i) highly skewed degrees, with a few hubs and many low-degree nodes; (ii) predominantly short-range connections supplemented by occasional long-range links \cite{granovetter1973strength}; and (iii) strong clustering, reflecting the tendency that “a friend of a friend is likely to be a friend’’ (Fig.~\ref{fig:girg-illustrations}).
Two parameters govern GIRG behavior. The \emph{power-law exponent} $\tau \in (2,3)$ controls the degree distribution, with $\Pr(k)\sim k^{-\tau}$; smaller $\tau$ yields heavier tails and more hubs while keeping the mean degree finite. The \emph{long-range parameter} $\alpha \in (1,\infty]$ modulates the role of geometry in link formation; the limit $\alpha=\infty$ recovers a \emph{threshold model} in which nodes connect whenever their distance is below a cutoff $r$. When $\alpha$ is close to $1$, the network contains many long links. The network exhibits a more geometric appearance, with most links being short-range as $\alpha$ increases. The parameter $\tau$ governing the degree-exponent can be directly fitted to data, while $\alpha$ can be estimated using the computed link-length distribution. In the large network limit, the edge-length distribution of GIRGs follows a power law. However, on finite GIRGs (even for $\sim 10^6$ nodes) finite size effects are non-negligible. To overcome this technical difficulty we compute the corrected finite-size edge-length distribution and use more advanced statistical methods (see Supplementary material) to estimate the long-range parameter which reliably recovers the true $\alpha$ for synthetic GIRGs.
The estimated best-fit long-range parameter is $\hat \alpha_{\mathrm{Gow}}\sim 1.2\pm 0.02$ for the Gowalla dataset.

\section{Results}
\subsection{Simulation results}
Our simulations on the Gowalla dataset confirm both our theoretical findings below in Section \ref{sec:theoretical-results} and also the empirical observations of real disease spreading.  We start the SI-epidemic process from a user in the center of Europe near the city of Nuremberg, with degree $52$.   
We set $\nu = \mu$ in \eqref{eq:transition-rate} and vary $\mu$ between $0$ and $1$, and $\zeta$ between $0$ and $3$.  We write $I(t)$ for the number of nodes reached by the SI-process by time $t$:
\begin{equation}\label{eq:It}
I(t):=\#\{ v \text{ node: $v$ infected before $t$}\}.
\end{equation}
Our first scenario is when the link-transmission rates have no degree- and spatial dependence, corresponding to $\mu = \zeta =0$. In this case the epidemic grows explosively (extremely fast, super-exponentially) according to the theoretical predictions below in Theorem \ref{theorem:summary} and shows barely any spatial correlations. This is well-reflected in the visualisation of the spread on Figure \ref{fig:heatmaps-explosive} and also the steep increase of $I(t)$ on Figure \ref{fig:epidemic-curves-explosive}.
Spatial mixing is almost immediate as the proportion of infected nodes in the US and Europe both increase rapidly after starting the process (inlay Figure \ref{fig:epidemic-curves-explosive}).  

As we increase both the degree- and spatial penalisation to $\mu\!=\!\zeta \!=\! 1$, the growth of $I(t)$ is time-scales slower, and becomes quasi-exponential. We observe more spatial correlation on the visualisations on Figure \ref{fig:heatmaps-exponential} and less spatial mixing as users from Europe initially dominate the infection process (inlay Figure \ref{fig:epidemic-curves-exponential}). We plot $I(t)$ on a log-linear scale and observe a concave curve that matches our theoretical predictions of $\log I(t)\sim t^\varphi$ for $\varphi<1$ in Figure \ref{fig:epidemic-curves-exponential}.   

Our next scenario is $\mu=1, \zeta=2$. The increase of the spatial penalization $\zeta$ causes $I(t)$ to grow polynomially. The surface of the earth can be approximated by a two-dimensional surface, so we set $d=2$ in our theoretical predictions below in Theorem \ref{theorem:summary}, and for $I(t)$ we predict  polynomial growth $I(t) \sim t^{2\psi}$ with $\psi>1$ for this value of $\mu$ and $\zeta$. This is confirmed on our regression for $\psi$ on Figure  \ref{fig:epidemic-curves-polynomial}
where we obtain $2\hat\psi=2.75$ for the estimate of polynomial growth exponent of $I(t)$. We comment here that the polynomial and pure geometric phases on Figure \ref{fig:epidemic-curves-polynomial}-\ref{fig:epidemic-curves-geometric} reflect the local geometry of the Gowalla dataset around the initial source of the infection as the curve $I(t)$ experiences a plateau around leaving the town that has $\sim 60\sim 10^{1.8}$ users. 
The visualization in Figure \ref{fig:heatmaps-polynomial} show increased spatial correlations compared to the quasi-exponential phase on Figure \ref{fig:heatmaps-exponential} and the pandemic exhibits less spatial mixing as users from Europe dominate $I(t)$ for a much longer time (inlay Figure~\ref{fig:epidemic-curves-polynomial}). 

The same effects are present in an exaggerated form as the process enters what we call the pure geometric phase when we set $\mu=1, \zeta=3$. Here our theoretical results predict polynomial growth with $I(t)\sim t^{2\psi}$  with $\psi=1$ and purely spatial growth of the infected region. Our visualizations confirm this in Figure \ref{fig:heatmaps-geometric} and in the inlay of Figure \ref{fig:epidemic-curves-geometric}. 
A linear regression gives $2\hat \psi=2.37$ after excluding the initial start-up phase and the saturating phase of the process. As  the node-set of the Gowalla network is highly inhomogeneous in space, this is within error range of our theoretical predictions below, valid for networks with homogeneously scattered node-sets.

\begin{figure*}
    \centering
\begin{subfigure}[b]{0.24\textwidth}
         \centering
         \includegraphics[width=\textwidth, clip=true]{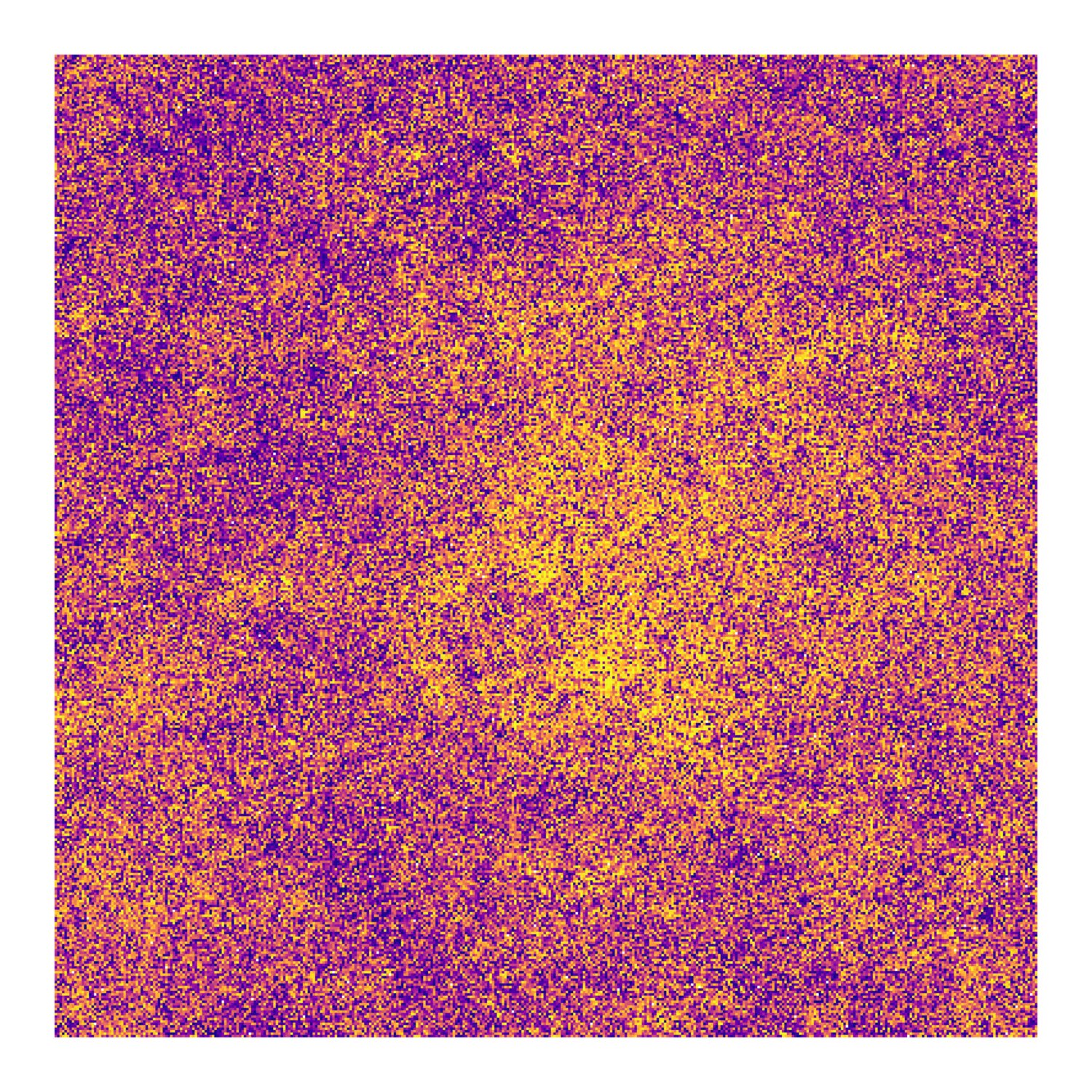}
     \end{subfigure}
     \hfill
     \begin{subfigure}[b]{0.24\textwidth}
         \centering
         \includegraphics[width=\textwidth, clip=true]{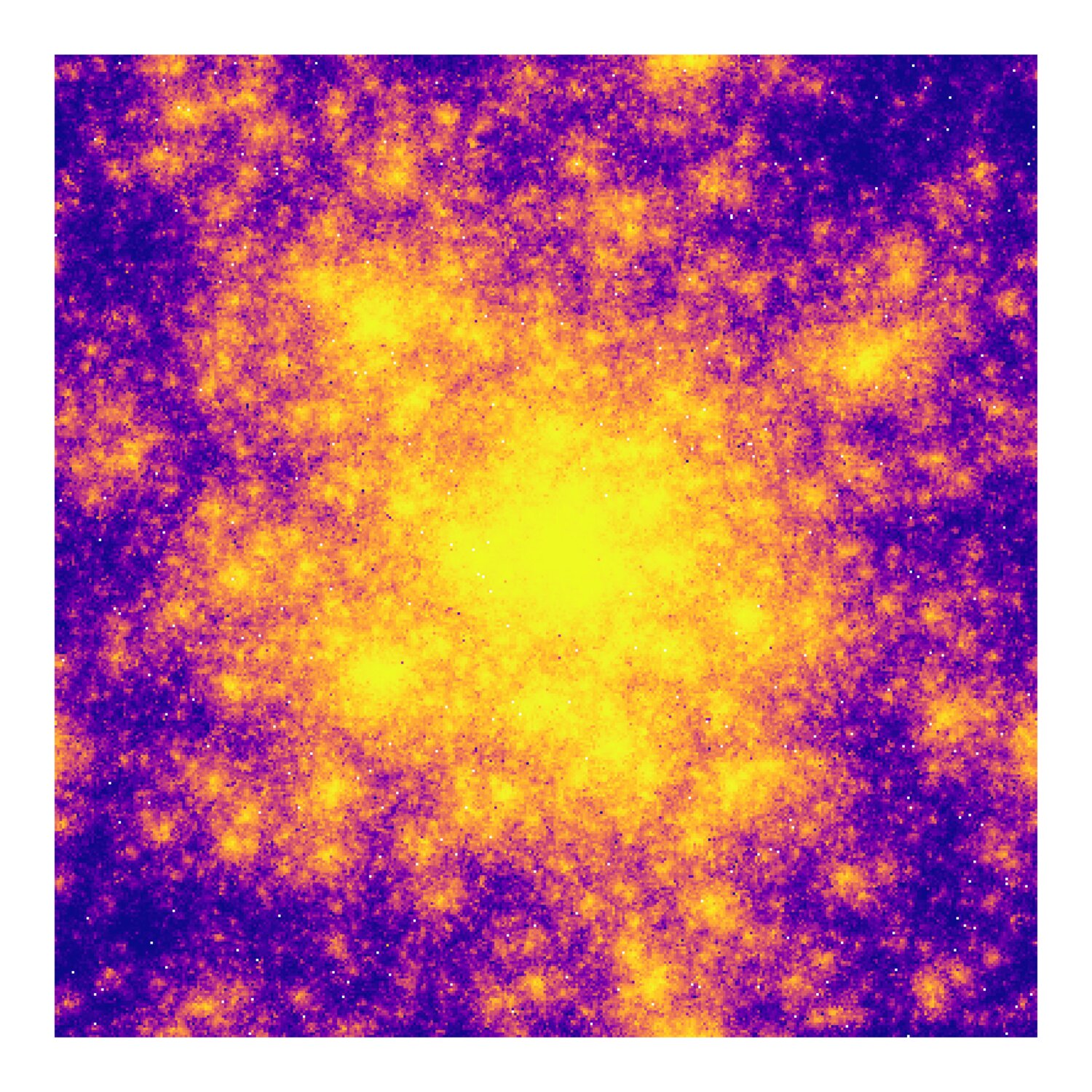}
     \end{subfigure}
     \hfill
     \begin{subfigure}[b]{0.24\textwidth}
         \centering
         \includegraphics[width=\textwidth, clip=true]{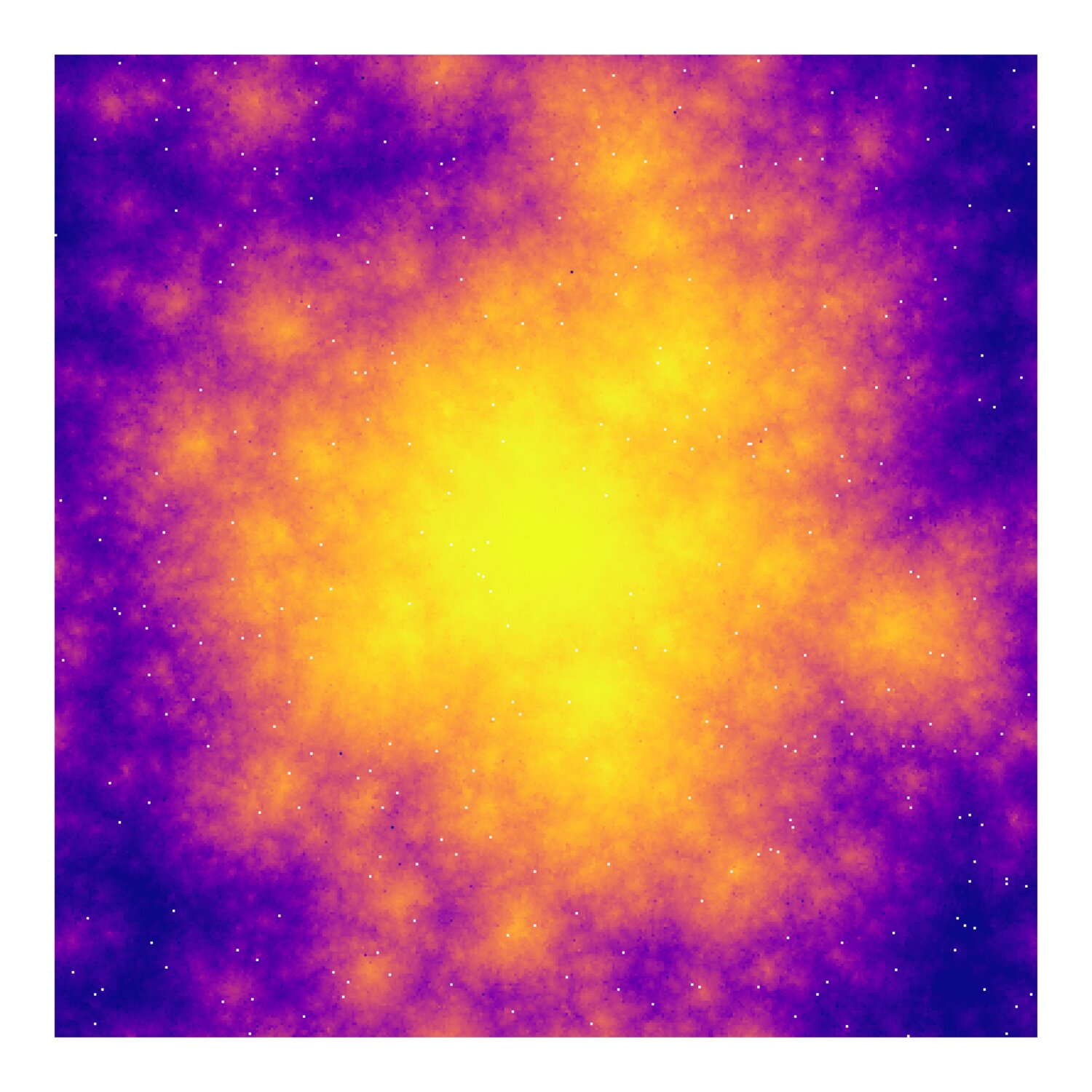}
     \end{subfigure}
     \hfill
     \begin{subfigure}[b]{0.24\textwidth}
         \centering
         \includegraphics[width=\textwidth, clip=true]{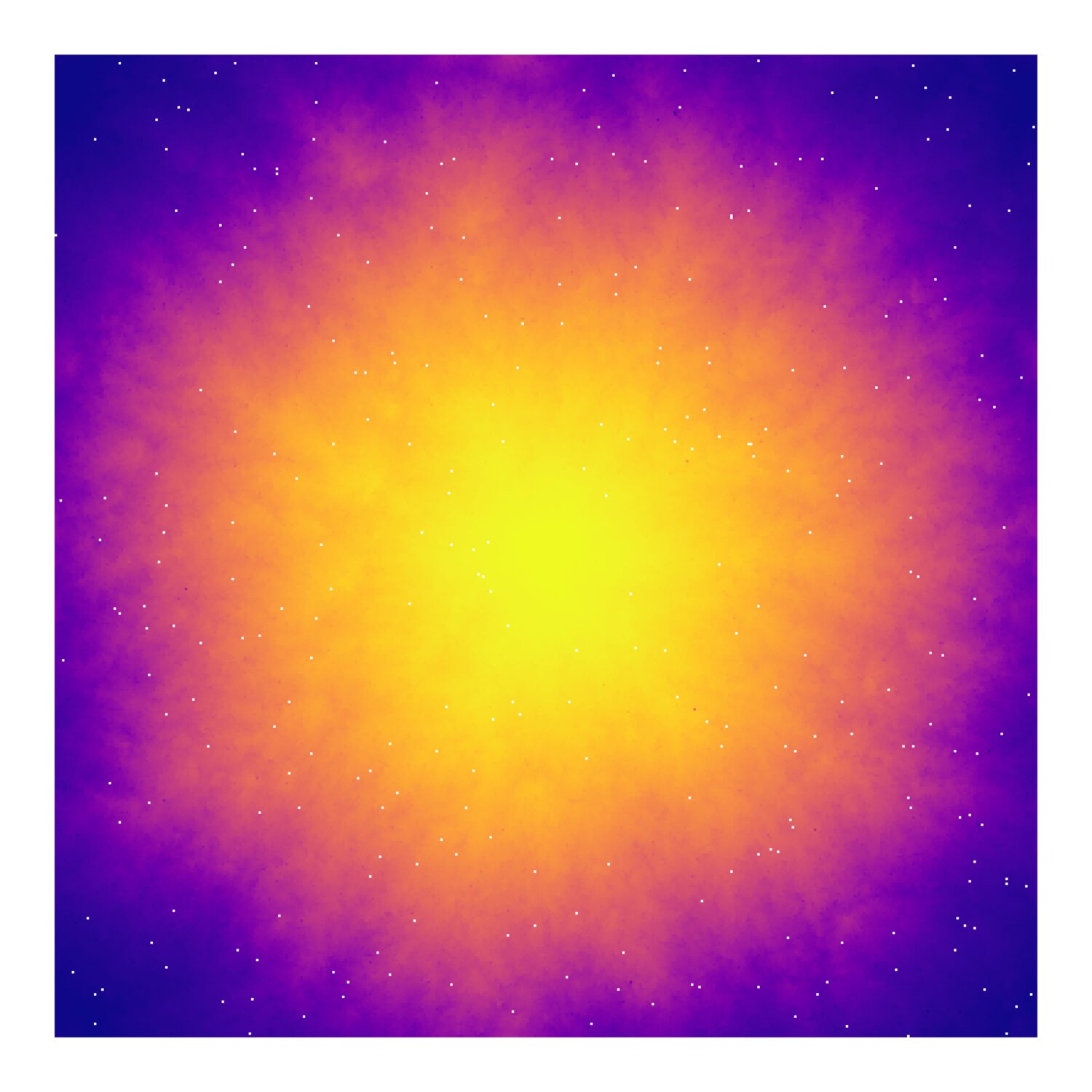}
     \end{subfigure}

     \begin{subfigure}[b]{0.24\textwidth}
         \centering
         \includegraphics[width=\textwidth, clip=true]{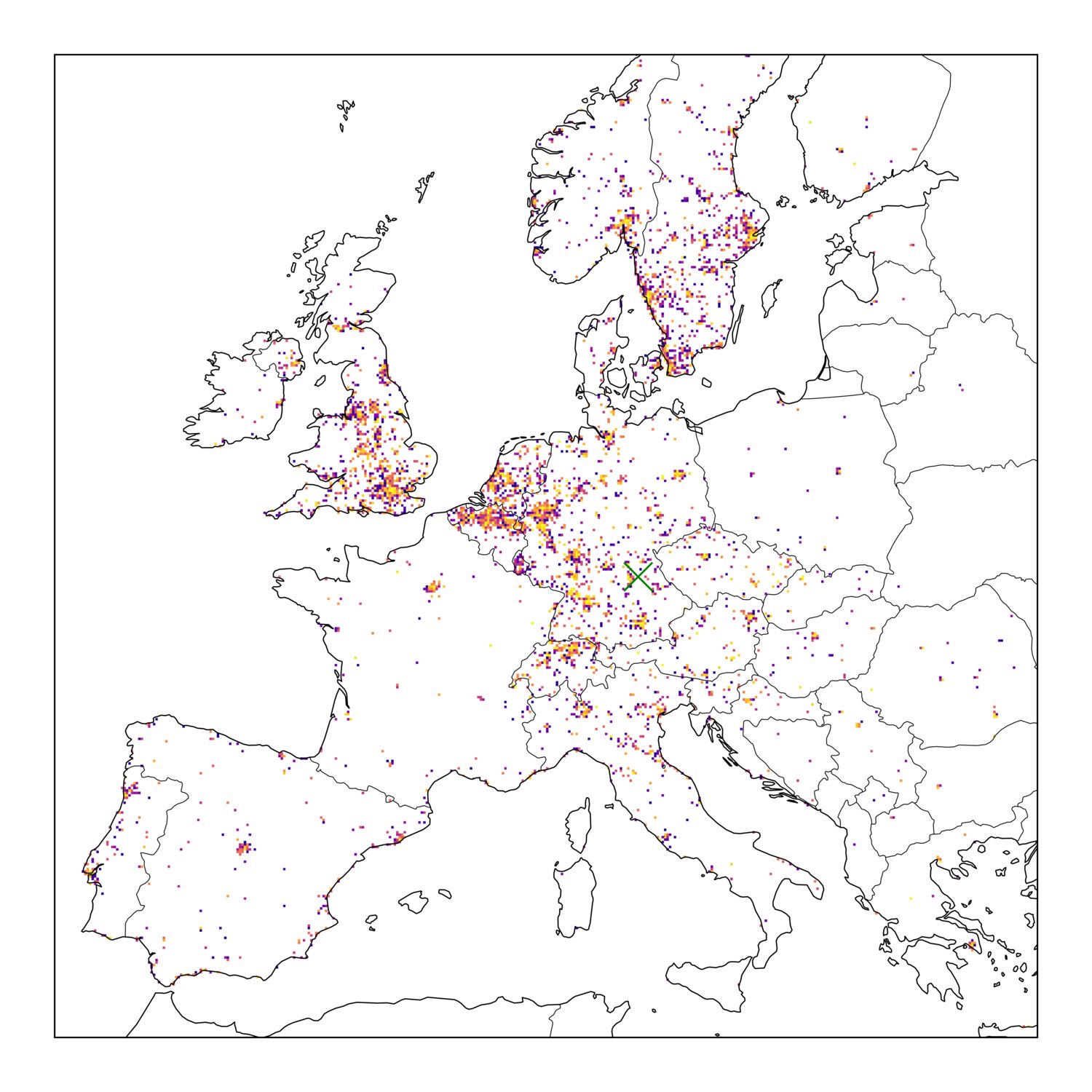}
         \subcaption{$\mu=\zeta=0$}
         \label{fig:heatmaps-explosive}
     \end{subfigure}
     \hfill
     \begin{subfigure}[b]{0.24\textwidth}
         \centering
         \includegraphics[width=\textwidth, clip=true]{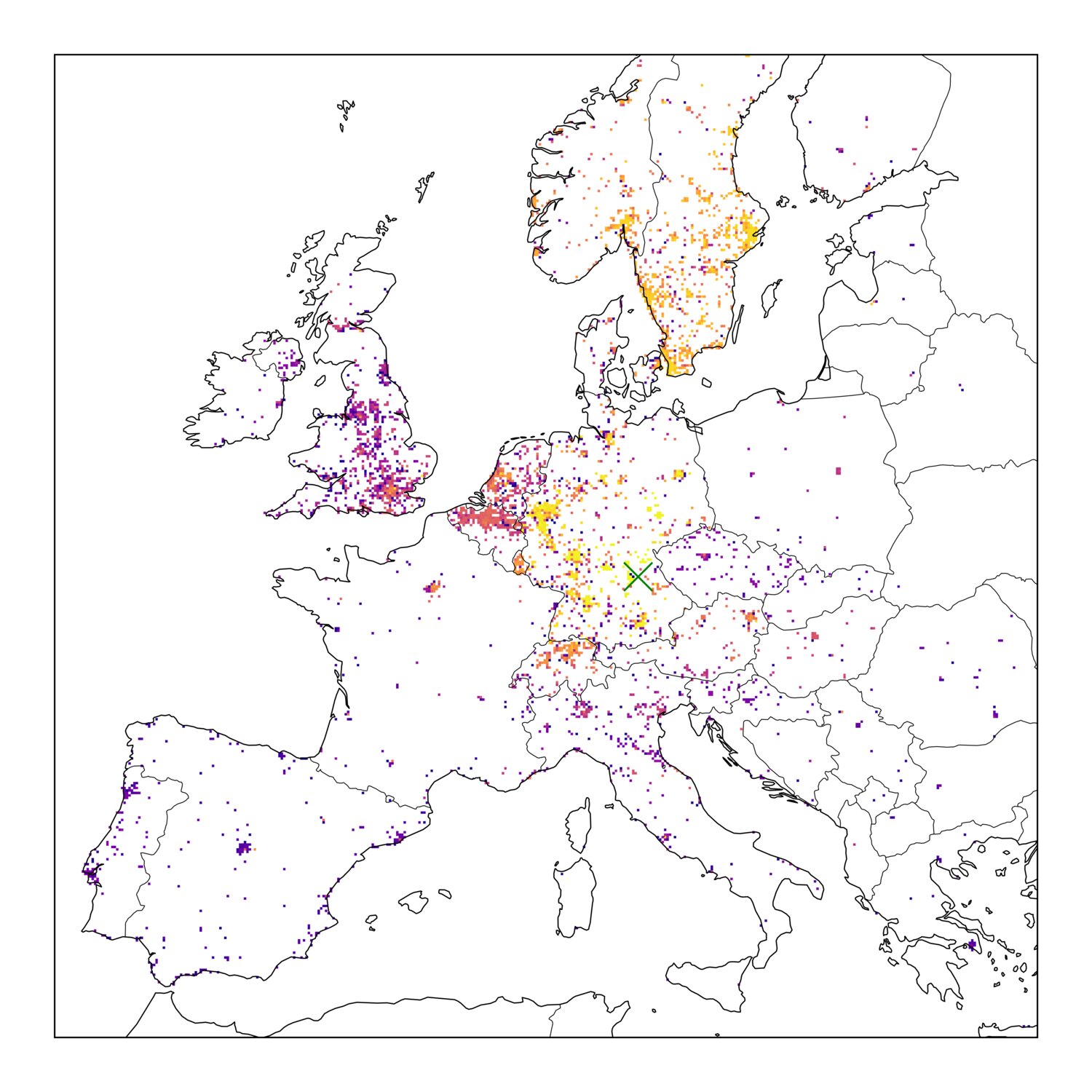}
         \subcaption{$\mu=\zeta=1$ }
         \label{fig:heatmaps-exponential}
     \end{subfigure}
     \hfill
     \begin{subfigure}[b]{0.24\textwidth}
         \centering
         \includegraphics[width=\textwidth, clip=true]{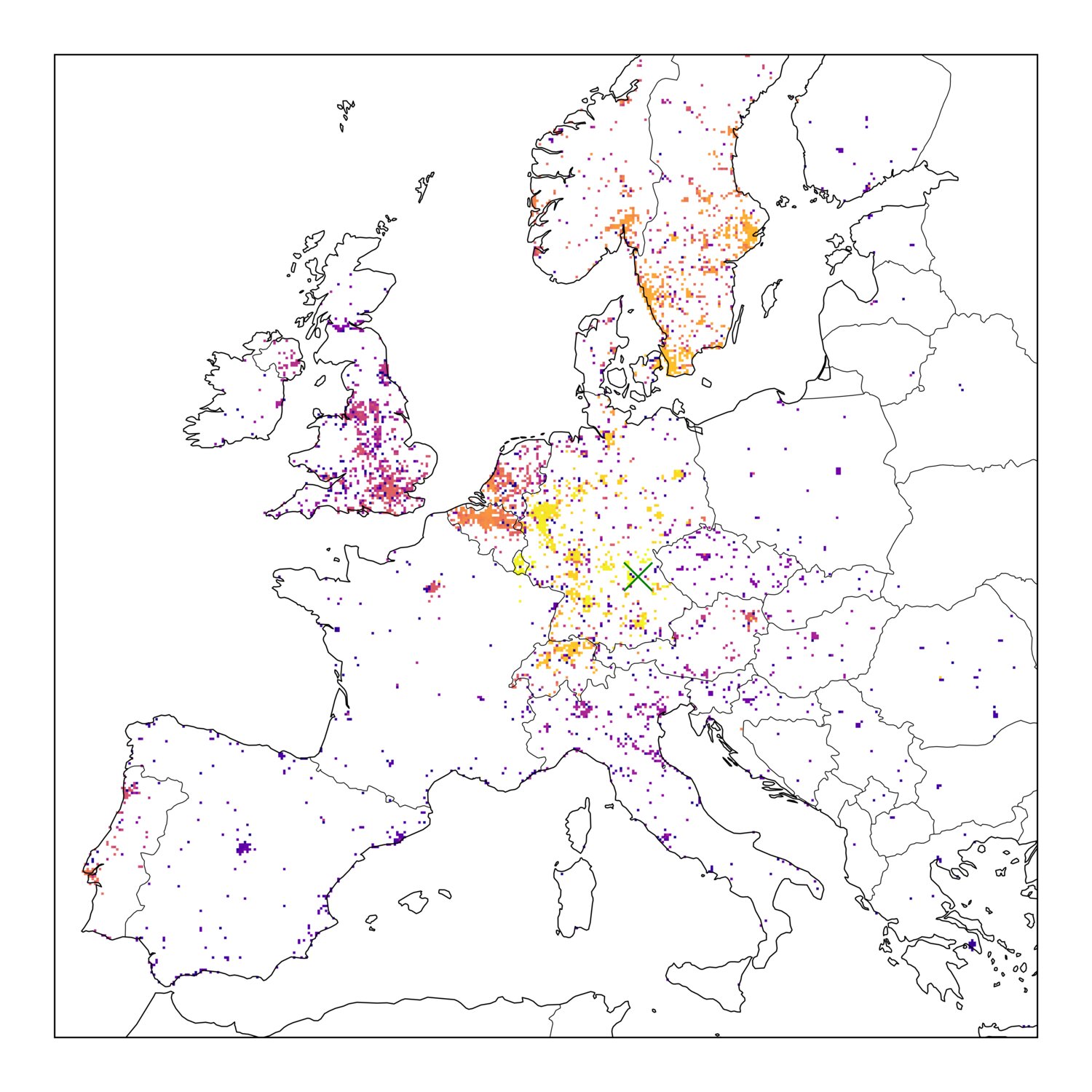}
         \subcaption{$\mu=1, \zeta=2$}
         \label{fig:heatmaps-polynomial}
     \end{subfigure}
     \hfill
     \begin{subfigure}[b]{0.24\textwidth}
         \centering
         \includegraphics[width=\textwidth, clip=true]{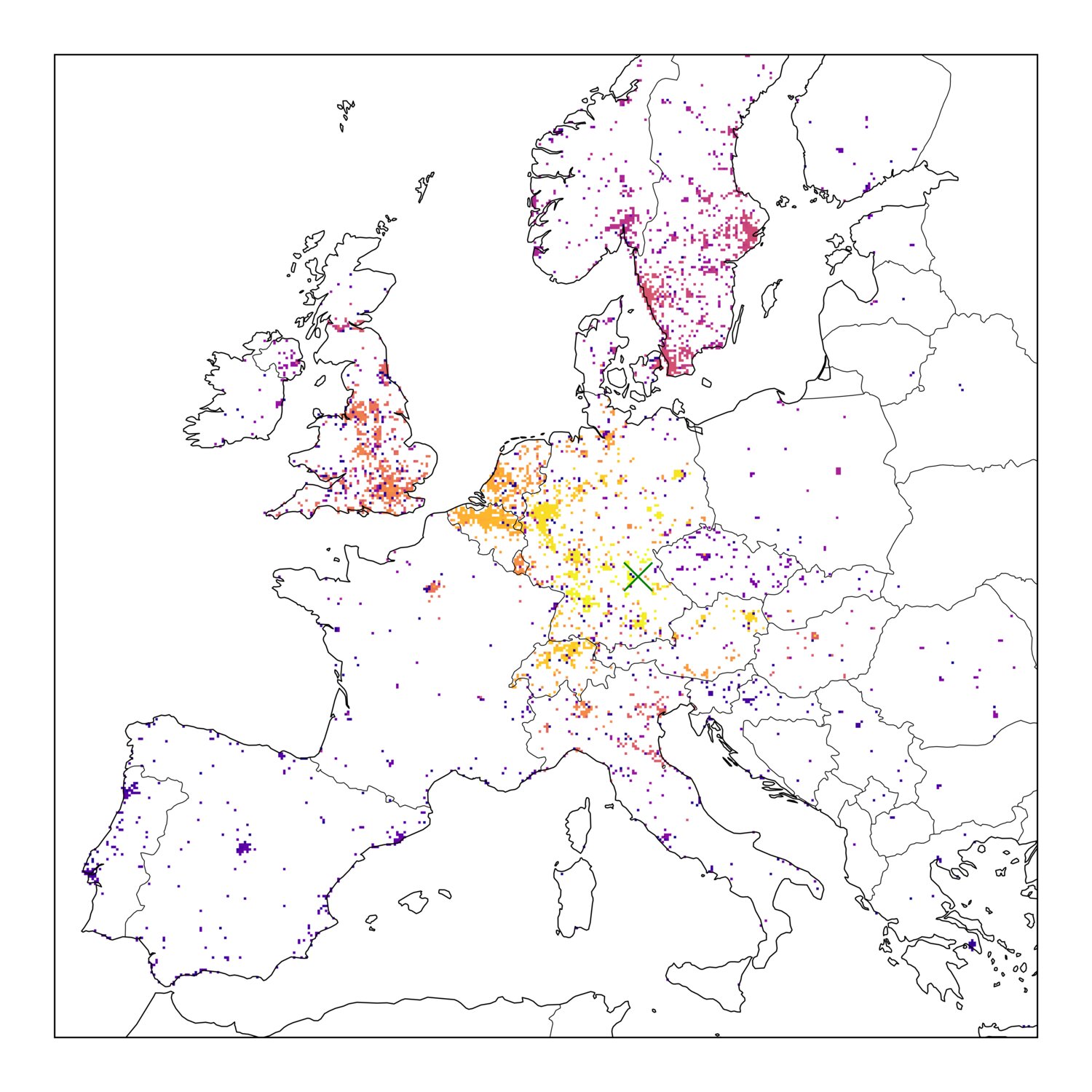}
         \subcaption{$\mu = 1, \zeta = 3$}
         \label{fig:heatmaps-geometric}
     \end{subfigure}
  \caption{
    Heatmaps of the epidemic spread: four different universality classes. Bottom row: Gowalla network with source node (indicated by `x', close to Nuremberg, Germany). Top row: synthetic GIRG network on $1m$ nodes with the torus geometry with $\tau=\tau_{\mathrm{Gow}}=2.78$ and $\alpha=\alpha_{\mathrm{Gow}}=1.2$. Source node  at the origin. Nodes are colored by their infection times: yellow infected first, then orange, then purple (see more details in SI). The heatmaps on GIRG networks use the same underlying graph. The random factors $E_{uv}\sim \mathrm{Exponential}(1)$ associated to each link are identical across all four plots (both for Gowalla and GIRG). We vary only the `penalty parameters' $\mu$ and $\zeta$ in \eqref{eq:transition-rate}: (a) $\mu\!=\!\zeta\!=\!0$ explosive growth (b) $\mu\!=\!\zeta\!=\!1$ quasi-exponential growth (c) $\mu\!=\!1, \zeta\!=\!2$ polynomial growth (d) $\mu\!=\!1, \zeta\!=\!3$ geometric growth.
    }
    \label{fig:heatmaps}
\end{figure*}
\emph{Epidemic curves on random reference networks.} To illustrate the importance of the underlying geometry on $I(t)$, we tested the growth also on random reference networks obtained from the Gowalla dataset, where we kept node-degrees and spatial locations of nodes but reshuffled the links using a configuration model network on the node set. After this transformation, almost all links become weak ties that span large distances, see SI. Due to this effect, increasing $\zeta$ in the transmission rates \eqref{eq:transition-rate} causes a general slow-down in the growth of the pandemic and a shift of $I(t)$ while maintaining the same shape. Increasing $\mu$ in \eqref{eq:transition-rate} still has an effect on $I(t)$. 
When $\mu=0$ the pandemic curve $I(t)$ grows explosively (extremely fast), corresponding to 
theoretical predictions \cite{komjathy2021penalising}. For higher values of $\mu=1$, (and $\zeta=1,2,3$) the growth becomes exponential, i.e., $I(t) \sim e^{C t}$ with the coefficient $C$ depending on $\mu$, see Figure S6, SI for the epidemic curves. The polynomial and pure geometric phases disappear as these were caused by geometry which is destroyed by the re-shuffling of links. This is in line with theoretical results of pandemic growth on configuration model networks, which can be either explosive or show exponential early growth \cite{adriaans2018weighted, fransson2023stochastic}.

\emph{Simulation results on synthetic GIRG networks.} 
Compared to the Gowalla dataset, the nodes of GIRG networks are more homogeneously scattered in space (Figure \ref{fig:girg-illustrations}). For our experiments we generated GIRGs with parameters that we estimated from the Gowalla dataset, namely degree power-law exponent $\tau=2.78$ and long-range parameter $\alpha=1.2$. As GIRG networks mimic the properties of spatially embedded complex networks, we expect similar behavior for $I(t)$ as for the Gowalla dataset. Indeed, as we vary $\mu$ and $\zeta$ analogously to our experiments on the Gowalla dataset, we see the same phases appearing on these synthetic networks. When $\mu=\zeta=0$, $I(t)$ is explosive, shows high variation in its kick-off phase, and there is almost no dependence between distance from source and infection time, see Figure \ref{fig:heatmaps-explosive}. As the penalisation increases to $\mu=\zeta=1$, the quasi-exponential phase has a pronounced concave log-linear plot of $I(t)$ on Figure \ref{fig:epidemic-curves-exponential} following theoretical predictions below in Theorem \ref{theorem:summary}. The polynomial and pure geometric phases with $\mu=1$, and respectively $\zeta=2$ and $\zeta=3$ in Figure \ref{fig:heatmaps-polynomial}-\ref{fig:heatmaps-geometric} show pronounced dependence of infection times on spatial distance from the source and linear curves of $I(t)$ on log-log plots. For the polynomial phase we estimate $2\psi=3.46$ which satisfies $\psi>1$ as predicted. Due to finite-size effects, this value is lower than what Theorem \ref{theorem:EXACT} predicts, (see SI). 

The pure geometric phase shows almost circular growth on Figure \ref{fig:heatmaps-geometric} and a corresponding estimated $I(t)\sim t^{2.47}$ which is close to the quadratic theoretical predictions. GIRG networks show a high degree of \emph{universality}: changing the parameters $\tau, \alpha$ changes the boundaries of the four growth phases with respect to $\mu, \zeta$, while for each scale-free GIRG network with $\tau\in (2,3)$ all four phases may occur for appropriately chosen values of $\mu,\zeta$ in \eqref{eq:transition-rate}. The visualizations and the epidemic curves in Figures \ref{fig:heatmaps}-\ref{fig:epidemic_curves} remain qualitatively similar within each growth phase. This leads us to theoretical investigations.

\subsection{Theoretical results} 
\label{sec:theoretical-results}

\begin{figure*}
    \centering

    \begin{subfigure}[b]{0.24\textwidth}
         \centering
         \includegraphics[width=\textwidth, clip=true]{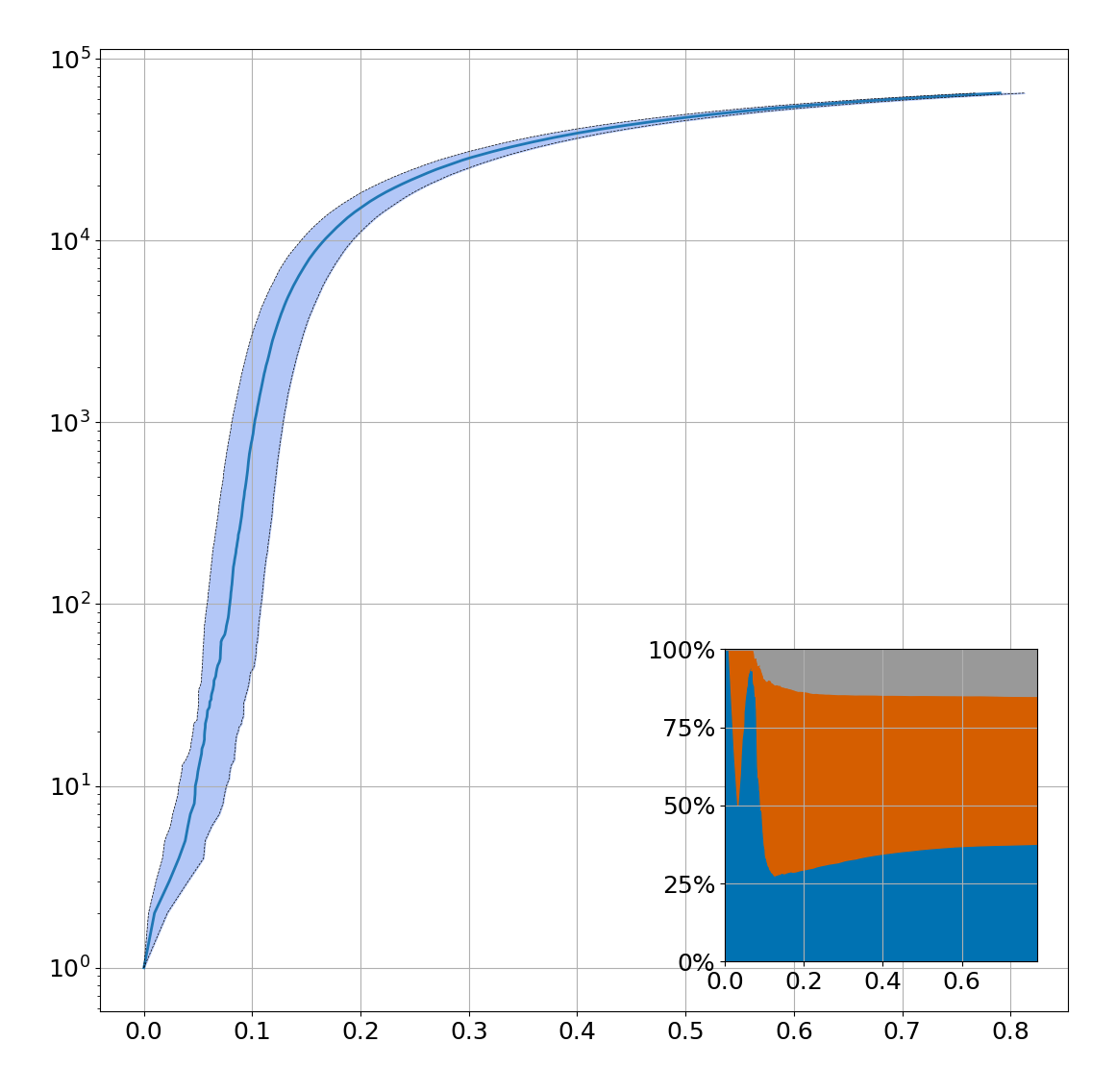}
     \end{subfigure}
     \hfill
     \begin{subfigure}[b]{0.24\textwidth}
         \centering
         \includegraphics[width=\textwidth, clip=true]{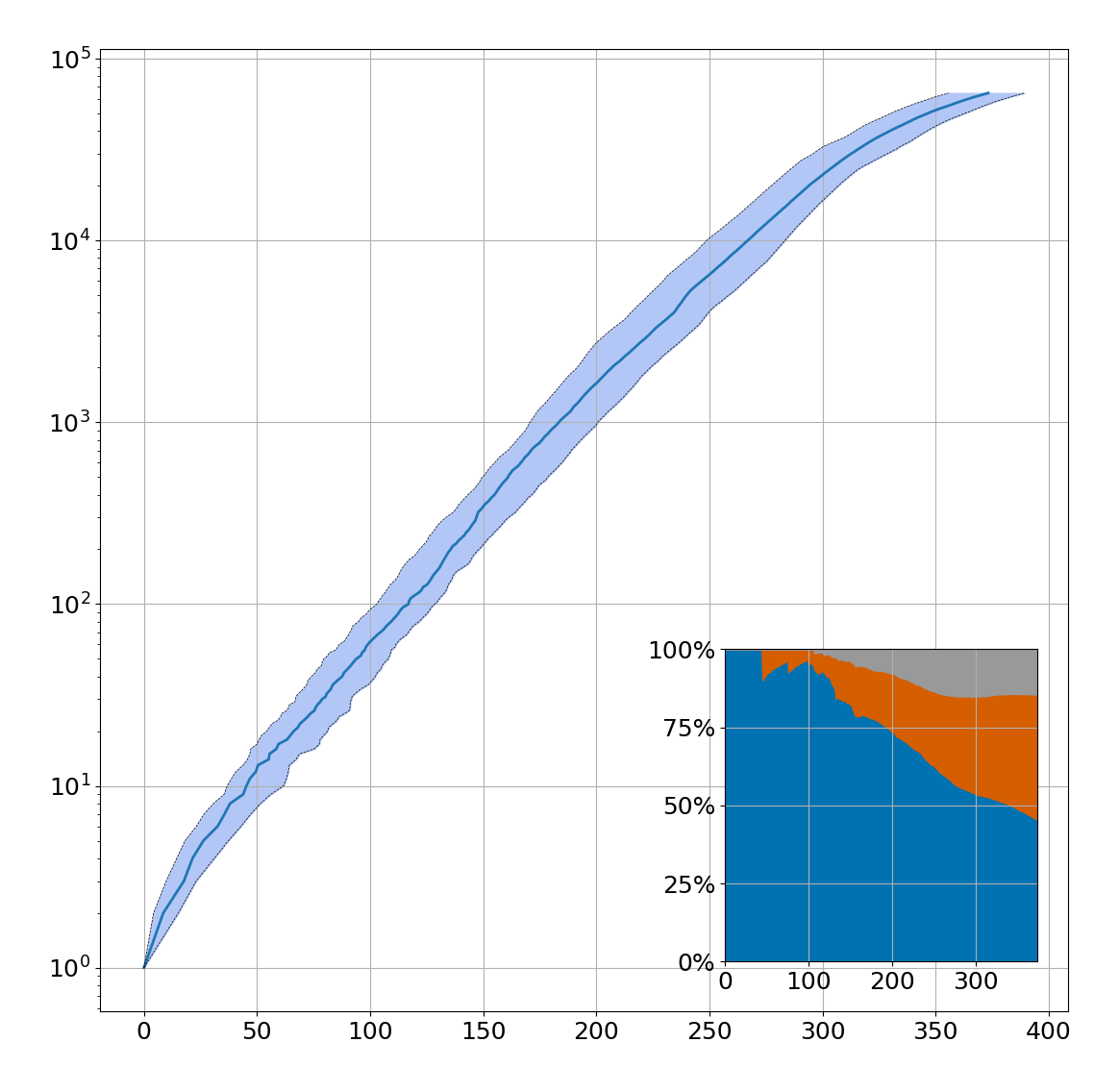}
     \end{subfigure}
     \hfill
     \begin{subfigure}[b]{0.24\textwidth}
         \centering
         \includegraphics[width=\textwidth, clip=true]{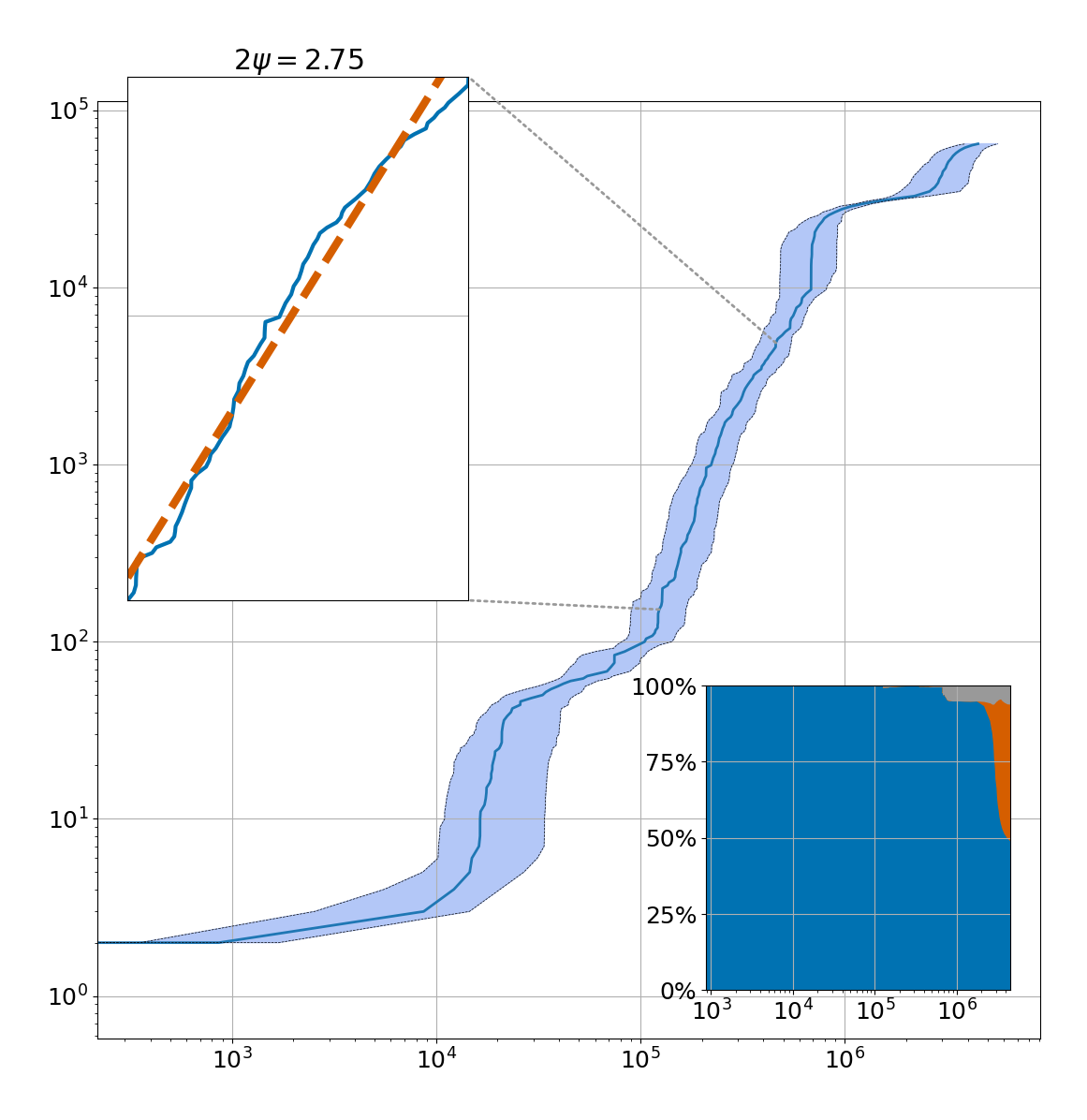}
     \end{subfigure}
     \hfill
     \begin{subfigure}[b]{0.24\textwidth}
         \centering
         \includegraphics[width=\textwidth, clip=true]{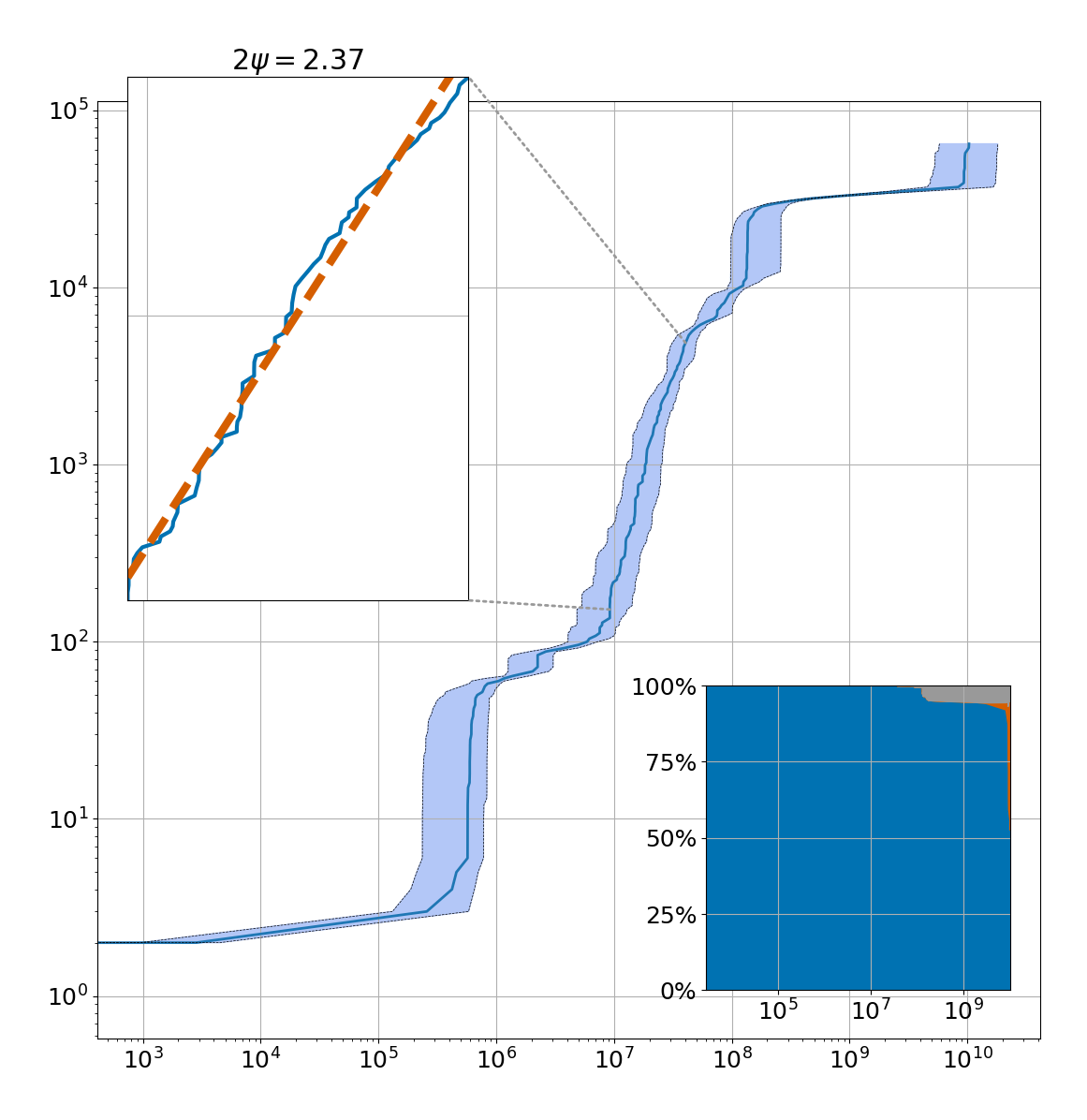}
     \end{subfigure}

     \begin{subfigure}[b]{0.24\textwidth}
         \centering
         \includegraphics[width=\textwidth, clip=true]{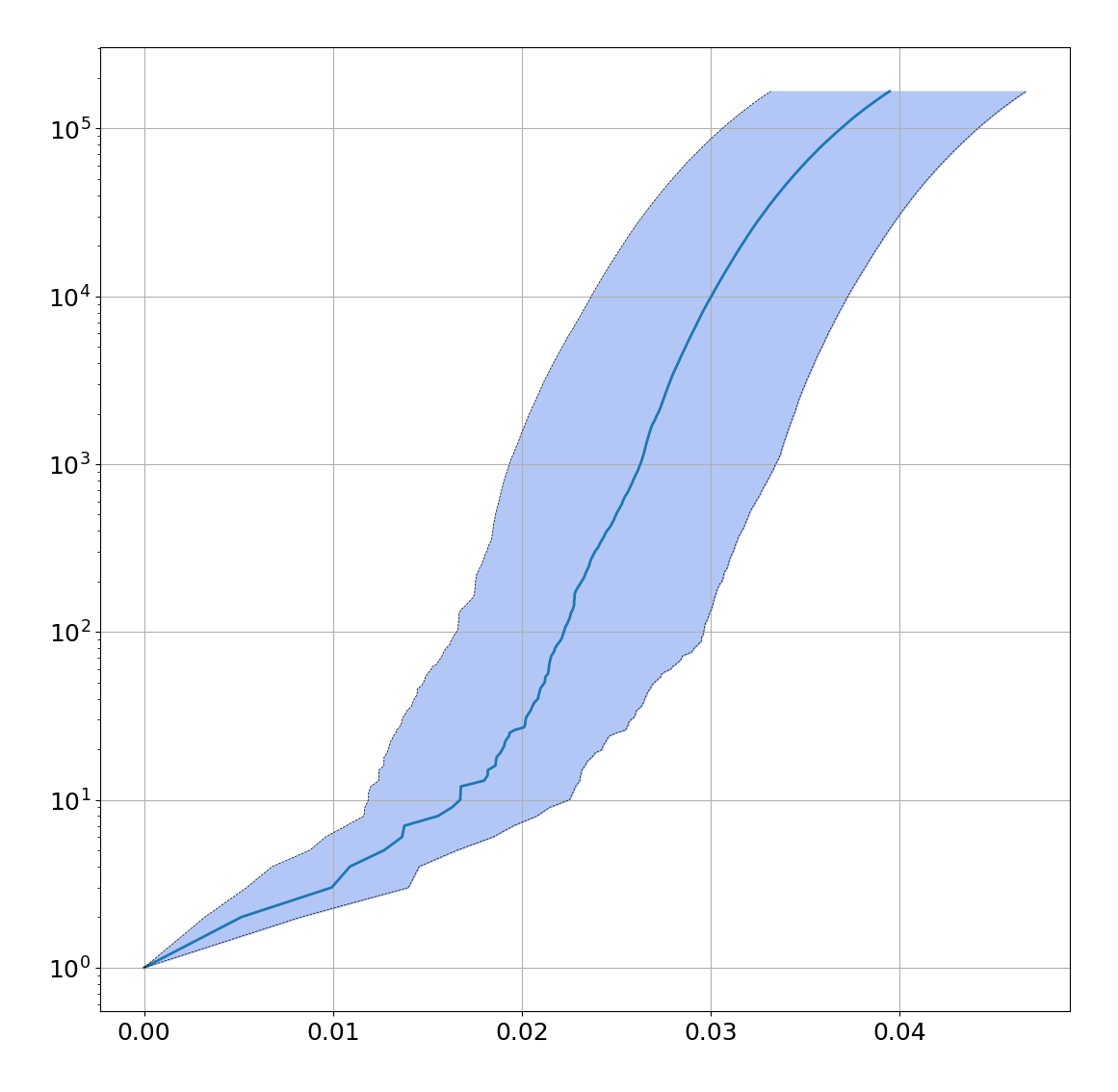}
         \subcaption{$\mu = \zeta = 0$ }
         \label{fig:epidemic-curves-explosive}
     \end{subfigure}
     \hfill
     \begin{subfigure}[b]{0.24\textwidth}
         \centering
         \includegraphics[width=\textwidth, clip=true]{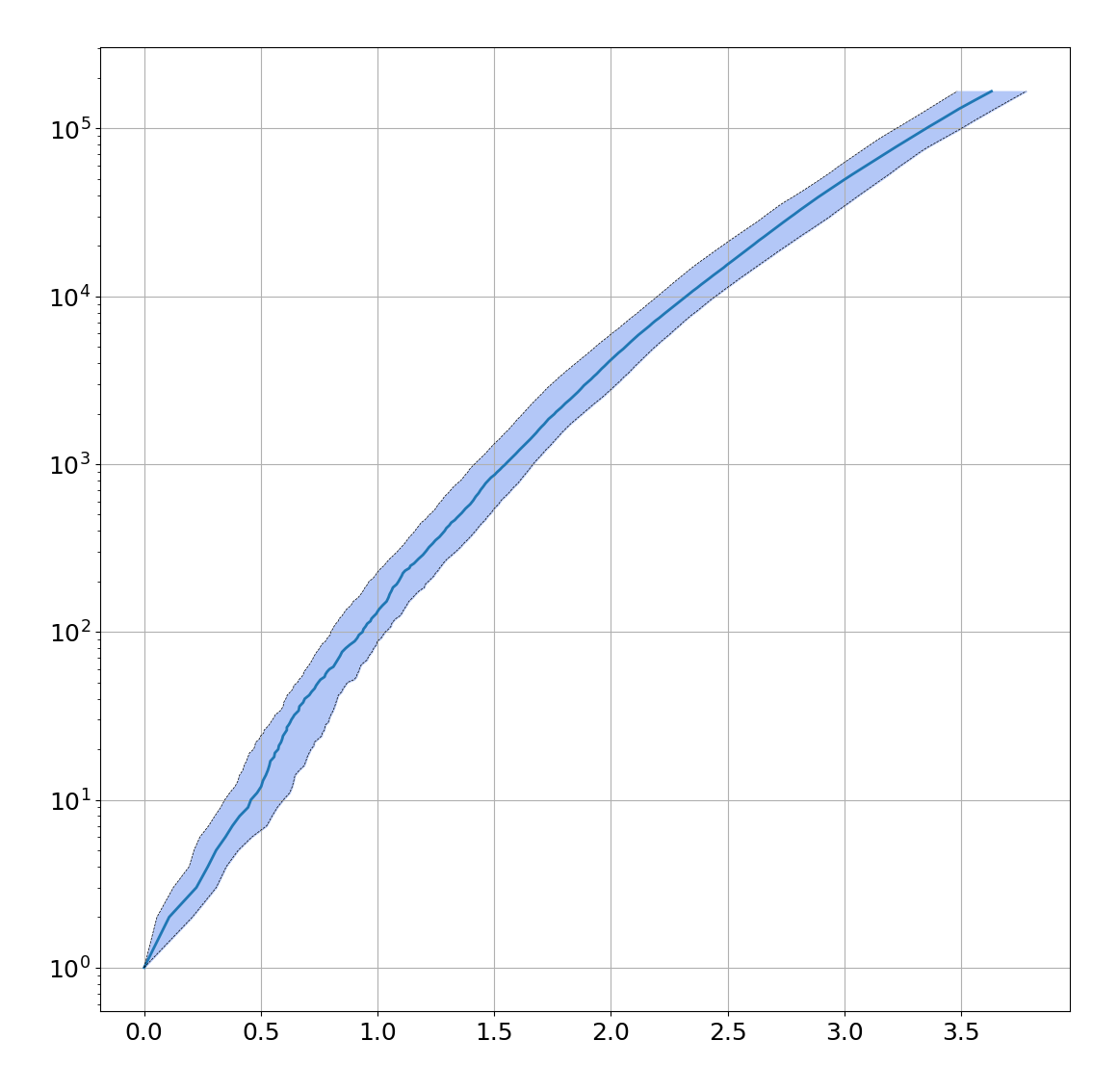}
         \subcaption{$\mu = \zeta = 1$}
         \label{fig:epidemic-curves-exponential}
     \end{subfigure}
     \hfill
     \begin{subfigure}[b]{0.24\textwidth}
         \centering
         \includegraphics[width=\textwidth, clip=true]{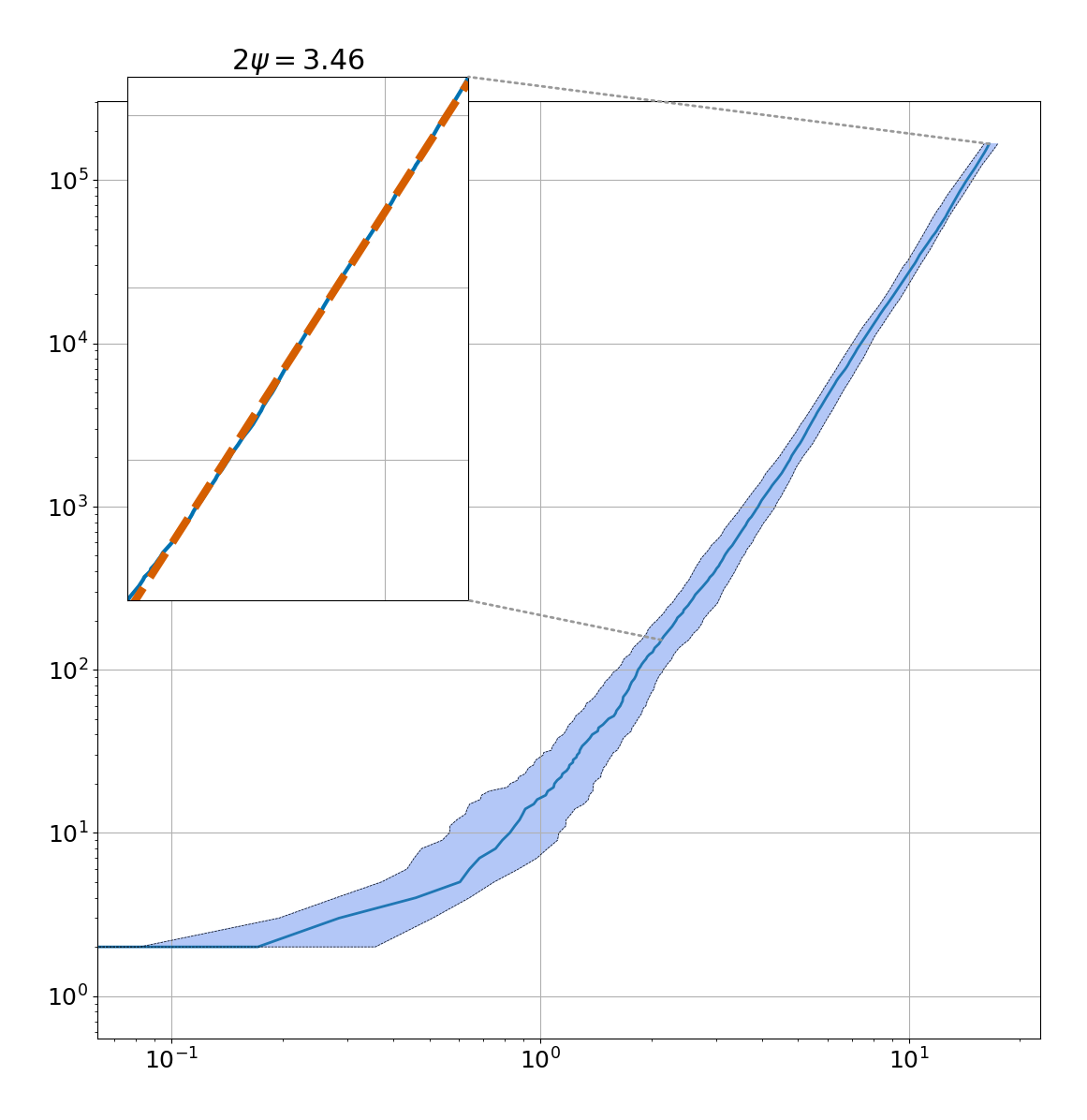}
         \subcaption{$\mu = 1, \zeta = 2$}
         \label{fig:epidemic-curves-polynomial}
     \end{subfigure}
     \hfill
     \begin{subfigure}[b]{0.24\textwidth}
         \centering
         \includegraphics[width=\textwidth, clip=true]{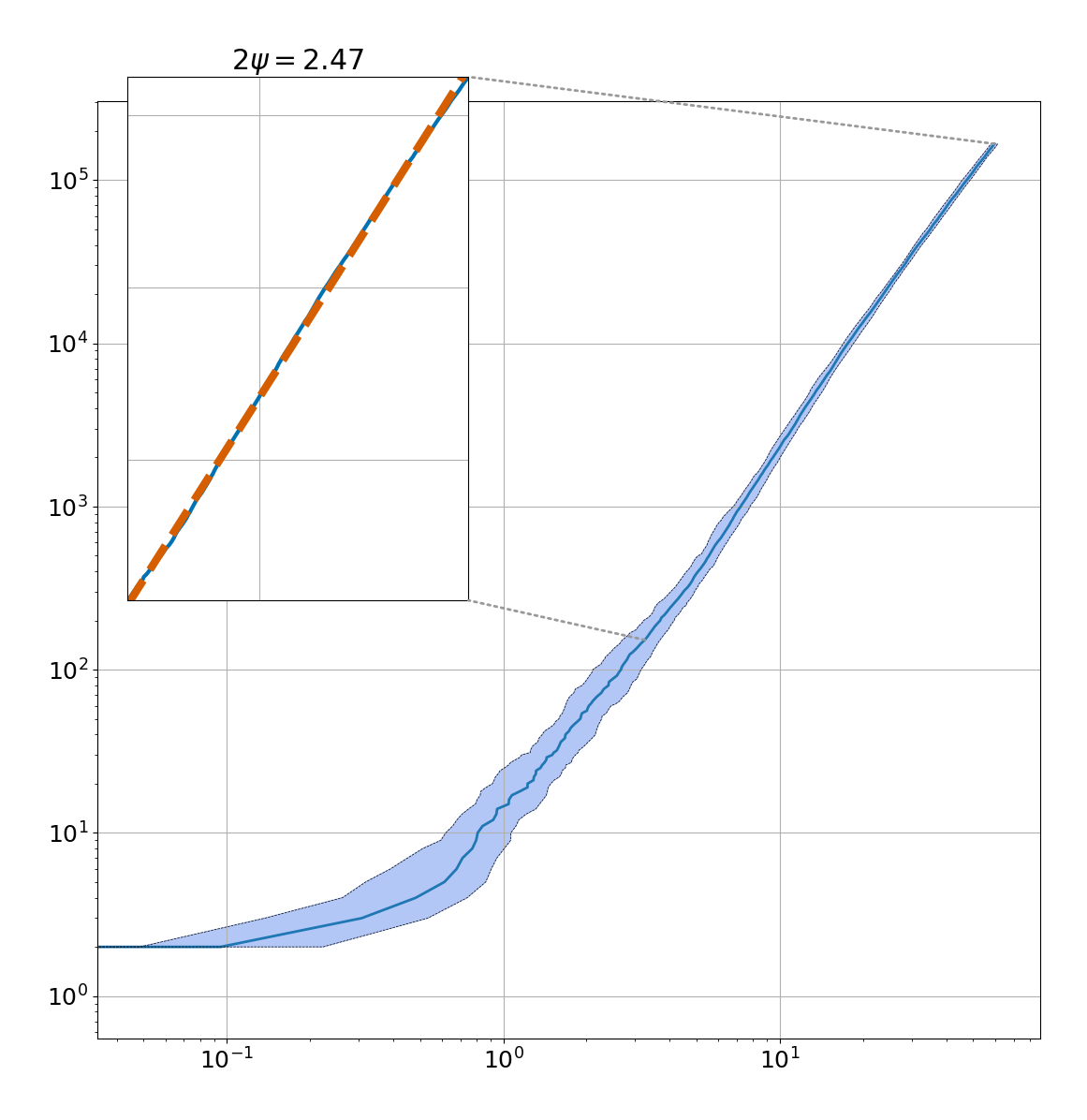}
         \subcaption{$\mu = 1, \zeta = 3$}
         \label{fig:epidemic-curves-geometric}
     \end{subfigure}
     
\caption{The four different phases for the epidemic curve $I(t)$ as a function of $t$. Top row: on the Gowalla dataset $N\sim95k$ nodes. Bottom row: on a synthetic GIRG network with $10^6$ nodes. We visualise $I(t)$ on the Gowalla dataset up to $10^{4.77}$ infected nodes, after which network saturation occurs. The middle curve is the median value of $55$ epidemic runs while the shaded area shows the $25\%-75\%$ quantiles of the runs. In each run only the random link factors $E_{uv}$ are re-sampled while the underlying graphs (i.e.\ the Gowalla network on the top row and the GIRG on the bottom row) stay the same. The $y$-axis is on a log-scale with base $10$. The $x$-axis is on a linear scale on figures (a)-(b), and on a log-scale with base $10$ on figures (c)-(d). Theorem \ref{theorem:summary} predicts that the epidemic on fig.~(a) grows explosively (i.e.\ saturates independently of the network size), on fig.~(b) it grows quasi-exponentially, on fig.~(c) it grows polynomially $\sim t^{2\psi}$ with $\psi>1$ and on fig.~(d) it grows geometrically $\sim t^{2\psi}$ with $\psi=1$. On fig.~(b), $\log I(t)\sim t^\varphi$ for $\varphi <1$, visible as a concave curve on a log-linear scale.   
For the polynomial and geometric phases on figures (c)-(d), the top left inlays show a linear regression (in red) from $I(t) = 10^{2.17}$ to $I(t) = 10^{3.70}$ with estimated slopes $2 \psi = 2.75$ (c) and $2\psi = 2.37$ (d) on the Gowalla dataset and $2\psi = 3.46$ (c) and $2\psi= 2.47$ (d) on synthetic GIRG networks. 
Top row, inlays: the proportions of infected European (blue), US (orange), and other (grey) nodes; with $t$ on a log-scale for the Gowalla dataset.
}
\label{fig:epidemic_curves}
\end{figure*}

The SI-dynamics that we use translates to the mathematical framework of \emph{first passage percolation}. In first passage percolation, each link $(u,v)$ draws an exponential random variable $Y_{uv}$ with rate $r(u,v)$. A path (a consecutive set of links) between the initially infected node $u_0$ and a node $v$ of the form $\pi=(u_0, u_1, u_2,\dots, u_k=v)$ possesses the path-transmission time 
\begin{equation} \label{eq:path-infection-time}
T_\pi:= Y_{u_0u_1} + Y_{u_1u_2}+ \dots Y_{u_{k-1}v}.
\end{equation}
Node $v$ gets infected at time $T_{\pi}$ if it is still susceptible, equivalently, if $\pi$ carries the shortest path-transmission time among all paths connecting $u_0, v$. The infection time $T(u_0,v)$ satisfies 
\begin{equation}\label{eq:tau-v}
\begin{aligned}
T(u_0, v)= \min_{\pi: \text{path from $u_0$ to $v$}} T_{\pi}.
\end{aligned}
\end{equation}
$T(u_0,v)$ can be thought of as the length of the \emph{shortest path} connecting $u_0,v$ with respect to the random link-weights. We can express the early growth of the epidemic curve as
\[
I(t) = \#\big\{ v: T(u_0, v)\le t\big\}.
\]
The shortest-path representation  reduces the problem of studying a randomly growing infection processes to studying shortest paths in a link-weighted network. For symmetric link-transmission rates in \eqref{eq:transition-rate} (i.e., $\mu=\nu$), the direction of the infection on the path can be ignored. In our theoretical results we shall establish bounds on the length of the shortest path between any far away pair of nodes $u,v$ as a function of the spatial distance $\|u-v\|$. Illustratively, suppose we show that $T(u_0,v)\sim \|v-u_0\|^{1/\psi}$ for some number $\psi\ge 1$. This then leads to the size estimate
\begin{equation}\label{eq:poly-growth}
I(t) \sim \#\big\{ v: \|v-u_0\|^{1/\psi}\le t\big\} \sim t^{d\psi},
\end{equation}
and we obtain polynomial early growth. If $\psi=1$ we call this \emph{pure geometric} growth. 
Similarly, if the shortest path $T(u_0,v)\sim (\log \|v-u_0\|)^{1/\varphi}$ for some exponent $\varphi > 0$, then 
\begin{equation}\label{eq:stretched-exp}
I(t) \sim \#\big\{ v: (\log \|v-u_0\|)^{1/\varphi}\le t\big\} \sim \exp( d t^{\varphi}),
\end{equation}
and we obtain quasi-exponential growth curves. We prove that in synthetic GIRG networks $\varphi\le 1$, while $\varphi>1$ is not (robustly) realisable, instead, the early growth jumps into yet another regime. This phase is a rather counter-intuitive possibility: the transmission time $T(u_0,v)$ does \emph{not} depend on the spatial distance $\|v-u_0\|$, but satisfies that for any positive value of $t$,  $T(u_0, v)\le t$ happens with positive probability $q_t$. Then, in a finite network with $n\gg 1$ nodes, (regardless on how large $n$ is), we observe 
\begin{equation}\label{eq:explosive-finite}
    I(t) \sim \#\big\{ v: T(u_0, v)\le t\big\} \sim q_{t-S} n,
\end{equation}
for all $t>S$ where $S$ is a random shift in time corresponding to a random kick-off time that again does not depend on $n$, but on the local neighbourhood of the source node, see \cite{komjathy2020explosion}. An epidemic curve still exists and depends on $t$ in finite networks, but the `saturation time' to reach say $99\%$ of the final size does not depend on the network size. Individual runs of the pandemic show different random shifts $S$, evidenced by less concentration of $I(t)$ on Figure \ref{fig:epidemic-curves-explosive} in this phase.   We call this phase \emph{explosive early growth}.
While such an extremely fast spreading seems unlikely for biological diseases, it provides a plausible explanation for why online viruses can sweep over the whole world with remarkable rapidity. An illustrative case is the Wannacry ransomware which reached over $200,000$ computers in over 150 countries in less than a day \cite{jonssoninformation}.

 \textbf{Mathematical formulation.} In synthetic GIRG networks, each node is equipped with a random node-weight $W_u$ that, together with the spatial distance, governs the link probabilities between nodes. A node with a given weight $W_u$ shall have, in expectation, an (explicit) constant $c$ times $W_u$ many links in total. For the readability of proofs we change  the link-transmission rates in \eqref{eq:transition-rate} to depend on node weights rather than on actual degrees. This change only eases the technicalities, as the degree of a node $\deg(u)$ shows strong concentration around its expectation $cW_u$~\cite{bringmann2019geometric}. Our theoretical findings are summarized in the following theorem.

\begin{theorem}\label{theorem:summary}
Consider an SI epidemic with transmission rates $(W_uW_v)^{-\mu}\|u-v\|^{-\zeta}$ on any link connecting nodes $u,v$ on a synthetic GIRG network with degree power law $\tau>2$ and long-range parameter $\alpha>1$ of dimension $d\ge 1$. Then the early epidemic growth curve $I(t)$ in \eqref{eq:It} satisfies
\begin{enumerate}
\setlength\itemsep{0em}  
    \item[(i)] $I(t)$ grows explosively if $\mu + \zeta/d < (3-\tau)/2$.
    \item[(ii)] $I(t)$ grows quasi-exponentially $\sim \exp( d t^{\varphi})$ for some $\varphi\le 1$ if $\mu+\zeta/d > (3-\tau)/2$, and
    at least one of the following holds additionally:
    \begin{enumerate}
    \item[a)] $\zeta/d < 2-\alpha$;
    \item[b)] $\mu+\zeta/d < 3-\tau$. 
    \end{enumerate}
    \item[(iii)] $I(t)$ grows polynomially $\sim t^{d\psi}$ for some $\psi>1$ if $\mu+\zeta/d > 3-\tau$ and $\zeta/d > 2-\alpha$ both hold, and
    at least one of the following holds additionally: 
    \begin{enumerate}
    \item[a)] $\alpha<2$ and $\zeta/d < 2-\alpha +1/d$; 
    \item[b)] $\alpha>2$, $\tau<3$ and  $\mu < (1-\zeta) \cdot (\frac{1}{d}+ \frac{3-\tau}{d(\alpha-2)})$;
    \item[c)] $\tau<3$ and $\mu+\zeta/d < 3-\tau+1/d$.

    \end{enumerate}
    \item[(iv)] $I(t)$ grows purely geometrically as $\sim t^{d}$ when none of the conditions above are satisfied. In particular, for all $\mu\ge 0, \zeta\ge 0$ when $\tau>3, \alpha>2$. 
\end{enumerate}
For a proof see the Supplementary Information.
\end{theorem}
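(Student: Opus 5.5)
The plan is to translate each regime into a statement about the first-passage distance $T(u_0,v)$ as a function of $r=\|u_0-v\|$. Once that is done, the conversions \eqref{eq:poly-growth}, \eqref{eq:stretched-exp} and \eqref{eq:explosive-finite} turn distance bounds into growth bounds for $I(t)$. Since the statement summarises, I would prove each phase by combining known results on heterogeneous long-range first passage percolation on GIRGs with a multiscale argument. The transmission time on a link $uv$ is $\mathrm{Exp}(1)\cdot (W_uW_v)^{\mu}\|u-v\|^{\zeta}$, which is exactly the kernel-based setting of \cite{komjathy2020explosion}. Throughout, the key quantity is the cost of a link between two nodes at distance $\rho$ with weights $w_1,w_2$, compared with the probability that such a link exists.

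\emph{Explosive phase (i).} I would build an infinite path of hubs. From a node of weight $w$, look for a neighbour of weight about $w^{1/(\tau-2)-\eps}$ at distance about $w^{\gamma/d}$, with $\gamma$ slightly above $1/(\tau-2)$. Such a neighbour exists with probability tending to $1$ along the sequence, by GIRG connectivity for the high-weight pairs. The cost of the step is about $w^{2\mu/(\tau-2)+\zeta\gamma/d}$ times an exponential factor, and the weights grow doubly exponentially. It then suffices to show that these costs are summable: the exponent should come out negative exactly when $\mu+\zeta/d<(3-\tau)/2$, after optimising the weight jump and the distance, possibly with a two-step hub-to-hub path through a low-weight connector. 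Summability gives an infinite path of finite total length, i.e.\ explosion. For the converse, in all other phases, I would use a first-moment count over paths of doubly-exponentially growing weights to show non-explosion.

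\emph{Phases (ii)--(iv): upper bounds.} For (ii) I would use a hierarchical (renormalisation) construction at scale $r$. Connect $u_0$ and $v$ by first going to a hub or long edge of scale $r$, namely a pair of weight-$r^{\kappa}$ nodes linked by an edge of length $\asymp r$. Then recurse on two subproblems at scale $r^{\delta}$ with $\delta<1$. With cost $C(r)\le r^{c}+2C(r^{\delta})$, a polylogarithmic bound $(\log r)^{1/\varphi}$ follows whenever the top-level edge cost $r^{c}$ can be taken with $c\le 0$ up to logs. Condition (a) ($\zeta/d<2-\alpha$) handles the case where long edges between low-weight nodes are abundant enough. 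Condition (b) handles the case where hubs supply them. For (iii) the same recursion is used, but the top-level edge costs $r^{c}$ with $c>0$ and $c$ below $1$ in the scaled sense; this gives $C(r)\asymp r^{1/\psi}$ with $\psi>1$. The cases (a)--(c) correspond to the three ways (pure long edges, hub-assisted for $\alpha>2$, hub-to-hub) to make the exponent strictly less than the nearest-neighbour exponent. The geometric bound $T\lesssim r$ in (iv) comes from a standard nearest-neighbour chain argument, using renormalised good boxes of bounded weight.

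\emph{Lower bounds.} This is the main obstacle: showing, for instance in (iv), that $T(u_0,v)\gtrsim r$, and in (ii) that $T$ is not $O(\log r)$-smaller. I would use a multiscale first-moment argument. Define a box of scale $R$ as \emph{bad} if it contains a "cheap jump", meaning an edge or hub-mediated short-cut whose cost is smaller than $R^{1/\psi}$ relative to its span. Bound the probability that a box is bad via the exponents appearing in the conditions; if every condition for faster growth fails, this probability decays polynomially in $R$. A path from $u_0$ to $v$ must then cross a chain of good boxes at every scale, and the good crossings accumulate cost. Controlling the dependency between hubs across scales, which forces me to handle weight and distance simultaneously, is where I expect most of the work. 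This is also where the delicate boundary (b) of (iii) is determined, by optimising over paths that alternate long edges and medium-weight hubs.
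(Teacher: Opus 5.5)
Your architecture follows the paper: a hierarchical bridge construction for the upper bounds in (ii)--(iii), renormalised percolation of bounded-weight vertices for (iv), a multiscale first-moment lower bound counting long cheap edges, and a hub ladder for (i). The genuine gap is in (i).

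\textbf{The explosion step fails as written.} You jump from weight $w$ to weight $w^{1/(\tau-2)-\varepsilon}$, which is essentially the largest weight a weight-$w$ vertex can reach. So only $O(1)$ candidate edges exist. The step cost is then a single $\mathrm{Exp}(1)$ variable times $(ww')^{\mu}\rho^{\zeta}\ge 1$, which does not decay in $w$. Already at $\mu=\zeta=0$ you are summing infinitely many order-one exponentials. No optimisation of the jump or the distance alone can make the exponent negative, so the costs are not summable.

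Your radius is also off. To reach weight $w^{a}$ with constant probability you need $\rho^d\approx w^{1+a}$. In a ball of volume $w^{\gamma}$ with $\gamma\approx 1/(\tau-2)$ there are typically no vertices of weight $w^{1/(\tau-2)-\varepsilon}$ at all.

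The missing idea is to trade part of the weight gain for \emph{selection among many parallel edges}:
\begin{itemize}
\item Jump only to weight $w^{a}$ with $1<a<1/(\tau-2)$, searching within distance $\rho=w^{(1+a)/d}$.
\item There are $N\asymp w^{1+a}\cdot w^{-a(\tau-1)}=w^{1-a(\tau-2)}\to\infty$ such vertices, each adjacent to the current vertex with probability bounded below.
\item Take the cheapest of these edges. Its exponential factor is a minimum of $N$ independent $\mathrm{Exp}(1)$ variables, of order $1/N$, while the prefactor is about $w^{(1+a)(\mu+\zeta/d)}$.
\item The step therefore costs about $w^{(1+a)(\mu+\zeta/d)-(1-a(\tau-2))}$.
\end{itemize}
This exponent is increasing in $a$ and equals $2(\mu+\zeta/d)-(3-\tau)$ at $a=1$. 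So it is negative for some $a>1$ if and only if $\mu+\zeta/d<(3-\tau)/2$, which is exactly the threshold. Along $w_i=K^{a^i}$ this gives summable costs $K^{-\delta a^i}$ and summable failure probabilities $e^{-cN}$, as in the paper's sketch. No two-hop connectors are needed.

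\textbf{Two smaller points.}

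First, in the hub and hybrid regimes of (ii)--(iii), your recursive subproblems start at the endpoints of the long edge. These are vertices of weight $r^{\kappa}$, so the path must enter and leave them through edges penalised by $r^{\kappa\mu}$. Take $\zeta=0$, $\alpha>2$ and $\mu$ slightly above $3-\tau$, where $\kappa=d/2$. A typical such edge costs $r^{d\mu/2}\gg r^{\eta_3}$, so a naive entry into the hub dominates the total. The paper avoids this by building 3-edge bridges $y_0b_0b_vy_v$ in which the hubs $b_0,b_v$ are joined by cheap edges to low-weight vertices $y_0,y_v$. It then recurses from $y_0,y_v$, not from the hubs.

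Second, your bounded-weight nearest-neighbour chain for (iv) only works for $d\ge 2$. In $d=1$, bounded-weight vertices do not percolate: for example take $\tau<3$, $\alpha>2$ and $\mu$ large enough that every condition in (iii) fails. There the paper lets the weight window $M=M_{\|v\|}$ grow with the distance, and obtains only $\|v\|^{1+o(1)}$.
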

Figure \ref{fig:phase-diagrams} illustrates the phase diagram(s)  described by Theorem \ref{theorem:summary}.  
We give a detailed interpretation.
Take a synthetic network sampled from a scale-free GIRG model with $\tau\in(2,3)$ and $\alpha>1$ on $n$ nodes (as in Figure \ref{fig:girg-low-tau-high-alpha}), (i.e., finite average degree but infinite asymptotic degree variance). 
 As we increase the exponents $\mu$ and/or $\zeta$ in \eqref{eq:transition-rate} of the link-transmission rates, we decrease the rate of the transmission on long links and links to and from high degree nodes (hubs). Theorem \ref{theorem:summary} says that the slow-down effects are negligible when $\mu + \zeta/d<(3-\tau)/2$ (area $A$ on all diagrams of Fig.~\ref{fig:phase-diagrams}), and the pandemic spreads explosively as in \eqref{eq:explosive-finite}. 
As the degree and/or spatial dependence of transmission rates are gradually increased, the pandemic slows down: $I(t)$ grows quasi-exponentially, then polynomially, and finally settles on purely geometric growth. The phase boundaries depend only on the power-law exponent $\tau$ and the long-range parameter $\alpha$, but not on other parameters (e.g. average degree) of the network. On scale-free networks with $\tau\in(2,3)$ all four phases are present. 
The strict polynomial phase with growth $\sim t^{d\psi}$, $\psi>1$ is \emph{rare} in network modeling, and our model is the first that robustly produces this phenomenon in network contexts, see \cite{komjathy2023four} for related works.  

\subsubsection{An intuitive explanation of the four phases of growth}\label{sec:proof}
The interpretation of our rigorous derivations gives insight into the driving forces of the pandemic in each of the phases.  

\emph{Explosive early growth.} 
In this phase the pandemic is driven by the presence of `superspreaders' or hubs and their hierarchical structure in networks  with high degree heterogeneity ($\tau<3$). 
The infection first explores the local neighborhood of the initial vertex $u_0$. Upon reaching some nodes with relatively high degree (say $K$), some of these nodes will transmit to a set of nodes with even higher degree ($K^s$ for some $s>1$). These nodes in turn pass on the infection to even higher degree nodes of degree $(K^s)^s$ and so on, until global hubs are reached. We call this the `degree-increasing phase', followed by a `degree descending phase' where global hubs will infect a substantial proportion of all slightly smaller hubs, who in turn infect a substantial proportion of slightly smaller hubs and so on, until local neighborhoods of typical nodes are reached. The total duration of these phases do not depend on network-size because of the very quick (summable) transmission times between hubs. Geometry only plays a role in the bounded spatial neighborhoods of the initial node and the typical target node. The infection time $T(u,v)$ converges to a sum of two random variables $Y_{u_0}+Y_{v}$ that respectively describe how long it takes to leave/enter the local neighborhood of $u_0$ and $v$. Here $Y_{u_0}$ represents the random time it takes for the pandemic to take off, and $Y_{u_0}$ enters the infection time for all nodes in the network, resulting in a random shift in the epidemic curve,  recovering \eqref{eq:explosive-finite}:
\begin{equation} I(t) \sim n\mathbb P( Y_v <t-S \mid Y_{u_0}=S)=n q_{t- S}.\end{equation}

\begin{figure*}
    \centering

    \begin{minipage}[c]{0.25\textwidth}
        \centering

        \begin{tikzpicture}[scale=1]
        
        \fill[color=yellow!20] (0,0)--(0,0.115)--(0.23,0)--(0,0);
        \fill[color=blue!20] (0,0.115)--(0.23,0)--(1.7,0)--(1.7,0.5)--(0,0.5)--(0,0.115);
        \fill[color=magenta!20] (1.7,0)--(1.7,0.5)--(2.7,0.5)--(2.7,0)--(1.7,0);
        \fill[color=lime!20] (2.7,0)--(2.7,0.5)--(3.1,0.5)--(3.1,0)--(2.7,0);
        
        \fill[yellow!20] (-0.15,0.25) circle (3pt) node {\tiny\color{black}{$A$}}; 
        \fill[blue!20] (0.85,0.25) circle (1pt) node {\tiny\color{black}{$B$}};
        \fill[magenta!20] (2.2,0.25) circle (1pt) node {\tiny\color{black}{$D$}};
        \fill[lime!20] (2.9,0.25) circle (1pt) node {\tiny\color{black}{$G$}};
        
        \draw[->] (-0.05,0)--(3.15,0);
        \draw[->] (0,-0.05)--(0,0.55);
        
        \draw[very thick] (0,0.115) -- (0.23,0);
        \draw (1.7,0)--(1.7,0.5);
        \draw (2.7,0)--(2.7,0.5);
        
        \node[right] at (3.15,0) {\tiny$\zeta$};
        \node[above] at (0,0.55) {\tiny$\mu$};
        \node[left] at (0,0) {\tiny$0$};
        \node[below] at (0,0) {\tiny$0$};
        \node[left] at (0,0.5) {\tiny$0.5$};
        \node[below] at (1,0) {\tiny$1$};
        \node[below] at (2,0) {\tiny$2$};
        \node[below] at (3,0) {\tiny$3$};

        \draw (1,0.05) -- (1,-0.05);
        \draw (2,0.05) -- (2,-0.05);
        \draw (3,0.05) -- (3,-0.05);

        \end{tikzpicture}

        \subcaption{}
        \label{fig:gowalla-phase-diagram}

        \begin{tikzpicture}[scale=1.3]
        
        \fill[color=yellow!20] (0,0)--(0,0.3)--(0.6,0)--(0,0);
        \fill[color=blue!20] (0,0.3)--(0.2,0.2)--(0.8,0.2)--(0.8,0.5)--(0,0.5)--(0,0.3);
        \fill[color=magenta!20] (0.8,0.2)--(1.8,0.2)--(1.8,0.5)--(0.8,0.5)--(0.8,0.2);
        \fill[color=cyan!20] (0.2,0.2)--(0.8,0.2)--(1.2,0)--(0.6,0)--(0.2,0.2);
        \fill[color=red!20] (0.8,0.2)--(1.8,0.2)--(2.2,0)--(1.2,0)--(0.8,0.2);
        \fill[color=lime!20] (1.8,0.2)--(2.2,0)--(2.4,0)--(2.4,0.5)--(1.8,0.5)--(1.8,0.2);
        
        \fill[yellow!20] (0.18,0.09) circle (1pt) node {\tiny\color{black}{$A$}}; 
        \fill[blue!20] (0.4,0.35) circle (1pt) node {\tiny\color{black}{$B$}};
        \fill[cyan!20] (0.7,0.1) circle (1pt) node {\tiny\color{black}{$C$}};
        \fill[magenta!20] (1.3,0.35) circle (1pt) node {\tiny\color{black}{$D$}};
        \fill[red!20] (1.5,0.1) circle (1pt) node {\tiny\color{black}{$F$}};
        \fill[lime!20] (2.1,0.25) circle (1pt) node {\tiny\color{black}{$G$}};
        
        \draw[->] (-0.05,0)--(2.45,0);
        \draw[->] (0,-0.05)--(0,0.55);
        
        \draw[very thick] (0,0.3) -- (0.6,0);
        \draw[dashed] (0.2,0.2)--(1.8,0.2);
        \draw (0.8,0.2)--(0.8,0.5);
        \draw (0.8,0.2) -- (1.2,0);
        \draw (1.8,0.2)--(1.8,0.5);
        \draw (1.8,0.2)--(2.2,0);
        
        \node[right] at (2.45,0) {\tiny$\zeta$};
        \node[above] at (0,0.55) {\tiny$\mu$};
        \node[left] at (0,0) {\tiny$0$};
        \node[below] at (0,0) {\tiny$0$};
        \node[left] at (0,0.5) {\tiny$0.5$};
        \node[below] at (1,0) {\tiny$1$};
        \node[below] at (2,0) {\tiny$2$};

        \draw (1,0.05) -- (1,-0.05);
        \draw (2,0.05) -- (2,-0.05);
        
        \end{tikzpicture}

        \subcaption{}
        \label{fig:mu-zeta-small-alpha}
        
    \end{minipage}
    \begin{minipage}[c]{0.25\textwidth}
        \centering

        \begin{tikzpicture}[scale=1.1]

        \fill[color=yellow!20] (0,0)--(0,0.4)--(0.8,0)--(0,0);
        \fill[color=cyan!20] (0,0.4)--(0.8,0)--(1.6,0)--(0,0.8)--(0,0.4);
        \fill[color=red!20] (0,0.8)--(1.6,0)--(2.6,0)--(0.5,1.05)--(0,1.05)--(0,0.8);
        \fill[color=purple!20] (0,1.05)--(0.5,1.05)--(0,2.1)--(0,1.05);
        \fill[color=lime!20] (2.6,0)--(0.5,1.05)--(0,2.1)--(0,2.3)--(2.9,2.3)--(2.9,0)--(2.6,0);
        
        \fill[yellow!20] (0.25,0.125) circle (1pt) node {\tiny\color{black}{$A$}}; 
        \fill[cyan!20] (0.6,0.3) circle (1pt) node {\tiny\color{black}{$C$}};
        \fill[purple!20] (0.18,1.3) circle (1pt) node {\tiny\color{black}{$E$}};
        \fill[red!20] (1.04,0.52) circle (1pt) node {\tiny\color{black}{$F$}};
        \fill[lime!20] (1.6,1.3) circle (1pt) node {\tiny\color{black}{$G$}};
        
        \draw[->] (-0.05,0)--(2.95,0);
        \draw[->] (0,-0.05)--(0,2.35);
        
        \draw[very thick] (0,0.4) -- (0.8,0);
        \draw (0,0.8) -- (1.6,0);
        \draw[dashed] (0,1.05)--(0.5,1.05);
        \draw (2.6,0)--(0.5,1.05);
        \draw (0.5,1.05)--(0,2.1);
        
        \node[right] at (2.95,0) {\tiny$\zeta$};
        \node[above] at (0,2.35) {\tiny$\mu$};
        \node[left] at (0,0) {\tiny$0$};
        \node[below] at (0,0) {\tiny$0$};
        \node[below] at (1,0) {\tiny$1$};
        \node[below] at (2,0) {\tiny$2$};
        \node[left] at (0,1) {\tiny$1$};
        \node[left] at (0,2) {\tiny$2$};

        \draw (1,0.05) -- (1,-0.05);
        \draw (2,0.05) -- (2,-0.05);
        \draw (0.05,1) -- (-0.05,1);
        \draw (0.05,2) -- (-0.05,2);

        \end{tikzpicture}

        \subcaption{}
        \label{fig:mu-zeta-large-alpha}
        
    \end{minipage}
    \begin{minipage}[c]{0.25\textwidth}
        \centering
        
        \begin{tikzpicture}[scale=2]

        \fill[color=yellow!20] (1,2)--(2.9,2)--(2.9,2.2)--(1,2.2)--(1,2);
        \fill[color=blue!20] (1,2.2)--(1.5,2.2)--(1.9,2.6)--(1.9,3.2)--(1,3.2)--(1,2.2);
        \fill[color=magenta!20] (1.9,2.6)--(2,2.7)--(2,3.2)--(1.9,3.2)--(1.9,2.6);
        \fill[color=purple!20] (2,3)--(2.15,2.85)--(2,2.7)--(2,3);
        \fill[color=cyan!20] (1.9,2.6)--(2.9,2.6)--(2.9,2.2)--(1.5,2.2)--(1.9,2.6);
        \fill[color=red!20] (1.9,2.6)--(2.9,2.6)--(2.9,2.85)--(2.15,2.85)--(1.9,2.6);
        \fill[color=lime!20] (2.15,2.85)--(2,3)--(2,3.2)--(2.9,3.2)--(2.9,2.85)--(2.15,2.85);
        
        \fill[yellow!20] (1.7,2.1) circle (1pt) node {\tiny\color{black}{$A$}}; 
        \fill[blue!20] (1.4,2.6) circle (1pt) node {\tiny\color{black}{$B$}};
        \fill[cyan!20] (2.4,2.4) circle (1pt) node {\tiny\color{black}{$C$}};
        \fill[magenta!20] (1.95,2.85) circle (1pt) node {\tiny\color{black}{$D$}};
        \fill[purple!20] (2.05,2.85) circle (1pt) node {\tiny\color{black}{$E$}};
        \fill[red!20] (2.4,2.72) circle (1pt) node {\tiny\color{black}{$F$}};
        \fill[lime!20] (2.4,2.925) circle (1pt) node {\tiny\color{black}{$G$}};
        
        \draw[->] (0.97,2)--(2.95,2);
        \draw[->] (1,1.97)--(1,3.25);
        \draw[very thick] (1,2.2)--(2.9,2.2);
        \draw[dashed] (1.5,2.2) -- (2.15,2.85);
        \draw (1.9,2.6)--(1.9,3.2);
        \draw[dashed] (2,2.7)--(2,3.2);
        \draw (2,3)--(2.15,2.85);
        \draw (1.9,2.6)--(2.9,2.6);
        \draw (2.15,2.85)--(2.9,2.85);
        \draw[very thick] (2,3)--(2,3.2);
        
        \draw[dotted] (1,3)--(2.9,3);
        \draw[dotted] (2,2)--(2,2.7);
        
        \node[right] at (2.95,2) {\tiny$\alpha$};
        \node[above] at (1,3.25) {\tiny$\tau$};
        \node[left] at (1,3) {\tiny$3$};
        \node[left] at (1,2) {\tiny$2$};
        \node[below] at (1,2) {\tiny$1$};
        \node[below] at (2,2) {\tiny$2$};

        \draw (2,2.03) -- (2,1.97);
        \draw (1.03,3) -- (0.97,3);
        
        \end{tikzpicture}
        
        \subcaption{}
        \label{fig:alpha-tau}
        
    \end{minipage}
    \begin{minipage}[c]{0.2\textwidth}
        \centering
        
        \begin{tikzpicture}[scale=2] 
        
        \fill[yellow!20] (0,0) circle (2pt) node {\tiny\color{black}{$A$}}; 
        \fill[blue!20] (0,-0.2) circle (2pt) node {\tiny\color{black}{$B$}};
        \fill[cyan!20] (0,-0.4) circle (2pt) node {\tiny\color{black}{$C$}};
        \fill[magenta!20] (0,-0.6) circle (2pt) node {\tiny\color{black}{$D$}};
        \fill[purple!20] (0,-0.8) circle (2pt) node {\tiny\color{black}{$E$}};
        \fill[red!20] (0,-1) circle (2pt) node {\tiny\color{black}{$F$}};
        \fill[lime!20] (0,-1.2) circle (2pt) node {\tiny\color{black}{$G$}}; 

        \node[right] at (0.1,0) {Explosive};
        \node[right] at (0.1,-0.2) {Weak-tie quasi-exponential};
        \node[right] at (0.1,-0.4) {Hub quasi-exponential};
        \node[right] at (0.1,-0.6) {Weak-tie polynomial};
        \node[right] at (0.1,-0.8) {Hybrid polynomial};
        \node[right] at (0.1,-1) {Hub polynomial};
        \node[right] at (0.1,-1.2) {Geometric};
        
        \end{tikzpicture}

    \end{minipage}
    
    \caption{Phase diagrams of epidemic growth for the contact-dependent SI. On all diagrams, parameter choices falling in region $A$ yield explosive spread. Parameter choices in $B$ and $C$ yield quasi-exponential growth $I(t)\sim \exp(dt^{\varphi})$, while in $D$, $E$ and $F$ they yield polynomial growth with $I(t) \sim t^{d\psi}, \psi>1$, while parameter choices in $G$ yield purely geometric growth $I(t) \sim t^{d}$. Optimal infection paths use weak ties for parameters in region $B$ and $D$, hubs for $C$ and $F$, and weak ties between medium-sized hubs for $E$. The bold lines indicate discontinuous phase transitions, while the other transitions are smooth. Figures (a)-(c) show phase diagrams when the penalty parameters $\mu$ and $\zeta$ vary for fixed $\tau,\alpha$ and $d=2$: (a) uses the estimated parameters of the Gowalla dataset $\tau_{\mathrm{Gow}} = 2.78, \alpha_{\mathrm{Gow}} = 1.2$; (b) uses $\tau=2.4, \alpha=1.6$; (c) uses $\tau=2.2, \alpha=2.25$. Figure (d) shows a phase diagram when $\tau$ and $\alpha$ vary for fixed $d=4$, $\mu = 0.3$, $\zeta=0.4$.
    }   
    \label{fig:phase-diagrams}
\end{figure*}
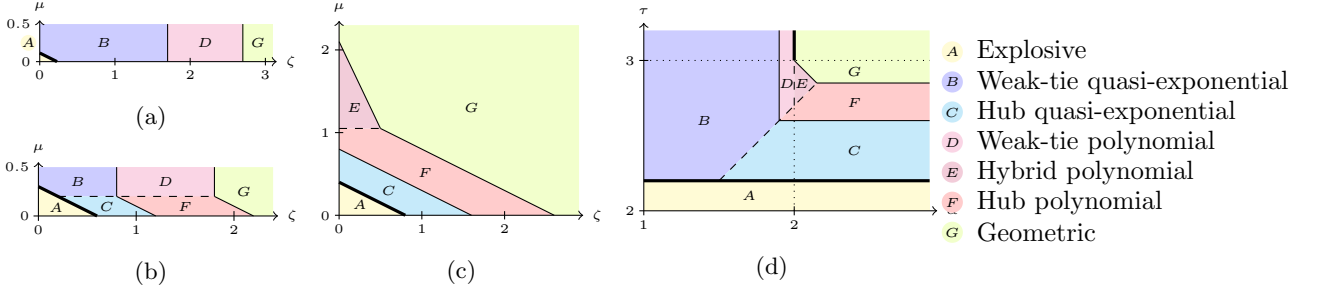

\emph{Quasi-exponential and polynomial early growth.} 
In both of these phases, the pandemic shows a fractal structure and spreads via alternating between local infections and using links that span large distances. 
The process starts by a take-off phase using local infections near the source node, until a few relatively long links transmit the disease to new areas.  At some of these new areas, via local infections again, even longer links are found that transmit the disease to even further areas. As the total area of local infection grows, the chance of finding even longer links  increases, leading to self-similarity and fractal-like growth patterns, see Fig.~\ref{fig:heatmaps-exponential}-\ref{fig:heatmaps-polynomial}. The difference between the quasi-exponential and the polynomial phase is the time it takes to pass the long links that lead the infection to new areas. In the quasi-exponential phase long links are passed so quickly that the infection time across such a link does not depend on the spatial distance the link spans. In this phase the process can enter new areas in constant time and we see almost exponential growth — the limitations essentially come from geometry saturating the process locally on all scales, inhibiting growth from exponential to quasi-exponential. In the polynomial phase all long links experience significant slow-downs, and carry transmission times polynomial in the distance they span. Fractal-like spreading with alternating local infections and long links has been observed in real pandemics, see \cite{PACURAR2020110073, long2023multifractal, natalia2023fractal}. In our theoretical results, the type of long links that drive the pandemic subdivide each phase into subphases: optimal infection paths typically use either many weak ties on fairly low-degree nodes (areas $B$ and $D$ in Fig.~\ref{fig:phase-diagrams}) or long links between locally dominant hubs (areas $C$ and $F$). In the polynomial phase a third, hybrid way of spreading by using atypically long weak ties between less prominent hubs is the optimal strategy that dominates the spread under specific parameter settings (area $E$). We illustrate typical infection paths on Figure S7 in SI.

\emph{Pure geometric growth.} When the above transmission paths become too slow to play a role in the spreading process due to the penalty on high degree nodes and/or long-distance links, the pandemic propagates using essentially local, short-range infections only and avoids hubs. The process then propagates following the properties of the underlying geometric space. Synthetic GIRG networks assume a random yet fairly homogeneous node distribution, yielding to the circular growth pattern of Fig.~\ref{fig:heatmaps-geometric}. We conjecture that for GIRGs a \emph{shape theorem} holds, as is universally observed in first passage percolation on geometrically embedded (lattice-like and random) graphs \cite{cox1981some, coletti2023limiting}. Despite the high variations in node density in the Gowalla dataset, the geometric pandemic growth pattern is still visible there in the same phase, see Fig.~\ref{fig:heatmaps-geometric}.

Optimizing the type and length of links that the pandemic typically uses to enter new regions allows us to rigorously obtain explicit exponents of early growth on synthetic GIRG networks. To state our results, we introduce $\varphi:=\lim_{t\to \infty} \tfrac{\log\log I(t)}{\log t}$ and $\psi:=\lim_{t\to \infty} \tfrac{\log I(t)}{\log (t^d)}$, as in \eqref{eq:poly-growth}-\eqref{eq:stretched-exp}, which is meant on the infinite geometric network. Alternative definitions on finite networks $G_n$ are possible; by defining $\varphi$ and $\psi$ using the growth of $I(t)$ on the interval $[0,t_n]$, where $t_n\to \infty$ is a time well before saturation, and taking the large network limit $n\to\infty$. 

\begin{theorem}\label{theorem:EXACT} In the setting of Theorem \ref{theorem:summary}, for explicit functions $\varphi=\varphi(d, \mu, \zeta, \tau, \alpha)$ and $\psi=\psi(d, \mu, \zeta, \tau, \alpha)$, it holds that 
\begin{equation}\label{eq:EXACT-both}
\begin{aligned}
I(t) &\lesssim \exp( t^{(1+o(1))\varphi}) \quad \text{in phase (ii),}\\
I(t)&\sim t^{d\psi (1+o(1))} \quad\quad  \text{in phases (iii)-(iv)}.
\end{aligned}
\end{equation}
\end{theorem}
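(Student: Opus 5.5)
The plan is to translate the statement about $I(t)$ into a statement about the transmission (first passage) time $T(u_0,v)$ as a function of the spatial distance $r=\|u_0-v\|$. Then \eqref{eq:poly-growth} and \eqref{eq:stretched-exp} convert distance bounds into growth bounds. Concretely, I will show that in phases (iii)--(iv) there is an explicit $\eta=1/\psi\in(0,1]$ with $T(u_0,v)=r^{\eta+o(1)}$ with high probability as $r\to\infty$. In phase (ii) I will show $T(u_0,v)\ge (\log r)^{1/\varphi-o(1)}$; only a lower bound on $T$ is needed, since the theorem claims only an upper bound on $I(t)$. Since GIRG vertices have density of order one in $\R^d$, $I(t)$ is comparable to the volume of $\{x:\|x\|^{\eta}\le t\}$, up to the fluctuations in the exponent. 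A union bound over vertices in dyadic annuli, with concentration of vertex counts, turns these pointwise bounds into $I(t)=t^{d\psi(1+o(1))}$, respectively $I(t)\le \exp(t^{(1+o(1))\varphi})$.

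\textbf{Upper bound on $T$ (lower bound on $I$), via a multiscale renormalisation.} I will fix a candidate $\eta$ and construct explicit cheap paths hierarchically. To connect two points at distance $r$, I look inside the balls of radius $r^{\gamma}$ ($\gamma<1$) around them for a ``bridge'' edge spanning distance about $r$. Its two endpoints have weights $w_1,w_2$, chosen so that an edge with these weights exists between the two balls with probability close to one. The GIRG connection probability is $\min\{1,(w_1w_2/r^d)^{\alpha}\}$, and the density of weight-$w$ vertices is $w^{-\tau}$. The bridge's transmission time is then of order $(w_1w_2)^{\mu}r^{\zeta}$. I recurse on the two sub-problems at scale $r^{\gamma}$.

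Optimising over $\gamma$ and over the weight profile gives three options, which are exactly the three subphases. The first is low weights with many edges, which requires $r^{2d\gamma}$ potential pairs against $r^{-d\alpha}$ and is driven by $\alpha$. The second is locally maximal hubs, of weight $r^{d\gamma/(\tau-1)}$, which is driven by $\tau$. The third is the hybrid with intermediate weights. This yields the recursion
\[
T(r)\le 2T(r^{\gamma})+r^{\theta(\gamma)+o(1)},
\]
whose solution is $r^{\max(\theta,\gamma\eta)}$, and the fixed point gives $\eta$. When the optimal bridge cost has $\theta\le 0$ (phase (ii)), the recursion instead gives $T(r)\lesssim(\log r)^{1/\varphi}$ with $1/\varphi=\log 2/\log(1/\gamma)$. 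In phase (iv) I just use nearest-neighbour paths, which exist because of the local edges, and get $\eta=1$.

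\textbf{Lower bound on $T$ (upper bound on $I$).} This is the main obstacle. I would prove it by a first-moment bound over hierarchical path structures rather than over paths. The idea is that any path from $u_0$ to $v$ must contain a longest edge, say of length $\ell$. Either $\ell\ge r^{1-\epsilon}$, in which case I bound the cost of that edge together with the costs of reaching its endpoints from $u_0$ and $v$. Or all edges are shorter, in which case the path must cross many disjoint annuli, and the costs add up. Applying this inductively yields a ``hierarchy'' of edges of nested scales. I then bound the expected number of cheap hierarchies, summing over scales and weights of the bridge endpoints and using independence of the exponential edge weights. The delicate points are the dependence between levels when vertices are reused, and high-weight vertices with many edges. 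For these I would use the known GIRG lower-bound technique for graph distances and for long-range first passage percolation, namely a Markov inequality on hierarchies with $2^k$ bridges. The resulting exponent matches the optimum of the construction because both are governed by the same optimisation of $\theta(\gamma)$. Finally, a Borel--Cantelli argument over dyadic scales upgrades the bounds to ones holding for all large $t$, completing \eqref{eq:EXACT-both}.
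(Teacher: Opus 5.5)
\textbf{Main gap: the phase (ii) lower bound.} You read the $\lesssim$ literally, so you make $T(u_0,v)\ge(\log r)^{1/\varphi-o(1)}$ the heart of the argument. You justify it by saying the hierarchy first moment ``matches the optimum of the construction because both are governed by the same optimisation of $\theta(\gamma)$''. Nothing supports this, and the paper points the other way.

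Its phase-(ii) input is only the upper bound $d_{\mathcal C}(u_0,v)\le(\log\|v\|)^{\Delta_i+o(1)}$ with $\Delta_i=1/\varphi_i$, from Theorem~\ref{thm:main-supporting}. The paper says explicitly that this theorem gives only upper bounds outside phase (iv). That bound is exactly your recursion, and it yields $I(t)\gtrsim\exp(t^{(1-o(1))\varphi})$. That is what the paper actually certifies, despite the $\lesssim$ in the statement. The lower bound, Theorem~\ref{thm:main-lower}, is stated only under $\zeta/d>2-\alpha$ and $\mu+\zeta/d>3-\tau$, i.e.\ outside phase (ii).

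The paper goes further. It observes that as $\mu+\zeta/d\downarrow(3-\tau)/2$ one has $\Delta_2\to 1/(2-\log_2(\tau+1))>2.4$. It concludes that either this upper bound is not sharp or the transition to explosion is discontinuous. So a matching polylogarithmic lower bound is precisely what is \emph{not} known; it is not a by-product of a shared optimisation.

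The ``delicate points'' you defer (vertices reused across levels, hubs with many edges) are the whole difficulty. Once several bridges of a hierarchy can share a high-weight endpoint, the first moment over $2^k$-bridge hierarchies no longer factorises. Those configurations are exactly the hub-driven strategies of region $C$. As written, your phase-(ii) plan proves only the reverse inequality, via the construction.

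\textbf{Phases (iii)--(iv): a simpler route for the lower bound, and a failing step in the (iv) upper bound.} Your dichotomy (one edge of length comparable to $r$, or many disjoint crossings) is the right one. For the lower bound, however, the paper never counts hierarchies, and that is what makes it tractable.

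\begin{itemize}
\item The cost is dominated by the single longest edge. So Lemma~\ref{lem:no_long_cheap_edge} only bounds the expected number of \emph{single} edges in a box with length in $[L,L^{1+\varepsilon}]$ and cost at most $L^s$. It integrates over $z=W_uW_v$, and its weak-tie, hybrid and hub terms are where $\eta_1,\eta_2,\eta_3$ come from.
\item A box at scale $A_k=A_1(k!)^2$ is good if it has no such edge at $L_k=A_k/(100k^2)$ and all but $3^d$ of its children are good.
\item A deterministic induction using $x^\eta+y^\eta\ge(x+y)^\eta$ then finishes. Vertex reuse never arises.
\item The agreement of the resulting $s_\star$ with the construction is a separate computation, $\eta_\star=\min(s_\star,1)$ in Claim~\ref{claim:admissible}.
\end{itemize}

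Your phase-(iv) upper bound fails as stated. A GIRG has no deterministic nearest-neighbour edges, and edges at hubs are penalised. The paper therefore restricts to vertices of weight in $[M,2M]$ and edges of cost at most $C_M$. It shows this subgraph percolates with linear chemical distances by renormalising to site-bond percolation (Antal--Pisztora, Deuschel--Pisztora), which needs $d\ge 2$. In $d=1$ no such subgraph percolates, so the paper uses a scale-dependent $M=M_{\|u-v\|}$ and obtains only $\|v\|^{1+o(1)}$.
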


The limit functions $\varphi, \psi$ are different on each of the areas of the phase diagram on Figure \ref{fig:phase-diagrams}, with exact values in \eqref{eq:delta-values} and \eqref{eq:eta-values}. They indicate the type of the long links that carry the infection to new areas.
In the quasi-exponential phase, these long links are either typically weak ties (region $B$ in Figure \ref{fig:phase-diagrams}) or links between two hubs (region $C$). For the polynomial phase (iii), based on the exponent $\psi$, there are three possible driving forces: long links between resp. weak ties (region $D$), hubs (region), and hybrid (region $E$), i.e.\ weak ties between medium-sized hubs. In the pure geometric phase, short-range connections drive the pandemic (region $G$). 

These results can be summarised in \emph{phase diagrams} (Fig.~\ref{fig:phase-diagrams}). In a natural scenario, we think of the network as fixed, and we only vary the penalisation exponents of the transmission dynamics $\mu, \zeta$ in \eqref{eq:transition-rate}. This gives rise to Fig.~\ref{fig:gowalla-phase-diagram} where we set $\tau=2.78, \alpha=1.2$, giving the phase diagram of a GIRG with the same tail-exponents as the Gowalla network. Theoretically, we may also fix $\mu$ and $\zeta$, then vary the parameters $\alpha$ and $\tau$ of the underlying (synthetic) network, giving rise to the phase diagram in Fig.~\ref{fig:alpha-tau}, where the same colour corresponds to the same function for $\varphi$ and $\psi$.   
Our results imply that boundaries of the regions $A$, $B\cup C$, $D\cup E \cup F$, and $G$ on Figure \ref{fig:phase-diagrams} are true phase boundaries, while the boundaries between $B$ and $C$ and those between $D$, $E$ and $F$ describe continuous shifts in the way the pandemic is most likely to propagate. 

\section*{Discussion and Conclusions}
Different behavioural factors in the transmission dynamics can lead to significantly different early epidemic growth curves on the same underlying network for even the same disease in its pandemic state.  The phenomenon has been observed in consecutive waves of the Covid-19 pandemic in various countries, where growth varied between super-exponential, quasi-exponential and polynomial growth across different waves. 
Our theory suggests a plausible explanation for this phenomenon and a simple model of transmission dynamics where the phenomenon can be reproduced both on synthetic and on real networks. We have studied the effects of ego-network-dependent transmission dynamics in disease spreading, where the random transmission time across a link is influenced by the number of total contacts of both the infector and the receiver individual as well as the spatial distance between them. We have found that our model can  can reproduce super-exponential, (quasi)-exponential, polynomial and pure geometric early epidemic growth on the same spatially embedded network. Our results are based both on a rigorous mathematical analysis of growth profiles in a synthetic network model and on data-driven simulation of growth dynamics on a social network.

 Based on existing mathematical theory, we conjecture that the results are robust also in the choice of network: other synthetic network models will show a subset of these phases -- for instance, non-spatial networks with power-law degree distributions will reproduce the explosive and exponential phase \cite{adriaans2018weighted, bhamidi2017universality} but will lack the polynomial and geometric phases, as those are driven by geometry. Homogeneous lattice-based models will show the geometric phase, while lattice-based long-range models may also show the quasi-exponential phase \cite{biskup2004scaling} and the polynomial phase but will lack the explosive phase as that is driven by superspreaders/hubs.    
  
A natural limitation of our theory is that it uses the SI model of infection. This is a natural choice for the onset of a pandemic, but does not cover later phases when saturation of subpopulations factor in. This is particularly relevant for the slower growth regimes, and we want to caution about the polynomial but not purely geometric growth phase in geometrically embedded networks. In that phase and that phase only, the pandemic is driven by long-range contacts with long transmission times. These transmissions may be filtered out by an SIR dynamics where healing is also present, or need to assume the presence of individuals with long healing times.

 Beyond the scientific interests, our results may contribute to the better design of epidemic forecasts and intervention strategies during an ongoing pandemic. We highlight the importance of the underlying social contact network, in particular the effect of long-range connections and hubs (superspreaders) as the potential driving sources of the pandemic whose presence may push the pandemic into a faster universality class of growth. These observations could lead to different testing and intervention strategies (e.g., focusing on limiting the number of social contacts and long-range travels). Crucially, our theory predicts that the efficacies of different interventions may vary for different growth regimes, since they are driven by different types of links. Thus, identifying the growth regime may help identifying the most effective interventions for any given wave.

\section*{Materials and Methods}
\subsection*{Construction of the Gowalla Network}
For the analysis performed on the Gowalla network we used spatial locations of $n=107,092$ users. Users typically use multiple login locations; while in the constructed network we assign to each user a unique location as follows: 
we identify for each user the modal set of coordinates (rounded to the nearest $.25$), then choose the most common login location from within the corresponding $.25\times .25$ longitude-latitude box, breaking all ties uniformly at random. This method is consistent with the method in \cite{gowalla}, where an accuracy of $85\%$ for approximate home location is estimated. With this procedure we obtain a graph on $n = 107, 092$ nodes with $456,830$ links, with an average degree $8.53$. We then pass to the largest connected component of this network, which has $96,953$ nodes with $455,026$ links and average degree $9.39$.

\subsection*{Parameter estimation for synthetic GIRG networks}
 We chose the degree power-law exponent $\tau$ and the long-range parameter $\alpha$ of our synthetic network to match the Gowalla dataset. We choose $\tau$ by estimating the tail exponent of the power-law distribution of the Gowalla network's degree sequence, using a double-bootstrapped Hill's estimator \cite{hill1975simple} as applied to scale-free networks in \cite{voitalov2019scale} (see SI). Estimating $\alpha$ is harder, as the link-length distribution of the Gowalla dataset is far less regular than in a GIRG; this would be true even for a GIRG generated on the Gowalla node-set, since coastal nodes have far fewer nodes within a given geographical distance than inland nodes. Nevertheless, a power-law is still a good fit when truncated to shorter edges, and in SI we develop a non-linear regression method to estimate $\alpha$ based on this approximation. The estimator achieves reliable recovery of the true $\alpha$ on synthetic GIRG networks. We then apply the same estimator to the Gowalla dataset: the estimator is robust under the choice of the truncation thresholds.

\subsection*{Generating Geometric Inhomogeneous Random Graphs}
Synthetic GIRG networks were generated as follows. The number of nodes was sampled from a Poisson distribution with mean $n$. Each of these nodes was assigned a position sampled uniformly at random from the unit torus \([0,\sqrt{n})^2\), and a node-``fitness'' value \( w_u \), drawn independently from a Pareto distribution with exponent \( \tau \). A link between any two nodes \( u \) and \( v \) was added with probability  
\begin{equation}\label{eq:girg-formula}
    \mathrm{Prob}(u,v) =  \min\big\{ \big( \tfrac{ w_u w_v}{\|x_u - x_v\|^d} \big)^\alpha, 1 \big\},
\end{equation}
where \( \|x_u - x_v\| \) is the distance on the torus between the nodes. We employed a linear time GIRG sampling algorithm to generate these synthetic networks~\cite{bringmann2017sampling} \cite{GIRG_sampling_github}.  

\subsection*{Random Reference Networks}
To generate a random reference network from the Gowalla dataset, we repeatedly iterate over all edges. For each edge $(u,v)$, we take a uniformly random edge $(u',v')$ and attempt to rewire the two edges to $(u, u')$ and $(v, v')$. This is essentially a random walk on the switch chain, which is known to mix rapidly when the degree sequence of the original graph follows a power law~\cite{egmmss-switch}. Thus the result is an approximately uniform graph with the same positions and degree sequence as the original Gowalla network and no multiple edges or loops. We use the implementation from the graph-tool library~\cite{peixoto_graph-tool_2014}.

\subsection*{Growth Exponent of Epidemic Curves}
In Theorem \ref{theorem:EXACT} we identified the growth exponents of the epidemic curve $I(t)$ in the quasi-exponential and the polynomial phase on synthetic GIRG networks. Here we give the value of these exponents. 
In the quasi-exponential phase (ii) of Theorems \ref{theorem:summary} \ref{theorem:EXACT}, it holds that  $\lim_{t\to \infty}\log \log I(t)/ \log t=\varphi(d,\mu,\zeta,\tau,\alpha)=\max(\varphi_1, \varphi_2)\le 1$, where 
 \begin{equation}\label{eq:delta-values}
    \begin{aligned}
     \varphi=\varphi_1&=1-\log_2 (\alpha+\zeta/d)), \qquad \text{on region B} \\
     \varphi=\varphi_2&=1-\log_2(\tau-1+\zeta/d+\mu) \qquad \text{on region C},   
     \end{aligned}
 \end{equation}
 and $\varphi_1$ enters the maximum only if $\zeta/d<2-\alpha$, while $\varphi_2$ only if $\mu +\zeta/d<3-\tau$. 
  In the polynomial phase (iii) of Theorems \ref{theorem:summary}-\ref{theorem:EXACT}, $\lim_{t\to \infty}\log I(t)/ \log (t^d)=\psi(d,\mu,\zeta,\tau,\alpha)=\max(\psi_1, \psi_2, \psi_3)> 1$
where
 \begin{equation}\label{eq:eta-values}
    \begin{aligned}
     \psi&=\psi_1=1/\big(\zeta-d(2-\alpha)\big), \qquad \text{on region D}\\
     \psi&=\psi_2=1/\big(\zeta + \mu d \tfrac{ (\alpha-2)}{\alpha-(\tau-1)}\big), \qquad \text{on region E}\\
     \psi=\psi_3&=1/\big(\zeta+\mu d -d(3-\tau)\big), \qquad \text{on region F,} 
     \end{aligned}
 \end{equation}
 where $\psi_1$ enters the maximum only if $\alpha<2$ and $\zeta/d \in(2-\alpha, 2-\alpha+1/d)$, and $\psi_2$ only if $\alpha>2, \tau<3$ and $\mu<(1-\zeta)(\tfrac{1}{d}+\tfrac{3-\tau}{d(\alpha-2)})$ holds, and $\psi_3$ only if $\mu+\zeta/d\in (3-\tau, 3-\tau+1/d)$ according to the conditions in Theorem \ref{theorem:summary}. Finally in the pure geometric phase $\psi=\lim_{t\to \infty}\log I(t)/ \log (t^d)=1$ on area $G$ of Figure \ref{fig:phase-diagrams}. 

 \subsection*{Data Availability} Previously published data used for this work: \cite{gowalla}.

\bibliographystyle{plain}
\bibliography{references}

\appendix
\section{Supporting information}

\subsection{Geometric inhomogeneous random graphs: model and basic properties}\label{sec:model-def-prop}

In this section, we give the mathematical definition of the synthetic network we use both for our theoretical results and for the simulations, and explain some of the key properties of these networks as well as the role of the parameters. The model Geometric Inhomogeneous Random Graph (GIRG) was introduced in \cite{bringmann2019geometric}. To obtain slightly simpler formula, we use an equivalent formulation where we blow up the geometric space by a factor of $n^{1/d}$ in each dimension and the edge connection probability by a factor of $n$~\cite{komjathy2020explosion}. 
We say that a random variable follows a power law with exponent $\tau>1$ if it has a probability density function that decays as $\sim x^{-\tau}$ as $x$ tends to infinity. From now on, in a network we use the words nodes vs vertices exchangeably, and also do so with connections/links/edges.

\begin{definition}[GIRG]\label{def:GIRG}
For parameters $\tau>2$, $\alpha>1$, and dimension $d\in\N$, a \emph{Geometric Inhomogeneous Random Graph} $G=(V,E)$ is obtained by the following three-step procedure:
\begin{enumerate}
    \item[(1)] The vertex set $V$ is given by a Poisson Point Process of intensity $1$ on either the torus or the box $[-n^{1/d}/2, n^{1/d}/2]^d$ of volume $n$ centred at the origin.

    \item[(2)] Assign to each vertex $v\in V$ independently and identically distributed \emph{weight} $W_v$ that follows a power-law with exponent $\tau$.

    \item[(3)] Every pair of vertices $u,v$ is then connected by an edge independently with probability
    \begin{align}\label{eq:girg-connection}
        p_{uv} = c\cdot\min\Big\{\frac{W_uW_v}{\|u-v\|^d}, 1\Big\}^{\alpha},
    \end{align}
    where $c\in(0,1]$ is a constant and $\|\cdot\|$ denotes the Euclidean norm.
\end{enumerate}
\end{definition}
A modification of Step (1) is also often used, when the vertex set $V$ is given by $n$ independent points chosen uniformly in the torus or box $[-n^{1/d}/2, n^{1/d}/2]^d$. In the definition above, there are a random number $N$ of points following a Poisson distribution with mean $n$. Given the value of $N$, the points are again independent and have uniform location in the cube. The advantage of our convention is that any smaller or larger box induces a GIRG model again, just with a different size $n$. Thus one can easily move between models of different sizes.  
\vskip1em

\textbf{Key properties.} 

\emph{Degree distribution.}
In GIRG, the degree of a vertex follows a mixed Poisson distribution with mean proportional to its weight; in particular, the degree distribution is a \emph{power-law} with the same exponent $\tau$ as the one used to sample the vertex-weights $W_v$ for each vertex \cite{bringmann2025average}. 

\emph{Connectivity: component structure.} The model naturally generalises to an infinite model on $\R^d$, using a unit intensity Poisson point process as a vertex set. On $\R^d$, we call the model \emph{supercritical} when there is a connected component containing infinitely many vertices of positive density $\theta=\mathbf{Prob}(v \text{ in the infinite component})>0$.  The infinite model is always supercritical when $\tau\in (2,3)$, regardless of the value of the dimension $d\ge 1$ and the long-range parameter $\alpha>1$. When $\tau>3$ and the dimension is $d\ge2$,  then either the constant $c$ in \eqref{eq:girg-connection} has to be sufficiently large, or the average weight $\mathbb E[W]$ has to be sufficiently large for in infinite connected component to appear in GIRG. Both result in `sufficiently high edge-density'. In dimension $1$, the model is always supercritical when $\tau\in (2,3)$, but when $\tau>3$ then we need the sufficiently high edge density and additionally $\alpha\in(1,2]$ for an infinite component to exist.
Moving to finite GIRGs, the existence of an infinite component naturally translates to the existence of a linear connected component of proportion $\theta$ in finite GIRGs with the same parameters, for all $n$ sufficiently large. It is also known that all smaller components are at most poly-logarithmic in size, see \cite[Theorem 1.1]{jorritsma2025cluster}.

\emph{Typical distance in the large component.} GIRG is \emph{small-worlds} for a large range of relevant parameters. The graph distance $d_G(u,v)$ between two vertices $u,v$ in the same component, also called hopcount, is the minimal number of edges on any path between the two vertices.  We call a graph \emph{small world} if the the graph distance between a uniformly chosen node-pair $u,v$ in the largest component is small compared to the total number of vertices. 
There are three universality classes for the hopcount \cite{bringmann2025average,deprez2015inhomogeneous,berger2004lower}:
\begin{align}\label{eq:graph-distance}
    d_G(u,v) = \#\{ e: \text{edge on shortest path between $u,v$}\}=  \begin{cases}
    \Theta(\log\log n) & \mbox{ if $2 < \tau < 3$,}\\
	(\log n)^{\Theta(1)} & \mbox{ if $\tau>3$ and $1<\alpha<2$,}\\
    \Theta(n^{1/d}) & \mbox{ if $\tau>3$ and $\alpha>2$.}
	\end{cases}
\end{align}
So GIRG is small world if either $\alpha<2$ or $\tau<3$, and in the latter case we speak of \emph{ultrasmall worlds}. Moreover, the graph diameter (i.e., the maximal graph distance among all pairs of nodes in the graph) of the largest connected component of GIRG is also small, namely of order $(\log n)^{O(1)}$, for the most interesting parameter range $\tau\in(2,3)$, and also for larger $\tau$ when $\alpha\in(1,2)$ \cite{bringmann2025average, coppersmith2002diameter,benjert2025diameter}. Finally, the average local clustering coefficient is a measure capturing the friend-of-a-friend is typically also a friend and the presence of communities.  This quantity evaluated on a GIRG network is bounded away from $0$ as $n\to\infty$ \cite{bringmann2019geometric}, indicating strong presence of local communities.

\subsection{Edge-length distribution: theoretical results and estimation methods}\label{sec:edge-length}

While GIRGs are increasingly more popular for real life network modelling, currently there is no established statistical estimation method to infer its long-range parameter $\alpha$.
In contrast, for the other key parameter, the power-law exponent $\tau$ governing the distribution of the hidden variables $W$, there are good and reliable estimators, as $\tau$ can be inferred from the degree distribution, see Section \ref{sec:statistical-methods} below. 

As $\alpha$ governs the edge-length distribution in the graph (but often only between low-degree vertices), one can design estimators based on the theoretical results about edge-length statistics. We found that one needs to be somewhat careful, as estimators based on asymptotic results for the large network limit fail to recover the correct $\alpha$ on synthetic GIRG network of size up to $10^5$ nodes in dimension $2$: in that case the side-length of the box is only $\sim300$, and tail edge statistics are sensitive to this truncation effect, while fitting on very short range edges ($10-50$) leaves a large room for error. To overcome this, we compute the edge-length distribution of finite GIRGs in arbitrary dimensions $d\ge 1$ and then use the computed formula for better statistical fit. This adjustments reliably recovers the true $\alpha$ with narrow confidence intervals.

 We work on the $d$-dimensional torus (in the case of the main text $d=2$). 
Before we start with computations, we note an elementary lemma stating that the product of two Pareto random variables sharing the same power-law exponent has a regularly varying tail with the same tail exponent. 
\begin{lemma}\label{lem:product_distribution}
	Let $X, Y$ be two independent and identically distributed positive random variables with cumulative distributions $F_X(x)=1-x^{-\tau}$ and $F_Y(y)=1-y^{-\tau}$, respectively. Then the cumulative distribution of their product $Z:=XY$ is given by $F_Z(z)=1- \ell(z)z^{-\tau}$ with $\ell^*(z)=1+(\tau-1)\log(z)$, while the density of their product is $f_Z(z)=(\tau-1)^2\log (z)/z^{\tau}$ for $z\ge 1$.
\end{lemma}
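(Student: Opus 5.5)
The plan is a direct convolution computation for the tail of $Z=XY$, followed by differentiation. First a point about conventions. The paper says a variable has power-law exponent $\tau$ when its \emph{density} decays like $x^{-\tau}$. The stated density $(\tau-1)^2\log(z)/z^{\tau}$ and slowly varying factor $1+(\tau-1)\log z$ are consistent with that convention, i.e.\ with $\pr(X>x)=x^{-(\tau-1)}$ for $x\ge1$. They are not consistent with the literal $F_X(x)=1-x^{-\tau}$, which yields exponent $\tau$ in place of $\tau-1$.

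I will therefore work with a general tail exponent $\beta>0$, where $\pr(X>x)=\pr(Y>x)=x^{-\beta}$ for $x\ge 1$ and $f_X(x)=\beta x^{-\beta-1}$. At the end I specialise to $\beta=\tau-1$, which gives the stated formulas. Taking $\beta=\tau$ instead gives the same statement with $\tau$ in place of $\tau-1$. In both cases the qualitative conclusion is the one used later: the product is regularly varying with the same exponent, times a logarithmic correction.

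Fix $z\ge 1$ and condition on $X=x$, using independence:
\[
\pr(Z>z)=\int_1^\infty \pr(Y>z/x)\,\beta x^{-\beta-1}\,dx .
\]
Split the integral at $x=z$. For $x>z$ we have $z/x<1$, so $\pr(Y>z/x)=1$, and this part contributes $\pr(X>z)=z^{-\beta}$. For $1\le x\le z$ we have $\pr(Y>z/x)=(x/z)^{\beta}$, so the integrand becomes $\beta z^{-\beta}x^{-1}$, and this part contributes $\beta z^{-\beta}\log z$. Adding the two pieces,
\[
\pr(Z>z)=z^{-\beta}\bigl(1+\beta\log z\bigr),
\]
which is $F_Z(z)=1-\ell(z)z^{-\beta}$ with $\ell(z)=1+\beta\log z$.

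Differentiating $-\pr(Z>z)$ gives
\[
f_Z(z)=\beta z^{-\beta-1}+\beta^2 z^{-\beta-1}\log z-\beta z^{-\beta-1}=\beta^2 z^{-\beta-1}\log z .
\]
With $\beta=\tau-1$ this is exactly $(\tau-1)^2\log(z)/z^{\tau}$.

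There is no real obstacle. The only delicate point is the case split at $x=z$: on one side the conditional tail probability is capped at $1$, and it is this boundary piece that produces the additive constant $1$ in $\ell$. The rest is keeping the exponent convention straight, as discussed above.
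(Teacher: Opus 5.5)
Your computation is correct and is essentially the paper's argument. The paper gets the density first, via the change of variables $(x,y)\mapsto(x,xy)$ with Jacobian $1/x$, i.e.\ $f_Z(z)=\int_1^z f(x)f(z/x)x^{-1}\,dx$ with $f(x)=(\tau-1)x^{-\tau}$, and then integrates; you condition on $X$ to get the tail and then differentiate, which is the same product convolution run in the opposite order. Your reading of the convention ($\beta=\tau-1$) matches the density the paper's proof actually uses, so the tail in the statement should read $\bigl(1+(\tau-1)\log z\bigr)z^{-(\tau-1)}$ rather than with $z^{-\tau}$, exactly as your general formula gives.
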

\begin{proof}
The density of $f_Z(z)$ can be computed using the variable transform $(x, y)\mapsto (x, z=xy)$ with Jacobian $1/x$. As both $x$ and $z/x$ have to be above $1$, and $f(x)=(\tau-1)x^{-\tau}$, we obtain:
\[ f_Z(z)=\int_1^z f(x)f(z/x) x^{-1} \mathrm dx= (\tau-1)^2 \frac{\log z}{z^{-\tau}}.\]
Integrating the density gives the statement. 
	For more general regularly varying distributions in place of Pareto distributions, $\ell^\star(z)=1+(\tau-1)\ln(z)$ can be replaced by another slowly varying function, and is a consequence of~\cite[Corollary, Page 3]{embrechts1980closure}.
\end{proof}

\begin{theorem}[Asymptotic tail of edge-lengths in GIRG.]\label{thm:edge-length}
Consider GIRG on the $d$-dimensional volume-$n$ torus $\Pi_n:=[-n^{1/d}/2,n^{1/d}/2)^d$ with parameters $\tau>2, \alpha>1$. Let $E_n(L_1, L_2)$ denote the number of edges with Euclidean length in the interval $[L_1, L_2]$. Further, let $E_n(L_1, L_2, \le M)$ denote the number of edges with Euclidean length in the interval $[L_1, L_2]$ connecting two vertices $(u,v)$ with weights satisfying $W_uW_v\le M$. Then there exists explicit constants $c_1=c_1(\alpha, \tau, d), c_2=c_2(\alpha, \tau, d), c_3=c_3(\alpha, \tau, d), c_4=c_4(\alpha, \tau, d, M)$ below in \eqref{eq:constant-prefactors} and \eqref{eq:c4M}, so that  for all $L_1<L_2< n^{1/d}/2$ with $L_1>c_5(\alpha, \tau, d)$ for some constant $c_5$, 
\begin{equation}\label{eq:edge-length-l1l2}
\begin{aligned}
\frac{E_n(L_1, L_2)}{n}&\stackrel{\mathbb P}{\longrightarrow}\  \begin{cases}
 c_1 \cdot\Big(L_1^{-d(\alpha-1)} - L_2^{-d(\alpha-1)}\Big)&\text{ if } \alpha<\tau-1\\
     c_2 \cdot \Big( L_1^{-d(\alpha-1)} \log^2(L_1^d) - L_2^{-d(\alpha-1)} \log^2(L_2^d) \Big)&\text{ if } \alpha=\tau-1\\
     c_3\cdot  \Big(L_1^{-d(\tau-2)} \log(L_1^d) - L_2^{-d(\tau-2)} \log(L_2^d)\Big)   &\text{ if } \alpha>\tau-1 
    \end{cases}\\
    \frac{E_n(L_1, L_2, \le M)}{n}& \stackrel{\mathbb P}{\longrightarrow} \ c_4(M)\cdot \Big(L_1^{-d(\alpha-1)} - L_2^{-d(\alpha-1)}\Big) \text{ for all } L_1>M^{1/d}, \text{ for all }\alpha>1,\tau>2,  
    \end{aligned}
\end{equation}
where $X_n \stackrel{\mathbb P}{\longrightarrow} x$ means that $X_n$ converges to $x$ in probability as $n\to \infty$.
\end{theorem}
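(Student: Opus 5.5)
The plan is to compute the first moment of $E_n(L_1,L_2)$ exactly via the Mecke formula for the Poisson point process, and then upgrade it to convergence in probability with a second-moment bound. By Mecke's formula (the multivariate version for pairs) and translation invariance of the torus,
\[
\E[E_n(L_1,L_2)] = \frac{n}{2}\int_{L_1\le \|x\|\le L_2} \E\Big[c\min\Big\{\tfrac{W_1W_2}{\|x\|^d},1\Big\}^{\alpha}\Big]\,\mathrm dx ,
\]
valid for $L_2<n^{1/d}/2$, where the torus ball is a genuine Euclidean ball. Write $Z=W_1W_2$, so that the law of $Z$ is given by Lemma \ref{lem:product_distribution}. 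Passing to polar coordinates with $r=\|x\|$, and setting $s=r^d$ so that $\mathrm dx = \omega_d\, \mathrm ds$ (here $\omega_d$ is the volume of the unit ball), the radial integrand becomes $g(s):=\E[\min\{Z/s,1\}^\alpha]$. This function splits as
\[
g(s)=\pr(Z\ge s)+s^{-\alpha}\E[Z^\alpha\mathbf 1_{Z<s}].
\]
The first term equals $\ell^*(s)s^{-(\tau-1)}$, using the Pareto tail with the exponent convention of the lemma. The second term is computed from the density $f_Z(z)=(\tau-1)^2\log z\, z^{-\tau}$:
\[
\int_1^s z^{\alpha-\tau}\log z\,\mathrm dz .
\]
Its behaviour splits into three cases.

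\begin{itemize}
\item If $\alpha<\tau-1$, the integral converges as $s\to\infty$, so $g(s)\sim C s^{-\alpha}$ up to a term that is lower order by a factor $s^{-(\tau-1-\alpha)}\log s$.
\item If $\alpha=\tau-1$, the integral equals $\tfrac12\log^2 s$, so $g(s)\sim C s^{-\alpha}\log^2 s$.
\item If $\alpha>\tau-1$, the integral behaves like $s^{\alpha-\tau+1}\log s/(\alpha-\tau+1)$, so $g(s)\sim C s^{-(\tau-1)}\log s$.
\end{itemize}

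Integrating $g$ over $s\in[L_1^d,L_2^d]$ then yields the three differences in \eqref{eq:edge-length-l1l2}, up to the identification of the constants $c_1,c_2,c_3$. Since the theorem claims exact closed forms, I would keep the full antiderivatives rather than the asymptotic equivalents. In that case $c_5$ must be chosen so that $L_1^d$ exceeds the threshold $s\ge 1$, where the formulas for $\min\{\cdot,1\}$ and the support of $Z$ apply. Any lower-order corrections (e.g.\ the term $s^{-(\tau-1)}\log s$ when $\alpha<\tau-1$) are either absorbed into explicitly stated constants or are shown to be exactly combinable. This is the bookkeeping step.

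For the restricted count $E_n(L_1,L_2,\le M)$, the hypothesis $L_1>M^{1/d}$ gives $Z\le M< s$ throughout. The minimum is therefore never attained at $1$, and $g_M(s)=s^{-\alpha}\E[Z^\alpha\mathbf 1_{Z\le M}]$ exactly. Integrating gives $c_4(M)=\tfrac{c\,\omega_d}{2\,d(\alpha-1)}\E[Z^\alpha\mathbf 1_{Z\le M}]$ times $(L_1^{-d(\alpha-1)}-L_2^{-d(\alpha-1)})$, which holds for all $\alpha>1$ and $\tau>2$.

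Concentration is the main obstacle, because the limit must hold in probability after dividing by $n$. I would bound $\mathrm{Var}(E_n)$ by applying the Mecke formula to pairs of edges. Edge pairs on four distinct vertices are conditionally independent given the point process, so they contribute nothing beyond the Poisson correlation. Pairs sharing a vertex $u$ contribute $n\,\E[\deg_{[L_1,L_2]}(u)^2]$, where $\deg_{[L_1,L_2]}(u)$ counts edges of $u$ with length in $[L_1,L_2]$. This second moment is at most of order $\E[W^2]$ restricted to the relevant range; when $\tau<3$ it is infinite, so truncation is needed. I would therefore truncate weights at $n^{\eps}$, show that the edges incident to heavier vertices number $o(n)$ in expectation, and verify that the truncated count has variance $O(n^{1+2\eps})=o(n^2)$. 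Chebyshev's inequality then gives $E_n/n-\E[E_n]/n\to 0$ in probability, completing the proof.
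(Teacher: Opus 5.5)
Your first-moment computation is the paper's own route in different notation. Conditioning on $\calV$ and summing $\Lambda_e(\|x-y\|)$ over pairs is the Mecke formula. Splitting $\E[\min\{Z/s,1\}^\alpha]$ at $Z=s$ with the density from Lemma~\ref{lem:product_distribution}, the three regimes, and the exact evaluation when $W_uW_v\le M<L_1^d$ are exactly what the paper does.

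\textbf{The gap.} Your ``bookkeeping step'' cannot be carried out. For $\alpha<\tau-1$ the exact integrand is
\[
g(s)=\frac{(\tau-1)^2}{(\tau-1-\alpha)^2}\,s^{-\alpha}-s^{-(\tau-1)}\Big(\frac{\alpha(\tau-1)}{\tau-1-\alpha}\log s+\frac{(\tau-1)^2}{(\tau-1-\alpha)^2}-1\Big).
\]
Its second part contributes $F(L_1^d)-F(L_2^d)$ with $F(s)=s^{-(\tau-2)}(a\log s+b)$ and $a\neq 0$. No constant $c_1$ makes the total equal to $c_1(L_1^{-d(\alpha-1)}-L_2^{-d(\alpha-1)})$ for all $L_1<L_2$. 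Likewise, the antiderivative of $s^{-\alpha}\log^2 s$ contains $s^{1-\alpha}\log s$ and $s^{1-\alpha}$ terms, and for $\alpha>\tau-1$ you are left with $s^{-(\tau-1)}$ and $s^{-\alpha}$ corrections.

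So the in-probability limit of $E_n(L_1,L_2)/n$, a constant multiple of $\int_{L_1^d}^{L_2^d}g(s)\,\mathrm ds$, is not of the displayed form. The displayed expressions are only its leading order as $L_1\to\infty$. That is also all the paper's proof establishes (``up to a $(1+o(1))$ factor for large $L_1$'', with $c_5$ chosen so that the neglected lower-order terms are dominated). Replace the bookkeeping by showing $g(s)=(1+o(1))\,g_{\mathrm{main}}(s)$ uniformly in $s\ge L_1^d$ as $L_1\to\infty$, and integrate.

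Only the $\le M$ formula is exact. There your prefactor has a stray $1/d$: with $\mathrm dx=\omega_d\,\mathrm ds$ you get $\frac{c\,\omega_d}{2(\alpha-1)}\E[Z^\alpha\mathbf 1_{Z\le M}]$. This is half of \eqref{eq:c4M}; the factor $2$ comes from the paper's summation over ordered pairs in \eqref{eq:e-to-lambdae-pre}.

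\textbf{Concentration.} Here you genuinely differ from the paper, which just invokes an Azuma-type inequality \cite[Theorem~6]{bringmann2025average}. Your second-moment/Chebyshev route is valid and self-contained. Azuma buys exponential tails, which convergence in probability does not need.

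For fixed $L_2$ your route is also easier than you expect, and the truncation is unnecessary. For a point $u$ added via the Mecke formula, $\deg_{[L_1,L_2]}(u)$ is at most the number of points in the annulus $A_{L_1,L_2}(u)$. That count is Poisson with mean $\mathrm{vol}(A_{L_1,L_2})$, regardless of the weights. Since vertex-disjoint pairs of potential edges contribute exactly $\E[E_n]^2$, this gives
\[
\mathrm{Var}\big(E_n(L_1,L_2)\big)\le \E[E_n(L_1,L_2)]+n\,\mathrm{vol}(A_{L_1,L_2})^2=O(n).
\]
So your estimate of the shared-vertex term by $\E[W^2]$ is an overestimate here. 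Truncating weights at $n^{\eps}$ becomes necessary only once the upper cutoff is removed, as in Corollary~\ref{cor:edge-length-limit}. There the relevant degree really has second moment of order $\E[W^2]=\infty$ for $\tau<3$.
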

A corollary of the proof gives the asymptotic edge-length distribution in the large network limit:
\begin{corollary}[Asymptotic tail of edge-lengths in GIRG.]\label{cor:edge-length-limit}
Consider GIRG on the $d$-dimensional volume-$n$ torus $\Pi_n:=[-n^{1/d}/2,n^{1/d}/2)^d$ with parameters $\tau>2, \alpha>1$. Let $E_n(L)$ denote the number of edges longer than $L$ in the graph, and let $E_n(L, \le M)$ denote the number of edges longer than $L$ connecting two vertices $u,v$ with weights satisfying $W_uW_v\le M$. Then with the same explicit constants as in Theorem \ref{thm:edge-length}, for all fixed $L$, as $n\to \infty$, 
\begin{equation}
\begin{aligned}
\frac{E_n(L)}{n}&\  {\buildrel \mathbb P \over \longrightarrow} \ \begin{cases}
 c_1 L^{-d(\alpha-1)} &\text{ if } \alpha<\tau-1\\
 c_2 L^{-d(\alpha-1)} \log^2(L^d)  &\text{ if } \alpha=\tau-1\\
 c_3 L^{-d(\tau-2)}  \log (L^d) &\text{ if } \alpha>\tau-1. 
    \end{cases}\\
    \frac{E_n(L, \le M)}{n}&\  {\buildrel \mathbb P \over \longrightarrow}\ c_4(M) L^{-d(\alpha -1)} \quad  \text{ for all } L>M^{1/d}, \text{ for all }\alpha>1,\tau>2. 
    \end{aligned}
\end{equation}
\end{corollary}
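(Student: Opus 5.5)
The plan is to obtain the corollary directly from Theorem \ref{thm:edge-length} by choosing $L_1=L$ and letting $L_2$ grow with $n$, while showing that edges longer than $L_2$ contribute nothing in the limit. Fix $L>c_5$ (for smaller fixed $L$ one would use the finite-$n$ formula underlying the theorem, or simply restrict attention to $L>c_5$ as the theorem does). Choose a sequence $L_2=L_2(n)\to\infty$ with $L_2<n^{1/d}/2$, for instance $L_2=n^{1/(2d)}$. Then write
\[
\frac{E_n(L)}{n} = \frac{E_n(L, L_2)}{n} + \frac{E_n(L_2, \infty)}{n}.
\]

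For the first term, the natural move is to invoke the limit in \eqref{eq:edge-length-l1l2}. As $L_2\to\infty$, the subtracted terms $L_2^{-d(\alpha-1)}$, $L_2^{-d(\alpha-1)}\log^2(L_2^d)$ and $L_2^{-d(\tau-2)}\log(L_2^d)$ all vanish, since $\alpha>1$ and $\tau>2$. There is a catch: Theorem \ref{thm:edge-length} is stated for fixed $L_1,L_2$, whereas here $L_2$ depends on $n$. The first thing I would try is to use monotonicity to get around this. For any fixed $K$ and all large $n$, we have $E_n(L,K)\le E_n(L,L_2)\le E_n(L)$. The lower bound, combined with the theorem at fixed $K$, shows that $E_n(L)/n$ is at least the limit profile evaluated at $(L,K)$, minus $o_{\mathbb P}(1)$. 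Sending $K\to\infty$ then gives the lower bound
\[
\liminf_n \frac{E_n(L)}{n} \ge c_1L^{-d(\alpha-1)}
\]
in probability (respectively the $c_2$, $c_3$ profiles). This step needs no uniformity at all.

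For the upper bound we must control the tail $E_n(K,\infty)/n$ uniformly in $n$. I would do this with a first-moment bound. By Mecke's formula,
\[
\mathbb E[E_n(K,\infty)]\le \frac{n}{2}\int_{\|x\|>K}\mathbb E\Big[\min\{W_1W_2\|x\|^{-d},1\}^{\alpha}\Big]\,\mathrm dx .
\]
By Lemma \ref{lem:product_distribution}, $Z=W_1W_2$ has tail $\ell(z)z^{-(\tau-1)}$ with $\ell$ logarithmic. Hence $\mathbb E[\min\{Z r^{-d},1\}^{\alpha}]$ is of order $r^{-d\min(\alpha,\tau-1)}$ up to logarithmic factors. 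Integrating over $\|x\|>K$ gives the bound $O\big(K^{-d(\min(\alpha,\tau-1)-1)}\,\mathrm{polylog}(K)\big)$, which tends to $0$ as $K\to\infty$ and is uniform in $n$. This is exactly the computation behind the explicit constants $c_1,c_2,c_3$, so it matches the three regimes. Markov's inequality then gives
\[
\frac{E_n(L)}{n}\le \frac{E_n(L,K)}{n}+\frac{E_n(K,\infty)}{n}= \text{limit}(L,K)+\eps_K+o_{\mathbb P}(1),
\]
with $\eps_K\to0$. Combining the two bounds yields convergence in probability to $c_1L^{-d(\alpha-1)}$ (respectively the $\alpha=\tau-1$ and $\alpha>\tau-1$ forms).

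The weight-restricted statement $E_n(L,\le M)$ follows by the same sandwich argument. Here the tail bound is simpler: on the event $W_uW_v\le M$ and $\|x\|>K>M^{1/d}$, the connection probability is at most $M^{\alpha}\|x\|^{-d\alpha}$, and integrating gives $O(K^{-d(\alpha-1)})$. The main obstacle is this uniform tail control, the only part not already contained in Theorem \ref{thm:edge-length}, and in particular getting the correct exponent $\min(\alpha,\tau-1)$ from the product-weight tail. The other ingredient, monotonicity of $E_n(L,\cdot)$ in the upper length, is routine.
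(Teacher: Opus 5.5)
Your argument is correct, but it is organised differently from the paper's. The paper does not derive the corollary from the \emph{statement} of Theorem~\ref{thm:edge-length}. Instead it reruns the same expectation computation with $L_2$ allowed to exceed $n^{1/d}/2$, handling the part of the torus outside the inscribed ball via the surface fraction $\phi_d$. It sandwiches that part between the formulas with $L_2\wedge n^{1/d}/2$ and $L_2\wedge \sqrt d\, n^{1/d}/2$ and notes that the $n$-dependent terms vanish as $n\to\infty$. It then gets convergence in probability of $E_n(L)/n$ itself from the Azuma-type concentration inequality of \cite{bringmann2025average}.

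You use the theorem only as a black box on a fixed window $[L,K]$. In place of concentration for the full count, you use a uniform first-moment bound on the tail $E_n(K,\infty)$, Markov's inequality, and the double limit ($n\to\infty$, then $K\to\infty$). This buys two things. First, you never need a concentration statement for counts of arbitrarily long edges, which are carried largely by high-weight vertices and are where such arguments are most delicate. Second, you sidestep the torus-boundary geometry entirely: for an upper bound on the tail, the integral over $\{\|x\|>K\}\subset\R^d$ suffices, and $\min(\alpha,\tau-1)>1$ makes it small uniformly in $n$. In exchange, the paper's route yields explicit finite-$n$ expectations for length windows reaching beyond $n^{1/d}/2$, which is what the later finite-size estimator of $\alpha$ relies on.

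Restricting to $L>c_5$ is not a defect of your proof. As $L\downarrow 0$ the displayed right-hand sides diverge, while $E_n(L)/n$ stays bounded by half the mean degree. So the formulas, in the paper and in your version alike, only hold for large $L$, with the $(1+o(1))$ precision inherited from the theorem.
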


\begin{proof}[Proof of Theorem \ref{thm:edge-length} and Corollary \ref{cor:edge-length-limit}] 
Let $E_n(L_1, L_2)$ denote the number of edges in $[-n^{1/d}/2,n^{1/d}/2]^d$ with length between $L_1, L_2$, with $L_2$ being possibly larger than $n^{1/d}/2$. We start computing the expectation. 
Let

\begin{equation}\label{eq:lambda_r}
	    \Lambda_e(r):=\E[(1 \wedge W_xW_y/r^d)^{\alpha}],
	\end{equation}
    where the randomness is taken over two independent random weights $W_x$, $W_y$ from the weight distribution. By~\eqref{eq:girg-connection}, then $c\Lambda_e(r)$ is the probability that two vertices $x,y$ with fixed position in distance $r$, but with random weights, form an edge. 
	Let us write $B_L(x)$ for the Euclidean ball of radius $L$ centered around $x$, and $B_L$ when the center is $0$. We also write $A_{L_1, L_2}(x):=B_{L_2}(x)\setminus B_{L_1}(x)$ for the annulus centered around $x$ of inner radius $L_1$ and outer radius $L_2$ on the torus, and abbreviate $A_{L_1, L_2} := A_{L_1, L_2}(0)$. Using conditional expectation on the location of vertices $\mathcal V \subseteq [-n^{1/d}/2,n^{1/d}/2)^d$,
	\begin{equation}\label{eq:e-to-lambdae-pre}
    \begin{aligned}
		\mathbb E[E_n(L_1,L_2)\mid \mathcal V] &=
		\sum_{\substack{x,y \in \mathcal V,\|x-y\| \in [L_1,L_2]}}
		\E\left[\mathbf{1}_{\{xy \textnormal{ is an edge}\}}\right] 
        \\&=
		\sum_{\substack{x,y \in\mathcal V ,\|x-y\|\in [L_1, L_2]}}
		\E\left[c\left(1 \wedge \dfrac{W_xW_y}{|x-y|^d}\right) ^\alpha  \right] \\
		&= c \sum_{x \in \mathcal V} \,
		\sum_{y \in \mathcal V\cap A_{L_1, L_2}(x)}
		\Lambda_e(|x-y|).
	\end{aligned}
    \end{equation}
    Taking now expectation over the location of vertices $\calV$, the outer summation gives a factor $n$, while the inner summation becomes an integral using the translation invariance of the torus and the fact that each vertex falls into a given set proportional to the area of the set. 
    We arrive at: 
    \begin{equation}\label{eq:e-to-lambdae}
    \mathbb E[E_n(L)]=cn\int_{y\in [-n^{1/d}/2,n^{1/d}/2)^d\cap  A_{L_1, L_2}} \Lambda_e(|y|) \mathrm dy
    \end{equation}
    We can compute the above integral by switching to polar coordinates. The Jacobian is $r^{d-1}$ on the segment $L_1<r<n^{1/d}/2$ so we get the surface $\mathrm{Surf(d)}$ of a $d$-dimension unit sphere. If $L_2>n^{1/d}/2$ then for $r>n^{1/d}/2$ we only integrate over part of the surface of a unit sphere depending on $r$. We write $L_2\wedge x:=\min(L_2, x)$:
      \begin{equation}\label{eq:enl-integral-form}
    \mathbb E[E_n(L_1, L_2)]=cn \mathrm{Surf}(d)\int_{r=L_1}^{L_2\wedge n^{1/d}/2} r^{d-1} \Lambda_e(r)\,\mathrm{d}r + cn\mathbf 1_{\{L_2> n^{1/d}/2\}}\cdot \int_{r=n^{1/d}/2}^{L_2\wedge n^{1/d}\sqrt{d}/2} r^{d-1}\phi(2r/n^{1/d}) \Lambda_e(r) \mathrm d r
    \end{equation}
  where in general dimension $d$, $\phi_d(u)$ for $u\in [1/2,\sqrt{d}/2]$ is the total surface area of a $d$-dimensional sphere of radius $u$ that is inside the unit cube $[-1/2,1/2]^d$. So for dimension $2$ this is the total arc-length where the circle of radius $u$ intersects the square $[-1/2,1/2]^d$. Note that $\phi_d(2r/n^{1/d})=\mathrm{Surf}(d)$ if $r= n^{1/d}/2$, and it is monotone decreasing and equals $0$ for all $r\ge n^{1/d}\sqrt{d}/2$.
  
	Next we compute  $\Lambda_e(r)$ in 
 \eqref{eq:lambda_r}. Note that $\Lambda(r)$ only depends on the product $W_xW_y=:Z$, not on the individual weights of the two vertices. By Lemma~\ref{lem:product_distribution}, the distribution of $Z$ is of the form $f_Z(z)=\ell(z)z^{-\tau}$, where $\ell(z)=(\tau-1)^2\log (z)$. 
Recall that $r>L_1$ and rewrite  \eqref{eq:lambda_r} using law of total probability as follows, then cut the integral in two according to the minimum:
\begin{equation}\label{eq:lambda-r-detailed}
\begin{aligned}
    \Lambda_e(r)&= \int_{z=1}^{\infty} \left(1\wedge\dfrac{z}{r^d}\right)^\alpha  \cdot \dfrac{\ell(z)}{z^{\tau}}\,\mathrm{d}z = r^{-d\alpha}\int_{z=1}^{r^d} z^\alpha  \cdot \dfrac{\ell(z)}{z^{\tau}}\,\mathrm{d}z + \int_{z=r^d}^{\infty} \dfrac{\ell(z)}{z^{\tau}} \mathrm{d}z\\
    &= r^{-d\alpha} (\tau-1)^2 \Big[\Big(\frac{\log(z)}{(\alpha+1-\tau)}- \frac{1}{(\alpha+1-\tau)^2}\Big)z^{\alpha-\tau+1}\Big]_{z=1}^{z=r^d} + (\tau-1)^2\Big[-\Big(\frac{\log(z)}{\tau-1} + \frac{1}{(\tau-1)^2}\Big)z^{1-\tau}\Big]_{z=r^{d}}^{\infty}\\ 
    \end{aligned}
\end{equation}
Where we assumed that $\alpha\neq \tau-1$. If $\alpha= \tau-1$, then the first integral evaluates to $(\log r^d)^2/2$.

For $r\to\infty$, the limiting behaviour is
\begin{equation}\label{eq:lambdae-approx}
    \Lambda_e(r)\asymp\begin{cases}r^{-d\alpha}(\tau-1)^2/(\tau-1-\alpha)^2(1+o(1))  &\text{ if } \alpha<\tau-1\\
    r^{-d\alpha} [\log^2(r^d)(\tau-1)^2/2 + (\tau-1)\log(r^d)](1+o(1)) &\text{ if } \alpha=\tau-1\\
    r^{-d(\tau-1)}\log(r^d)(\tau-1)\alpha/(\alpha+1-\tau)(1+o(1)) &\text{ if } \alpha>\tau-1 
    \end{cases}
\end{equation}
Observe that $\Lambda_e(r)$ integrates over the vertex-weights. The very precise form of $\Lambda_e(r)$ is of the form $r^{-d\min(\alpha, \tau-1}) (\tilde c_0 \log (r^d)^2 + \tilde c_1\log (r^d) + \tilde c_2)$ for specific constant $\tilde c_0, \tilde c_1, \tilde c_2$. We listed $\tilde c_2$ when $\alpha<\tau-1$ as in this case $\tilde c_0= \tilde c_1=0$, we listed $\tilde c_0, \tilde c_1$ but neglected $\tilde c_2$ when $\alpha=\tau-1$, and we listed $\tilde c_1$ when $\alpha>\tau-1$ as in this case $\tilde c_0=0$, but neglected $\tilde c_2$. This is justified by the following reasoning: as in the next integral we integrate over $r$ values $r\in[L_1, L_2]$, when $c_1\neq 0$ then we can neglect $\tilde c_2$, as these are negligible compared to $\tilde c_1 \log(r^d)$ when $L_1$ is not too small. In other words, the numerical error caused by the approximation in \eqref{eq:lambdae-approx} will rather present itself in small values for $L_1$, but it will get better and better as $L_1$ increases. More precisely, one can check that if $\alpha>\tau-1$ then the neglected constants $\tilde c_2=(\tau-1)^2[1/(\alpha +1-\tau)^2+1/(\tau-1)^2]$ is smaller than $\log (r^d)\alpha(\tau-1)/(\alpha+1-\tau)$ whenever $\log r^d > \alpha+ (\alpha+1-\tau)^{-1}$.

Nevertheless the negligence only affects the logarithmic correction term and the constant prefactor, but it leaves the main polynomial decay of $\Lambda_e(r)$ unchanged.
We will also see numerically that this negligence does not the cause main errors. 
Using these bounds in \eqref{eq:enl-integral-form}, we arrive at the conclusion that the first integral evaluates to, for $L_1<n^{1/d}/2$, up to a $(1+o(1))$ factor for large $L_1$,
\begin{equation}\label{eq:edge-length-l1l2-2}
cn \mathrm{Surf}(d)\int\limits_{r=L}^{L_2\wedge n^{1/d}/2} r^{d-1} \Lambda_e(r)\,\mathrm{d}r \asymp \begin{cases}
 n c_1 \Big(L_1^{-d(\alpha-1)} - (L_2\wedge \tfrac{n^{1/d}}{2})^{-d(\alpha-1)}\Big)&\text{ if } \alpha<\tau-1\\
    n c_2 \Big( L_1^{-d(\alpha-1)} \log^2(L_1^d) - (L_2\wedge \tfrac{n^{1/d}}{2})^{-d(\alpha-1)} \log^2(L_2\wedge\tfrac{n^{1/d}}{2}) \Big)&\text{ if } \alpha=\tau-1\\
    n c_3 \Big(L_1^{-d(\tau-2)} \log(L_1^d) - (L_2\wedge \tfrac{n^{1/d}}{2})^{-d(\tau-2)} \log(L_2 \wedge \tfrac{n^{1/d}}{2})   &\text{ if } \alpha>\tau-1 
    \end{cases}
\end{equation}
where 
\begin{equation}\label{eq:constant-prefactors}
\begin{aligned}
c_1&=c\cdot \mathrm{Surf}(d)(\tau-1)^2/\Big((\tau-1-\alpha)^2 d(\alpha-1)\Big)\\
c_2&=c\cdot \mathrm{Surf}(d)(\tau-1)^2/\Big(2 d(\alpha-1)\Big)\\
c_3&=c\cdot \mathrm{Surf}(d)\Big((\tau-1)+(\tau-1)^2/(\alpha+1-\tau)\Big)/(d(\tau-2)).
\end{aligned}
\end{equation}
For $L_2<n^{1/d}/2$, this finishes the proof. 
For $L_2>n^{1/d}/2$, the contribution of the second integral in \eqref{eq:enl-integral-form} can be bounded as follows. For any given $L_1<n^{1/d}/2$ and $L_2 \ge n^{1/d}/2$, we can use the upper bound $\phi(r/(n^{1/d}/2))\le \mathrm{Surf}_d$ up to $r\le n^{1/d}\sqrt{d}/2$ and then set  $\phi(r/(n^{1/d}/2))=0$ for $r\ge n\sqrt{d}/2$. Then the formulas in \eqref{eq:edge-length-l1l2-2} gives a lower bound, while a corresponding similar upper bound is obtained by replacing $L_2\wedge n^{1/d}/2$ everywhere in the above formulas by $L_2 \wedge (\sqrt d n^{1/d}/2)$. If $L_1<\varepsilon n^{1/d}$ for a fixed $\varepsilon>0$,  then as $n \to \infty$ the contribution of the terms containing $L_2$  becomes negligible as a function of $\varepsilon$, i.e., they tend to zero with vanishing $\varepsilon$.

This shows the first statement in Theorem~\ref{thm:edge-length} and Corollary \ref{cor:edge-length-limit} in expectation. As the edge set can be looked as a thinned process on \emph{pairs} of points in a Poisson point process, the number of edges in a given length interval is a mixed Poisson distribution with mixing variable $W_v$. It concentrates around its expectation by an Azuma-type concentration inequality~\cite[Theorem~6]{bringmann2025average}, see the proof of~\cite[Theorem~9]{bringmann2025average} for how to apply it. 
 This gives the proof of the first statement in Theorem~\ref{thm:edge-length} and Corollary \ref{cor:edge-length-limit}.

\emph{Edge-length distribution restricted to low-weight vertices.}
Here we modify the formulas and compute the expected number of edges longer than $L$  with endpoints having $W_uW_v<M$. This is a slightly weaker restriction than restricting to the subgraph spanned by vertices of weight $<\sqrt{M}$. We will assume $L_1>M^{1/d}$.  The method above is valid, with a modified formula for $\Lambda_e(r)$ in \eqref{eq:lambda_r}.
Namely, \eqref{eq:lambda-r-detailed} becomes, for $r^d>L_1^d>M>1$,
\begin{equation}\label{eq:lambda-r-detailed-2}
\begin{aligned}
    \Lambda_{e,<M}(r)&= \int_{z=1}^{M} \left(1\wedge\dfrac{z}{r^d}\right)^\alpha  \cdot \dfrac{\ell(z)}{z^{\tau}}\,\mathrm{d}z = r^{-d\alpha}\int_{z=1}^{M} z^\alpha  \cdot \dfrac{\ell(z)}{z^{\tau}}\,\mathrm{d}z \\
    &= r^{-d\alpha} (\tau-1)^2 \Big[\Big(\frac{\log(z)}{(\alpha+1-\tau)}- \frac{1}{(\alpha+1-\tau)^2}\Big)z^{\alpha-\tau+1}\Big]_{z=1}^{z=M} \\
    &= r^{-d\alpha}\Big(\frac{(\tau-1)^2}{(\alpha+1-\tau)^2}+M^{\alpha-\tau+1 }(\tau-1)^2 \frac{ \log M -1/(\alpha+1-\tau)}{\alpha+1-\tau}\Big)=: \tilde c_4(M) \cdot r^{-d\alpha}.
    \end{aligned}
\end{equation}
Based on whether $\alpha<\tau-1$ or $\alpha>\tau-1$, the sign of the term containing $M^{\alpha-\tau+1 }\log M$ is negative/positive, respectively. For $\alpha=\tau-1$, the coefficient $\tilde c_4(M)$ of $r^{-d\alpha}$ above changes to $(\tau-1)^2\log^2(M)/2$. Using this function in \eqref{eq:enl-integral-form} we obtain that the first integral in \eqref{eq:enl-integral-form} turns into
\begin{equation}\label{eq:edge-lengths-M}
   cn^d \mathrm{Surf}(d)\int_{L_1}^{L_2\wedge n/2}r^{d-1} \Lambda_{e,\le M}(r)\mathrm dr= cn^d \mathrm{Surf}(d) \tilde c_4(M) \Big[-\frac{r^{d(1-\alpha)}}{d(\alpha-1)}\Big]_{r=L_1}^{r=L_2\wedge \tfrac{n}{2}} = n^d c_4(M) \Big( L_1^{-d(\alpha-1)} - (L_2\wedge \tfrac{n}{2})^{-d(\alpha-1)}\Big).
\end{equation}
with 
\begin{equation}\label{eq:c4M}
c_4(M):=\frac{c\cdot \mathrm{Surf}(d)\tilde c_4(M)}{d(\alpha-1)} = \frac{c\cdot \mathrm{Surf}(d)}{d(\alpha-1)}\cdot \Big(\frac{(\tau-1)^2}{(\alpha+1-\tau)^2}+M^{\alpha-\tau+1 }(\tau-1)^2 \frac{ \log M -1/(\alpha+1-\tau)}{\alpha+1-\tau}\Big)
.
\end{equation}
This finishes the proof of Theorem \ref{thm:edge-length} for $L_2\le n/2$, and gives also a lower bound for $L_2\ge n/2$. For $L_2> n/2$, the contribution of the second integral to $\mathbb E[E_n(L_1, L_2, \le M)]$ can be again upper bounded using that $\phi_d(r/(n/2))\le \mathrm{Surf}_d$ for  $r\in[n/2, \sqrt{d}n/2]$ and  $\phi_d(r/(n/2))=0$ for all $r\ge \sqrt{d}n/2$. 
This gives the upper bound 
\[ 
\mathbb E[E_n(L_1, L_2, \le M)]\le  n^d c_4(M) \Big( L_1^{-d(\alpha-1)} - (L_2\wedge \sqrt{d}\tfrac{n}{2})^{-d(\alpha-1)}\Big),
\]
and finishes the proof.
\end{proof}

\subsection{One-dependent SI on GIRG}\label{sec:1-SI}
In this section we define the transmission dynamics of the SI-epidemic and state the main theorems about transmission time between far away vertices. We emphasize that in the SI model, every link $(u,v)$ of the network only transmits the disease at most once, when one endpoint is still infected and the other end is still susceptible, while the transmission time itself is random and can be represented as an exponential random variable $Y_{uv}$ with rate $r(u,v)$. $I(t)$ is then the set of nodes that got infected before time $t$. Equivalently, one may sample the random transmission times $Y_{uv}$ \emph{in advance}, store these variables and use them (only) when they are needed for the infection process. We also make use of the probabilisitc identity that an exponential variable of rate $r$ can be represented as an exponential random variable of unit rate, multiplied by the number $1/r$. 
Sampling the transmission times $Y_{uv}$ at the moment of the infection of $u$ or in advance does not change the course of the pandemic, but it allows us to switch viewpoints. This gives rise to the following definition, that is commonly known as `first passage percolation' in the mathematical literature \cite{auffinger201750}.
Below, we switch from the term \emph{transmission time} to \emph{transmission cost}, and use these phrases equivalently.

\begin{definition}[1-dependent SI (1-SI)]\label{def:1-FPP}
Consider a GIRG $G=(V,E)$ with the associated sequence of vertex-weights $(W_v)_{v\in V}$. For every edge $uv\in E$, draw an i.i.d.\ exponential random variable $Y_{uv} \sim \mathrm{Exp}(1)$, and set the \emph{(transmission) cost} of the edge $uv$ as 
\begin{equation}\label{eq:cost}
\cost{uv} := Y_{uv}\cdot(W_uW_v)^{\mu}\|u-v\|^{\zeta},    
\end{equation}
for fixed parameters $\mu,\zeta\ge0$. The costs define a \emph{cost-distance} $d_{\calC}(u,v)$ between any two vertices $u$ and $v$, which is the minimal total cost of any path between $u$ and $v$. We call $d_{\calC}$ the 1-dependent first passage percolation.
\end{definition}
Note that the cost-distance between any two nodes $u,v$ is the same as the transmission time of the SI epidemic to $v$, assuming that $u$ is the source vertex. The advantage of the cost-distance definition is that we do not need to specify the source node in advance.

With this definition, we can state our main theoretical results, which gives an (almost)  complete characterisation of how the cost-distance between two vertices scales as a function of their geometric distance. Here and below, we abbreviate with high probability by whp, and by this we mean that the probability of the respective event tends to $1$ in the large network limit.
In this theorem, we write $f(x)= \Theta(g(x))$ or $f(x)=O(g(x))$ if there exists strictly positive and strictly finite constants $c_1, c_2$ so that $c_1<f/g<c_2$, or $f/g < c_2$, respectively. We also write $f(x)=o(g(x))$ if $f/g$ tends to $0$ as $x$ tends to infinity.
Recall that  a supercritical GIRG has a linear sized connected component. 
\begin{theorem}\label{thm:main-supporting}
Consider $1$-SI as in Definition \ref{def:1-FPP} with parameters $\mu, \zeta\ge 0$ on a supercritical GIRG $G=(V,E)$ from Definition \ref{def:GIRG} on $n$ vertices with parameters $\tau>2, \alpha>1, d\ge 1$. Let $u_0 \in V$ denote the vertex at the origin and let $v\in V$ be a vertex\footnote{As we work on the translation invariant torus wlog we can assume that there is a vertex at $0$. The theorem holds for arbitrary vertices $v$ whose choice is predetermined before the generation of the graph. E.g. one may choose $v$ to be the closest vertex to the endpoint of the vector $f(n) \underline e$ for some function $f(n)<n^{1/d}/2$ and $\underline e$ a unit vector, or one may choose $v$ uniformly among all vertices.} , and let $\|v\|$ be its Euclidean distance from $0$. Then the transmission time $d_{\calC}(u_0,v)$ behaves as follows, on the event that both $u_0$ and $v$ are in the largest component.
  \begin{enumerate}
\setlength\itemsep{0em}  
    {\setlength\itemindent{25pt}
    \item[Phase (i)] If $\mu + \zeta/d < (3-\tau)/2$, then $d_{\calC}(u_0,v) = \Theta(1)$, i.e., the transmission time is not growing with the network size~$n$.\footnote{Formally, the probabilistic statement is that for every $\eps >0$ there is $C>0$ such that the bound $\Pr[d_{\calC}(u_0,v) > C] \le \eps$ holds uniformly for all $n\in \mathbb{N}$.}
    }
    {\setlength\itemindent{25pt}
    \item[Phase (ii)] 
    \begin{enumerate}
    \item[b)] If $\zeta/d < 2-\alpha$, then whp $d_{\calC}(u_0,v) \le (\log\|v\|)^{\Delta_1+o(1)}$, where $\Delta_1 := 1/(1-\log_2(\alpha+\zeta/d))$.
    {\setlength\itemindent{25pt}
    \item [c)] If $\mu+\zeta/d < 3-\tau$, then whp $d_{\calC}(u_0,v) \le (\log\|v\|)^{\Delta_2+o(1)}$, where $\Delta_2 := 1/(1-\log_2(\tau-1+\mu+\zeta/d))$.}
    \end{enumerate}
    }
    {\setlength\itemindent{25pt}
    \item[Phase (iii)] 
    \begin{enumerate}
    \item[d)] If $\alpha<2$ and $\zeta/d \le  2-\alpha +1/d$, then whp $d_{\calC}(u_0,v) \le \|v\|^{\eta_1+ o(1)}$, where $\eta_1 := \zeta-d(2-\alpha)$.
    {\setlength\itemindent{25pt}
    \item[e)] If $\alpha>2$, $\tau<3$ and  $\mu \le  (1-\zeta) \cdot (\frac{1}{d}+ \frac{3-\tau}{d(\alpha-2)})$, then whp $d_{\calC}(u_0,v) \le  \|v\|^{\eta_2\pm o(1)}$, where $\eta_2 := \zeta + \mu d \frac{\alpha-2}{\alpha-\tau+1}$.}
    {\setlength\itemindent{25pt}
    \item[f)] If $\tau<3$ and $\mu+\zeta/d \le 3-\tau+1/d$, then whp $d_{\calC}(u_0,v) \le \|v\|^{\eta_3+o(1)}$, where $\eta_3 := \zeta+\mu d-d(3-\tau)$.}
    \end{enumerate}
    }
    {\setlength\itemindent{25pt}
    \item[Phase (iv)] If none of the above systems of inequalities are satisfied on the parameters $\alpha, \tau, d, \mu, \zeta$, then if additionally $d\ge 2$ then whp $d_{\calC}(u_0,v)\le C\|v\|$ for some constant $C$. 
    
    For dimension $1$, if none of the above systems of inequalities are satisfied then  $d_{\calC}(u_0,v) \le  \|v\|^{1+o(1)}$. (We recall that the model is supercritical in dim $1$ if either $\tau<3$ or $\alpha<2$).
}
\end{enumerate}  
\end{theorem}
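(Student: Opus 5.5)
All bounds in Theorem~\ref{thm:main-supporting} are upper bounds on $d_{\calC}(u_0,v)$. In every phase I would prove them by building an explicit cheap path, using the same \emph{multiscale (hierarchical) construction} that is standard for long-range percolation distances. The plan is to reduce everything to one recursive lemma.

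Suppose $x,y$ are two ``good'' regions at Euclidean distance $r$. We want, with probability $1-o(1)$ uniformly on large scales, two sub-regions $B_x,B_y$ at distance $r^{\gamma}$ from $x$ and $y$ respectively, for some $\gamma<1$, together with one edge between $B_x$ and $B_y$ whose cost is at most $r^{\eta}$ (or $O(1)$ in the quasi-exponential regime). Iterating this $k$ times replaces the distance $r$ by $r^{\gamma^k}$ and produces $2^k$ bridging edges at scale $r^{\gamma^j}$. The total cost is then bounded by the sum over scales
\[
\sum_{j\ge 0} 2^{j}\, \mathrm{cost}\big(r^{\gamma^j}\big).
\]
In the quasi-exponential phase each bridge costs $O(1)$ at any scale. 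Stopping at $k\approx \log\log r/\log(1/\gamma)$ gives $2^k=(\log r)^{1/\log_2(1/\gamma)}$, which yields $\Delta=1/(1-\log_2(1/\gamma)^{-1}\cdots)$. Concretely, the optimal $\gamma$ is $(\alpha+\zeta/d)/2$ in case b) and $(\tau-1+\mu+\zeta/d)/2$ in case c), which reproduces $\Delta_1$ and $\Delta_2$. In the polynomial phase each bridge costs $(r^{\gamma^j})^{\eta}$, and the sum is dominated by the top scale, giving $\|v\|^{\eta+o(1)}$ provided $2\gamma^{\eta}<1$ is not needed beyond geometric summation.

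The second step is the probabilistic input for the bridge. For case b) and the weak-tie polynomial case d), take two boxes of volume $r^{d\gamma}$ around $x$ and $y$ and restrict to vertices of bounded weight, which have cost factor $O(1)$. By the computation in Theorem~\ref{thm:edge-length}, the restricted edge-length tail $E_n(L,\le M)\asymp L^{-d(\alpha-1)}$ implies that the expected number of edges between the boxes is about $r^{2d\gamma}r^{-d\alpha}$. Each such edge costs $Y\cdot r^{\zeta}$, and the minimum of $N$ exponentials is about $1/N$. Balancing gives cost $r^{\zeta-d(2\gamma-\alpha)}$. Taking $\gamma\to1$ in the polynomial case gives $\eta_1=\zeta-d(2-\alpha)$. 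In the quasi-exponential case, choose $\gamma$ with $2d\gamma-d\alpha=\zeta$.

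For the hub cases c) and f), the bridge instead joins the maximal-weight vertices of the two boxes, which have weight about $r^{d\gamma/(\tau-1)}$. For the hybrid case e), optimise over an intermediate weight $w$ and count edges between weight-$w$ vertices. In each case the cost exponent is obtained by optimising expected count against the penalty $(W_uW_v)^{\mu}r^{\zeta}$. This is exactly where $\eta_2$ with the factor $(\alpha-2)/(\alpha-\tau+1)$ arises.

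The third ingredient is connecting $x$ (or $y$) to its sub-box $B_x$ cheaply. This is handled recursively, so the recursion needs good regions to be dense at every scale. I would require that in every box, whp, a positive fraction of sub-boxes contain a vertex of the giant component with bounded local cost to a ``centre''. The failure probabilities must be summable over scales, using the stretched-exponential concentration of GIRG edge counts (Azuma as in Theorem~\ref{thm:edge-length}). Alternatively, one can reserve independent randomness for each level by sprinkling. Phase (i) follows separately from the known explosion argument on GIRG~\cite{komjathy2020explosion}. There the degree-increasing path $K\to K^{s}\to\dots$ has summable costs exactly when $\mu+\zeta/d<(3-\tau)/2$, because the edge between weights $w$ and $w^s$ spans distance about $w^{(1+s)/d}$ and costs $w^{\mu(1+s)+\zeta(1+s)/d}$ divided by the number of candidates. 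Phase (iv) for $d\ge2$ follows from a renormalisation/Peierls argument: a path of nearest-neighbour good boxes of bounded weight and length $O(\|v\|)$ exists, and its edges have bounded costs. The same argument in $d=1$ loses a factor $\|v\|^{o(1)}$ because it must jump over bad stretches.

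The main obstacle is the independence across the $2^k$ branches and the $k$ levels of the hierarchy: the sub-boxes chosen at one level depend on edges revealed earlier. I would handle this by revealing edges scale by scale in disjoint weight and distance ranges, so that each bridge event uses fresh randomness, and by union-bounding the $2^k\le(\log r)^{O(1)}$ failure events against per-bridge failure probabilities of order $\exp(-r^{\Omega(\gamma^k)})$. To keep that bound effective, I stop the recursion at scale $(\log r)^{C}$ and connect the remaining pieces by the bounded-cost local paths.
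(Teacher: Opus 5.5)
Your overall architecture matches the paper: hierarchical bridges between boxes of side $r^{\gamma}$, $2^k$ remaining gaps, constant $k$ in the polynomial phase and $k\approx\log\log r/\log(1/\gamma)$ in the polylogarithmic phase. Your weak-tie bridges for b) and d) and the intermediate-weight optimisation for e) are also right. The genuine gap is in the hub cases c) and f), where you let the bridge join the \emph{maximal-weight} vertices of the two boxes, of weight $r^{d\gamma/(\tau-1)}$. This is the choice $x=1$ in the paper's parametrisation, and it fails.

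There are only $O(1)$ such vertices per box. Since $\tau<3$, their weight product $r^{2d\gamma/(\tau-1)}$ overshoots $r^{d}$, so the extra weight buys no connection probability and only adds penalty. Each candidate bridge therefore has typical cost $r^{\zeta+2\mu d\gamma/(\tau-1)}$. With $O(1)$ candidates, the bridge cost is polynomial in $r$ whenever $\mu+\zeta>0$. Consequently c) gives no polylogarithmic bound at all. In f), with $\gamma\uparrow 1$, you get the exponent $\zeta+2\mu d/(\tau-1)$, strictly larger than $\eta_3=\zeta+\mu d-d(3-\tau)$.

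The hubs to use are those of weight $\approx r^{d/2}$, just heavy enough that $W_uW_v\approx\|u-v\|^d$, so that they connect with constant probability. A box of volume $r^{d\gamma}$ contains about $r^{d\gamma-d(\tau-1)/2}$ of them, giving $r^{2d\gamma-d(\tau-1)}$ candidate edges of typical cost $r^{\mu d+\zeta}$. Requiring $r^{\Omega(1)}$ candidates of cost at most $r^{s}$ gives $2\gamma>\tau-1+\mu+\zeta/d$ for $s=0$, hence $\Delta_2$, and $s>\eta_3$ as $\gamma\uparrow 1$. So the weight must be optimised in every case, as the paper does through the single exponent $\Lambda(s,\gamma,x,\dots)$, not only in e).

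A second missing ingredient concerns the bridge endpoints. Your recursion continues from the endpoints themselves, but in c), e) and f) these have polynomially large weight, and every edge at such a vertex carries the factor $W^{\mu}$. If the recursion finally enters a hub $b_x$ through an uncontrolled edge, that edge alone costs about $r^{\mu d/2}$. This destroys the polylogarithmic bound in c), and it can exceed $r^{\eta_3}$ near the lower boundary of f).

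The paper instead requires a $3$-edge bridge $y_0b_0b_vy_v$ whose outer endpoints $y_0,y_v$ have low weight and are reached from the hubs by cheap edges. For example, in phase c) a vertex of weight $w$ has about $w$ low-weight neighbours within distance $w^{1/d}$, so its cheapest exit costs about $w^{\mu+\zeta/d-1}=o(1)$. The recursion is then run only between low-weight vertices, and \cite{komjathy2023four} shows this changes the failure probability by at most a constant factor.

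Finally, the leftover gaps are bridged at linear cost, not bounded cost. The stopping scale must therefore be $(\log r)^{o(1)}$, and at least $(\log\log r)^{\omega(1)}$ for the union bound. Stopping at $(\log r)^{C}$ adds $C$ to the exponent $\Delta$.
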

Note that Theorem \ref{thm:main-supporting} gives -- except in Phase (iv) -- only upper bounds on the transmission times. As there is some overlap between the conditions of each subphase, one needs to take the sharpest upper bound for each given set of parameters --  i.e., the one with the smallest exponent $\Delta$ or $\eta$ -- so that the conditions of the bound are satisfied by the specific parameters. This gives rise to the optimal upper bound exponent $\eta_\star$ that we shortly define.  In the polynomial growth Phase (iii) and in the pure geometric Phase (iv), we also have matching lower bounds. To be able to state the matching lower bound, we need to define, for each value of $\alpha, \tau, d, \mu, \zeta$ the optimal exponent.  The following definition makes this precise:  
\begin{definition}[Optimal spreading exponents]\label{def:lower-exponent}
Consider the parameters $\tau>2, \alpha>1, d\ge 1$ and $\mu, \zeta\ge 0$. Assume that $\zeta/d>2-\alpha$, and $\mu+\zeta/d>3-\tau$. Define the optimal upper-bound exponent $\eta_\star$  and an optimal lower-bound exponent $s_\star$ as
\begin{equation}\label{eq:eta-star}
\begin{aligned}
\eta_\star&:=\min\big\{\eta_1^{\mathbf 1\{\alpha<2,\, \zeta/d\le 2-\alpha+1/d\}},\eta_2^{\mathbf 1\{\alpha>2,\, \tau<3, \,\mu \le (1-\zeta)(1/d + (3-\tau)/(d(\alpha-2))\}}, \eta_3^{\mathbf 1\{\tau<3,\, \mu+\zeta/d \le 3-\tau+1/d\}}\big\}\\
s_\star&:=\min\Big\{\frac{\eta_1}{\mathbf 1\{\alpha\le 2\}},\frac{\eta_2}{\mathbf 1\{\alpha>2,\, \tau\le 3\}}, \frac{\eta_3}{\mathbf 1\{\tau\le 3\}}\Big\}.
\end{aligned}
\end{equation}
where $\mathbf 1\{a<b, \dots, c<d \}=1$ if the listed inequalities $a<b, \dots, c<d$ are all satisfied and otherwise it equals $0$. 
\end{definition}
 The conditions $\zeta/d>2-\alpha$ and $\mu+\zeta/d>3-\tau$ exclude the possibility of (the much faster) quasi-exponential spreading. The formula for $\eta_\star$ filters out the minimal exponent among $\eta_1, \eta_2, \eta_3$ respecting the conditions for the exponent to `exist' in Phase (iii), and otherwise it gives $\eta_\star=1$, implying that we are in the pure geometric growth Phase (iv). The optimal lower-bound exponent $s_\star$ is related to the lower bound. As we will see below in Claim \ref{claim:admissible}, $\eta_\star=\min(s_\star, 1)$. We now state the corresponding lower bound for the polynomial and pure geometric spreading. 
\begin{theorem}[Corresponding lower bound on 1-SI]\label{thm:main-lower}
Consider the same setting as in Theorem \ref{thm:main-supporting} and assume $\zeta/d>2-\alpha$, $\mu+\zeta/d>3-\tau$. If $s_\star\le 1$ then whp $d_{\mathcal C}(u_0,v)\ge \|v\|^{s_\star - o(1)}$. Further, if $s_\star >1 $ then whp $d_{\mathcal C}(u_0,v)\ge c \|v\|$ for some constant $c>0$ for all dimensions $d\ge 1$.
\end{theorem}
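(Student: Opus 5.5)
The plan is a multiscale first-moment argument over \emph{hierarchical} path structures. I would show that, whp, every path from $u_0$ to $v$ contains a sub-structure whose total cost is at least $\|v\|^{s_\star-o(1)}$ (or $c\|v\|$ when $s_\star>1$). A naive union bound over all paths fails, because the number of paths grows too fast. So I would not count paths. Instead I would count \emph{cost-cheap coarse-grained skeletons}. Fix a small $\varepsilon>0$ and say that a pair $(x,y)$ at Euclidean distance $D$ is \emph{$D$-cheap} if $d_{\calC}(x,y)\le D^{s}$ with $s=\min(s_\star,1)-\varepsilon$. The goal is to bound, by induction on the scale $D=2^k$, the probability $p(D)$ that a fixed pair of (Poisson) points at distance about $D$ is joined by a path of cost at most $D^{s}$. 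For $s_\star\le 1$ the target is $p(D)\le D^{-\delta}$ for some $\delta>0$, which is summable against the polynomially many relevant pairs; for $s_\star>1$ I would instead use a linear-cost variant $d_{\calC}(x,y)\le cD$.

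The key step is a renormalisation inequality. Any cheap path between points at distance $D$ either stays within a ball of radius $\asymp D$ using only edges shorter than $D^{1-\varepsilon'}$, or it uses some edge $e=\{a,b\}$ of length $\ell\ge D^{1-\varepsilon'}$. In the first case I split the path at its first exit from concentric annuli, which yields two cheap sub-pairs at scale $D/2$ on disjoint regions. In the second case the path decomposes as a cheap segment $x\to a$, the long edge, and a cheap segment $b\to y$. The cost of the long edge is $Y_{ab}(W_aW_b)^\mu\ell^\zeta$, so requiring it to be at most $D^s$ forces $Y_{ab}\le D^{s}\ell^{-\zeta}(W_aW_b)^{-\mu}$. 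Integrating this against the GIRG edge density, using $\Lambda_e$-type computations as in Theorem~\ref{thm:edge-length} (split according to whether $W_aW_b\lessgtr \ell^d$ and according to the weights of the endpoints), gives the expected number of such cheap long edges near $x$. Exactly three types of term arise: low-weight long edges, where the integral converges with exponent governed by $\alpha$ and produces $\eta_1$; edges between hubs with $W_aW_b\approx\ell^d$, governed by $\tau$ and producing $\eta_3$; and the intermediate hybrid optimisation, in which the endpoint weight $w$ is optimised in $w^{-\mu}$ times the weight density times $(w^2/\ell^d)^\alpha$, producing $\eta_2$. The exponents $s_\star$ in Definition~\ref{def:lower-exponent} are by construction the values at which these expected counts stop decaying. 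The indicator conditions ($\alpha\le2$, $\tau\le3$) identify when each mechanism has a nontrivial contribution at all. I would cite Claim~\ref{claim:admissible} for the identity $\eta_\star=\min(s_\star,1)$ so that the cases line up.

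Combining the two cases gives a recursion of the form
\[
p(D)\le C p(D/2)^2 + \sum_{\ell} N(D,\ell)\, p(\cdot)\,p(\cdot),
\]
where $N(D,\ell)$ is the expected number of cheap edges of length about $\ell$. Using BK/van den Berg–Kesten-type disjointness for the two sub-segments, and the fact that the sub-segments have smaller scale, I would close the induction with $p(D)\le D^{-\delta}$ once $s<s_\star$. For the bounded-cost base case, local cost is at least a positive constant times the number of hops in a ball, because only bounded-weight, bounded-length edges are cheap there. This yields the linear bound $c\|v\|$ when $s_\star>1$: a path that avoids cheap long edges at every scale must essentially traverse $\|v\|$ unit cells, each contributing a cost of order $\Omega(1)$ with exponential tails, and a Peierls/lattice-animal counting argument on coarse cells finishes that case, in all dimensions $d\ge1$.

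The main obstacle is the dependence introduced by hubs. A single high-weight vertex creates many correlated long edges, so the two sub-segments are not independent, and the union over possible long edges cannot be split naively. I would try first to condition on the weights and to treat ``vertex plus its incident edges'' as the atom of the BK inequality, truncating weights at $D^{d/(\tau-1)+\varepsilon}$, which is never exceeded in a ball of radius $D$ whp. The delicate part is verifying that the hybrid optimisation really yields exactly $\eta_2$ and that no other mixed strategy beats $s_\star$. To handle this, I would parametrise all edges by $(\log\ell/\log D,\log W_a/\log D,\log W_b/\log D)$ and check that the linear program maximising the expected count is attained at one of the three vertices giving $\eta_1,\eta_2,\eta_3$.
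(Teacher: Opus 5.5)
Your recursion cannot close as set up. The problem is quantitative, not technical.

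\textbf{Entropy beats the available decay.} The inequality $p(D)\le Cp(D/2)^2+\sum_\ell N(D,\ell)\,p(\cdot)\,p(\cdot)$ hides sums over intermediate vertices whose positions are not fixed. The exit point $z$ of a short-edge path from $B_{D/2}(x)$ can be any of the $\asymp D^{d-\varepsilon'}$ vertices in a shell of width $D^{1-\varepsilon'}$. The endpoints $a,b$ of the long edge can lie anywhere in the cheaply reachable regions of $x$ and $y$, not only near $x$ and $y$.

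Against this polynomial entropy you only have the decay of $p$, and near the threshold that decay is necessarily tiny. In phase (iii), run the upper-bound construction with $s=s_\star-\varepsilon$ and $\gamma=1-O(\varepsilon)$. The expected number of cheap bridges between $B_{D^\gamma}(x)$ and $B_{D^\gamma}(y)$ is $D^{\Lambda}$ with $\Lambda=-O(\varepsilon)$, and the local pieces cost only $D^{\gamma(\eta_\star+o(1))}\ll D^s$. Hence $p(D)\ge D^{-C\varepsilon}$, so $\delta=O(\varepsilon)$ is the best you could ever prove.

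But then every summand in your first case is already at least $D^{-O(\varepsilon)}$. The right-hand side is therefore at least $D^{d-\varepsilon'-O(\varepsilon)}\gg1$, and the induction $p(D)\le D^{-\delta}$ is vacuous. For the same reason, $D^{-\delta}$ is not ``summable against the polynomially many relevant pairs''.

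\textbf{Other unsupported steps.} From $c_1+c_2\le D^s$ you cannot conclude that both halves cost at most $(D/2)^s$. You would need a budget convolution exploiting $2(D/2)^s=2^{1-s}D^s$; otherwise the effective exponent drifts upward over the $\log_2 D$ levels. BK/Reimer does not apply, because the long edge and both adjacent segments depend on the same weights $W_a,W_b$. Conditioning on all weights leaves weight-dependent conditional probabilities whose products you never control. The $s_\star>1$ case has the same problem: edges of length $L$ and cost at most $cL$ occur at a polynomially small but positive density at every scale. So the unit-cell Peierls step would have to be interleaved with the same non-closing recursion.

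\textbf{The missing idea: do not count paths at all.} This is what the paper does. Its only probabilistic input is Lemma~\ref{lem:no_long_cheap_edge}, a first-moment bound on the number of \emph{edges} in a box of side $A$ with length in $[L,L^{1+\varepsilon}]$ and cost at most $L^s$. That computation is where $\eta_1,\eta_2,\eta_3$, and hence $s_\star$, arise. Your linear program in $(\log\ell,\log W_a,\log W_b)$ would be a reasonable substitute for it, but it is only the input.

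The bound is fed into the multiscale scheme of \cite{komjathy2024polynomial}, with $A_k=A_1(k!)^2$ and $L_k=A_k/(100k^2)$. A box is $\eta$-good if neither it nor its slight shifts contain an edge longer than $L_k$ of cost at most $L_k^\eta$, and all but $3^d$ of its child boxes are good. The chance of a long cheap edge in a scale-$k$ box is only $A_k^{-\kappa}k^{O(1)}$ with a tiny $\kappa>0$. Since $A_k$ grows like $(k!)^2$ while each scale has only $k^{O(d)}$ child boxes, this easily beats the entropy. So the boxes around the origin are good at all large scales.

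The lower bound is then deterministic, by induction over scales. A path between far-apart vertices of a good box either uses an expensive long edge or contains many disjoint segments inside good child boxes. The budget bookkeeping is done by sublinearity, $\sum_i\mathcal D(\pi_i)^\eta\ge(\sum_i\mathcal D(\pi_i))^\eta$. Taking $\eta=1$ gives the linear bound when $s_\star>1$, in every dimension.
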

\begin{remark}[Relation between upper and lower bound] Below, in Claim \ref{claim:admissible} we prove that $\eta_\star=\min(s_\star,1)$. This means that up to a $o(1)$ term in the exponent corresponding to logarithmic correction terms, the upper and the lower bounds in the polynomial growth phase (iii) are matching up to $o(1)$ additive term in the exponent, as in phase (iii) $\eta_\star=s_\star<1$. Further, in the pure geometric phase Theorems \ref{thm:main-supporting} and \ref{thm:main-lower} are matching up to constant factor. Finally, Theorem \ref{thm:main-lower} also gives lower bounds when $\alpha=2$ and/or when $\tau=3$. 
In this sense,  the lower bound is stronger than the upper bound, as it includes the phase boundaries $\alpha=2$ and $\tau=3$. The lower bound is valid also in dimension $1$.  However, when  $\tau>3, \alpha>2$, it holds trivially as $0 $ and $v$ are unlikely to be in the same small component, see \cite{deijfen2013scale}.
\end{remark}
\begin{remark}
When $v$ is chosen uniformly at random in the box or torus $[-n^{1/d}/2, n^{1/d}/2]^d$ in the above theorem, whp the Euclidean norm of $v$ satisfies $\|v\|\sim n^{1/d}$. Theorems \ref{thm:main-supporting}-\ref{thm:main-lower} can be translated to get the growth of the number of infected vertices $I(t)$ as explained in the main text. Generally, if one has a monotone increasing function $f$ with inverse $f^{(-1)}$ so that $d_{\mathcal C}(0,v)\asymp f(\|v\|)$, then for sufficiently large $t$ it holds in the infinite network that 
\[ 
I(t)=\{v: d_\mathcal C(u,v)\le t\}\sim \{v: f(\|v\|)\le t\} \sim f^{(-1)}(t)^d,
\]
\end{remark}
For finite networks one observes the growth $I(t)\sim f^{(-1)}(t)^d$ after the end of the initial phase where the pandemic exists its local neighbourhood, and before saturation effects become significant. In the statement above, the $\sim$ symbol can be made explicit based on the exact form of $f$: for polynomial $f$, one recovers the growth exponent $\psi= 1/\eta$ in $t^{d\psi(1+o(1))}$ but not constant prefactors or additive shifts. For quasi-exponential growth one recovers the stretch exponent $\varphi=1/\Delta$ in  $\exp( t^{\varphi+o(1)})$, but not constant prefactors in the exponent.

\subsubsection*{Phase transitions} 
The exact values of the $\Delta_i$s and $\eta_i$s enable us to unfold the \emph{phase transitions} between the four phases (i)--(iv).
Our results imply that the curves $\mu+\zeta/d=(3-\tau)/2$ and $\mu+\zeta/d = 3-\tau$ are phase boundaries for networks with power-law exponent $2<\tau<3$. 
Remarkably, it is rather `rare' to find strictly exponential growth (with $\Delta=1$) in synthetic geometric network models. Nevertheless this is conjectured when $\tau>3$, $\alpha=1$ and $\zeta=0$, and is proven for the case $\tau=\infty$ (i.e.\ Poisson degree distribution), $\alpha=1$ and $\zeta=0$ in \cite{trapman2010growth}. This is consistent with our formula for $\Delta$ as $\alpha+\zeta/d\downarrow 1$ and $\tau>3$ is fixed, as we also obtain $\Delta_1\to 1/(1-\log_2(1))=1$.
Interestingly, when we approach the phase boundary of explosion Phase (1) (Area A on Figures \ref{fig:enlarged-gowalla-phase-diagram}, \ref{fig:enlarged-mu-zeta-small-alpha}, \ref{fig:enlarged-mu-zeta-large-alpha}, \ref{fig:enlarged-alpha-tau}) , from the hub-dominated part of the quasi-exponential phase (Area C on the same figures) by letting $\mu+\zeta/d \downarrow (3-\tau)/2$ (and keeping $\alpha>2$ fixed), then $\Delta_2$ does not converge to $1$, but to
$1/(2-\log_2(\tau+1)) =:\Delta_{\tau}$. 
So, for the whole range 
$\tau\in(2,3)$, 
$\Delta_{\tau} \ge 1/(2-\log_2(3))>2.4>1$.
This leaves two possibilities: either our upper bound on $\Delta$ is not sharp; or the early growth shows a discontinuous phase transition between the explosive and the quasi-exponential phases. As  either $\tau\uparrow 3$, or as $\tau+\zeta/d+\mu\uparrow 3$, both $\Delta_\tau$ and $\Delta_2$ approach infinity, indicating that we are nearing the polynomial phase. Similarly, when letting either $\zeta/d\downarrow 2-\alpha$ or $\mu+\zeta/d \downarrow 3-\tau$, and we approach the phase boundary between polynomial and quasi-exponential growth from the polynomial side (that is, we approach the boundaries between areas D and B or the boundary between areas C and F),  then the exponents $\eta_1, \eta_3$ in the polynomial phase tend to $0$. This indicates that the transition between the quasi-exponential phase and the polynomial phase is smooth for fixed $\tau\in(2,3)$. 

We explain now the boundary between area D and G on Figure \ref{fig:enlarged-alpha-tau}. Here, Theorem \ref{thm:main-supporting} proves a gap in the polynomial regime as $\tau$ crosses $3$, the threshold between infinite and finite degree variance. Namely, as $\tau\uparrow 3$ while keeping $\alpha>2$ fixed, the exponents $\eta_1,\eta_2$ approach $\zeta+\mu d$. Further, since we assume $\mu+\zeta/d<3-\tau+1/d$, we have that $\zeta+\mu d <1$ with strict inequality. Hence, when $\alpha>2$ and throughout the whole range $\tau\in (2,3)$, the exponent $\eta$ stays strictly below $1$ (and tends to $\zeta+\mu d$ as $\tau\uparrow3$).
On the other hand, as soon as $\tau$ gets larger than $3$ we get in the purely geometric phase (iv), where the exponent of the cost-distance is $1$.  
This indicates a non-smooth phase transition of the cost-distance as $\tau$ crosses $3$ for fixed $\alpha>2$. This is not so surprising since $\tau=3$ is exactly the phase transition between ultra-small and linear graph distances (for $\alpha>2$) in the network, see \eqref{eq:graph-distance}.

\subsection{Proof sketch of the polynomial and quasi-exponential phases}
In this section we provide more details on the proof of phases $(ii)-(iii)$ of Theorem \ref{thm:main-supporting}. Our goal is to find a path $\pi(u_0,v)$ present in the network between $u_0$ and any far away node $v$  with cost (transmission time)  $\mathcal C(\pi)$ that satisfies the desired bound of Theorem \ref{thm:main-supporting}. While this path may not provide the actual shortest path, it does give a possible infection path, and thus serves as an upper bound on the actual (shortest) infection path, so we may conclude our theorem.

We are going to \emph{construct} a path $\pi$ in $G$ between $0$ and $v$ such that its cost $\cost{\pi}$ satisfies the corresponding upper bound. Instead of a sequential construction starting from the source node $0$ and adding edges one-by-one to the path, we rather build the path starting from its approximate `middle' corresponding to the longest edge qua Euclidean length, then connecting both endpoints of this edge to $u_0$ and $v$, respectively, in an iterative manner. 

Before diving into the proof, we introduce some useful notation. Recall that each vertex is equipped with a vertex-weight $W_v$ and that the transmission rate also depends on this weight. For a vertex $v\in V$, a length $r>0$ and a weight interval $I_w \subset [1,\infty)$, we denote by $B_r(v)$ the box of side length $r$ centred at $v$ (intersected with the ground space $[-n^{1/d}/2, n^{1/d}/2]$), and by $B_r(v) \times I_w$ the set of vertices located in $B_r(v)$ with weight in $I_w$. For a subset $\pi \subset E$, we denote by $\cost{\pi} := \sum_{e\in\pi}\cost{e}$ its total transmission time or \emph{cost}.

\subsubsection*{Idea of the iterative construction}
Despite the different scaling of $\cost{\pi}$ in these two phases, the same approach is used to construct $\pi$. The idea is the following. For some constant $\gamma = \gamma(d, \mu, \zeta, \tau, \alpha)\in(0,1)$ (which will be optimised to yield the best possible bound), we search for a cheap edge connecting the balls $B_{r_1}(0)$ and $B_{r_1}(v)$ of radius $r_1=\|v\|^{\gamma}$. More precisely, we actually need to find a cheap edge $b_0b_v$ connecting some vertices $b_0\in B_{r_1}(0), b_v\in B_{r_1}(v)$ such that these vertices are themselves connected to low-weight vertices via cheap edges $b_0y_0\subset B_{r_1}(0)$ and $b_vy_v\subset B_{r_1}(v)$. Indeed, if we just require the edge $b_0b_v$ to be cheap, this could lead to choosing high-weight vertices $b_0,b_v$ such that any other outgoing edge has a too-large cost because of the weight penalty. The three-edge path $y_0b_0b_vy_v$ will be now roughly in the middle of the construction.  

After having found such a $3$-edge path $y_0b_0b_vy_v$ with low-weight end nodes $y_0, y_v$ connecting $B_{r_1}(0)$ and $B_{r_1}(v)$, 
we may now search for two paths connecting $0$ and $y_0$, and connecting $v$ and $y_v$, respectively. In particular, the distance between the node-pairs we need to connect is now at most  $\frac{\sqrt{d}}{2}r_1 \le \frac{\sqrt{d}}{2}\|v\|^{\gamma}$, as $\gamma <1$ and $\|v\|$ large, is much smaller than the original distance $\|v\|$ (between $0$ and $v$). Moreover, we can use the same idea to connect these pairs of nodes, which allows us to find cheap $3$-edge paths connecting some low-weight vertices in $B_{r_2}(0)$ and $B_{r_2}(y_0)$, with $r_2=r_1^\gamma=\|v\|^{\gamma^2}$. Also, we need cheap $3$-edge paths between  $B_{r_2}(v)$ and $B_{r_2}(y_v)$, respectively. We keep iterating this procedure for some appropriately chosen value $k=k(\|v\|, d, \mu, \zeta, \tau, \alpha)$ iterations, until we can connect the remaining gaps using linear-cost paths that have negligible costs. See Figure \ref{fig:hierarchy} for an illustration of the first and second iterations of this construction. We note that after each iteration the number of new paths to be found doubles, thus after $k$ iterations we have to fill $2^k$ `gaps'.

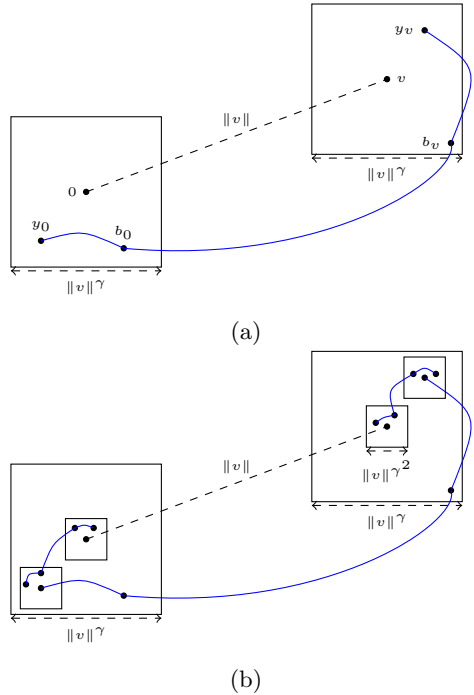
\begin{figure}[!h]
\centering

\begin{minipage}[c]{0.5\textwidth}
\centering

\begin{tikzpicture}[scale=0.5]
    \filldraw (0,0) circle (2pt) node[left] {\tiny $0$};
    \filldraw (8,3) circle (2pt) node[right] {\tiny $v$};
    \draw[dashed] (0,0) -- (8,3);
    \draw (4,1.5) node[above] {\tiny $\|v\|$};
        
    \draw (-2,-2) rectangle (2,2);
    \draw[dashed, <->] (-2,-2.1) -- (2,-2.1);
    \draw (0,-2.1) node[below] {\tiny $\|v\|^{\gamma}$};
    \draw (6,1) rectangle (10,5);
    \draw[dashed, <->] (6,0.9) -- (10,0.9);
    \draw (8,0.9) node[below] {\tiny $\|v\|^{\gamma}$};
        
    \filldraw (1,-1.5) circle (2pt) node[above] {\tiny $b_0$};
    \filldraw (9.7,1.3) circle (2pt) node[left] {\tiny $b_v$};
    \draw[blue] (1,-1.5) .. controls (6.5,-2) and (10,0.5) .. (9.7,1.3);
    \filldraw (-1.2,-1.3) circle (2pt) node[above] {\tiny $y_0$};
    \draw[blue] (1,-1.5) .. controls (-0.1,-1) .. (-1.2,-1.3);
    \filldraw (9,4.3) circle (2pt) node[left] {\tiny $y_v$};
    \draw[blue] (9,4.3) .. controls (10.5,3.3) .. (9.7,1.3);
\end{tikzpicture}
\subcaption{}
\label{fig:hierarchy-first-iter}
\end{minipage}
\begin{minipage}[c]{0.5\textwidth}
\centering

\begin{tikzpicture}[scale=0.5]
    \filldraw (0,0) circle (2pt);
    \filldraw (8,3) circle (2pt);
    \draw[dashed] (0,0) -- (8,3);
    \draw (4,1.5) node[above] {\tiny $\|v\|$};
        
    \draw (-2,-2) rectangle (2,2);
    \draw[dashed, <->] (-2,-2.1) -- (2,-2.1);
    \draw (0,-2.1) node[below] {\tiny $\|v\|^{\gamma}$};
    \draw (6,1) rectangle (10,5);
    \draw[dashed, <->] (6,0.9) -- (10,0.9);
    \draw (8,0.9) node[below] {\tiny $\|v\|^{\gamma}$};
        
    \filldraw (1,-1.5) circle (2pt);
    \filldraw (9.7,1.3) circle (2pt);
    \draw[blue] (1,-1.5) .. controls (6.5,-2) and (10,0.5) .. (9.7,1.3);
    \filldraw (-1.2,-1.3) circle (2pt);
    \draw[blue] (1,-1.5) .. controls (-0.1,-1) .. (-1.2,-1.3);
    \filldraw (9,4.3) circle (2pt);
    \draw[blue] (9,4.3) .. controls (10.5,3.3) .. (9.7,1.3);
    
    \draw (-0.55,-0.55) rectangle (0.55,0.55);
    \draw (-1.75,-1.85) rectangle (-0.65,-0.75);
    \draw (7.45,2.45) rectangle (8.55,3.55);
    \draw[dashed, <->] (7.45,2.35) -- (8.55,2.35);
    \draw (8,2.35) node[below] {\tiny $\|v\|^{\gamma^2}$};
    \draw (8.45,3.75) rectangle (9.55,4.85);  

    \filldraw (-1.6,-1.2) circle (2pt);
    \filldraw (-1.2,-0.9) circle (2pt);
    \draw[blue] (-1.6,-1.2) .. controls (-1.55,-0.9) .. (-1.2,-0.9);
    \filldraw (-0.3,0.3) circle (2pt);
    \filldraw (0.2,0.3) circle (2pt);
    \draw[blue] (-0.3,0.3) .. controls (-0.05,0.4) .. (0.2,0.3);
    \draw[blue] (-0.3,0.3) .. controls (-0.9,-0.1) .. (-1.2,-0.9);
    \filldraw (7.7,3.1) circle (2pt);
    \filldraw (8.2,3.3) circle (2pt);
    \draw[blue] (7.7,3.1) .. controls (7.9,3.25) .. (8.2,3.3);
    \filldraw (8.7,4.4) circle (2pt);
    \filldraw (9.3,4.4) circle (2pt);
    \draw[blue] (8.7,4.4) .. controls (9,4.6) .. (9.3,4.4);
    \draw[blue] (8.7,4.4) .. controls (8,4) .. (8.2,3.3);
\end{tikzpicture}
\subcaption{}
\label{fig:hierarchy-second-iter}
\end{minipage}

\caption{Illustration of the $3$-edge paths constructed during the first (a) and second (b) iterations of the procedure.}   
\label{fig:hierarchy}

\end{figure}

\subsubsection*{Probability computation for one bridge}
After $i$ iterations of the construction described above, we are left with $2^i$ gaps: we need to connect pairs of (low-weight) nodes of the form $(z,z')$ with $\|z-z'\|=O(r_i)$ with $r_i =\|v\|^{\gamma^i}$. For such a pair $(z,z')$, we want to find a cheap $3$-edge bridge $y_zb_zb_{z'}y_{z'}$ with $b_z,y_z\in B_{r_{i+1}}(z)$, $b_{z'},y_{z'}\in B_{r_{i+1}}(z')$, and $y_z,y_{z'}$ having low weight. For the sake of simplicity, we focus on the easier task of finding a cheap edge $b_zb_{z'}$ with $b_z\in B_{r_{i+1}}(z)$, $b_{z'}\in B_{r_{i+1}}(z')$, but without restrictions on the weights of these vertices. It turns out that, for a given cost-bound $K$ and failure probability $\eps$, whenever such an edge of cost $\Theta(K)$ can be found with probability $\ge 1-\eps$, then a $3$-edge path of cost $\Theta(K)$ satisfying the above conditions can also be found with probability $\ge 1-O(\eps)$, see \cite{komjathy2023four}, which permits this simplification. 

For a fixed factor $x=x(d, \mu, \zeta, \tau, \alpha)\in[0,1]$ (which will be optimised jointly with $\gamma=\gamma(d, \mu, \zeta, \tau, \alpha)$ to yield the best possible bounds), define $w_x := r_{i+1}^{x d/(\tau-1) }=\|v\|^{xd\gamma^{i+1}/(\tau-1)}$ and the weight interval $I_x := [w_x, 2w_x]$. We do this parametrization since we search near the typical maximum degree in the ball, which corresponds to taking $x=1$. We will search for cheap edges connecting the balls $B_{r_{i+1}}(z) $ and $B_{r_{i+1}}(z')$ using vertices with weight in the interval $I_x$. Note that any pair of vertices $b_z\in B_{r_{i+1}}(z), b_{z'}\in B_{r_{i+1}}(z')$ satisfies $\|b_z-b_{z'}\| = \Theta(r_i)$, since $\|z-z'\|$ is of order $r_i$, which is much larger than $r_{i+1}=r_i^\gamma$.
For a given constant  $s\in [0,1]$, we can compute the expected number of edges $\E[|E(z,z')|]$ between $B_{r_{i+1}}(z) \times I_{x}$ and $B_{r_{i+1}}(z') \times I_{x}$ of cost at most $r_i^s$ as follows, using that the probability that any node has weight in $I_x$ is $\Theta(w_x^{-(\tau-1)})$, and that the transmission cost of the edge is $\Theta(w_x^{2\mu} r_i^\zeta)$:
\begin{align}\label{eq:expected-cheap-edges}
    \E[|E(z,z')|]= \Theta\big(\big(r_{i+1}^{d} w_x^{-(\tau-1)}\big)^2\big) \cdot \Theta\big(\min\big\{w_x^2r_i^{-{d}}, 1\big\}^{\alpha}\big) \cdot \pr\big(\mathrm{Exp}(1) \le \Theta(w_x^{-2\mu} \cdot r_i^{-\zeta} \cdot r_i^s\big).
\end{align}
The first term corresponds to the (expected) number of pairs of vertices that can be found in the balls $B_{r_{i+1}}(z)$ with weight in the interval $I_x$, formally of the size of the set:
\[
(b_z,b_{z'})\in (B_{r_{i+1}}(z) \times I_{x}) \times (B_{r_{i+1}}(z') \times I_{x}),
\]
since these two boxes contain $\Theta(r_{i+1}^{d})$ vertices in expectation, each of them with probability $\Theta(w_x^{-(\tau-1)})$ to have weight in the interval $I_x = [w_x, 2w_x]$. The second term in \eqref{eq:expected-cheap-edges} is then the probability that such a vertex pair $(b_z,b_{z'})$ actually forms an edge, given by \eqref{eq:girg-connection}. Finally, if the edge $b_zb_{z'}$ exists, by \eqref{eq:cost} its cost is given by 
\[
Y_{b_zb_{z'}}\cdot(W_{b_z}W_{b_{z'}})^{\mu}\cdot(\|b_z-b_{z'}\|\vee 1)^{\zeta} = Y_{b_zb_{z'}}\cdot \Theta(w_x^{2\mu} \cdot r_i^{\zeta}),
\]
and since $Y_{b_zb_{z'}}$ is an $\mathrm{Exp}(1)$ variable, the probability that this is smaller than $r_i^{s}$ is given by the third term in \eqref{eq:expected-cheap-edges}. Note that $\pr(\mathrm{Exp}(1) \le a^\rho) = \Theta(a^{\rho \wedge 0})$ as $a\to\infty$. Using this and that $r_{i+1}=r_i^\gamma$ while  $w_x = r_i^{xd\gamma/(\tau-1)}$, the expression in \eqref{eq:expected-cheap-edges} becomes
\begin{align}
\begin{split}\label{eq:expected-cheap-edges-simplified}
    &\Theta\big(r_{i}^{2(1-x)d\gamma}\big) \cdot \Theta\big(r_i^{\alpha (\frac{2xd\gamma}{\tau-1}-d) \wedge 0}\big) \cdot \Theta\big(r_i^{((s-\zeta) - \frac{2x\mu d\gamma}{\tau-1}) \wedge 0}\big) \\
    &\qquad= \Theta\Big(r_i^{2(1-x)d\gamma + \alpha(\frac{2xd\gamma}{\tau-1}-d) \wedge 0 + (s-\zeta-\frac{2x\mu d\gamma}{\tau-1}) \wedge 0 }\Big)
    =: \Theta(\|v\|^{\Lambda(u, \gamma, x, d, \mu, \zeta, \tau, \alpha)\cdot\gamma^i}),
\end{split}
\end{align}
where we define the function in the exponent as
\[ \Lambda(s, \gamma, x, d, \mu, \zeta, \tau, \alpha) = 2(1-x)d\gamma + \alpha\Big(\frac{2xd\gamma}{\tau-1}-d\Big) \wedge 0 + \Big(s-\zeta-\frac{2x\mu d\gamma}{\tau-1}\Big) \wedge 0. \]
We mention here that it is at this step of the proof that we assume that $\|v\|$ is large. Namely, we will set this exponent to be just slightly bigger than $0$ (say, $\epsilon$), but as $\|v\|$ is large, this will still guarantee us in expectation a large number of cheap connecting edges. 
\subsubsection*{Total failure probability and cost}
We choose the parameters $s,\gamma,x$ so that $\Lambda = \Lambda(s, \gamma, x, d, \mu, \zeta, \tau, \alpha)>0$. Later we will see how to make this choice in order to optimise the cost of the constructed path. When $\Lambda>0$, the number of edges between $B_{r_{i+1}}(z) \times I_{x}$ and $B_{r_{i+1}}(z') \times I_{x}$ of cost at most $r_i^{s}$ is concentrated around its expectation, and in particular, since this expectation is given by \eqref{eq:expected-cheap-edges-simplified}, the probability that no such edge exists is at most $\exp(-\Theta(r_i^{\Lambda}))$, which follows by standard Chernoff bounds for indicator variables. Remember that after $i$ iterations there are $2^i$ pairs of vertices $(z,z')$ for which we are searching for such an connecting edge. Since $\gamma<1$ and so $r_i= \|v\|^{\gamma^i}$ gets doubly-exponentially smaller as $i$ increases, the last iteration contains the main probabilistic error: the failure probability of $k$ iterations of this construction can be upper bounded by
\begin{align}\label{eq:failure-prob}
    p_{\mathrm{fail}}(\Lambda, \gamma, k) \le \sum_{i=0}^{k-1} 2^i \exp(-\Theta(\|v\|^{\Lambda\gamma^i})) \le 2^k \exp(-\Theta(\|v\|^{\Lambda\gamma^{k-1}}).
\end{align}
We shall choose $k$ so that this error probability is still small yet the distances left to connect the gaps after the $k$ iterations are relatively small. 
If the first $k$ iterations of the construction are successful, call $\calP_k$ the set of edges selected this way (note that $|\calP_k| = \sum_{i=0}^{k-1} 2^i = 2^k - 1$). At this stage, there remains to connect $2^k$ pairs of low-weight vertices, each pair separated by distance $O(r_k)=O(\|v\|^{\gamma^k})$. This can be done using paths following the Euclidean geometry on such short distances if order $\|v\|^{\gamma^k}$, which yields a path $\pi$ between $0$ and $v$ of total cost
\begin{align}\label{eq:total-cost}
    \cost{\pi} = \cost{\calP_k} + O(2^k\|v\|^{\gamma^k}).
\end{align}

This iterative strategy for building a cheap path creates complex dependencies: the geometric locations where we are searching for cheap edges in one iteration depend on the edges found in the previous iteration; and there are weight dependencies as well. It is possible to overcome those dependencies by following a probabilistic procedure: first, we expose all the weights and locations of vertices in advance, and show that these are well scattered, before we start to build the path. Well-scatteredness follows from concentration estimates of Poisson variables. Then, one can use a sequential revealment of the edges of the graph, called multi-round exposure. While the technicalities are tedious, the concept is well-established, and we work it out in  \cite{komjathy2023four}, to which we refer the reader for further details.

Here, we focus on the optimisation problem and investigate the total cost of the construction further. We now distinguish between two cases, depending on whether there exists choices of $\gamma$ and $x$ such that $\Lambda(0, \gamma, x, d, \mu, \zeta, \tau, \alpha)>0$ for $s=0$ or whether there exists a choice $0<s<1$ and at the same time $\Lambda(0, \gamma, x, d, \mu, \zeta, \tau, \alpha)>0$. We shall see that if none of these two options is possible, we are in the pure geometric regime.

If our choice of $s,\gamma,x$ that satisfy $\Lambda(u, \gamma, x, d, \mu, \zeta, \tau, \alpha)>0$ is such that $0<s<1$, this means that the (upper bound on the) cost of the very first edge is $\|v\|^s$ (in other words, $\cost{\calP_1} = O(\|v\|^s)$). More generally, for any constant $k$, for all sufficiently large $\|v\|$, 
\[
    \cost{\calP_k} = \sum_{i=0}^{k-1} 2^i \cdot O(\|v\|^{s\gamma^i}) = O(\|v\|^s).
\]
In this case the total cost is dominated by the cost of the very first (and longest) edge, but it is still much shorter than the Euclidean distance $\|v\|$. In this case, we choose $k$ so that $\gamma^k < s$. Since $s\in(0,1)$ and $\gamma<1$ are constants, this condition is satisfied for (a large enough) constant $k$. This results in $\|v\|^{\gamma^k}< \|v\|^s$. Therefore, by \eqref{eq:total-cost} we have constructed a path $\pi$ between $0$ and $v$ of cost
\begin{align}\label{eq:polynomial-cost-bound} 
    \cost{\pi} \le O(\|v\|^s) + O(2^k\|v\|^{\gamma^k}) = O(\|v\|^s).
\end{align}
By \eqref{eq:failure-prob}, the failure probability of this construction satisfies
\begin{equation}\label{eq:pfail}
    p_{\mathrm{fail}}(\Lambda, \gamma, k) \le 2^k \exp(-\Theta(\|v\|^{\Lambda\gamma^{k-1}}),
\end{equation}
which goes to $0$ as $\|v\|\to\infty$ since $\Lambda, \gamma>0$ and $k$ is constant (i.e.\ does not grow with $\|v\|$).

Suppose on the other hand that there exists some $\gamma,x$ such that for $s=0$ it holds that  $\Lambda(0, \gamma, x, d, \mu, \zeta, \tau, \alpha)>0$. Then, the cost of each edge entering the iterative construction is bounded by $O(\|v\|^s) = O(1)$, which means that $\cost{\calP_k} = O(2^k)$. Hence, our goal is to iterate until the remaining distances to bridge are also of the same order. i.e.\ we choose $k=k(\|v\|, d, \mu, \zeta, \tau, \alpha)$ such that $\|v\|^{\gamma^k} = \Theta(1)$. This is satisfied for $k=\frac{\log\log\|v\|}{\log (1/\gamma)}$. By \eqref{eq:total-cost}, this yields a path of total cost
\begin{align}\label{eq:polylog-cost-bound}
    \cost{\pi} \le O(2^k) + O(2^k) = O(2^k) = O\Big((\log\|v\|)^{\log 2 / \log (1/\gamma)}\Big).
\end{align}
One may compute that for this choice of $k$, $\|v\|^{\gamma^k}=\Theta(1)$, so unfortunately the failing probability of the construction in \eqref{eq:pfail} is too large. So, we choose $k=k(\|v\|, d, \mu, \zeta, \tau, \alpha)$ a little bit smaller: set $k=\frac{\log\log\|v\|}{\log (1/\gamma)}-\Theta(\log\log\log \|v\|)$, and with this choice it holds that 
\[
(\log\log \|v\|)^{\omega(1)} \le \|v\|^{\gamma^k} \le (\log\|v\|)^{o(1)},
\]
which allows us to keep the upper bound on $\cost{\pi}$ in \eqref{eq:polylog-cost-bound} (almost) unchanged, while making the failure probability in \eqref{eq:failure-prob} satisfy $p_{\mathrm{fail}}(\Lambda, \gamma, k) = o(1)$. The cost estimate in \eqref{eq:polylog-cost-bound} only changes by an additive $o(1)$ term in the exponent, as in this case $2^k \|v\|^{\gamma^k} = O( (\log \|v\|)^{o(1)+\log 2/\log(1/\gamma)})$.

This explains, in general, the main stategy of finding a path between $0,v$ with either polynomial or a polylogarithmic cost as a function of $\|v\|$.

\subsubsection*{Optimisation of the exponents: polylogarithmic cost}
We now turn to optimising the cost of the path obtained via this iterative construction. We repeat the value of the exponent $\Lambda$ in \eqref{eq:expected-cheap-edges-simplified} defined as 
\begin{align}\label{eq:Lambda-def}
    \Lambda(s,\gamma,x,d,\mu,\zeta,\tau,\alpha) := 2(1-x)d\gamma + \alpha d\Big(\tfrac{2x\gamma}{\tau-1}-1\Big) \wedge 0 + \Big(s-\zeta-\tfrac{2x\mu d\gamma}{\tau-1}\Big) \wedge 0.
\end{align}
Given the graph parameters $\tau>3$ and $\alpha>1$ and the transmission parameters $\mu, \zeta>0$, we would like to find the minimal $s$  so that for some $0\le x\le 1$ and $0<\gamma<1$, it holds that $\Lambda>0$. The constraint $\gamma<1$ is necessary in our construction for the balls to shrink, while $x\le 1$ encodes that we use a weight range that is below the typical meximum in the ball under consideration, so we actually find vertices with those weights. 

For the constructed path to have polylogarithmic cost, we need to find values of $\gamma\in(0,1), x\in[0,1]$ such that setting $s=0$ yields $\Lambda(0,\gamma,x,d,\mu,\zeta,\tau,\alpha)>0$.  
Setting $s=0$ in \eqref{eq:Lambda-def} gives
\begin{align}\label{eq:Lambda-polylog}
    \Lambda(0,\gamma,x,d,\mu,\zeta,\tau,\alpha) = 2(1-x)d\gamma + \Big(\alpha d\Big(\tfrac{2x\gamma}{\tau-1}-1\Big) \wedge 0 \Big)-\zeta-\tfrac{2x\mu d\gamma}{\tau-1}.
\end{align}
Since in \eqref{eq:polylog-cost-bound} the $x$ parameter does not enter the cost-bound (unlike $\gamma$), we can start by maximising $\Lambda$ with respect to $x$. Note that $\Lambda(0,\gamma,x,d,\mu,\zeta,\tau,\alpha)$ is piecewise linear in $x$, so the extremal points can be attained either at the boundary values for $x$ (namely $x=0$ and $x=1$), or at the point where $\Lambda=\Lambda(x)$ switches from one linear function of $x$ to another, namely at $x=\tfrac{\tau-1}{2\gamma}$, assuming this value is in the interval $[0,1]$. It is important to keep in mind that the bound in \eqref{eq:polylog-cost-bound} is minimised when the exponent $\tfrac{\log 2}{\log (1/\gamma)} =\tfrac{1}{\log_2 (1/\gamma)}$ is smallest. In other words, in order to optimise our cost-bound (for a fixed choice of $x\in\{0, \tfrac{\tau-1}{2\gamma}, 1\}$) we need to find the smallest (infimum) $\gamma\in(0,1)$ that satisfies $\Lambda(0,\gamma,x,d,\mu,\zeta,\tau,\alpha)>0$. We now analyse the three cases $x=0$, $x=1$, and $x=\tfrac{\tau-1}{2\gamma}$ separately.

Setting $x=0$ in \eqref{eq:Lambda-polylog} yields
\[
    \Lambda(0,\gamma,0,d,\mu,\zeta,\tau,\alpha) = 2d\gamma - \alpha d - \zeta.
\]
Since $\gamma\in(0,1)$, the above expression can only be positive if
\begin{align}\label{eq:weak-polylog-condition}
    \zeta/d < 2-\alpha.
\end{align}
Assuming this inequality holds, the smallest $\gamma$ that makes $\Lambda$ non-negative in this case is $\gamma:= \tfrac{\alpha+\zeta/d}{2}\in(0,1)$. For this choice we obtain $\Lambda=0$, but we can obtain a positive $\Lambda$ for any $\gamma > \tfrac{\alpha+\zeta/d}{2}$, and we may take the infimum over all these since this only adds an  additive $o(1)$ in the exponent of the cost. For ease of presentation, we give the calculation directly for $\gamma= \tfrac{\alpha+\zeta/d}{2}\in(0,1)$, which yields an exponent of 
\begin{equation}\label{eq:Delta-1-def}
\Delta_1:=\frac{1}{\log_2(2/(\alpha+\zeta/d))} = \frac{1}{1-\log_2(\alpha+\zeta/d)}
\end{equation}\
in the cost-bound \eqref{eq:polylog-cost-bound}, corresponding to Phase $(ii.a)$ in Theorem \ref{thm:main-supporting}. Observe that in this case the long links that we use to build the low-cost path uses edges that occur on constant-weight (hence, low-degree) vertices. These edges are weak ties and only `accidentally present' as the effect of $\alpha$ drives the long-range connections. Thus in this phase the pandemic uses weak-tie links to propagate far.  

If we choose $x=\tfrac{\tau-1}{2\gamma}$, then \eqref{eq:Lambda-polylog} becomes
\[
    \Lambda(0,\gamma,\tfrac{\tau-1}{2\gamma},d,\mu,\zeta,\tau,\alpha) = 2d\gamma - (\tau-1)d - \zeta - \mu d.
\]
The smallest $\gamma$ that achieves $\Lambda\geq 0$ is given by $\gamma:=(\tau-1+\mu+\zeta/d)/2$. Note in particular that it satisfies $\gamma \ge \tfrac{\tau-1}{2}$, which is a necessary condition to have $x\in[0,1]$ in this case. For a valid solution we also need that $\gamma$ is in the interval $(0,1)$, which now results in the condition
\begin{align}\label{eq:strong-polylog-condition}
   \mu+\zeta/d < 3-\tau.
\end{align}
Summarising, when $\mu+\zeta/d<3-\tau$, then $\gamma=(\tau-1+\mu+\zeta/d)/2$ (or strictly speaking, any $\gamma$ than this value) and $x=(\tau-1)/(2\gamma)=(\tau-1)/(\tau-1+\mu+\zeta/d)$ is a valid choice that results in $\Lambda >0$.
 This then yields an exponent in the cost-bound in \eqref{eq:polylog-cost-bound} that equals 
 \begin{equation}
 \Delta_2:=\frac{1}{\log_2(2/(\tau-1+\mu+\zeta/d))} = \frac{1}{1-\log_2(\tau-1+\mu+\zeta/d)}
 \end{equation}
 corresponding to phase $(ii.b)$ in Theorem \ref{thm:main-supporting}. We observe that the value $x=(\tau-1)/(2\gamma)$ means that we use vertices with weight roughly $r_{i+1}^{d/2}$, in a ball of radius $r_{i+1}$. These vertices are the hubs of the ball: one can show that they all connect to each other, and thus we use hub-driven shortest path construction. Thus in this phase the pandemic uses hubs to propagate far.  

Finally, setting $x=1$ in \eqref{eq:Lambda-polylog} yields
\[
    \Lambda(0,\gamma,1,d,\mu,\zeta,\tau,\alpha) = \alpha d(\tfrac{2\gamma}{\tau-1}-1) \wedge 0 -\zeta-\tfrac{2\mu d\gamma}{\tau-1}.
\]
Note that in this case we always have $\Lambda \le 0$, i.e.\ the choice of $x=1$ does not yield any solution for polylogarithmic cost.

\subsubsection*{Optimisation of the exponents: polynomial cost}
We now turn to the polynomial case, namely when $\Lambda>0$ cannot be satisfied for $s=0$, but it may be satisfied for some $s\in(0,1)$ still resulting in sublinear cost. Our goal is to minimize $s$ so that there is a choice of $x\in[0,1]$ and $\gamma\in(0,1)$ so that $\Lambda>0$. As before, we will solve this optimization problem with the condition $\Lambda \ge 0$ to avoid working with infima. 
In this case the cost of the path is dominated by the cost of the first (and longest) edge in the construction, see \eqref{eq:polynomial-cost-bound}. In particular this cost is a decreasing function of $\gamma$ (since increasing the size of the two boxes around $0$ and $v$ increases the number of "candidate" edges). Analytically this means that the first term $2\gamma (1-x)d$, corresponding to the number of combinatorial options to choose the longest edge from, is increasing in $\gamma$. Therefore, we should set $\gamma$ to its maximal value (namely $1$, and later $1-o(1)$ to ensure $\gamma<1$). As $x$ sweeps through $[0,1]$ we may compensate for this choice in the other terms in $\Lambda$ by decreasing $x$ if necessary. Setting $\gamma=1$ yields
\begin{align}\label{eq:Lambda-polynomial}
    \Lambda(s,1,x,d,\mu,\zeta,\tau,\alpha) = 2(1-x)d + \alpha d(\tfrac{2x}{\tau-1}-1) \wedge 0 + (s-\zeta-\tfrac{2x\mu d}{\tau-1}) \wedge 0.
\end{align}

From there, our goal is to find the smallest $s\in(0,1]$ such that there exists $x\in[0,1]$ satisfying $\Lambda(s,1,x,d,\mu,\zeta,\tau,\alpha) \ge 0$. 
Since the term that contains $s$ is inside the minimum $(s-\zeta-\tfrac{2x\mu d}{\tau-1}) \wedge 0$, choosing $s > \zeta +\tfrac{2x\mu d}{\tau-1}$ never helps. 

Case 1. We start by the case where the optimum is $s = \zeta +\tfrac{2x\mu d}{\tau-1}$, in which case \eqref{eq:Lambda-polynomial} becomes
\begin{align*}
    \Lambda = 2(1-x)d + \alpha d(\tfrac{2x}{\tau-1}-1) \wedge 0.
\end{align*}
Since $s = \zeta +\tfrac{2x\mu d}{\tau-1}$ is an increasing function of $x$, our goal is then to find the smallest $x\in[0,1]$ satisfying $\Lambda\ge 0$. Choosing $x=\tfrac{\tau-1}{2}$ yields a positive $\Lambda = (3-\tau)d$, so we can focus on the range $x\in[0,\tfrac{\tau-1}{2}]$, where we have
\begin{align}\label{eq:lambda-temp-1}
    \Lambda = 2(1-x)d + \alpha d(\tfrac{2x}{\tau-1}-1) = (2-\alpha)d + 2xd\tfrac{\alpha-(\tau-1)}{\tau-1}.
\end{align}
 We start with the subcase $\alpha\ge 2$ (which implies $\alpha > \tau-1$ in particular). Then the smallest $x$ satisfying $\Lambda \ge 0$ is given by $x=\tfrac{(\alpha-2)(\tau-1)}{2(\alpha-(\tau-1))}$, which yields $s = \zeta +\tfrac{2x\mu d}{\tau-1} = \zeta + \tfrac{(\alpha-2)\mu d}{\alpha-(\tau-1)}$. In order to have $s<1$, we need the condition
\begin{align}\label{eq:hybrid-polynomial-condition}
    \mu < (1-\zeta)\tfrac{\alpha-(\tau-1)}{(\alpha-2)d} = (1-\zeta) (\tfrac{1}{d}+ \tfrac{3-\tau}{(\alpha-2)d}),
\end{align}
and this results in the cost-exponent $s$ to take value
\begin{equation}
\eta_2:=\zeta + \tfrac{(\alpha-2)\mu d}{\alpha-(\tau-1)}.
\end{equation}
This corresponds to phase $(iii.e)$ in Theorem \ref{thm:main-supporting}. Observe that here the long edges that we use to build the path occur on degrees that are neither constant nor near the hubs having weight $r_{i+1}^{d/2}$ in the ball (corresponding to $x=(\tau-1)/(2\gamma)$), but somewhere in between. Further, the links are present only because the effect of the long-range parameter $\alpha$, so these are weak ties. We call this the hybrid regime: in this phase the pandemic uses weak-tie links between medium sized hubs to propagate far.

The other subcase is if $\alpha<2$. In this case in \eqref{eq:lambda-temp-1} the optimal choice is to set $x=0$, which does yield $\Lambda>0$.
Recall that we are in the case where $s=\zeta+\tfrac{2x\mu d}{\tau-1}$, so now  when $\alpha<2$ with $x=0$ we achieve $s = \zeta$. We will see that this is not an optimal solution, i.e., we can get a smaller exponent $s$ when $\alpha<2$.

Case 2. We now turn to the case where  choose $s < \zeta +\tfrac{2x\mu d}{\tau-1}$. Then $\eqref{eq:Lambda-polynomial}$ becomes
\begin{align*}
    \Lambda(s,1,x,d,\mu,\zeta,\tau,\alpha) = 2(1-x)d + \alpha d(\tfrac{2x}{\tau-1}-1) \wedge 0 + s-\zeta-\tfrac{2x\mu d}{\tau-1}.
\end{align*}
While we want to solve this for $\Lambda\ge 0$, we warn that a solution is only valid if $s < \zeta +\tfrac{2x\mu d}{\tau-1}$, resulting in that the sum of the first two terms,
\begin{equation}
\Lambda_{12}(x,d,\mu,\zeta,\tau,\alpha) = 2(1-x)d + \alpha d(\tfrac{2x}{\tau-1}-1) \wedge 0 > 0
\end{equation}
holds at the same time. This is since if $\Lambda_{12}\le 0$ then there are no edges present on which we try to optimise the cost. 
The smallest $s$ satisfying $\Lambda\ge 0$ is then given by 
\begin{align}\label{eq:u_min}
    s = \zeta + \tfrac{2x\mu d}{\tau-1} - 2(1-x)d - \alpha d(\tfrac{2x}{\tau-1}-1)\wedge 0.
\end{align}
This is a piecewise linear function of $x\in[0,1]$, so by the same reasoning as before, its minimum will be attained by one of the values in $x\in\{0, \tfrac{\tau-1}{2}, 1\}$. We analyse these three subcases separately.

Case 2.1. Setting $x=0$ in \eqref{eq:u_min} yields $s = \zeta-(2-\alpha)d$, which is smaller than $1$ if
\begin{align}\label{eq:weak-polynomial-condition}
    \zeta/d < 2 - \alpha + 1/d.
\end{align}
The condition that $\Lambda_{12}>0$ results in the condition that $\alpha<2$, which indeed guarantees that edges are actually present between $B_{r_{i+1}}(z) \times I_{0}$ and $B_{r_{i+1}}(z') \times I_{0}$. 
Summarizing, we obtain a solution $s$ that becomes:
\begin{equation}
\eta_1:= \zeta-(2-\alpha)d \text{ if } \alpha<2 \text{ and } \zeta<  2 - \alpha + 1/d.
\end{equation}
This corresponds to phase $(iii.d)$ in Theorem \ref{thm:main-supporting}. Note in particular that this is always a better solution than $s=\zeta$ that was obtained before in Case 1 under the condition $\alpha<2$. Again, we remark that in this phase we use $x=0$, corresponding to constant-weight  and hence constant degree vertices and weak-tie links. Thus in this phase the pandemic uses weak-tie links to propagate far.  

Case 2.2. 
If we choose $x=\tfrac{\tau-1}{2}$, which satisfies $x\in[0,1]$ only if $\tau<3$, \eqref{eq:u_min} becomes $s=\zeta +(\tau+\mu-3) d$. This is smaller than $1$ if
\begin{align}\label{eq:strong-polynomial-condition}
    \mu+\zeta/d < 3-\tau+1/d.
\end{align}
and results in the spreading exponent 
\begin{equation}
\eta_3=\zeta +\mu d+(\tau-3) d. 
\end{equation}
This corresponds to phase $(iii.f)$ in Theorem \ref{thm:main-supporting}. We observe that the value $x=(\tau-1)/(2\gamma)$ again means that we use vertices with weight roughly $r_{i+1}^{d/2}$, in a ball of radius $r_{i+1}$. These vertices are the hubs of the ball: one can show that they all connect to each other, and thus we use hub-driven shortest path construction. Thus in this phase the pandemic uses hubs to propagate far.

Finally, setting $x=1$ in \eqref{eq:u_min} yields $s = \zeta + \tfrac{2\mu d}{\tau-1}$ since $\tfrac{2}{\tau-1} > 1$. Notice that
\begin{align*}
   \tfrac{2\mu d}{\tau-1} > \mu d > (\tau+\mu-3)d, 
\end{align*}
so this is never an improvement compared to $u=\zeta +(\tau+\mu-3) d$ that was obtained right before.

We summarise the proof. If we can find parameters $x, \gamma\in(0,1) $ so that the inequality $\Lambda(s,\gamma,x,d,\mu,\zeta,\tau,\alpha)>0$ holds for $s=0$, then we obtain polylogarithmic distances, which in turn result in quasi-exponential growth of $I(t)$.  If we can find parameters $x, \gamma\in(0,1) $ so that the inequality $\Lambda(s,\gamma,x,d,\mu,\zeta,\tau,\alpha)>0$ holds for $s\in(0,1)$, then we obtain polynomial distances, which in turn result in polynomial growth of $I(t)$ faster than $t^{d}$. The infimum value of $s$ in $[0,1]$ so that a choice is possible gives the optimal spreading exponent. If the parameters $\alpha, \tau, \mu, \zeta$ are such that none of these are possible, then we use a new strategy to connect $0$ to $x$ at linear cost in $\|x\|$ that we describe in the next section, using short edges.
\subsection{Proof of upper bound of pure geometric growth}
When $\tau<3$, one can always construct examples with linear distances in $d\ge 2$ and almost-linear distances in $d\ge 1$. We sketch the arguments. For $d\ge 2$ we refer to \cite[Section 3]{komjathy2024polynomial} for details; the $d=1$ case is treated in \cite{komjathy2023four}.

\medskip
\noindent\textbf{Case dimension $d\ge 2$.}
Fix a large constant $M$ and consider the graph $G^M$ induced by vertices with weights in $[M,2M]$, keeping only edges whose costs satisfy $\calC_e\le M^{1+2(\mu+\zeta/d)}$. In a GIRG, the connection probability in gives that  \eqref{eq:girg-connection}
 whenever $W_u,W_v\in[M,2M]$ and $\|u-v\|\le M^{2/d}$, an edge between $u$ and $v$ is present with probability at least a constant $c$, by \eqref{eq:girg-connection}. Moreover, such an edge has transmission cost
$
\calC_{uv}=(W_uW_v)^\mu \|u-v\|^\zeta E_{uv}
  \le 4^\mu M^{2\mu} M^{2\zeta/d} E_{uv}.
$ where $E_{u,v}$ is an exponential variable with mean $1$.
Set $C_M := M\cdot 4^\mu M^{2\mu} M^{2\zeta/d}$. Then
\[
\mathbb P\!\bigl(\calC_{uv}\le C_M\bigr)
=\mathbb P\!\bigl(4^\mu M^{2\mu} M^{2\zeta/d} E_{uv}\le C_M\bigr)
=\mathbb P(E_{uv}\le M),
\]
which is at least $4/5$ for $M$ sufficiently large. Thus, in $G^M$ we retain only edges between $[M,2M]$-weight vertices whose cost is at most $\calC_e\le M^{1+2(\mu+\zeta/d)}$, and then ignore the costs to view a simple graph.

By \cite[Corollary 3.9]{komjathy2024polynomial}, for $M$ large enough (relative to the connectivity lower bound $4c/5$), this graph on $\R^d$ has an infinite component; denote it by $\mathrm{C}_\infty^M$. The same corollary yields that for any $u^\star,v^\star\in \mathrm{C}_\infty^M$, the graph distance (number of edges) along a shortest path is $\Theta(\|u^\star-v^\star\|)$. Since each edge in $G^M$ has cost at most $C_M$, cost-distances within $\mathrm{C}_\infty^M$ are likewise linear.

Finally, for arbitrary vertices $u,v$ we connect them to nearby $u^\star,v^\star\in \mathrm{C}_\infty^M$ using that $\mathrm{C}_\infty^M$ comes near any location with probability close to $1$ \cite{deuschel1996surface}, implying $d_{\mathcal C}(u,u^\star)=\Theta(\|u-u^\star\|)$; the same holds for $v,v^\star$. We ensure these linear cost-bounds hold \emph{simultaneously} for all candidate anchors $u^\star$ near $u$. In \cite{komjathy2024polynomial} this is implemented by a renormalisation that maps $G^M$ to a site–bond percolation on $\Z^d$, allowing us to pull back density and distance estimates from \cite{antal1996chemical,deuschel1996surface}.

\medskip
\noindent\textbf{Case dimension $d=1$.}
In one dimension, $G^M$ has no infinite component, so the above renormalisation route to \cite{antal1996chemical,deuschel1996surface} is unavailable. Instead, \cite{komjathy2023four} takes a finite-size approach: we consider the same $G^M$ but choose $M=M_{\|u-v\|}$ depending on $\|u-v\|$ to guarantee a large connected subgraph in the segment between $u$ and $v$. We then establish the requisite density and distance bounds directly, using paths along which the vertex weights increase, followed by a renormalisation argument at this scale. For finite graphs (e.g.\ the GIRG $G_n$ in Def.~\ref{def:GIRG}), we additionally exploit that (near)-shortest paths in $G^M$ deviate only slightly from the straight line, so “unwrapping’’ the torus to a box yields a sufficiently accurate approximation.

\subsection{Proof idea of the explosive phase}
We make use of the presence of `superspreaders' or hubs and the hierarchical structure of geometric graphs with high degree heterogeneity ($\tau<3$) to find a transmission path $\pi(u_0,v)$ between any nodes $u_0$ and $v$  with cost (transmission time)  $\mathcal C(\pi_{u_0, v})$ that does not depend on the spatial distance between $u_0, v$ as $\|v-u_0\|\to \infty$. While this path may not provide the actual shortest path, it does give a possible transmission path, and thus serves as an upper bound on the transmission cost of the actual shortest path. 

The construction of the path is as follows: 
From both $u_0$ and $v$ it takes a random time to `reach' a node of some large but constant degree $K$, which we will denote by $T_{u_0, K}, T_{v,K}$. Then, we show that for some positive $\delta>0$ and $a>1$, this node can transmit the information with at most  $< 1/K^{\delta}$ tansmission cost to some node with degree $K^{a}$, who then in turn can transmit with at most $1/K^{a\delta}$ cost to a node of degree $(K^{a})^a=K^{a^2}$, and so on. We iterate this construction until we reach the highest degree nodes in the network, then show that any two such nodes pass the information to each other within negligible time. Starting from both $u$ and $v$ this constructs a path containing $\sim \log \log \|v-u_0\|$ many links whose total transmission time is upper bounded by $T_{u_0, K}+T_{v,K}+ \sum_{i=1}^\infty 2/K^{a^i\delta}\le T_{u_0, K}+T_{v,K} + \varepsilon$ for some small $\varepsilon$ as the sum is finite and vanishes as $K$ gets large. Moreover, the random variables $T_{u_0, K}, T_{v,K}$ are locally determined and they converge to almost surely finite random variables $Y_u, Y_v$. We can then use the following argument to show the epidemic curve on a finite graph from a single source $u_0$:
\[ 
\begin{aligned}
\frac{I(t)}{n}&=\frac1n \{\# v: T_{u_0, v}\le t\} \ge \frac{1}{n}\{\# v: T_{u_0, K} +T_{v, K}+\varepsilon \le t\} \\
&=\frac{1}{n}\{\# v:T_{v, K} \le t- T_{u_0, K}\}= \frac{1}{n}\sum \mathbf 1\{T_{v, K} \le t- \varepsilon - T_{u_0, K}\} 
\end{aligned}
\]
The latter sum of indicators are weakly dependent and their average can be shown to converge to $\mathbb P(T_{v, K} \le t- \varepsilon - T_{u_0, K} \mid T_{u_0, K}):=q_{t-\varepsilon -S}$, where the random shift $S$ in Eq. [7] of the main text represents the random time $T_{u_0, K}$ it takes to reach a sufficiently high-degree node from the source node. This random time appears as part of (almost) all infection paths to different nodes $v$ across the network. 
A similar lower bound can also be given: one can show that it is impossible to avoid high degree nodes and still have fast transmission paths. So, we obtain the lower bound
$T_{u,v}\ge T_{u_0, K} + T_{v, K}$ on the transmission cost, and this leads to the upper bound $q_{t-S}$ on $I(t)$. Finally one lets $K\to \infty$ wich makes the $\varepsilon$ vanish. The full proof can be found in \cite{reubsaet2022topology}, see also \cite{komjathy2020explosion}.

\subsection{Lower bounds via multiscale analysis}
In this section we provide the proof for the lower bounds in Theorem \ref{thm:main-lower}. We will set up a multiscale analysis, in the same fashion as in \cite{komjathy2024polynomial}. The multiscale analysis presented in \cite{komjathy2024polynomial} carries through for the model we consider here once we make sure that we can pass between scales. In the multiscale analysis, on scale $k$ we consider boxes of side-length $A_k$ where the side-length $A_k=A_1 (k!)^2$ grows rapidly with the scales, and partition the box with scale-$(k-1)$ boxes. We call a box with side-length $A_k$ $s$-good if all but $3^d$ of its subboxes are good (except for some initial scale $k_0$ where we omit this condition) and if it does not contain any edges of length \emph{longer} than $L_k:=A_k/(100k^2)$ and transmission cost at most $L_k^s$, and the same is true for slightly shifted versions of this box. As $A_k$ grows rapidly with the scales, $L_k^{1+\varepsilon}= A_k^{1+\varepsilon}/(100k^2)^{(1+\varepsilon)} \gg A_k$, so we will be apply  the following lemma to bound the probability that a box is good, given that enough of its subboxes are good. 

The goal of this lemma is to estimate the number of edges inside a box that have length $L$ roughly the same as the side-length of the box, and transmission cost a polynomial of the length $\approx L^s$. Because we look at the power $s$ of the cost $\approx L^s$ vs the length of the edge $\approx L$, we will restrict the edge to be of length in the interval $[L, L^{1+\varepsilon}]$. In other words, we count much longer edges with the same cost at the appropriate larger scale.
\begin{lemma}\label{lem:no_long_cheap_edge}
Consider $1$-SI of Definition \ref{def:1-FPP} with parameters $\mu, \zeta\ge 0$ on a GIRG $G=(V,E)$ from Definition \ref{def:GIRG} on $n$ vertices with parameters $\tau>2, \alpha>1, d\ge 1$.
    For all sufficiently small $\eps > 0$, if $L>0$ is sufficiently large relative to $\eps$, then the following holds. Let $n/2>A>L$ and $s > 0$. Let $E(A, L, s)$ be the number of edges inside the box $[-A/2,A/2]^d$ with length in the interval $[L, L^{1+\varepsilon}]$ and cost at most $L^{s}$. 
    
    Then for all $\tilde \varepsilon >0$ small as $A, L\gg 1$ the following bounds hold for some explicit nonnegative constants $c_1, c_2$ and $c_3:=\max(\zeta(\tau-1-\alpha )/\mu - d(\alpha-1), 0)$:
    
    \noindent If $s\le \zeta$, then
\begin{equation}\label{eq:no_long_cheap_edge-1}
\mathbb E[E(A,L, s)]\le 	 
	c_2 A^d \big( L^{s - \zeta -d(\alpha-1) +\tilde \varepsilon} + L^{s - \zeta- d\mu- d(\tau-2)+\tilde \varepsilon}\Big).
    \end{equation}
    If $\zeta<s\le \zeta+d\mu$, we have three cases based on the value $\alpha, \tau, \mu$:
    \begin{align}\label{eq:no_long_cheap_edge-2}
\mathbb E[E(A, L, s)]\le 	 \begin{cases}
 c_2 A^d \Big( L^{-d(\alpha-1)+\tilde \varepsilon}  + L^{s-\zeta-d(\tau-2+\mu-\tilde \varepsilon)} +L^{-d(\alpha-1)+(s-\zeta)(\alpha-\tau+1)/\mu+\tilde \varepsilon + c_3 \varepsilon} \Big),
 & \mbox{ if $\alpha\le \tau-1$,}\\
c_2 A^d \Big( L^{s-\zeta-d(\tau-2+\mu)+\tilde \varepsilon} 
+ L^{-d(\alpha-1)+(s-\zeta)(\alpha-\tau+1)/\mu+\tilde \varepsilon + c_3 \varepsilon}\Big)
   &\mbox{ if $\tau-1<\alpha\le \tau-1+\mu$,}\\
   c_2 A^d \Big( L^{s-\zeta-d(\tau-2+\mu)+ \tilde \varepsilon}  + L^{-d(\alpha-1)+(s-\zeta)(\alpha-\tau+1)/\mu+\tilde \varepsilon} \Big) 
   &\mbox{if $\alpha>\tau-1+\mu$.}
	\end{cases}
\end{align}
Finally, if $s>\zeta+d\mu$, then for some constant $\hat c_2\ge 0$, the following bounds hold:
\begin{equation}\label{eq:eal-lower-bound}
\hat c_2 A^d\Big( L^{d(1-\alpha)}
+ L^{-d(\tau-2)} \Big) \le \mathbb E[E[A,L,s]]\le  c_2 A^d\Big( L^{d(1-\alpha)+ \tilde \varepsilon}
+ L^{-d(\tau-2) + \tilde \varepsilon} \Big).
\end{equation}
\end{lemma}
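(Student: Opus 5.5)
We bound $\mathbb E[E(A,L,s)]$ by a first-moment computation in the spirit of the proof of Theorem~\ref{thm:edge-length}. By the Mecke formula for the Poisson vertex set, translation invariance, and polar coordinates,
\[
\mathbb E[E(A,L,s)]\le A^d\,\mathrm{Surf}(d)\int_{L}^{L^{1+\eps}} r^{d-1}\,\E\Big[c\Big(1\wedge \tfrac{W_xW_y}{r^d}\Big)^{\alpha}\Big(1\wedge \tfrac{L^s}{(W_xW_y)^{\mu} r^{\zeta}}\Big)\Big]\,\mathrm dr .
\]
Here the last factor uses $\pr(\mathrm{Exp}(1)\le a)\le 1\wedge a$. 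As in $\Lambda_e(r)$, the inner expectation depends only on $Z=W_xW_y$, whose density is $(\tau-1)^2\log z\, z^{-\tau}$ by Lemma~\ref{lem:product_distribution}. The factor $\log z$ is absorbed into the $L^{\tilde\eps}$ slack, since all relevant $z$ are at most polynomial in $L$.

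Since $r\in[L,L^{1+\eps}]$, we replace $r^{\zeta}$ by $L^{\zeta}$ in the cost factor and $r^{-d\alpha}$ by $L^{-d\alpha}$ in the connection factor. Any mismatch costs at most a factor $L^{O(\eps)}$; this is precisely the source of the $c_3\eps$ correction. The remaining $r$-integral of $r^{d-1}$ times the connection factor then yields the usual $L^{d}$ volume factor, or the tail $L^{-d(\alpha-1)}$ behaviour once $z<r^d$.

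This reduces the problem to a one-dimensional integral over $z$ of a piecewise power function. The integrand is determined by two breakpoints.
\begin{itemize}
\item The connection breakpoint is $z=r^d\approx L^d$: below it the factor is $(z/L^d)^{\alpha}$, above it the factor is $1$.
\item The cost breakpoint is $z_c=L^{(s-\zeta)/\mu}$: below it the cost factor is $1$, above it the factor is $L^{s-\zeta}z^{-\mu}$.
\end{itemize}

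We then split into the three regimes of the statement.

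\emph{Case $s\le\zeta$.} We have $z_c\le 1$, so the cost factor is $L^{s-\zeta}z^{-\mu}$ throughout. The integral splits at $L^d$.
\begin{itemize}
\item Over $z<L^d$, the integrand is $L^{s-\zeta-d\alpha}z^{\alpha-\mu-\tau}$. Together with the $r$-integration this gives $L^{s-\zeta-d(\alpha-1)}$ if the exponent is below $-1$. Otherwise it is dominated by the endpoint $z=L^d$, which gives $L^{s-\zeta-d\mu-d(\tau-2)}$.
\item Over $z>L^d$, the integral gives $L^{s-\zeta-d(\mu+\tau-1)}$, which after the $L^d$ volume factor becomes the second term.
\end{itemize}
This yields \eqref{eq:no_long_cheap_edge-1}.

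\emph{Case $\zeta<s\le\zeta+d\mu$.} Now $1<z_c\le L^d$, and the integral has three pieces:
\begin{itemize}
\item $[1,z_c]$ with integrand $L^{-d\alpha}z^{\alpha-\tau}$;
\item $[z_c,L^d]$ with integrand $L^{s-\zeta-d\alpha}z^{\alpha-\mu-\tau}$;
\item $[L^d,\infty)$, treated as before.
\end{itemize}
Each power integral is dominated by its lower endpoint, its upper endpoint, or is logarithmic at the critical exponent. Which endpoint dominates depends on the signs of $\alpha-\tau+1$ and $\alpha-\tau+1-\mu$, which gives exactly the three subcases $\alpha\le\tau-1$, $\tau-1<\alpha\le\tau-1+\mu$ and $\alpha>\tau-1+\mu$. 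Evaluating at $z_c$ produces the middle term $L^{-d(\alpha-1)+(s-\zeta)(\alpha-\tau+1)/\mu}$. Evaluating at $L^d$ produces $L^{s-\zeta-d(\tau-2+\mu)}$. The lower endpoint gives $L^{-d(\alpha-1)}$, which is only relevant when $\alpha\le\tau-1$.

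\emph{Case $s>\zeta+d\mu$.} We have $z_c>L^d$, so the cost factor is $1$ up to $L^d$ and we recover Corollary~\ref{cor:edge-length-limit}: the bound is $L^{d(1-\alpha)}+L^{-d(\tau-2)}$ up to logarithms. For the matching lower bound in \eqref{eq:eal-lower-bound}, we restrict to $z\le L^d$, where the cost factor is bounded below by a constant, and use $\pr(\mathrm{Exp}(1)\le 1)\ge 1-e^{-1}$.

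The main obstacle is the bookkeeping in the middle case. One must track the $\eps$-dependent errors carefully. Replacing $r^{\zeta}$ by $L^{\zeta}$ is exact up to $L^{\zeta\eps}$. However, the cost breakpoint $z_c$ shifts by $L^{O(\eps\zeta/\mu)}$, and raising this shift to the exponent $\alpha-\tau+1-d(\alpha-1)\mu/\zeta$ produces exactly the correction $c_3\eps$ when $\alpha<\tau-1$. For $\alpha>\tau-1+\mu$, the corresponding monotonicity goes the favourable way, which is why $c_3$ drops out in that subcase. The constant $c_2$ then collects $c\,\mathrm{Surf}(d)$ and the $(\tau-1)^2$ prefactors.
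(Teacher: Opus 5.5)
Your argument is correct in substance, but it handles the key step differently from the paper.

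\textbf{Where your route differs.} The paper computes $\Lambda_c(r)$ for each fixed $r$, keeping the $r$-dependent cost breakpoint $L^{s/\mu}r^{-\zeta/\mu}$. This forces three things:
\begin{itemize}
\item a split into three $r$-ranges, relative to $L^{s/(d\mu+\zeta)}$ and $L^{s/\zeta}$;
\item the requirement $L^{1+\varepsilon}<L^{s/\zeta}$ in the middle case, so $\varepsilon$ must be small relative to $s-\zeta$;
\item an integration over $r\in[L,L^{1+\varepsilon}]$ only at the end.
\end{itemize}
You instead use $r\ge L$ and $\zeta\ge 0$ to bound $1\wedge L^{s}r^{-\zeta}z^{-\mu}\le 1\wedge L^{s-\zeta}z^{-\mu}$. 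This is a lossless upper bound. You then integrate over $r$ first, against the connection kernel. What remains is a single $z$-integral with fixed breakpoints $z_c=L^{(s-\zeta)/\mu}$ and $L^d$. Your endpoint analysis of that integral gives the terms in the statement. It in fact gives a slightly sharper bound, with no $c_3\varepsilon$ term, and needs no smallness of $\varepsilon$ relative to $s$. The paper's route buys the full profile of $\Lambda_c(r)$, which it also uses for the lower bound; your decoupling only serves the upper bounds.

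\textbf{What needs fixing.} Your paragraph on the $\varepsilon$-errors is wrong, and the replacement it describes should not be carried out.
\begin{itemize}
\item Replacing $r^{-d\alpha}$ by $L^{-d\alpha}$ in the connection factor, and then integrating $r^{d-1}$ over $[L,L^{1+\varepsilon}]$, loses a factor $L^{d\varepsilon}$ in every term. The statement does not allow this, because $\tilde\varepsilon$ is arbitrary once $\varepsilon$ is fixed. Integrate exactly instead. For $z\le L^d$ and $z>L^d$ respectively,
\[
\int_L^{L^{1+\varepsilon}} r^{d-1}\Big(1\wedge \frac{z}{r^d}\Big)^{\alpha}\,\mathrm dr\le \frac{z^{\alpha}L^{-d(\alpha-1)}}{d(\alpha-1)},\qquad \int_L^{L^{1+\varepsilon}} r^{d-1}\Big(1\wedge \frac{z}{r^d}\Big)^{\alpha}\,\mathrm dr\le \frac{\alpha\, z}{d(\alpha-1)}.
\]
The second bound is $z$, not ``the usual $L^d$''. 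Since $\tau>2$, the tail over $z>L^d$ is still dominated by $z\approx L^d$, so no $\varepsilon$ enters anywhere.
\item After your decoupling the breakpoint $z_c$ does not shift, so the $c_3\varepsilon$ term never arises in your computation. This is harmless, since an extra nonnegative term only weakens an upper bound. But your explanation of where it comes from is wrong. In the paper it comes from integrating $r^{d-1-d\alpha+\zeta(\tau-1-\alpha)/\mu}$ over $[L,L^{1+\varepsilon}]$ when that exponent exceeds $-1$, so that the upper endpoint dominates.
\item Note also that $c_3=0$ for every $\alpha>\tau-1$, not only for $\alpha>\tau-1+\mu$.
\item The lower bound for $s>\zeta+d\mu$ must use the true cost factor, not the decoupled one. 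Restrict to $r\in[L,2L]$. Then $L^sr^{-\zeta}z^{-\mu}\ge 2^{-\zeta}$ for $z\le L^d$. The ranges $z\in[1,2]$ and $z\in[L^d/2,L^d]$ then give $L^{-d(\alpha-1)}$ and $L^{-d(\tau-2)}$ respectively. Keeping all of $[L,L^{1+\varepsilon}]$ would instead require $\varepsilon\zeta<s-\zeta-d\mu$.
\end{itemize}
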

We postpone the proof of the lemma for later, but we give an intuition where the formulas come from, ignoring terms of order $O(\tilde \varepsilon)$ in the exponents. We start with \eqref{eq:no_long_cheap_edge-1}. When $s<\zeta$, then $L^{s}\ll L^{\zeta}$ so these edges carry a much shorter transmission time than typical. The first term in \eqref{eq:no_long_cheap_edge-1} corresponds to edges of length of order $L$ connecting two low-degree vertices. There are $A^dL^{d}$ low-weight pairs of vertices at distance of order $L$, each  of which forms an edge with probability $L^{-d\alpha}$. The expected transmission time on these edges is $L^\zeta$, so the probability of their cost being below $L^{s}$ is of order $L^{s-\zeta}$. Multiplying these together gives $A^dL^{d-d\alpha+s-\zeta}$, the first term in \eqref{eq:no_long_cheap_edge-1}. The second term in the first row corresponds to again to edges of length of order $L$, but now connecting two vertices whose degrees (vertex weights) multiply to order of $L^{d}$. There are roughly $A^d L^{-d(\tau-2)}$ such edges, and the probability that the cost on each of these edges is at most $L^{s}$ is of order  $L^{s-\zeta-d\mu}$.

The first term in the first row in \eqref{eq:no_long_cheap_edge-2} again counts  edges of length of order $L$ connecting two low-degree vertices, which yields roughly $A^d L^{-d(\alpha-1)}$. But now the expected transmission time on these edges, $L^\zeta$, is much shorter than $L^s$, so we count essentially all such edges. The second term here again counts  edges of length of order $L$, connecting two vertices whose degrees (vertex weights) multiply to roughly $L^{d}$, and with atypically low cost $L^{s-\zeta-d\mu}$.

We now move to the third row in \eqref{eq:no_long_cheap_edge-2}.  The new term     gives the exponent $\eta_3$, and it corresponds to edges of length roughly $L$ connecting two vertices whose weights multiply to roughly $L^{(s-\zeta)/\mu}$. The expected transmission time on any such edge is roughly $ L^{\mu (s-\zeta)/\mu }L^\zeta\asymp L^s$ so typically once the edge exist it is counted towards $E(A, L,s)$. It is slightly more complicated to count how many such edges are there. There are $A^d L^{d}$ pairs of vertices at distance $L$, out of which a proportion of $L^{-(\tau-1)(s-\zeta)/\mu}$ pairs of vertices have the given weight, and finally $L^{\alpha(s-\zeta)/\mu}/L^{d\alpha}$ is probability that they also form an edge. Multiplying these together gives the final expression $L^{-d(\alpha-1) + (s-\zeta)(\alpha-\tau+1)}$ after rearrangement. The appearing smaller order correction terms bound logarithmic factors appearing in the integrals. 

The next claim forms the bases of obtaining the lower bound on the transmission times between two far away vertices. 
\begin{claim}\label{claim:admissible}
Let us call $s> 0$ an admissible lower-bound exponent if $\mathbb E[E(A, A, s)]\to  0$ as $A \to \infty$ holds, and call $\mathcal S:=\mathcal S(\alpha, \tau, d, \mu, \zeta)$ the interval of admissible exponents. Then with $\eta_1=\zeta-d(2-\alpha)$, $\eta_2=\zeta+\mu d (\alpha-2)/(\alpha-\tau+1)$, and $\eta_3=\zeta+\mu d-(3-\tau)d$ from Theorem \ref{thm:main-supporting}, any $s> 0$ that satisfies the following inequality is admissible: 
\begin{equation}\label{eq:admissible-s}
    s < \min\Big(\frac{\eta_1}{\mathbf 1\{\alpha\le 2\}}, \frac{\eta_2}{\mathbf 1\{\alpha>2, \tau\le 3\}}, \frac{\eta_3}{\mathbf 1\{\tau\le 3\}}\Big):=s_\star,
\end{equation}
where we set $x/0:=\infty$, which means we do not count $x$ in the minimum\footnote{If the right hand side gives infinity, then all $s>0$ are admissible.}.

Let $s^\star:=\sup\{s: s \in \mathcal S\}$. Then for all $\tau\neq 3, \alpha\neq 2$ it holds that $\eta_\star=\min(1, s_\star)$, where $\eta_\star$ is given in Definition \ref{def:lower-exponent} is the optimal spreading exponent of the upper bound Theorem \ref{thm:main-supporting}. 
Further, on the line $\alpha=2$, we have that  $s_\star=\min(\eta_1, \eta_3/\mathbf1\{\tau\le 3\})$,
while on the line $\tau=3$ we have that $s_\star=\min(\eta_1/\mathbf1\{\alpha\le 2\},  \eta_2/\mathbf1\{\alpha>2\}, \eta_3)$. 
\end{claim}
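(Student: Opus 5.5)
The plan is to read off admissibility directly from Lemma~\ref{lem:no_long_cheap_edge} with $L=A$. Then $\mathbb E[E(A,A,s)]$ is bounded by $A^d$ times a finite sum of powers of $A$. So $s$ is admissible as soon as every exponent, after adding $d$, is strictly negative; choosing $\tilde\eps$ and $\eps$ small absorbs the $o(1)$ corrections. I would split according to the three regimes of the lemma, $s\le\zeta$, $\zeta<s\le\zeta+d\mu$ and $s>\zeta+d\mu$, and translate each term into a linear inequality on $s$.

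In the regime $s\le\zeta$, the two terms give the conditions $s<\zeta-d(2-\alpha)=\eta_1$ and $s<\zeta+d\mu-d(3-\tau)=\eta_3$. If $\alpha>2$, the first condition is implied by $s\le\zeta$, and if $\tau>3$ the second is. This is exactly why the indicators $\mathbf 1\{\alpha\le2\}$ and $\mathbf 1\{\tau\le3\}$ appear.

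In the middle regime, the terms produce the following conditions:
\begin{itemize}
\item $L^{-d(\alpha-1)}$ gives the requirement $\alpha<2$, and otherwise no constraint beyond what the regime already allows; this is the case that forces $s\le\zeta$ when $\alpha\le2$, consistent with $\eta_1\le\zeta$.
\item $L^{s-\zeta-d(\tau-2+\mu)}$ gives $s<\eta_3$.
\item The hybrid term gives $d(2-\alpha)+(s-\zeta)(\alpha-\tau+1)/\mu<0$. When $\alpha>\tau-1$ and $\alpha>2$ this rearranges to $s<\zeta+\mu d(\alpha-2)/(\alpha-\tau+1)=\eta_2$. When $\alpha\le 2$ it forces $s<\zeta$, hence is dominated by $\eta_1$. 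When $\alpha\le\tau-1$ it holds automatically as long as $\alpha>2$ (implying $\tau>3$).
\end{itemize}
The $c_3\eps$ correction only matters in the cases $\alpha\le\tau-1+\mu$, and vanishes as $\eps\to0$.

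In the last regime, the lower bound in \eqref{eq:eal-lower-bound} shows admissibility fails exactly when $\alpha\le2$ or $\tau\le3$. In that case one checks that the minimum in \eqref{eq:admissible-s} is then at most $\zeta+d\mu$, so no $s$ in this regime is below $s_\star$. For $\alpha>2$ and $\tau>3$ every $s$ is admissible, matching $s_\star=\infty$. Collecting the three regimes yields \eqref{eq:admissible-s}, and it also shows $s^\star=s_\star$.

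For the identity $\eta_\star=\min(1,s_\star)$, I would compare Definition~\ref{def:lower-exponent} term by term. By the standing assumptions $\zeta/d>2-\alpha$ and $\mu+\zeta/d>3-\tau$, all $\eta_i$ are positive. The extra conditions in $\eta_\star$ are $\zeta/d\le2-\alpha+1/d$, $\mu\le(1-\zeta)(\cdots)$ and $\mu+\zeta/d\le3-\tau+1/d$. These are precisely $\eta_1\le1$, $\eta_2\le1$ and $\eta_3\le1$ respectively, the middle one after the rearrangement done in \eqref{eq:hybrid-polynomial-condition}. Hence dropping a term from $\eta_\star$ happens exactly when that term exceeds $1$, and with the convention that $\eta_\star=1$ when all are dropped, we get $\eta_\star=\min(1,s_\star)$ away from $\alpha=2$ and $\tau=3$. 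On the lines $\alpha=2$ or $\tau=3$, the indicators in $s_\star$ simply evaluate as stated, and $\eta_2$ at $\alpha=2$ equals $\zeta\ge\eta_1$, which gives the displayed boundary formulas.

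The main obstacle is the middle regime: correctly handling the hybrid term's sign of $\alpha-\tau+1$ and verifying that, whenever it is active, it is dominated by or equal to one of the $\eta_i$ with the right indicator. A secondary difficulty is confirming that nothing in the regime $s>\zeta+d\mu$ could be admissible while being below $s_\star$.
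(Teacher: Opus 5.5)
Your proposal is correct and is essentially the paper's argument: set $L=A$ in Lemma~\ref{lem:no_long_cheap_edge}, require every exponent to be negative in the regimes $s\le\zeta$, $\zeta<s\le\zeta+d\mu$ and $s>\zeta+d\mu$, and then identify the truncation conditions in $\eta_\star$ with $\eta_i\le 1$. The domination step you flag is immediate: for $\tau>3$ one has $\eta_3>\zeta+d\mu\ge s$, and if in addition $\alpha>\tau-1$ then $0<\alpha-\tau+1<\alpha-2$, so also $\eta_2>\zeta+d\mu\ge s$.

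Two slips, neither affecting the claim. First, the $L^{-d(\alpha-1)}$ term vanishes only when $\alpha>2$, not $\alpha<2$. Second, your aside that the argument also gives $s^\star=s_\star$ is unjustified. For $s\le\zeta+d\mu$ the lemma provides only upper bounds, so you obtain only $s^\star\ge s_\star$, which is all the claim needs.
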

\begin{proof}
The proof is purely analytic. We start by proving that if $\tau>3, \alpha>2$, then any $s>0$ is admissible. This is in line with the statement as in this case $s_\star=\infty$.  Indeed, we may assume setting $s>\mu d + \zeta$ as increasing $s$ always increases $\E[E[A,A,s]]$ because then more edges are counted towards the expectation. Then we can use \eqref{eq:eal-lower-bound}:  
\begin{equation}\label{eq:aas-lower-bound-simplified}
\mathbb E[E(A, A, s)] \le c_2\Big( A^{d(2-\alpha)} + A^{d(3-\tau)}\Big),
\end{equation}
and the right hand side tends to zero when $\alpha>2$ and $\tau>3$. 

Note that for $s$ to be admissible it is sufficient if, after setting $L=A$, the total exponent of $A$ in each term in either \eqref{eq:no_long_cheap_edge-1} or the relevant row of \eqref{eq:no_long_cheap_edge-2} are negative. We start with $s\le \zeta$. All exponents are negative if:
\begin{equation}\label{eq:no_long_cheap_edge-1-simplified}
    \begin{aligned}
        s&<\min(\zeta - d(2-\alpha), \zeta +\mu d - d(3-\tau)) -d\tilde \varepsilon \quad \text{if } s\le \zeta. 
    \end{aligned}
\end{equation}
Recall that $\tilde \varepsilon$ was arbitrarily small (to bound logarithmic factors), so we may omit it due to the strict inequality in \eqref{eq:no_long_cheap_edge-1-simplified}. The first exponent is $\eta_1$. If $\alpha \le 2$ then $s<\eta_1$ follows directly from the first term in \eqref{eq:admissible-s}, and for $
\alpha >2$ it follows from $s\le\zeta < \eta_1$. 
The second exponent is $\eta_3$, and for $\tau \le 3$ the condition $s<\eta_3$ follows directly from \eqref{eq:admissible-s}, while for $\tau>3$ it follows from $s\le\zeta < \eta_3$. 

We now analyse the case $s\in(\zeta, \zeta+d\mu]$.
If $\alpha\le 2$ then $s\le \eta_1 < \zeta$, so this case is void. Therefore we may assume $\alpha >2$. Note that the first term $A^{d(2-\alpha)+\tilde \varepsilon(s-\zeta)/\mu}$ in \eqref{eq:no_long_cheap_edge-2} then automatically tends to zero if $\tilde\varepsilon$ is sufficiently small. 
The remaining conditions can be summarized as follows for $s\in(\zeta, \zeta+\mu d]$, when choosing both $\varepsilon$ and $\tilde \varepsilon$ arbitrarily small:
\begin{equation}\label{eq:other-s}
    \begin{aligned}
        s&< \min\Big(\zeta + d\mu -d(3-\tau), \zeta + d\mu (\alpha-2)/(\alpha+1-\tau)\Big)
    \end{aligned}
\end{equation}
The two terms on the right hand side are $\eta_3$ and $\eta_2$. If $\alpha\le\tau-1$ then $\tau \ge \alpha+1 >3$, and hence $s<\eta_3$ is satisfied by \eqref{eq:admissible-s}. For $\alpha>\tau-1$, the first condition $s<\eta_3$ follows directly from~\eqref{eq:admissible-s} if $\tau\le 3$, and for $\tau > 3$ it follows from $s\le \zeta+d\mu <\eta_3$. The second condition $s<\eta_2$ again follows immediately from \eqref{eq:admissible-s} if $\tau\le 3$ because we are in the case $\alpha >2$. For $\tau >3$ we use $(\alpha-2)/(\alpha+1-\tau) > 1$, and hence $s \le \zeta+d\mu < \eta_3$. 
This finishes the proof that all $s$ satisfying \eqref{eq:admissible-s} are admissible.

We now explain the relation between $
\eta_\star$ and $s_\star$ in \eqref{eq:eta-star}.
As 
\begin{equation}
\eta_\star:=\min\{\eta_1^{\mathbf 1\{\alpha<2,\, \zeta/d\le 2-\alpha+1/d\}},\eta_2^{\mathbf 1\{\alpha>2,\, \tau<3, \,\mu \le  (1-\zeta)(1/d + (3-\tau)/(d(\alpha-2))\}}, \eta_3^{\mathbf 1\{\tau<3,\, \mu+\zeta/d \le 3-\tau+1/d\}}\},
\end{equation}
we see that $s_\star$ and $\eta_\star$ both combine the minimum of the three possible growth exponents $\eta_1, \eta_2, \eta_3$ on domains where they exist, while they truncate them slightly differently. 
Namely, $\eta_\star$ takes the first exponent $\eta_1$ into account in the minimum if the corresponding edges in the constructive proof of the upper bound exist in the graph ($\alpha<2$) and the construction gives at-most-linear transmission path, i.e., $\eta_1\le 1$, which holds exactly if $\zeta/d\le 2-\alpha + 1/d$.  Similarly, $\eta_\star$ takes $\eta_2$ into account in the minimum if the edges of the construction leading to this upper bound exist, i.e., if $\alpha>2$ and $\tau< 3$, and additionally if $\eta_2\le 1$, corresponding to the inequality on $\mu$. Finally $\eta_\star$ takes $\eta_3$ into account in the minimum if the edges of the corresponding construction exist, i.e., if  $\tau< 3$, and additionally if $\eta_3\le 1$, corresponding to the inequality $\mu+\zeta/d\le 3-\tau+1/d$. As we can always construct paths of linear costs in dimension $d\ge 2$ and paths of cost $\|x\|^{1+o(1)}$ in dimension $1$, $1$ is always a valid upper bound exponent, so these exponents are all truncated if they take values higher than $1$ or if the corresponding long and low-cost edges do not exist in the graph.

At the same time, $s_\star$ is not truncated at $1$ and can take values higher than $1$ as well. On $\alpha\le 2$, it takes $\eta_1$ into account in the minimum, on $\alpha>2,\tau\le 3$ it takes $\eta_2$ into account and on $\tau\le 3$ it also takes $\eta_3$ into account (the first two domains do not overlap, while the third domain overlaps with the other two domains). 
So unless $\alpha=2$ or $\tau=3$, it holds that $\eta_\star=\min(s_\star, 1)$. 
On the line $\alpha=2$, we have that  $s_\star=\min(\eta_1, \eta_3/\mathbf1\{\tau\le 3\})$,
while on the line $\tau=3$ we have that $s_\star=\min(\eta_1/\mathbf1\{\alpha\le 2\}, \eta_2/\mathbf1\{\alpha>2\}, \eta_3)$. 
The lower bound is valid on a larger domain. 
\end{proof}
\begin{proof}[Sketch proof of Theorem \ref{thm:main-lower}]
We aim to show that every path from a vertex $0$ to $x$ incurs large cost. The key input is Lemma \ref{lem:no_long_cheap_edge}, which quantifies the heuristic that “most long edges are expensive.” With this in hand the renormalisation in \cite{komjathy2024polynomial} carries through.
We sketch the key steps. In the multiscale analysis, on scale $k$ we consider boxes of side-length $A_k$ with $A_k=A_1 (k!)^2$. We partition the box with scale-$(k-1)$ boxes (of side-length $A_{k-1}$ and then we also  slightly shift these partitions along each axes to obtain $3^d$ slightly shifted partitions. All these boxes are called the child-subboxes of $Q_k$. We call a box with side-length $A_k$ $\eta$-good if all but $3^d$ of its subboxes are good and if the large box with side-length $A_k$ does not contain any edges of length \emph{longer} than $L_k:=A_k/(100k^2)$ and transmission cost at most $L_k^\eta$, and the same is true for slightly shifted versions of this box along each of the axes. (The slight shifts are technical but necessary for the proof), see \cite[Definition 2.3]{komjathy2024polynomial}. By replacing \cite[Lemma 2.2]{komjathy2024polynomial} by Lemma \ref{lem:no_long_cheap_edge} here,  as $\mathbb E[A_k,L_k,\eta]$ tends to zero whenever $\eta<s_\star$ if $s_\star<1$ and with $\eta=1$ if $s_\star>1$, we can also show the box $Q_k$ surrounding the origin is $\eta$-good for all sufficiently large enough scale $k$ (almost surely), following the proof of \cite[Proposition 2.4]{komjathy2024polynomial}. The initialisation -- i.e., that box $Q_{k_0}$ is good, carries through word by word as the proof estimates the minimal transmission cost over all edges in a box and it shows that it is  at least some small constant $u$ whp, even when we do not use any penalties on the edges. 
This ensures that the multi-scale analysis is also initialised well in our case where spatial penalisation occurs. 

We then prove, deterministically and by induction on scale, that once a box is good, the transmission cost between any two vertices in the box whose Euclidean distance is linear in the box size is large, see \cite[Proposition 2.6]{komjathy2024polynomial}. Here “large” means either linear, or polynomial with exponent $\eta<\min(s_\star,1)$, in the Euclidean distance. The polynomial regime appears exactly when $s_\star\le 1$ and we obtain linear lower bound when $s_\star>1$.

The key to this proposition is the observation that if we fix two sufficiently separated vertices $u,v$ in a good box $Q_k$, then any path $\pi_{u,v}$ between them must either (i) use a long—and hence expensive—edge, or (ii) contain many long, disjoint subsegments lying inside good child boxes of $Q_k$, each contributing a controlled cost by the induction hypothesis. For any segment $\pi_s$ of the path, we call $\mathcal D(\pi_s)$ the Euclidean distance between the endpoints of the segment. 
In the strict polynomial case $s_\star\le 1$, we bound the total contribution of  the disjoint subsequents $\pi_{s}, s\in I$ lying in good child-boxes as follows, under the induction hypothesis that  for a given $\eta<s_\star$ $\mathrm{Cost}(\pi_s)\ge C_k\mathcal D(\pi_s)^{\eta}$ for some constant $C_k$ that depends on the scale $k$ but it converges to a constant $C$ as $k$ tends to infinity. 
\[ 
\mathrm{Cost}(\pi_{u,v}) \ge \sum_{s\in I} \mathrm{Cost}(\pi_s) \ge C_k \sum_{s\in I} \mathcal D(\pi_s)^{\eta} \ge C_k \Big(\sum_{s\in I} \mathcal D(\pi_s)\Big)^{\eta}.
\]
Here, the last inequality is sublinearity: as $\eta\le 1$, it holds for $x, y\ge 1$ that $x^{\eta} + y^{\eta}>(x+y)^{\eta}$.
The proof is finished by proving that the total length of good segments is a constant times the distance that  $\pi_{u,v}$ bridges, i.e., $\|u-v\|$. For more details we refer to \cite{komjathy2024polynomial}.
\end{proof}
   Finally, we provide the proof of Lemma \ref{lem:no_long_cheap_edge} which is the most technical part of the lower bound. As it is new compared to \cite{komjathy2024polynomial}, we provide a detailed proof.

\begin{proof}[Proof of Lemma \ref{lem:no_long_cheap_edge}]
Take any $s> 0$ and let $E(A,L,s)$ denote the number of edges in $[-A/2,A/2]^d$ with Euclidean length in the interval $[L, L^{1+\varepsilon}]$ and cost at most $L^{s}$. 
We follow the same structure as for the edge-length distribution in Section \ref{sec:edge-length}.  The rate of transmission along an edge of length $r$ with endpoints having weight $W_1, W_2$ is $(W_1W_2)^{-\mu}r^{-\zeta}$, and thus the transmission time can be modeled as $r^{\zeta}(W_1 W_2)^{\mu} \cdot X$ with $X$ an exponential random variable with mean $1$. We can then define
	\begin{equation}\label{eq:lambda_cr}
	    \Lambda_c(r):=\E\Big[(1 \wedge W_xW_y/r^d)^{\alpha} \mathbb P\big(r^{\zeta} (W_1W_2)^\mu X \le L^{s}\big)\Big],
	\end{equation} 
    where the second probability expresses the conditional probability that the transmission cost $T_{xy}$ on the edge is at most $L^{s}$.
As in Section \ref{sec:edge-length}, we write $A_{L_1, L_2}(x)$ for the annulus in $\R^d$ of inner radius $L_1$ and outer radius $L_2$ centered at $x\in \R^d$.	Using conditional expectation on the location of vertices $\mathcal V$, (and in the second row below on their weights) we have	
	\begin{equation}\label{eq:e-to-lambdae-pre2}
    \begin{aligned}
		\mathbb E[E(A,L,s)\mid \mathcal V] &=
		\sum_{\substack{x,y \in \mathcal V\cap  [-A/2,A/2)^d\\\|x-y\| \in [L, L^{1+\varepsilon}]}}
		\E\left[\mathbf{1}_{\{xy \textnormal{ is an edge}\}}\mathbf{1}_{\{T_{xy}\le L^{s}\}}\right] \\
        &=
		\sum_{\substack{x,y \in\mathcal V \cap [-A/2,A/2))^d\\\|x-y\|\in [L, L^{1+\varepsilon}]}}
		\E\left[c\left(1 \wedge \dfrac{W_xW_y}{|x-y|^d}\right) ^\alpha \mathbb P\big(\|x-y\|^{\zeta} (W_1W_2)^\mu X \le L^{s}\big) \right] \\
		&\le  c \sum_{x \in \mathcal V \cap [-A/2,A/2))^d}
		\sum_{y \in \mathcal V \cap [-A/2,A/2)^d\cap A_{L, L^{1+\varepsilon}}(x)}
		\Lambda_c(\|x-y\|)
	\end{aligned}
    \end{equation}
    Here, in the third row we use a weaker restriction on $y$ for values of $x$ closer to the boundary of the box, note that this restriction is translation invariant. We can take now expectation over the Poisson point process: the expectation over $x$ gives a factor $n^d A^d/n^d=A^d$. We can easily use the variable switch $x-y:=y$ then switch also to polar coordinates. 
    \begin{equation}\label{eq:e-to-lambdac}
    \mathbb E[E(A,L,s)]\le A^d \int_{y\in (B_{\sqrt{d}A}\cap A_{L, L^{1+\varepsilon}}(0))} \Lambda_c(\|y\|) \mathrm dy \le  A^d \mathrm{Surf}(d) \int_{r= L}^{L^{1+\varepsilon}} r^{d-1}\Lambda_c(r) \mathrm dr.
    \end{equation}
\textbf{Estimating $\Lambda_c(r)$}. In what follows we analyse the function $\Lambda_c(r)$ using Lemma \ref{lem:product_distribution}. Given that $W_1W_2=z$, we can bound the probability concerning the exponential variable $X$ in \eqref{eq:lambda_cr} by $P(X\le L^{s}r^{-\zeta}z^{-\mu})\le \min(1, L^{s}r^{-\zeta}z^{-\mu})$ as $1-e^{-x}\le \min(1,x)$.
Hence, using the density of $W_1W_2$ from Lemma \ref{lem:product_distribution}, and writing $\ell(z):=(\tau-1)^2\log(z)$,
\begin{equation}\label{eq:lambdac-integralform}
\Lambda_c(r)\le c\int_{z=1}^{\infty} \min(1,z/r^d)^{\alpha} \min(1, L^{s}r^{-\zeta}z^{-\mu})\ell(z)z^{-\tau}\mathrm dz. 
\end{equation}
As we will sweep $s$ through $[0,1]$, we have to distinguish three cases: 

\emph{Case 1: Relative large $r$.} For $r> L^{s/\zeta}$, the second minimum is for all $z\ge 1$ at the second value. In this case we can evaluate the integral by distinguishing where the first minimum is taken:
\begin{equation}\label{eq:start1-lambda-c}
\begin{aligned}
\Lambda_c(r)\mathbf 1_{\{r>L^{s/\zeta}\}}&\le \int_{z=1}^{r^d} c z^\alpha r^{-d\alpha} L^{s}r^{-\zeta}z^{-\mu}\ell(z)z^{-\tau} \mathrm dz + \int_{z=r^d}^{\infty}c L^{s}r^{-\zeta}z^{-\mu}\ell(z)z^{-\tau} \mathrm dz 
\end{aligned}
\end{equation}
We write $I_{l1}, I_{l2}$ for the two integrals. Let us introduce
\begin{equation}
\begin{aligned}
\ell_b(x)&:= \frac{(\tau-1)^2}{x^{1-b}}\int^x z^{-b} \log(z)\mathrm dz=(\tau-1)^2  (\log(x)/(b-1) - 1/(b-1)^2) \text{ for } b>1 \\
\tilde \ell_b(x)&:=\frac{(\tau-1)^2}{x^{b+1}} \int^x z^{b} \log(z)\mathrm dx=(\tau-1)^2  (\log(x)/(b+1) - 1/(b+1)^2) \text{ for } b>-1 \\
\ell_{2\star}(x)&:=(\tau-1)^2\int^x z^{-1}\log (z) \mathrm{d}z= (\tau-1)^2\log(x)^2/2.
\end{aligned}
\end{equation} 
Using this notation, the two integrals in \eqref{eq:start1-lambda-c} can be evaluated:
\begin{equation}
    \begin{aligned}
        I_{l1}&=c(1+o(1))\cdot\begin{cases} 
L^{s}r^{-\zeta-d\alpha} \ell_{\tau+\mu-\alpha}(1) &\text{ if } \alpha<\tau-1+\mu\\
L^{s}r^{-\zeta-d\alpha} \ell_{2\star}(r^d) &\text{ if } \alpha=\tau-1+\mu\\
L^{s} r^{-\zeta-d(\tau-1+\mu)} \tilde \ell_{\alpha-\mu-\tau}(r^d) &\text{ if } \alpha>\tau-1+\mu\\
        \end{cases}\\
        I_{l2}&=c(1+o(1))\cdot L^{s}r^{-\zeta-d(\tau-1+\ mu)} \ell_{\tau+\mu}(r^d)
    \end{aligned}
\end{equation}
Observe that $c\ell_b(r^d), c\tilde \ell_b(r^d),c \ell_2(r^d)$ are all at most $c_1 r^{d\tilde \varepsilon}$ for any small $\tilde \varepsilon$, and some constant $c_1$, uniformly for all $r\ge L$. When we estimate these functions from above by $c_1 r^{d\tilde \varepsilon}$, we have the same bound on the third case of $I_{l1}$ as on $I_{l2}$, and the first two cases are dominant only if $\alpha\le \tau-1+\mu$, and otherwise they are not the dominant term. Thus we can upper bound the integral as  
\begin{equation}\label{eq:lambdac-large-r}
\Lambda_c(r)\mathbf 1_{\{r>L^{u/\zeta}\}} \le c_1 L^{s}r^{-\zeta-d\alpha+d\tilde \varepsilon} + c_1 L^{s}r^{-\zeta-d(\tau-1+\mu)+d\tilde \varepsilon}.
\end{equation}
\emph{Case 2: relative medium range $r$: $r\in [L^{s/(d\mu+\zeta)}, L^{s/\zeta}]$}. As $z$ varies in \eqref{eq:lambdac-integralform}, the second minimum switches values at $z=L^{s/\mu}r^{-\zeta/\mu}$ (and equals $1$ for smaller $z$), while the first minimum switches at $r^d$. In this `relative medium $r$' case the order is $1<L^{s/\mu}r^{-\zeta/\mu}<r^d$.
So we can cut the integral at these values:
\begin{equation}\label{eq:medium-integral-cut}
    \begin{aligned}
        \Lambda_c(r)\mathbf 1_{\{L^{s/(d\mu+\zeta)}\le r\le L^{s/\zeta}\}} &\le \int_{z=1}^{L^{s/\mu}r^{-\zeta/\mu}} c z^{\alpha} r^{-d\alpha} \ell(z)z^{-\tau}\mathrm{d}z\\
        &+ \int_{z=L^{s/\mu}r^{-\zeta/\mu}}^{r^d}c z^{\alpha} r^{-d\alpha} L^{s} r^{-\zeta}z^{-\mu} \ell(z)z^{-\tau}\mathrm{d}z + \int_{z= r^d}^{\infty}c L^{s} r^{-\zeta}z^{-\mu} \ell(z)z^{-\tau}\mathrm{d}z. 
    \end{aligned}
\end{equation}
We denote the three integrals by $I_{m1},I_{m2}, I_{m3}$. 
Then, 
\begin{equation}
\begin{aligned}
    I_{m1}&= c(1+o(1))\cdot 
    \begin{cases} r^{-d\alpha}\ell_{\tau-\alpha}(1)
&\text{ if } \alpha<\tau-1\\
    r^{-d\alpha} \ell_{2\star}(L^{s/\mu} r^{-\zeta/\mu}) &\text{ if } \alpha=\tau-1\\
  r^{-d\alpha-\zeta(\alpha+1-\tau)/\mu} L^{s(\alpha+1-\tau)/\mu} \tilde \ell_{\alpha-\tau}(L^{s/\mu} r^{-\zeta/\mu})&\text{ if } \alpha>\tau-1 
    \end{cases}\\
    I_{m2}&=c(1+o(1))\cdot \begin{cases} r^{-d\alpha-\zeta - \zeta(\alpha+1-\mu-\tau)/\mu} L^{s+s(\alpha+1-\mu-\tau)/\mu} \ell_{\tau+\mu-\alpha}(L^{s/\mu}r^{-\zeta/\mu})
&\text{ if } \alpha<\tau-1+\mu\\
    r^{-d\alpha-\zeta} L^{s} \ell_{2\star}(r^d) &\text{ if } \alpha=\tau-1+\mu\\
  r^{-d\alpha-\zeta} r^{d(\alpha+1-\mu-\tau)} L^{s} \tilde \ell_{\alpha-\mu-\tau}(r^d) &\text{ if } \alpha>\tau-1+\mu 
    \end{cases}\\
    I_{m3}&=c(1+o(1)\cdot r^{-\zeta}r^{-d(\tau-1+\mu)} L^{s} \ell_{\tau+\mu}(r^d).
\end{aligned}
\end{equation}
Some simplification is possible of the exponents appearing in $I_{m2}$:
\begin{equation}
      I_{m2}=c(1+o(1))\cdot \begin{cases} r^{-d\alpha+ \zeta(\tau-1-\alpha)/\mu} L^{-s(\tau-1-\alpha)/\mu} \ell_{\tau+\mu-\alpha}(L^{s/\mu}r^{-\zeta/\mu})
&\text{ if } \alpha<\tau-1+\mu\\
    r^{-d\alpha-\zeta} L^{s} \ell_{2\star}(r^d) &\text{ if } \alpha=\tau-1+\mu\\
  r^{-d(\tau-1+\mu)-\zeta}  L^{s} \tilde \ell_{\alpha-\mu-\tau}(r^d)  &\text{ if } \alpha>\tau-1+\mu 
    \end{cases}\\
\end{equation}
Summing the three integrals, as $\mu>0$, we have $5$ cases based on the value of $\alpha$. We use that we can upper bound all slowly varying functions $c\ell_b(x), c\tilde \ell_b(x),c \ell_{2\star}(x)$ by $c_1 x^{\tilde \varepsilon}$ if their argument is $\gg 1$. We upper bound all constants by the maximum one involved, and write a generic $c_1$ for it. Some simplifications are possible. Using the upper bounds $c_1 x^{\tilde \varepsilon}$ on the slowly varying functions, the bounds on $I_{m1},I_{m2}$ agree when $\alpha>\tau-1$ and $\alpha<\tau-1+\mu$, and also the bounds on $I_{m2}$ and $I_{m3}$ agree if $\alpha>\tau-1+\mu$.
\begin{equation}\label{eq:lambdac-medium-r-00}
    \Lambda_c(r)\mathbf 1_{\{L^{s/(d\mu+\zeta)}\le r\le L^{s/\zeta}\}} \le 
    \begin{cases}
       \begin{aligned}& \tilde T_1:=c_1 r^{-d\alpha+ \zeta(\tau-1-\alpha)/\mu} L^{-s(\tau-1-\alpha)/\mu} L^{\tilde \varepsilon s/\mu}r^{-\tilde \varepsilon \zeta/\mu} \\
       &+c_1 r^{-d\alpha} + c_1 r^{-\zeta-d(\tau-1+\mu)+\tilde \varepsilon d} L^{s} 
       \end{aligned}
       &\text{ if } \alpha<\tau-1\\
 \begin{aligned}& \tilde T_2:=c_1 r^{-d\alpha+ \zeta(\tau-1-\alpha)/\mu} L^{-s(\tau-1-\alpha)/\mu} L^{\tilde \varepsilon s/\mu}r^{-\tilde \varepsilon \zeta/\mu} \\
       &+c_1 r^{-d\alpha} L^{\tilde \varepsilon s/\mu}r^{-\tilde \varepsilon \zeta/\mu} + c_1 r^{-\zeta-d(\tau-1+\mu)+\tilde \varepsilon d} L^{s} \end{aligned}      
         &\text{ if } \alpha=\tau-1\\
     \begin{aligned}& \tilde T_3:=c_1 r^{-d\alpha+ \zeta(\tau-1-\alpha)/\mu} L^{-s(\tau-1-\alpha)/\mu} L^{\tilde \varepsilon s/\mu}r^{-\tilde \varepsilon \zeta/\mu} \\
       &+ c_1 r^{-\zeta-d(\tau-1+\mu)+\tilde \varepsilon d} L^{s}  \end{aligned}       
         & \text{ if } \alpha \in (\tau-1, \tau-1+\mu),\\
        \begin{aligned}& \tilde T_4:= c_1 r^{-d\alpha-\zeta+\tilde \varepsilon d} L^{s}  \\
       &+c_1 r^{-d\alpha-\zeta(\alpha+1-\tau)/\mu} L^{s(\alpha+1-\tau)/\mu} L^{\tilde \varepsilon s/\mu}r^{-\tilde \varepsilon \zeta/\mu} \\
       &+ c_1 r^{-\zeta-d(\tau-1+\mu)+\tilde \varepsilon d} L^{s}\end{aligned}
       &\text{ if } \alpha=\tau-1+\mu\\
    \begin{aligned}& \tilde T_5:=c_1 r^{-d\alpha-\zeta(\alpha+1-\tau)/\mu} L^{s(\alpha+1-\tau)/\mu} L^{\tilde \varepsilon s/\mu}r^{-\tilde \varepsilon \zeta/\mu} \\
       &+ c_1 r^{-\zeta-d(\tau-1+\mu)+\tilde \varepsilon d} L^{u} \end{aligned}      
          & \text{ if } \alpha>\tau-1+\mu
    \end{cases}
\end{equation}
Further, the cases $\alpha=\tau-1$ and $\alpha=\tau-1+\mu$ can be merged with the cases $\alpha<\tau-1$ and $\tau-1<\alpha< \tau-1+\mu$, respectively,  as the exponents of $r$ and $L$ agree, the difference is only in the slowly varying part. When $\tau-1<\alpha\le \tau-1+\mu$ we can use that in this range of $r$, $L^{s/\mu}r^{^{-\zeta/\mu}}<r^d$ per assumption and estimate the terms containing $\tilde \varepsilon$ coming from $I_{m2}$ above by $r^{d\tilde \varepsilon}$. We arrive at the upper bound with three remaining cases:
\begin{equation}\label{eq:lambdac-medium-r}
    \Lambda_c(r)\mathbf 1_{\{L^{s/(d\mu+\zeta)}\le r\le L^{s/\zeta}\}} \le 
    \begin{cases}
       \begin{aligned}& T_1:=c_1 r^{-d\alpha+ \zeta(\tau-1-\alpha)/\mu} L^{-s(\tau-1-\alpha)/\mu} L^{\tilde \varepsilon s/\mu}r^{-\tilde \varepsilon \zeta/\mu} \\
       &+c_1 r^{-d\alpha} L^{\tilde \varepsilon s/\mu}r^{-\tilde \varepsilon \zeta/\mu} + c_1 r^{-\zeta-d(\tau-1+\mu)+\tilde \varepsilon d} L^{s} 
       \end{aligned}
       &\text{ if } \alpha\le \tau-1\\

     \begin{aligned}& T_2:=c_1 r^{-d\alpha+ \zeta(\tau-1-\alpha)/\mu+d\tilde \varepsilon} L^{-s(\tau-1-\alpha)/\mu} \\
       &+ c_1 r^{-\zeta-d(\tau-1+\mu)+\tilde \varepsilon d} L^{s}  \end{aligned}       
         & \text{ if } \alpha \in (\tau-1, \tau-1+\mu],\\
    \begin{aligned}& T_3:=
       c_1 r^{-d\alpha-\zeta(\alpha+1-\tau)/\mu} L^{s(\alpha+1-\tau)/\mu} L^{\tilde \varepsilon s/\mu}r^{-\tilde \varepsilon \zeta/\mu} \\
       &+ c_1 r^{-\zeta-d(\tau-1+\mu)+\tilde \varepsilon d} L^{s} \end{aligned}      
          & \text{ if } \alpha>\tau-1+\mu
    \end{cases}
\end{equation}
\emph{Case 3: relative small $r$: $r<L^{s/(d\mu+\zeta)}$.} In this case $r^d< L^{s/\mu}r^{-\zeta/\mu}$, so in \eqref{eq:lambdac-integralform} the first minimum switches earlier than the second minimum. Recalling from Lemma \ref{lem:product_distribution} that $\ell(z)=(\tau-1)^2\log(z)$,
\begin{equation}\label{eq:lambdac-smallr-upper}
    \begin{aligned}
        \Lambda_c(r)\mathbf 1_{\{r<L^{s/(d\mu+\zeta)}\}} &\le \int_{z=1}^{r^d}c z^{\alpha} r^{-d\alpha} \ell(z)z^{-\tau} \mathrm{d}z\\
        &+ \int_{z=r^d}^{L^{s/\mu}r^{-\zeta/\mu}}c \ell(z) z^{-\tau}\mathrm{d}z + \int_{z= L^{s/\mu}r^{-\zeta/\mu}}^{\infty} cL^{s} r^{-\zeta}z^{-\mu} \ell(z)z^{-\tau}\mathrm{d}z 
    \end{aligned}
\end{equation}
Here only the first integral needs a case distinction. Denoting the three integrals by $I_{s1}, I_{s2}, I_{s3}$, we obtain with the same $\ell_b$ and $\tilde \ell_b$ as before,
\begin{equation}\label{eq:lambdac-smallr-detailed}
\begin{aligned}
    I_{s1}&=c(1+o(1))\cdot
    \begin{cases} 
    r^{-d\alpha} \ell_{\tau-\alpha}(1)& \text{ if } \alpha<\tau-1, \\
    r^{-d\alpha} \ell_{2,\star}(r^d) &\text{ if } \alpha=\tau-1,\\
    r^{-d(\tau-1)} \tilde \ell_{\alpha-\tau}(r^d) &\text{ if } \alpha>\tau-1,\\
    \end{cases}\\
    I_{s2}&=c(1+o(1))\cdot r^{-d(\tau-1)}\ell_{\tau}(r^d)\\
    I_{s3}&=c(1+o(1))\cdot  L^{-s(\tau-1)/\mu} r^{\zeta(\tau-1)/\mu} \tilde \ell_{\mu+\tau}(L^{s/\mu}r^{-\zeta/\mu}),\\
\end{aligned}
\end{equation}
where we already simplified the exponents in $I_{s3}$.
Estimating again slowly varying functions $c\ell_b(x), c\tilde \ell_b(x), c\ell_{2\star}(x)$ from above by $c_1 x^{\tilde \varepsilon}$, we observe that the third case of $I_{s1}$ and $I_{s2}$ can be bounded by the same function, while the first two cases are only dominant if $\alpha\le \tau-1$, otherwise they are negligible compared to $I_{s2}$. So we can use a single formula for $\Lambda_{c}(r)$ in this range:
\begin{equation}\label{eq:lambda_c-smallr}
\begin{aligned}
  \Lambda_c(r)\mathbf 1_{\{r<L^{s/(d\mu+\zeta)}\}} &\le c_1 r^{-d\alpha+d\tilde \varepsilon} + c_1 r^{-d(\tau-1)+d\tilde \varepsilon} + c_1   L^{s(1-\tau)/\mu + \tilde \varepsilon s/\mu} r^{\zeta(\tau-1)/\mu- \tilde \varepsilon \zeta/\mu}
\end{aligned}
\end{equation}
For this case we also compute a lower bound on $\Lambda_c(r)$. Returning to its initial formula in \eqref{eq:lambda_cr} and \eqref{eq:e-to-lambdae-pre2}, we notice that $1-e^{-x}\ge \min(x,1)/2$ is a lower bound on the cumulative distribution function of an exponential random variable. Hence, the direction of the inequality in \eqref{eq:lambdac-integralform} can be reversed if we multiply the right-hand-side by $1/2$. Using that, we arrive to same integrals as in \eqref{eq:lambdac-smallr-detailed}, multiplied by a factor $1/2$ to give a matching lower bound. We can estimate all slowly varying function from below by a constant $\widehat c_1$ as they are all growing in their arguments. 
With formulas,
\begin{equation}\label{eq:lambdac-smallr-lower-bound}
\Lambda_c(r)\mathbf 1_{\{r<L^{s/(d\mu+\zeta)}\}} \ge \widehat c_1 r^{-d\alpha} + \widehat c_1 r^{-d(\tau-1)} + \widehat c_1   L^{s(1-\tau)/\mu } r^{\zeta(\tau-1)/\mu}
\end{equation}
Having computed $\Lambda_c(r)$ in \eqref{eq:lambdac-large-r}, \eqref{eq:lambdac-medium-r}, \eqref{eq:lambda_c-smallr} for all ranges of $r$, we can now evaluate the integral in \eqref{eq:e-to-lambdac}. Note that the three cases have boundaries $L^{s/(d\mu+
\zeta)}$ and $L^{s/\zeta}$ for $r$, but the integration in \eqref{eq:e-to-lambdac} only starts at $r\ge L$.
We again distinguish three cases for how $L=L^1$ relates to these exponents. We start with the easiest case: 

\emph{Case 1: small $s$: $s\le \zeta$.} In this case $L\ge L^{s/\zeta}$. That is, when we integrate over $r$ in \eqref{eq:e-to-lambdac} then we only see the `relative large $r$' case for $\Lambda_c$ as $\mathbf 1_{\{r\ge L^{s/\zeta}\}}=1$. 
We can use that $\alpha>1$ and $\tau>2$ so both terms in the integral below are integrable and arrive at
\begin{equation}\label{eq:small-u}
\begin{aligned}
\mathbb E[E(A, L, s)] &\le A^d \mathrm{Surf}(d) c_1  L^{s}\int_{r=L}^{L^{1+\varepsilon}} r^{d-1}\big(r^{-\zeta-d\alpha+d\tilde \varepsilon} + r^{-\zeta-d(\tau-1+\mu)+d\tilde \varepsilon}\big)\mathrm{d}r \\
&\le  c_2 A^d \big( L^{s - d(\alpha-1) -\zeta +d\tilde \varepsilon} + L^{s - d(\tau-2+\mu) -\zeta +d\tilde \varepsilon}\Big)
\end{aligned}
\end{equation}
Observe that $\tilde \varepsilon$ was arbitrary small bounding polylogarithmic correction terms, hence by re-labeling we can set its coefficient to $1$. This gives the formula in \eqref{eq:no_long_cheap_edge-1}. The interpretation below the Lemma statement follows by observing where the dominant terms of the integrals are attained. 

\emph{Case 2: medium $s$: $\zeta<s\le d\mu+\zeta$}. In this case $L\in [L^{s/(d\mu+\zeta)}, L^{s/\zeta})$, so we need to integrate over part of the relative medium-$r$ range. We may assume that $\varepsilon$ is small enough so that $L^{1+\varepsilon}<L^{s/\zeta}$, and then we do not need to integrate over the relative large $r$ range. 
 \begin{equation}\label{eq:medium-u}
\begin{aligned}
    \mathbb E[E(A, L, s)] &= A^d \mathrm{Surf}(d) c_1 \int_{r=L}^{L^{1+\varepsilon}} r^{d-1}\Lambda_c(r) \mathrm{d} r=:A^d \mathrm{Surf}_d c_1\tilde I_1.
\end{aligned}
\end{equation}

We need to use the three different cases in \eqref{eq:lambdac-medium-r}. We go through the three cases there one-by-one, starting with $\alpha\le \tau-1$. Now we integrate $r^{d-1}T_1$ (from $L$ to $L^{1+\varepsilon}$). We notice that we need to do another case-distintion whether $\zeta(\tau-1-\alpha)/\mu+d(\alpha-1)$ is positive or negative.  We add a factor $L^{\tilde \varepsilon}$ to cases where we allow $=$ in the case distinctions to compensate for the arising potential log-factors. The integral in \eqref{eq:medium-u} equals
\begin{equation}
    \tilde I_1\mathbf 1_{\{\alpha\le \tau-1\}}=\begin{cases}
\begin{aligned}
     F_{1}&:= L^{-d(\alpha-1)+\zeta(\tau-1-\alpha-\tilde \varepsilon)/\mu} L^{-s(\tau-1-\alpha-\tilde \varepsilon)/\mu}+ \\
     &+
     L^{-d(\alpha-1)-\tilde \varepsilon \zeta/\mu} L^{\tilde \varepsilon s/\mu}  + L^s L^{-\zeta-d(\tau-2+\mu-\tilde \varepsilon)}
     \end{aligned} &\text{ if } \zeta(\tau-1-\alpha)/\mu\le d(\alpha-1)\\
\begin{aligned}
     F_{2}&:=(L^{1+\varepsilon})^{-d(\alpha-1)+\zeta(\tau-1-\alpha-\tilde \varepsilon)/\mu} L^{-s(\tau-1-\alpha-\tilde \varepsilon)/\mu}+\\
     &+
     L^{-d(\alpha-1)-\tilde \varepsilon \zeta/\mu} L^{\tilde \varepsilon s/\mu}  + L^s L^{-\zeta-d(\tau-2+\mu-\tilde \varepsilon)}
     \end{aligned} &\text{ if } \zeta(\tau-1-\alpha)/\mu> d(\alpha-1)\\
     \end{cases} 
\end{equation}

Collecting terms and error terms, 
we arrive to a unified upper bound and there is no need for the extra case distinction. Returning to \eqref{eq:medium-u}, we arrive at, with $c_3:=\max(\zeta(\tau-1-\alpha )/\mu - d(\alpha-1), 0)$
\begin{equation}\label{eq:medium-u-2}
\begin{aligned}
    \mathbb E[E(A, L, s)] \mathbf 1_{\{\alpha\le \tau-1\}} &\le  c_2 A^d \Big( L^{-d(\alpha-1)+\tilde \varepsilon (s-\zeta)/\mu}  + L^{s-\zeta-d(\tau-2+\mu-\tilde \varepsilon)} + L^{-d(\alpha -1)+ (\zeta-s)(\tau-1-\alpha) + \tilde \varepsilon (s-\zeta)/\mu + \varepsilon c_3    } \Big). \\
\end{aligned}
\end{equation}
Observe that $\tilde \varepsilon$ was arbitrary small bounding polylogarithmic correction terms, hence by re-labeling we can set its coefficient to $1$. However, $\varepsilon$ is set in the lemma statement so we keep its coefficient. 
This gives the first row in \eqref{eq:no_long_cheap_edge-2}.
We now move to the next case computing $\tilde I_1$ in \eqref{eq:medium-u} when $\tau-1<\alpha\le \tau-1+\mu$. For this we use $T_2$ in \eqref{eq:lambdac-medium-r}. Observe that both terms of $T_2$ also appeared already in $T_1$ in \eqref{eq:lambdac-medium-r}, only the coefficient of the $\tilde \varepsilon$ in the exponent is different. 
\begin{equation}
    \tilde I_1\mathbf 1_{\{\tau-1<\alpha\le \tau-1+\mu\}}=\begin{cases}
\begin{aligned}
     \tilde F_{1}&:= L^{-d(\alpha-1)+(\zeta-s)(\tau-1-\alpha)/\mu+d\tilde \varepsilon} \\
      &+ L^{s-\zeta-d(\tau-2+\mu-\tilde \varepsilon)}
     \end{aligned} &\text{ if } \zeta(\tau-1-\alpha)/\mu\le d(\alpha-1)\\
\begin{aligned}
     \tilde F_{2}&:=(L^{1+\varepsilon})^{-d(\alpha-1)+(\zeta-s)(\tau-1-\alpha)/\mu+d\tilde \varepsilon}\\
     &+ L^{s-\zeta-d(\tau-2+\mu-\tilde \varepsilon)}
     \end{aligned} &\text{ if } \zeta(\tau-1-\alpha)/\mu> d(\alpha-1)\\
     \end{cases} 
\end{equation}
The disappearance of the middle term compared to $\alpha<\tau-1$ is important, it means that when $\alpha>\tau-1$ then edges on low weight vertices do not play a relevant role.  We can collect the coefficients of $\varepsilon$, estimate it from above, then merge the two cases, and then the final formula becomes (with $c_3:=\max(-d(\alpha-1)+ \zeta(\alpha+1-\tau)/\mu,0)$):
\begin{equation}\label{eq:medium-u-3}
\begin{aligned}
    \mathbb E[E(A, L, s)] \mathbf 1_{\{\tau-1<\alpha\le \tau-1+\mu\}} &= c_2 A^d \Big( L^{s-\zeta-d(\tau-2+\mu-\tilde \varepsilon)}+
     L^{-d(\alpha-1)+(\zeta-s)(\tau-1-\alpha)/\mu+d\tilde \varepsilon + c_3 \varepsilon}\Big). 
\end{aligned}
\end{equation}

Note that the last term is coming from the first term of $\tilde F_1$: this corresponds to edges of length $\approx L$ (as the integral is dominated at $r=L$) and the endpoints having $W_1W_2$  roughly $\approx L^{s/\mu}r^{-\zeta/\mu}\approx L^{(s-\zeta)/\mu}$ as this term is coming from the integral of the first term of $T_2$ in \eqref{eq:lambdac-medium-r} and that corresponds to $z=L^{(s-\zeta)/\mu}$ in $I_{m2}$ corresponding to the second integral in \eqref{eq:medium-integral-cut}. 

Finally we evaluate the last case in the medium range $s$, when $\alpha> \tau-1+\mu$. In this case there is no need for a case distinction as the first term in $r^{d-1}T_3$ with $T_3$ in \eqref{eq:lambdac-medium-r} is always integrable, and we obtain the exact same formula as for the previous case, except with no $c_3\varepsilon$ error in the exponent. 
\begin{equation}\label{eq:medium-u-4}
\begin{aligned}
\tilde  I_1\mathbf 1_{\{\alpha>\tau-1+\mu\}}&= c_1\Big(L^{-d(\alpha-1)+(s-\zeta)(\alpha+1-\tau)/\mu+\tilde \varepsilon\zeta/\mu} +
     L^{s-\zeta-d(\tau-2+\mu-\tilde \varepsilon)}\Big), \\
    \mathbb E[E(A, L, u)] \mathbf 1_{\{\alpha>\tau-1+\mu\}} &= c_2 A^d \Big( L^{s-\zeta-d(\tau-2+\mu-\tilde \varepsilon)} 
    + L^{-d(\alpha-1)+(\zeta-s)(\tau-1-\alpha)/\mu+d\tilde \varepsilon}\Big)
\end{aligned}
\end{equation}
The three cases are summarized in the lemma statement in \eqref{eq:no_long_cheap_edge-2}. 

\emph{Case 3: relative large $s$: $s>  d\mu+\zeta$.} 
Here we show that for relative large $s$, the expected number of edges of length $\approx L$ and transmission cost below $L^{s}$ is increasing when $\tau<3$. This, -- together with a strategy using `short edges' -- implies that one can never achieve a lower bound on the transmission cost that has an exponent above $\min(d\mu+\zeta,1)$.  

For $s> d\mu+\zeta$, it holds that  $L\le L^{s/(d\mu+\zeta)}$, so we need to integrate $r^{d-1}\Lambda_c(r)$ using the `relative small' $r$ range.   
\begin{equation}\label{eq:large-u-start}
\begin{aligned}
    \mathbb E[E(A, L, s)] &\ge  A^d \mathrm{Surf}(d) \widehat c_1 \int_{r=L}^{L^{1+\varepsilon}} r^{d-1}\Lambda_c(r) \mathrm{d} r 
\end{aligned}
\end{equation}
We evaluate the integral using \eqref{eq:lambdac-smallr-lower-bound}:
\begin{equation}
\begin{aligned}
\mathbb E[E(A, L, s)]&\ge\widehat c_1A^d \mathrm{Surf}_d\int_{L}^{L^{1+\varepsilon}} r^{d-1}\Big( r^{-d\alpha} +  r^{-d(\tau-1)} +  L^{-s(\tau-1)/\mu} r^{\zeta(\tau-1)/\mu}\Big) \mathrm d r\\
&\ge A^d  \hat c_2\Big( L^{d(1-\alpha)}
+ L^{-d(\tau-2)} + L^{-s(\tau-1)/\mu}L^{(1+\varepsilon)\cdot (d+\zeta(\tau-1+\tilde \varepsilon)/\mu)}\Big)
\\&=  A^d \hat c_2 \Big( L^{d(1-\alpha)}
+ L^{-d(\tau-2)} + L^{d- (s-\zeta)(\tau-1)/\mu +  c_4\varepsilon}\Big),
\end{aligned}
\end{equation}
where we obtained the last exponent after elementary rearrangements of the last exponent in the middle row and introduced $c_4:=d+\zeta(\tau-1)/\mu$. Tracing back where each of these terms come from, we come to the following: 
the first term $L^{d(1-\alpha)}$ corresponds the edges between vertices with product of weights $z=\Theta(1)$ and typical cost $L^\zeta \ll L^{s}$. The second term $L^{-d(\tau-2)}$ corresponds to vertices carrying weight with product $z=\Theta(L^{d})$, and of typical cost $L^{d\mu +\zeta}\ll L^{s}$. Finally, the last term corresponds to edges between vertices  having product of weight of order $z=\Theta(L^{s/\mu}r^{-\zeta/\mu})=\Theta(L^{(s-\zeta)/\mu})$, and typical cost $z^\mu r^{\zeta}\approx L^{s-\zeta} L^{\zeta}\approx L^{s}$. Somewhat counterintuitively, the last term decreases as $s$ increases. This is due to the fact that here we count edges with end-vertices whose weight increases with $s$, and there are fewer and fewer of them as $s$ increases. 
Observe that the corresponding upper bound with $c\tilde \eps$ additional terms in each exponent is also valid using the upper bound \eqref{eq:lambdac-smallr-upper} instead of \eqref{eq:lambdac-smallr-lower-bound}.
This finishes the proof of the lemma.
\end{proof}

\subsection{Empirical studies}\label{sec:empirical-studies}

\subsubsection{Construction of the Gowalla Dataset}\label{sec:construction-Gowalla}
The Gowalla dataset is a publicly available network dataset \cite{gowalla} that consists of a total of $196,591$ users and $950,327$ links connecting users that represent friendships. For a subset $n = 107,092$ of these users  geographical log-in data is available in longitude/latitude format. Typically, users use multiple login locations to access the social network. We construct a graph on $n=107,092$ vertices where each vertex represents a single user with available location data. To determine a unique location for each vertex, we identify for each user the modal set of coordinates (rounded to the nearest $.25$), then choose the most common login location from within the corresponding $.25\times .25$ longitude-latitude box, breaking all ties uniformly at random. This method is consistent with the method in \cite{gowalla}, where an accuracy of 85\% for approximate home location is estimated. We then add the links representing the friendships between each vertex pair. 
With this procedure we obtain a graph on $n=107,092$ vertices with $456,830$ many edges, giving an average degree of $8.53$.

\subsubsection{Generating Synthetic Geometric Inhomogeneous Random Graphs}\label{sec:generating-GIRG}
Geometric Inhomogeneous Random Graphs are synthetic network models, see Definition \ref{def:GIRG} above. There is an available open source C++ package that provides the code for a fast generation of these graphs~\cite{girgs-c}, and open source bindings to allow its use from within Python~\cite{GIRG_sampling_github}. 
However, it uses a slightly different version of the connection probabilities compared to~\eqref{eq:girg-connection} in two key ways. First, it works on the unit torus $[0,1)^2$ rather than the scaled torus $[0, \sqrt{n})^2$. These two models would be equivalent up to a factor of $n$ in the $W_uW_v/||u-v||^d$ term in~\eqref{eq:girg-connection}, but unfortunately the code of~\cite{girgs-c} scales by a factor of the total weight instead (which is equivalent up to a random constant factor). Second, in place of $||u-v||^d$ it uses the $L^\infty$ metric on the torus --- that is, for two nodes $u = (x_1, y_1)$ and $v = (x_2, y_2)$ it computes 
$\|(u,v)\|_{\mathbb{T},\infty}:=\max\{\min\{|x_1-x_2|, 1- |x_1-x_2|\},\min\{ |y_1-y_2|, 1-|y_1-y_2|\}\}$. Thus the overall connection probability of an edge $\{u,v\}$ on vertex set $V$ is given by
\[
    p_{uv}' = \min\Big\{\frac{W_uW_v}{\|(u,v)\|_{\mathbb{T},\infty}\sum_{v \in V}W_v}\Big\}.
\]
Both of these were standard choices in the early development of GIRGs when this code was written, but they are now less standard and in particular do not align with~\cite{komjathy2023four, komjathy2024polynomial} on which our work is based. As such we have modified the code accordingly to sample the GIRG on $[0,\sqrt{n})^2$ with connection probability matching~\eqref{eq:girg-connection}. It is sufficiently optimised that this was quite subtle work, but we provide unit tests using the DKW inequality~\cite{dkw-inequality} which indicate small GIRGs are being sampled according to approximately the correct distribution. 

\subsubsection{Synthetic Simulation of Contact-Dependent Epidemics and the Epidemic Curves}\label{sec:simulation-epidemic-curves}
 We take as input a graph with geometric data, that is, each vertex is assigned a location. To simulate the infection process, for each link we draw independent and identically distributed exponential random variables $Y_{uv} \sim \text{Exp}(1)$ between nodes $u,v$.
Then we determine the transmission cost/time across each link by setting $T_{uv}:=Y_{uv} (\deg(u)\deg(v))^{\mu}(\|u-v\|_{\mathbb{T},2} \vee 1)^\zeta$, 
as in \cite[Eq. (1)]{benjert25degree} of the main text for a given $\mu, \zeta$. 

Fixing $T_{u,v}$ for each edge determines for each vertex $v$ the minimal transmission cost-weighted distance $d_{\mathcal C}(u,v)$ from the initially infected node $u_0$ that we determine using Dijkstra's algorithm.
We compute for each node $v$ its infection time $d_{\mathcal C}(u_0, v)$, and we the  order the nodes according to their infection times. We then set $I(t):=\{v: d_{\mathcal C}(u_0,v) \le t\}$ for the set of nodes that are reached by the infection before time $t$. 

For the Gowalla dataset, we use the Haversine distance between users when we calculate the transmission time on any edge, i.e., here we sample iid Exponential $Y_{u,v}$ variables with mean $1$ and then set the transmission cost to be across the (undirected) edge $u,v$: 
$$T_{uv}:=Y_{uv} (\deg(u)\deg(v))^{\mu}\|u-v\|_{\mathrm{H}}^\zeta$$ 
where $\|u-v\|_{\mathrm H}$ is the Haversine distance between the two nodes $u,v$ given their latitutes and longitudes. The initially infected node corresponds to a user located roughly at the centre of Europe near Nuremberg, (at coordinates $49.50, 11.44$). 

For the visualisation of the epidemic curves on a logarithmic scale, we sample the time moments $t_i$ at which $I(t_i)=n_i$ nodes are reached by the epidemic, where $n_i$ sweeps over the five most significant bits of the binary expansion Thus e.g.\ for each $n \ge 4$ we sample at points $\{2^n + \sum_{i=1}^4 a_i2^{n-i}\colon a_1,\dots,a_4 \in \{0,1\}$, with $I_{\text{max}}= 96,774$ being the total number of vertices ever infected; this is the same for all runs.  This creates a single epidemic curve for each run of the epidemic with fixed values of $\mu, \zeta$. Then we resample the whole collection of the exponential random variables $Y_{uv}$ a total of $55$ times. On Figure \cite[Figure 3]{benjert25degree} we show the median value of $t_i$  for each $n_i$. The shaded areas covers the $25$th and $75$th-percentile of the runs. Within the component with $96,774$ vertices, there are $37,024$ vertices in Europe and $46,381$ in the US. We show their relative proportion within $I(t)$ on the inlays of \cite[Figure 3]{benjert25degree}. 
 
\subsubsection{Visualisation of the Diffusion Process}\label{sec:visualisation}
For the visualisation of the infection process on synthetic GIRG networks on \cite[Figure 3]{benjert25degree} in the main text, we take the underlying graph to be a GIRG on the torus $[0, 1000)^2$, using the same sampled graph for each choice of $\mu$ and $\zeta$ with $\tau=2.78$ and $\alpha=1.2$ chosen to match the Gowalla dataset. For each $(\mu, \zeta)$ pair, we run the infection process on this graph, then project the node set from the torus onto $[-500, 500)^2$ with the initial infection site at the centre. We then discretise the projection into $2.5\times 2.5$ boxes, i.e.\ $400$ boxes per side; we chose this number to ensure that most boxes would contain at least one node. To form the heatmap, we colour each box according to the \emph{infection order} of the node-set under matplotlib's reversed plasma colour map. Thus the colour scheme reflects the order at which nodes are reached by the infection, not the actual infection time. As the vertex set is random, many boxes contain multiple nodes, and in this case we take the first infection. Some boxes contain no nodes, and these are rendered as white pixels.

For the visualisation of the infection process on the Gowalla dataset, we follow a similar procedure on the Gowalla graph, with some slight adjustments that we explain now. After running the infection, we project the vertex set onto the plane via an azimuthal equidistant projection centred at the initial infection site in order to preserve distances to this site. We then crop the projection to Europe for visibility (although the infection runs on the whole network). We partition this projection into $400$ boxes per side to match the synthetic network; these boxes are approximately $8\times 8$km. As we should expect given the inhomogeneity of population density, the number of nodes within a given non-empty box varies significantly with median $2$, mean $8.85$, standard deviation $71.6$ and maximum $3059$. As before, we then form the heatmap by colouring each box according to the infection order of the \emph{first infected node} inside it, but we restrict to infections within the cropped projection and normalise by the number of non-empty boxes. 
We use the same method for generating and visualising the shortest infection paths in Figure \ref{fig:geodesics}.

\subsubsection{Animations}\label{sec:animations}
We provide animations of the epidemic spread in a synthetic network and the Gowalla network of Europe. They are in separate SI files, their names starting with animations\_GIRG and animations\_Europe. For the synthetic network we used a synthetic GIRG network of $500\times 500$ nodes placed on a regular grid with parameters $\tau=2.7$ and $\alpha =1.2$. The epidemics have penalisation exponents $\mu =0, \zeta=0$ on the first animations (explosive growth), $\mu=\zeta=1 $ on the second animations (quasi-exponential growth),   $\mu =1, \zeta=2$ on the third animation (polynomial growth), and finally $\mu=1, \zeta=3$ on the fourth animation (pure geometric growth), the same as for the heatmaps in \cite[Figure 3]{benjert25degree}.
For the animated epidemic on Europe, we use the same epidemic parameters as above and also as in \cite[Figure 3]{benjert25degree}.

\subsection{Statistical methods}\label{sec:statistical-methods}
\subsubsection{Power law tail estimates for the degree distribution: recovering $\tau$} \label{sec:stat-method-tau}

We fit a power-law distribution to the empirical degree distribution $n_k/n\sim k^{-\tau_{\mathrm{Gow}}}$ of the Gowalla dataset. To estimate the tail exponent $\tau_{\mathrm{Gow}}$ of this distribution, we use the Hill's estimator \cite{hill1975simple}. This estimator is proven to be consistent on synthetic networks \cite{bhattacharjee2022large} and performs well on empirical network data \cite{voitalov2019scale}. The estimator for any i.i.d.\ sample $x_1, \ldots , x_n$ with corresponding order statistics $x_{(1)} \geq x_{(2)} \geq \ldots \geq x_{(n)}$ is defined by 
\[
\xi_{\kappa,n}^{\text{Hill}} = \frac{1}{\kappa}\sum_{i=1}^{\kappa}\log\left(\frac{x_{(i)}}{x_{(\kappa+1)}}\right).
\]
An open source repository is available accompanying \cite{voitalov2019scale} which estimates the power-law exponent using the Hill's estimator on any input degree sequence. The Hill's estimator is dependent on the choice of $\kappa$ and this code uses a double bootstrap method to determine the optimal choice of $\kappa$ that minimises the asymptotic mean squared error. While the Hill's estimator is proven to be consistent \cite{bhattacharjee2022large}, we do experience finite size effects on synthetic GIRG networks, as the estimator over-estimates the true $\tau$ for smaller networks $n\sim 40k$ with $\hat\tau=3.08$ versus true $\tau=2.78$, while its performance increases as $n$ gets larger and already for $n$ ranging from $40k$ to $4\cdot 10^6$ we only experience an error of $\hat \tau= 2.7 \pm 0.062$ when $\tau=2.7$.

\subsubsection{Tail estimates for the edge-length distribution: recovering $\alpha$} \label{sec:stat-method-alpha}
Estimating the parameter $\alpha$ governing the edge-length distribution is a more challenging task. While the asymptotic edge-length distribution follows a power-law by Corollary \ref{cor:edge-length-limit} with tail exponent $d(\alpha-1)$, finite-size effects are more prevalent for edge-lengths as for degrees. For example, in dimension $2$, having a network size $n\in[10^5, 10^6]$ leads to maximal edge-lengths of order $~100-~1000$, introducing visible finite size effects already for the empirical distribution around $L=50$. With fits to straight lines on log-log scale, this leaves only limited data for the estimation and causes large errors, especially for the relevant range of values of $\alpha < 2$, when the network contains many long edges. 
Indeed, the asymptotic prediction of power law tail gives a very bad fit, see Figure \ref{fig:alpha-estimators}.
Using the results of Theorem \ref{thm:edge-length} which we developed for finite-size GIRGs, we carry out a non-linear least squares regression on the tail of the empirical truncated edge length distribution that we explain now in detail.

\emph{Tail of the empirical truncated edge length distribution.} For a GIRG on $n$ vertices in a box of side length $\sqrt{n}$ in dimension $2$, we first construct the following function. With $L_{+}<\sqrt{n}/2$, we fix an  interval of edge lengths $[L_{-}, L_{+}]$  that we consider our observation window. Then we consider the restricted tail distribution of edge lengths given by
\begin{equation}\label{eq:truncated-tail}
    \bar F_{n,[L_{-}, L_{+}]}(L):=\frac{E_n[L, L_{+}]}{E_n[L_{-}, L_{+}]}:=\frac{\#\{e \in G: L<\|e\|_{\Pi,2} \le L_{+}]\}}{\#\{e: \in G: L_{-}\le \|e\|_{\Pi,2} \le L_{+}\}}.
\end{equation}
that we call the tail of the edge-length distribution in the observation window $[L_{-}, L_{+}]$. 
By our theoretical results in Theorem \ref{thm:edge-length}, $E_n[L, L_{+}]/n$ converges to $c_1(L^{-d(\alpha-1)} - L_{+}^{-d(\alpha-1)})$, assuming that $\alpha<\tau-1$, while the denominator converges to some normalisation constant as the total number of edges is $\Theta(n)$ \cite{bringmann2019geometric}. 
This convergence is strong in practice; see Figure~\ref{fig:alpha-estimators}. Assuming $\alpha$ and $c_1$ are unknown, we can fit the function $
f_{a, b}(L)=b(L^{-d(a-1)} - L_+^{-d(a-1)})$ to the empirical data $\log F_{n, [L_-, L_+]}(L)$ using a non-linear least square regression, which gives $\hat\alpha = \hat{a}$.

Our estimator robustly recovers the true $\alpha$ in synthetic  networks. The approximation of the truncated edge-length distribution by $f$ is weakest when $L$ is small, and so our error drops as $L_-$ increases; on a $10^6$-node synthetic GIRG on $[0, 1000)^2$ with $\tau = 2.78$ and $\alpha = 1.2$, we obtain $\hat\alpha = 1.183$ for $L_- = 10$, $\hat\alpha = 1.191$ for $L_- = 20$, $\hat\alpha = 1.196$ for $L_- = 40$ and $\hat\alpha = 1.198$ for $L_- = 80$. On a synthetic dataset there is little downside to taking $L_-$ large, but on the Gowalla dataset large edge lengths are more subject to distortion due to the presence of oceans so we take $L_- \le 20km$; this allows us to take $L_+$ relatively small and still analyse a reasonable number of edges. Exploratory analysis indicates that the recovered $\hat\alpha$ for the Gowalla network is not very sensitive to the values of $L_-$ and $L_+$ we choose as long as $L_+ - L_-$ is reasonably large and $L_+$ is not too large; all choices of $(L_-, L_+) \in \{(5, 100), (5, 175), (10, 100), (10, 175), (20, 100)\}$ yield $\hat\alpha = 1.2 \pm 0.02$.

\subsubsection{Long-range parameter estimation for the Gowalla dataset}
For the Gowalla dataset we measure the length of edges via the Haversine distance. 
With the estimator $\bar F_{n, [L_-, L_+](L)}$ in \eqref{eq:truncated-tail} at hand, we perform the same estimation on the Gowalla dataset, i.e., using an observation window $[L_-, L_+]$ of edge lengths and subsampling a random $100k$ random edges in the range $L_-, L_+$, then proceed with the nonlinear regression as above for GIRGs. For the Gowalla dataset we observe that the estimator for $\alpha$ is robust over the choice of $L_-$ but depends more sensitively on the choice of $L_+$: when $L_+$ reaches above $500$kms then the fact that for many vertices there are oceans within that range starts to be visible in the estimate, we obtain $\hat \alpha=1.2\pm 0.05$.

\subsubsection{Polynomial growth exponents on epidemic curves} 
We used linear regression to estimate the polynomial growth exponent $\psi$ of the epidemic curves $I(t)\sim t^{d\psi}$ in their polynomial and pure geometric phase. Here, we excluded the initial part of the curve $I(t)$, until the time that $I(t)$ reaches $~60$ individuals as this is the number of users in the same city as the initially infected user. This part of the curve is thus not representative of the later growth as it is a dense sub-community of the network. 

We mention that the estimated growth exponents are admittedly far from the theoretically proven values, and they experience slow convergence. The reason for this is that the theoretical values are obtained in the proof of Theorem \ref{thm:main-supporting} by asymptotical analysis: we set the expected number of long edges forming the fast transmission `bridges' from one area to the other to be roughly $n^{\varepsilon}$ for vanishingly small $\varepsilon>0$. As the number of these bridges tend to infinty in the large network limit, they provide the theoretical results, but in finite networks we see slower growth.  Bridges are present (as long edges are part of the graph) but on the finite network they carry longer transmission times, yielding slower growth on the polynomial epidemic curves on  \cite[Figure 4c]{benjert25degree} compared to the asymptotic result. For the linear growth, the direction is the other way round: our lower bound shows that fast edges are not present in the graph, but in fact we set their number to $n^{-\varepsilon}$, so on finite networks some faster edges may still exist. This means that when theory predicts linear distances and hence $\sim t^2$ epidemic curves, the actual observed exponents in finite graphs are found to be slightly higher \cite[Figure 4d]{benjert25degree}

\subsection{List of supporting technologies}\label{sec:supporting-technologies}

The following open-source libraries were used as part of the codebase.

\begin{itemize}
    \item The Python standard library.
    \item An efficient implementation of the GIRG sampling algorithm of~\cite{bringmann2017sampling} in C++ by Christopher Weyand~\cite{girgs-c}, and Python bindings for it by Tom\'{a}\v{s} Gaven\v{c}iak~\cite{GIRG_sampling_github}, forked as described above.
    \item The graph-tool library~\cite{peixoto_graph-tool_2014} and numpy~\cite{numpy} for other graph manipulation code.
    \item The tail index estimation library of~\cite{voitalov2019scale} for calculating the Hill's estimator of the Gowalla dataset's degree sequence.
    \item The scipy~\cite{SciPy} library for random variable generation and non-linear regression.
    \item The cartopy~\cite{Cartopy} and shapely~\cite{Shapely} libraries for manipulating geographic coordinates.
    \item The matplotlib~\cite{matplotlib-paper} and PILlow~\cite{PILlow} libraries for figure generation.
    \item The dill library from the pathos project~\cite{pathos-github, pathos-paper} for saving and loading data.
    \item The requests~\cite{requests} library for automatically downloading data.
    \item The haversine~\cite{Haversine} library for calculating geographic distances in code written before migrating to cartopy.
    \item The scikit-learn~\cite{scikit-learn} library for linear regression in code written before migrating to scipy.
\end{itemize}

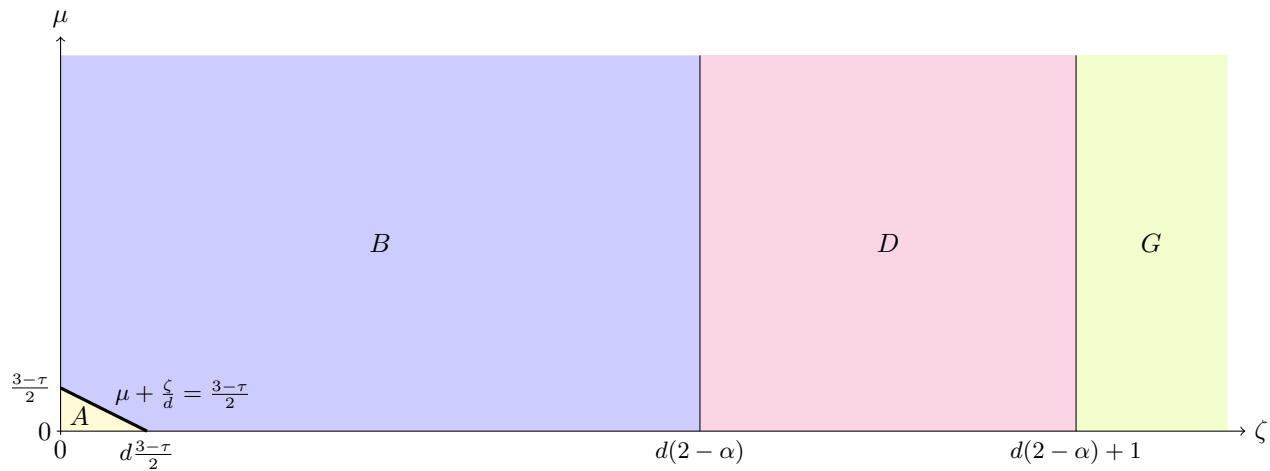
\begin{figure}
    \centering
    \begin{tikzpicture}[scale=5]
        
\fill[color=yellow!20] (0,0)--(0,0.115)--(0.23,0)--(0,0);
\fill[color=blue!20] (0,0.115)--(0.23,0)--(1.7,0)--(1.7,1)--(0,1)--(0,0.115);
\fill[color=magenta!20] (1.7,0)--(1.7,1)--(2.7,1)--(2.7,0)--(1.7,0);
\fill[color=lime!20] (2.7,0)--(2.7,1)--(3.1,1)--(3.1,0)--(2.7,0);

\fill[yellow!20] (0.05,0.04) circle (1pt) node {\color{black}{$A$}}; 
\fill[blue!20] (0.85,0.5) circle (1pt) node {\color{black}{$B$}};
\fill[magenta!20] (2.2,0.5) circle (1pt) node {\color{black}{$D$}};
\fill[lime!20] (2.9,0.5) circle (1pt) node {\color{black}{$G$}};

\draw[->] (-0.01,0)--(3.15,0);
\draw[->] (0,-0.01)--(0,1.05);

\draw[very thick] (0,0.115) -- (0.23,0);
\draw (1.7,0)--(1.7,1);
\draw (2.7,0)--(2.7,1);

\node[right] at (3.15,0) {$\zeta$};
\node[above] at (0,1.05) {$\mu$};
\node[left] at (0,0) {$0$};
\node[below] at (0,0) {$0$};
\node[left] at (0,0.115) {\small$\frac{3-\tau}{2}$};
\node[below] at (0.23,0) {\small$d\frac{3-\tau}{2}$};
\node[below] at (1.7,0) {\small$d(2-\alpha)$};
\node[below] at (2.7,0) {\small$d(2-\alpha)+1$};

\node[right] at (0.12,0.1) {\small$\mu + \frac{\zeta}{d} = \frac{3-\tau}{2}$};

\end{tikzpicture}
    \caption{Enlarged version of Figure 5a, with explicit formulas for the boundaries.}
    \label{fig:enlarged-gowalla-phase-diagram}
\end{figure}

\begin{figure}
    \centering


\begin{tikzpicture}[scale=6]

\fill[color=yellow!20] (0,0)--(0,0.3)--(0.6,0)--(0,0);
\fill[color=blue!20] (0,0.3)--(0.2,0.2)--(0.8,0.2)--(0.8,1)--(0,1)--(0,0.3);
\fill[color=magenta!20] (0.8,0.2)--(1.8,0.2)--(1.8,1)--(0.8,1)--(0.8,0.2);
\fill[color=cyan!20] (0.2,0.2)--(0.8,0.2)--(1.2,0)--(0.6,0)--(0.2,0.2);
\fill[color=red!20] (0.8,0.2)--(1.8,0.2)--(2.2,0)--(1.2,0)--(0.8,0.2);
\fill[color=lime!20] (1.8,0.2)--(2.2,0)--(2.4,0)--(2.4,1)--(1.8,1)--(1.8,0.2);

\fill[yellow!20] (0.2,0.1) circle (1pt) node {\color{black}{A}}; 
\fill[blue!20] (0.4,0.6) circle (1pt) node {\color{black}{B}};
\fill[cyan!20] (0.7,0.1) circle (1pt) node {\color{black}{C}};
\fill[magenta!20] (1.3,0.6) circle (1pt) node {\color{black}{D}};
\fill[red!20] (1.5,0.1) circle (1pt) node {\color{black}{F}};
\fill[lime!20] (2.1,0.6) circle (1pt) node {\color{black}{G}};

\draw[->] (-0.01,0)--(2.45,0);
\draw[->] (0,-0.01)--(0,1.05);

\draw[very thick] (0,0.3) -- (0.6,0);
\draw[dashed] (0.2,0.2)--(1.8,0.2);
\draw (0.8,0.2)--(0.8,1);
\draw (0.8,0.2) -- (1.2,0);
\draw (1.8,0.2)--(1.8,1);
\draw (1.8,0.2)--(2.2,0);

\draw[dotted] (0,0.2)--(0.2,0.2);
\draw[dotted] (0.8,0.2)--(0.8,0);
\draw[dotted] (1.8,0.2)--(1.8,0);

\node[right] at (2.45,0) {$\zeta$};
\node[above] at (0,1.05) {$\mu$};
\node[left] at (0,0) {$0$};
\node[below] at (0,0) {$0$};
\node[left] at (0,0.2) {\small$\alpha-\tau+1$};
\node[left] at (0,0.3) {\small$\frac{3-\tau}{2}$};
\node[below] at (0.6,0) {\small$d\frac{3-\tau}{2}$};
\node[below] at (0.8,0) {\small$d(2-\alpha)$};
\node[below] at (1.2,0) {\small$d(3-\tau)$};
\node[below] at (1.8,0) {\small$d(2-\alpha)+1$};
\node[below] at (2.2,0) {\small$d(3-\tau)+1$};

\node[rotate=-27, above] at (0.46,0.06) {\small$\mu + \frac{\zeta}{d} = \frac{3-\tau}{2}$};
\node[rotate=-27, above] at (1.06,0.06) {\small$\mu + \frac{\zeta}{d} = 3-\tau$};
\node[rotate=-27, above] at (2.02,0.08) {\small$\mu + \frac{\zeta}{d} = 3-\tau+\frac{1}{d}$};

\end{tikzpicture}

    \caption{Enlarged version of Figure 5b, with explicit formulas for the boundaries.}
    \label{fig:enlarged-mu-zeta-small-alpha}
\end{figure}

\begin{figure}
    \centering


\begin{tikzpicture}[scale=5]

\fill[color=yellow!20] (0,0)--(0,0.4)--(0.8,0)--(0,0);
\fill[color=cyan!20] (0,0.4)--(0.8,0)--(1.6,0)--(0,0.8)--(0,0.4);
\fill[color=red!20] (0,0.8)--(1.6,0)--(2.6,0)--(0.5,1.05)--(0,1.05)--(0,0.8);
\fill[color=purple!20] (0,1.05)--(0.5,1.05)--(0,2.1)--(0,1.05);
\fill[color=lime!20] (2.6,0)--(0.5,1.05)--(0,2.1)--(0,2.3)--(2.9,2.3)--(2.9,0)--(2.6,0);

\fill[yellow!20] (0.25,0.125) circle (1pt) node {\color{black}{A}}; 
\fill[cyan!20] (0.65,0.325) circle (1pt) node {\color{black}{C}};
\fill[purple!20] (0.18,1.3) circle (1pt) node {\color{black}{E}};
\fill[red!20] (1.1,0.55) circle (1pt) node {\color{black}{F}};
\fill[lime!20] (1.6,1.3) circle (1pt) node {\color{black}{G}};

\draw[->] (-0.01,0)--(2.95,0);
\draw[->] (0,-0.01)--(0,2.35);

\draw[very thick] (0,0.4) -- (0.8,0);
\draw (0,0.8) -- (1.6,0);
\draw[dashed] (0,1.05)--(0.5,1.05);
\draw (2.6,0)--(0.5,1.05);
\draw (0.5,1.05)--(0,2.1);

\node[right] at (2.95,0) {$\zeta$};
\node[above] at (0,2.35) {$\mu$};
\node[left] at (0,0) {$0$};
\node[below] at (0,0) {$0$};
\node[left] at (0,1.05) {$\alpha-\tau+1$};
\node[left] at (0,0.4) {$\frac{3-\tau}{2}$};
\node[below] at (0.8,0) {$d\frac{3-\tau}{2}$};
\node[left] at (0,0.8) {$3-\tau$};
\node[below] at (1.6,0) {$d(3-\tau)$};
\node[below] at (2.6,0) {$d(3-\tau)+1$};
\node[left] at (0,2.1) {$\frac{1}{d}+\frac{3-\tau}{d(\alpha-2)}$};

\node[rotate=-27, above] at (0.4,0.2) {$\mu + \frac{\zeta}{d} = \frac{3-\tau}{2}$};
\node[rotate=-27, above] at (0.8,0.4) {$\mu + \frac{\zeta}{d} = 3-\tau$};
\node[rotate=-27, above] at (1.55,0.525) {$\mu + \frac{\zeta}{d} = 3-\tau+\frac{1}{d}$};
\node[rotate=-63, above] at (0.25,1.575) {$\mu+(\frac{1}{d}+\frac{3-\tau}{d(\alpha-2)})\zeta=\frac{1}{d}+\frac{3-\tau}{d(\alpha-2)}$};

\end{tikzpicture}

    \caption{Enlarged version of Figure 5c, with explicit formulas for the boundaries.}
    \label{fig:enlarged-mu-zeta-large-alpha}
\end{figure}

\begin{figure}
    \centering


\begin{tikzpicture}[scale=8]

\fill[color=yellow!20] (1,2)--(2.9,2)--(2.9,2.2)--(1,2.2)--(1,2);
\fill[color=blue!20] (1,2.2)--(1.5,2.2)--(1.9,2.6)--(1.9,3.2)--(1,3.2)--(1,2.2);
\fill[color=magenta!20] (1.9,2.6)--(2,2.7)--(2,3.2)--(1.9,3.2)--(1.9,2.6);
\fill[color=purple!20] (2,3)--(2.15,2.85)--(2,2.7)--(2,3);
\fill[color=cyan!20] (1.9,2.6)--(2.9,2.6)--(2.9,2.2)--(1.5,2.2)--(1.9,2.6);
\fill[color=red!20] (1.9,2.6)--(2.9,2.6)--(2.9,2.85)--(2.15,2.85)--(1.9,2.6);
\fill[color=lime!20] (2.15,2.85)--(2,3)--(2,3.2)--(2.9,3.2)--(2.9,2.85)--(2.15,2.85);

\fill[yellow!20] (1.7,2.1) circle (1pt) node {\color{black}{A}}; 
\fill[blue!20] (1.4,2.4) circle (1pt) node {\color{black}{B}};
\fill[cyan!20] (2.4,2.4) circle (1pt) node {\color{black}{C}};
\fill[magenta!20] (1.95,2.92) circle (1pt) node {\color{black}{D}};
\fill[purple!20] (2.05,2.82) circle (1pt) node {\color{black}{E}};
\fill[red!20] (2.4,2.72) circle (1pt) node {\color{black}{F}};
\fill[lime!20] (2.4,3.1) circle (1pt) node {\color{black}{G}};

\draw[->] (0.99,2)--(2.95,2);
\draw[->] (1,1.99)--(1,3.25);
\draw[very thick] (1,2.2)--(2.9,2.2);
\draw[dashed] (1.5,2.2) -- (2.15,2.85);
\draw (1.9,2.6)--(1.9,3.2);
\draw[dashed] (2,2.7)--(2,3.2);
\draw (2,3)--(2.15,2.85);
\draw (1.9,2.6)--(2.9,2.6);
\draw (2.15,2.85)--(2.9,2.85);
\draw[very thick] (2,3)--(2,3.2);

\draw[dashed] (1,3)--(2.9,3);
\draw[dashed] (2,2)--(2,2.7);
\draw[dotted] (1,2.6)--(1.9,2.6);
\draw[dotted] (1,2.85)--(2.15,2.85);
\draw[dotted] (1.9,2)--(1.9,2.6);

\node[right] at (2.95,2) {$\alpha$};
\node[above] at (1,3.25) {$\tau$};
\node[left] at (1,3) {$3$};
\node[left] at (1,2) {$2$};
\node[below] at (1,2) {$1$};
\node[left] at (1,2.2) {$3-2(\mu+\frac{\zeta}{d})$};
\node[left] at (1,2.6) {$3-(\mu+\frac{\zeta}{d})$};
\node[left] at (1,2.85) {$3+\frac{1}{d}-(\mu+\frac{\zeta}{d})$};
\node[below] at (2,2) {$2$};
\node[below] at (1.9,2) {$2-\frac{\zeta}{d}$};
\node[rotate=45, below] at (1.75,2.45) {$\tau = \alpha+1-\mu$};
\node[right] at (2.1,2.92) {$\tau+(\frac{d\mu}{1-\zeta}-1)\alpha=3+2(\frac{d\mu}{1-\zeta}-1)$};

\end{tikzpicture}

    \caption{Enlarged version of Figure 5d, with explicit formulas for the boundaries.}
    \label{fig:enlarged-alpha-tau}
\end{figure}

\begin{figure}
    \centering
    \includegraphics[width=
    \textwidth]{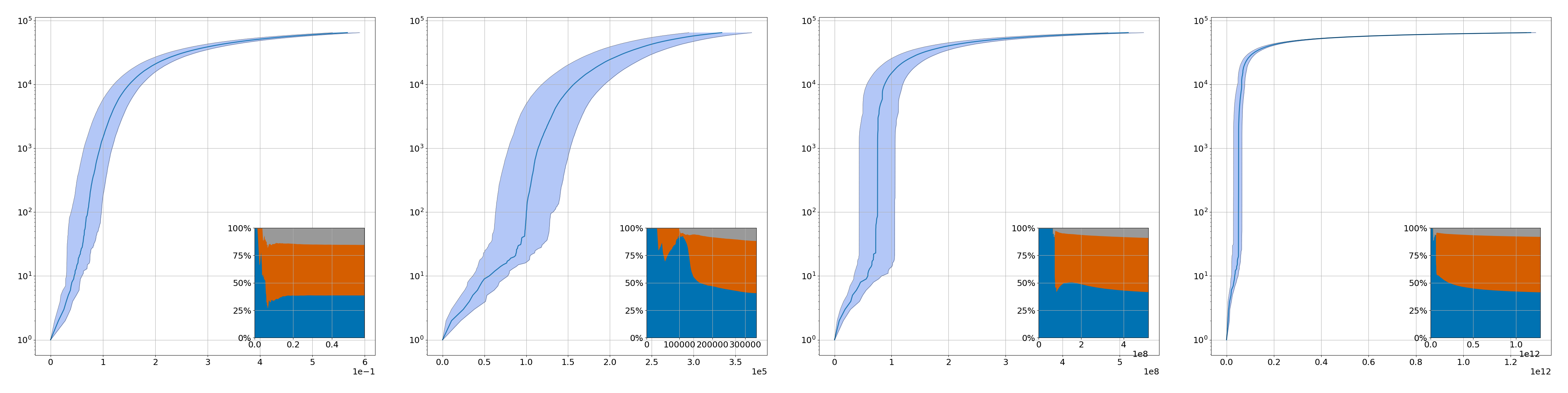}
    \caption{Four epidemic curves $I(t)$ as a function of $t$ on the random reference network obtained from the Gowalla dataset, with the same parameters as on Figure 4 of \cite{benjert25degree}. We rewired edges randomly while maintaining the same degree counts.  We visualise $I(t)$ up to $10^{4.77}$ infected nodes, after which network saturation occurs. The middle curve is the median value of $55$ epidemic runs while the shaded area shows the $25\%-75\%$ quantiles of the runs. In each run only the random link factors $E_{uv}$ are re-sampled. The $y$-axis is on a log-scale with base $10$. The $x$-axis is linear scale on figures (a)-(b), and it is on a log-scale with base $10$ on figures (c)-(d). On a random reference network theory predicts only two quantitative phases: explosive and exponential. Here only fig (a) is predicted to grow explosively with $\mu=\zeta=0$, while figures (b)-(c)-(d) all grow exponentially with $\mu=1$ and $\zeta=1,2,3$ respectively. The increase of $\zeta$ causes a slowdown while maintains the general shape of the curves. 
 The inlays show the proportions of infected European (blue), US (orange), and other (grey) nodes; with $t$ on a log-scale. As predicted by the theory, fast mixing occurs in both phases.}
     \label{fig:epicurves-random-ref}
 \end{figure}

 \begin{figure}
    \centering
     \includegraphics[width=0.45\textwidth]{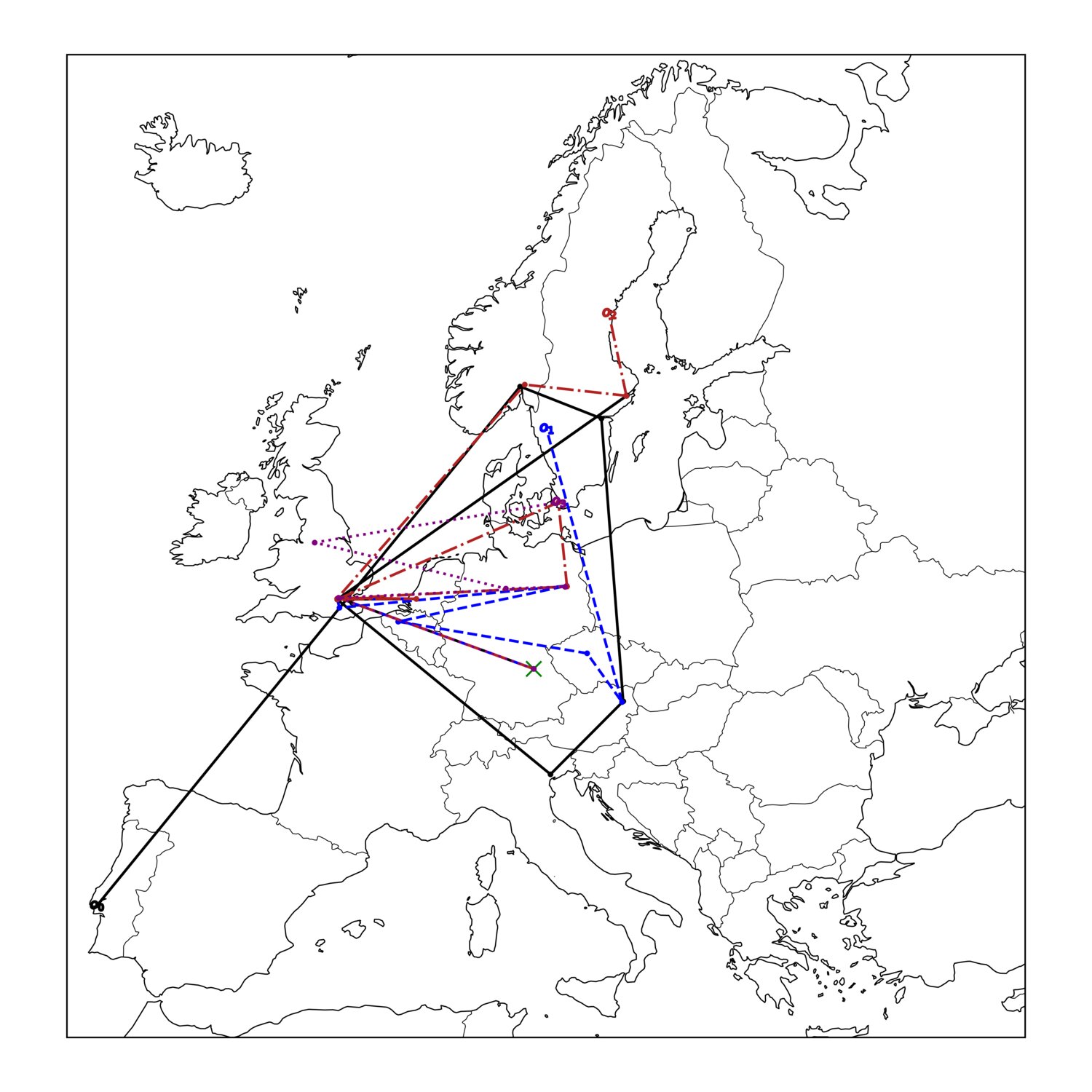}
    \includegraphics[width=0.45\textwidth]{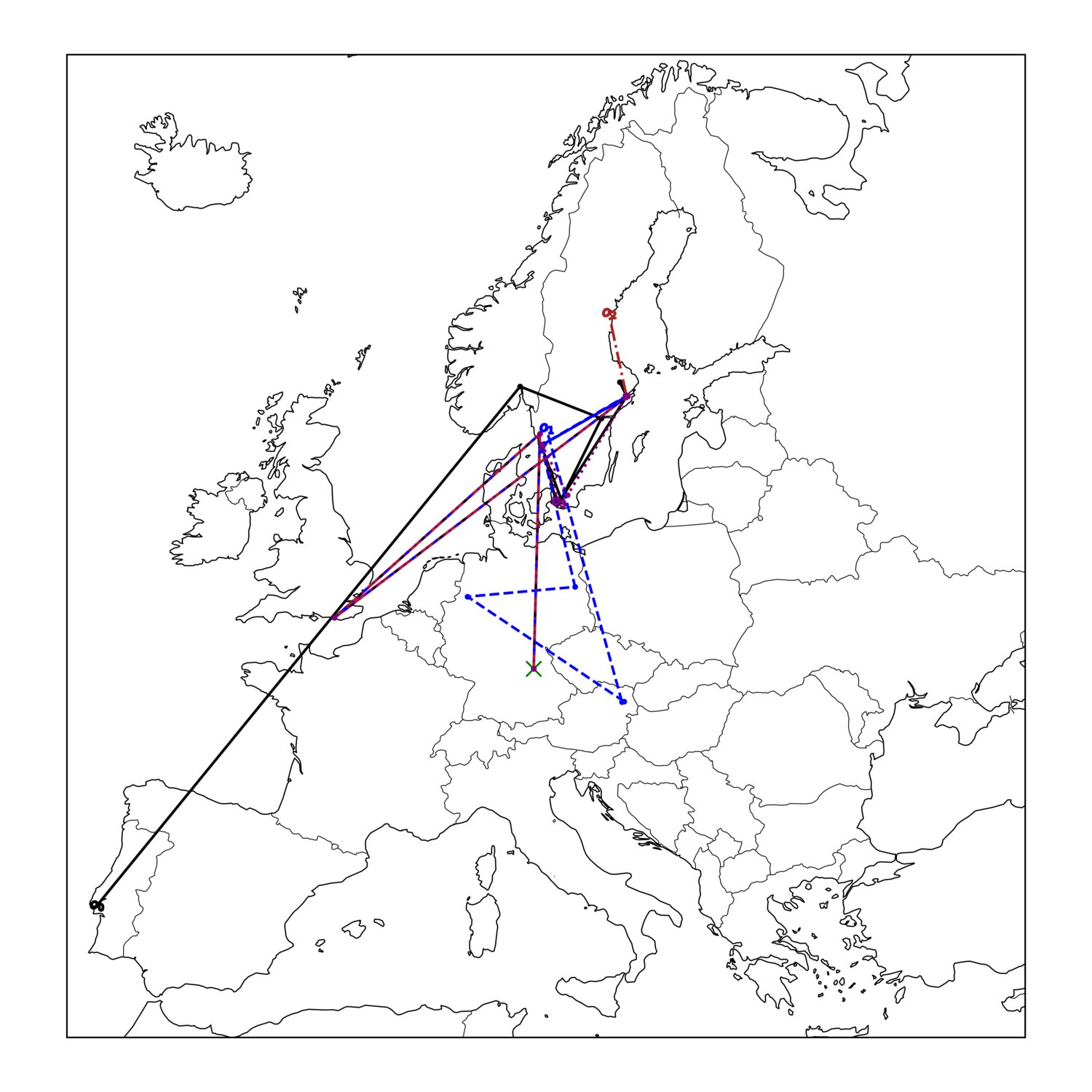}
    \includegraphics[width=0.45\textwidth]{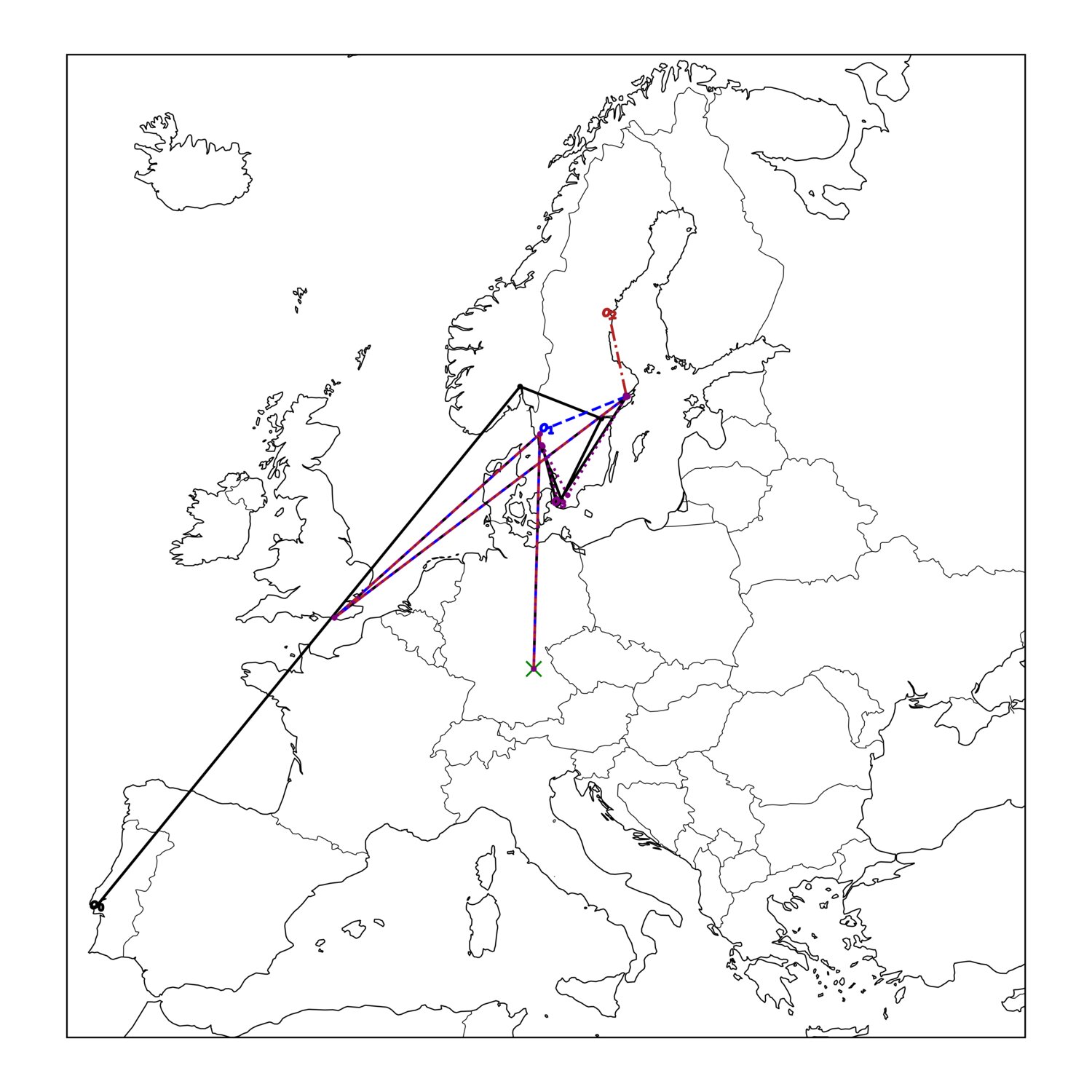}
    \includegraphics[width=0.45\textwidth]{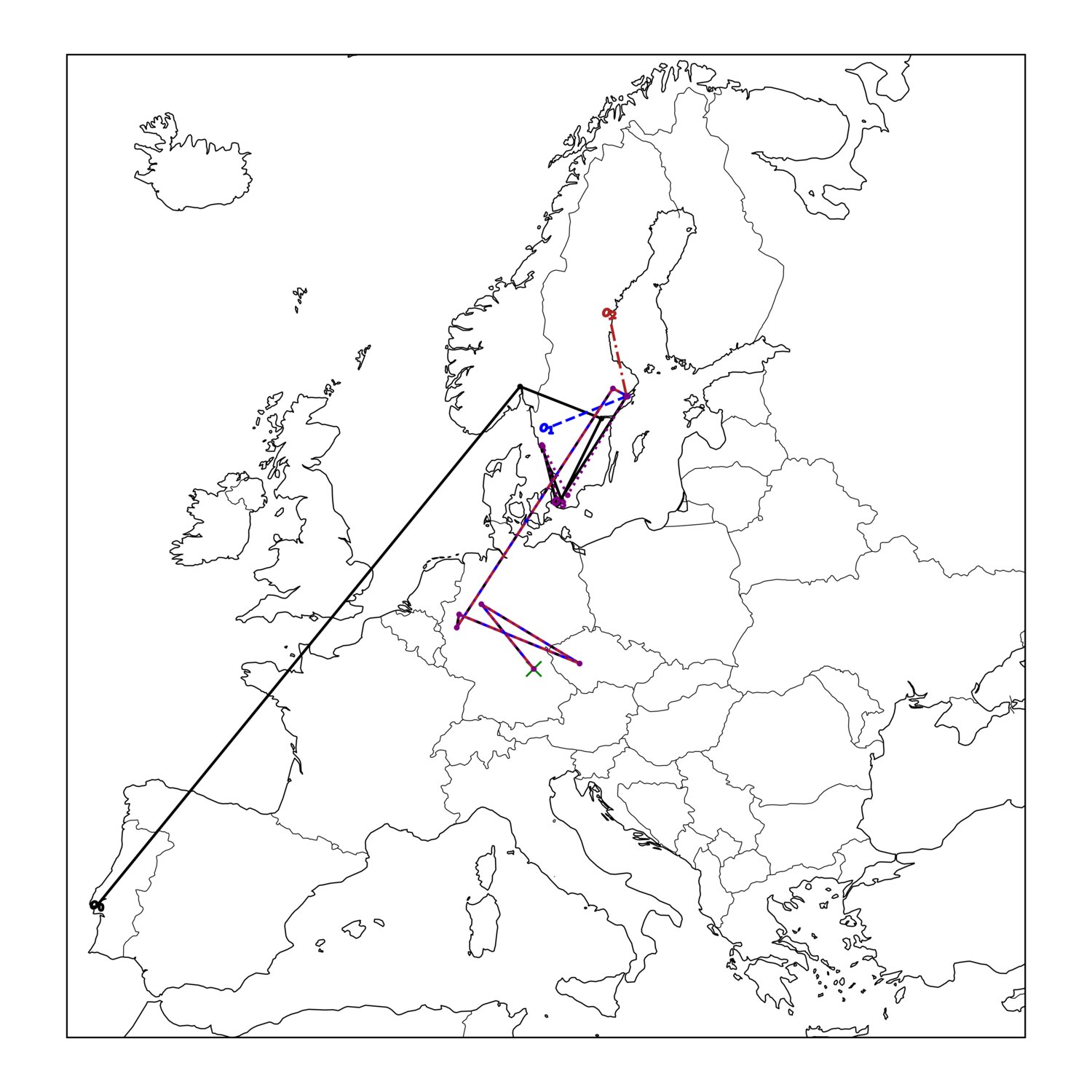}
    \caption{Typical infection paths in the four different phases for the epidemic run on Europe part of the Gowalla network and on synthetic GIRG networks. The epidemics have the same parameters as on Figure 4 \cite{benjert25degree}. The `$x$' indicates the source node while the $o_1, o_2, o_3, o_4$ the location of the four randomly selected users.}
     \label{fig:geodesics}
 \end{figure}

 \begin{figure}
    \centering
     \includegraphics[width=0.45\textwidth]{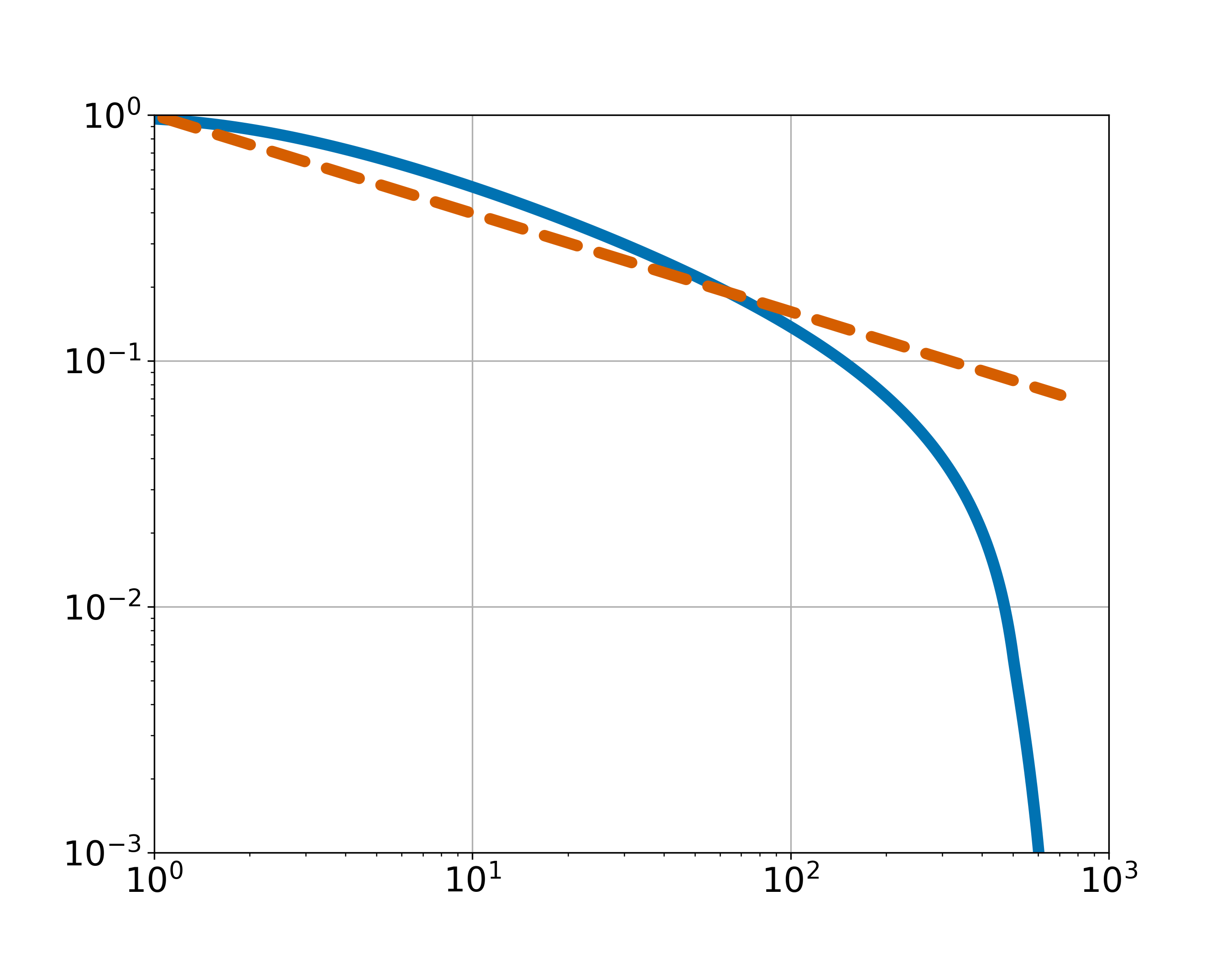}
    \includegraphics[width=0.45\textwidth]{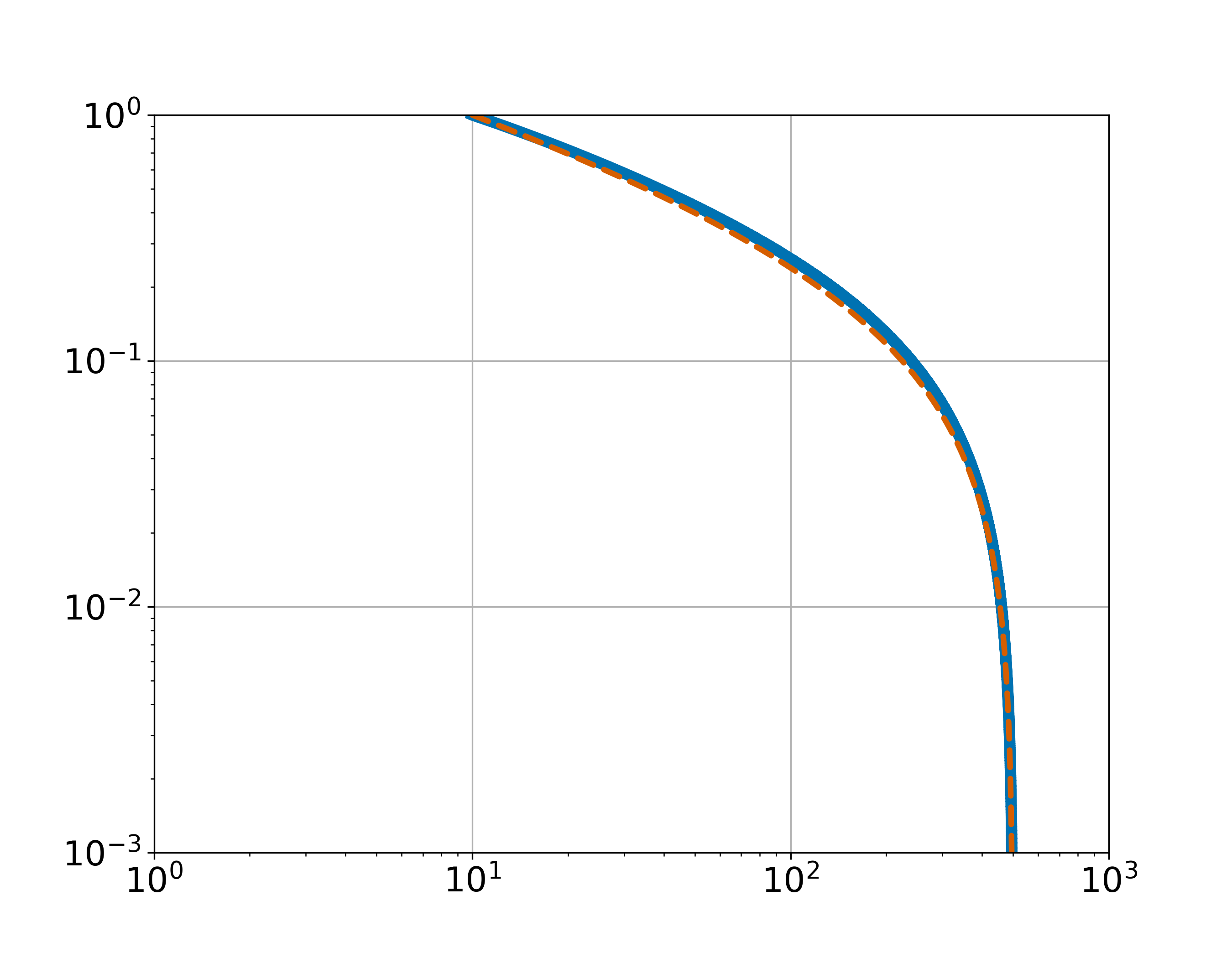}
    \caption{This plot shows the edge-length distributions for the synthetic Gowalla network. Left: The blue solid curve is the empirical proportion of edges with length at least $L$ plotted against $L$; note that the diameter of the torus is roughly $707$. The red dashed curve is the asymptotic proportion predicted by the large network limit. Right: The blue solid curve is the the empirical truncated edge-length distribution with $L_- = 10$ and $L_+ = 500$. The red dashed curve is the distribution predicted by Theorem~\ref{thm:edge-length} (up to a normalising factor). Note that the two curves do not precisely coincide due to the low value of $L_-$.}
     \label{fig:alpha-estimators}
 \end{figure}

\end{document}